\newtheorem{thm}{Theorem}[subsection]
\newtheorem{prop}[thm]{Proposition}
\newtheorem{lemma}[thm]{Lemma}
\newtheorem{definition}[thm]{Definition}
\newtheorem{proposition}[thm]{Proposition}
\theoremstyle{remark}
\newtheorem{remark}[thm]{Remark}
\newtheorem{example}[thm]{Example}
\theoremstyle{definition}
\numberwithin{equation}{section}
\newcommand{\cal}{\mathcal}
\newcommand{\VM}{{\mathcal{V}}^{\bullet}(M)}
\newcommand{\LD}{{\rm{LD}}}
\newcommand{\LC}{{\rm{LC}}}
\newcommand{\LfC}{{\rm{LfC}}}
\newcommand{\C}{{\Bbb C}}
\newcommand{\bbA}{{\Bbb A}}
\newcommand{\bbR}{{\Bbb R}}
\newcommand{\End}{\operatorname{End}}
\newcommand{\Tor}{\operatorname{Tor}}
\newcommand{\ad}{\operatorname{ad}}
\newcommand{\FMC}{\operatorname{FMC}}
\newcommand{\FMfC}{\operatorname{FMfC}}
\newcommand{\FreeOp}{\operatorname{FreeOp}}
\newcommand{\Coker}{\operatorname{Coker}}
\newcommand{\PaB}{\mathbf{PaB}}
\newcommand{\PaP}{\mathbf{PaP}}
\newcommand{\frt}{\mathfrak{t}}
\newcommand{\As}{\operatorname{As}}
\newcommand{\Gerst}{\operatorname{Gerst}}
\newcommand{\Nerve}{\operatorname{Nerve}}
\newcommand{\BV}{\operatorname{BV}}
\newcommand{\Ob}{\operatorname{Ob}}
\newcommand{\Com}{\operatorname{Com}}
\newcommand{\Lie}{\operatorname{Lie}}
\newcommand{\cPsh}{{\cP^{\#}}}
\newcommand{\bbZ}{{\Bbb Z}}
\newcommand{\cO}{{\mathcal O}}
\newcommand{\cP}{{\mathcal P}}
\newcommand{\cV}{{\mathcal V}}
\newcommand{\cA}{{\mathcal A}}
\newcommand{\cB}{{\mathcal B}}
\newcommand{\cQ}{{\mathcal Q}}
\newcommand{\cC}{{\mathcal C}}
\newcommand{\cR}{{\mathcal R}}
\newcommand{\Li}{{L_{\infty}}}
\newcommand{\Ai}{{A_{\infty}}}
\newcommand{\iHom}{\underline{\operatorname{Hom}}}
\newcommand{\Cai}{{\operatorname{Calc}_{\infty}}}
\newcommand{\Ca}{{\operatorname{Calc}}}
\newcommand{\eq}{\sim}
\newcommand{\Br}{{\operatorname{Bar}}}
\newcommand{\Cbr}{{\operatorname{Cobar}}}
\newcommand{\Ker}{\operatorname{Ker}}
\newcommand{\op}{\operatorname{op}}
\newcommand{\D}{\operatorname{D}}
\newcommand{\R}{{\Bbb R}}
\newcommand{\Q}{{\Bbb Q}}
\newcommand{\Hom}{\operatorname{Hom}}
\newcommand{\isomoto}{\overset{\sim}{\to}}
\newcommand{\lisomoto}{\overset{\sim}{\leftarrow}}
\newcommand{\id}{\operatorname{id}}
\newcommand{\Tr}{\operatorname{Tr}}
\newcommand{\FM}{\operatorname{FM}}
\newcommand{\Conf}{\operatorname{Conf}}
\newcommand{\g}{{\mathfrak{g}}}
\newcommand{\co}{\bf{{c}}}
\newcommand{\cha}{\bf{{h}}}
\newcommand{\Y}{Y}
\newcommand{\Ccc}{C^{\bullet}(A)}
\newcommand{\Hcc}{H^{\bullet}(A)}
\renewcommand{\subsubsection}{\@startsection
{subsubsection}%
{2}%
{0mm}%
{-\baselineskip}%
{-0.5\baselineskip}%
{\normalfont\normalsize\bfseries }}%
\begin{document}

\copyrightinfo{~2012}{Boris Tsygan}
\setcounter{page}{19}

\title{Noncommutative Calculus and Operads}

\author[B.Tsygan]{Boris Tsygan}
\address{Department of
Mathematics, Northwestern University, Evanston, IL 60208-2730, USA} \email{b-tsygan@northwestern.edu}

\thanks{
The author was partially supported by NSF grant
DMS-0906391}




\maketitle

\section{Introduction}
This expository paper is based on lecture courses that the author taught at the Hebrew University of Jerusalem in the year of 2009--2010 and at the Winter School on Noncommutative Geometry at Buenos Aires in July-August of 2010. It gives an overview of works on the topics of noncommutative calculus, operads and index theorems. 

Noncommutative calculus is a theory that defines classical algebraic structures arising from the usual calculus on manifolds in terms of the algebra of functions on this manifold, in a way that is valid for any associative algebra, commutative or not. It turns out that noncommutative analogs of the basic spaces arising in calculus are well-known complexes from homological algebra. For example, the role of noncommutative multivector fields is played by the Hochschild cochain complex of the algebra; the the role of noncommutative forms is played by the Hochschild chain complex, and the role of the noncommutative de Rham complex by the periodic cyclic complex of the algebra. These complexes turn out to carry a very rich algebraic structure, similar to the one carried by their classical counterparts. Moreover, when the algebra in question is the algebra of functions, the general structures from noncommutative geometry are equivalent to the classical ones. These statements rely on the Kontsevich formality theorem \cite{Konts} and its analogs and generalizations. We rely on the method of proof developed by Tamarkin in \cite{T}, \cite{T1}. The main tool in this method is the theory of operads \cite{May}.

A consequence of the Kontsevich formality theorem is the classification of all deformation quantizations \cite{BFFLS} of a given manifold. Another consequence is the algebraic index theorem for deformation quantizations. This is a statement about a trace of a compactly supported difference of projections in the algebra of matrices over a deformed algebra. It turns out that all the data entering into this problem (namely, a deformed algebra, a trace on it, and projections in it) can be classified using formal Poisson structures on the manifold. The answer is an expression very similar to the right hand side of the Atiyah-Singer index theorem. For a deformation of a symplectic structure, all the results mentioned above were obtained by Fedosov \cite{Fe}; they imply the Atiyah-Singer index theorem and its various generalizations \cite{BNT}. 

The algebraic index theorem admits a generalization for deformation quantizations of complex analytic manifolds. In this new setting, a deformation quantization as an algebra is replaced by a deformation quantization as an algebroid stack, a trace by a Hochschild cocycle, and a difference of two projections by a perfect complex of (twisted) modules. The situation becomes much more mysterious than before, because both the classification of the data entering into the problem and the final answer depend on a Drinfeld associator \cite{Dr}. The algebraic index theorem for deformation quantization of complex manifolds in its final form is due to Willwacher (\cite{Wil}, \cite{Wil1}, and to appear).

The author is greatly indebted to the organizers and the members of the audience of his course and talks in Jerusalem for the wonderful stimulating atmosphere. He is especially thankful to  Volodya Hinich and David Kazhdan with whom he had multiple discussions on the subject and its relations to the theory of infinity-categories, as well as to Ilan Barnea for a key argument on which Section \ref{s:Infinity-algebras and categories} is based. He is grateful to Oren Ben Bassat, Emmanouil Farjoun, Jake Solomon, Ran Tesler and Amitai Zernik for very interesting and enjoyable discussions. It is my great pleasure to thank Willie Corti\~nas and of the other co-organizers of the Winter School in Buenos Aires, as well as the audience of my lectures there.

\section{ Hochschild and cyclic homology of algebras}
\label{section:CC}
\label{subsection:C-CC}
Let $k$ denote a commutative algebra over a field of characteristic zero
and let $A$ be a flat $k$-algebra with unit,
not necessarily commutative. Let $\overline A = A/k\cdot 1$. For $p\geq 0,$ let
$C_p(A)\overset{def}{=}A\otimes_k\overline A^{\otimes_k p}$. Define
\begin{eqnarray}\label{diffl:b}
b : C_p(A) & \rightarrow & C_{p-1}(A) \\ \nonumber
a_0\otimes\ldots\otimes a_p & \mapsto &
(-1)^p a_pa_0\otimes\ldots\otimes a_{p-1} + \\ & &
\sum_{i=0}^{p-1}(-1)^ia_0\otimes\ldots\otimes a_ia_{i+1}\otimes\ldots\otimes
a_p\ .\nonumber
\end{eqnarray}
Then $b^2 = 0$ and one gets the complex $(C_\bullet, b)$, called {\em the
standard
Hochschild complex of $A$}. The homology of this complex is denoted by
$H_{\bullet}(A,A)$, or by $HH_{\bullet}(A)$.

\begin{proposition} \label{prop:b,B}
The map
\begin{eqnarray}\label{diffl:B}
B : C_p(A) & \rightarrow & C_{p+1}(A) \\ \nonumber
a_0\otimes\ldots\otimes a_p & \mapsto & \sum_{i=0}^p (-1)^{pi}
1\otimes a_i\otimes\ldots\otimes a_p\otimes a_0\otimes\ldots\otimes a_{i-1}
\end{eqnarray}
satisfies $B^2 = 0$ and $bB+Bb = 0$ and therefore defines a map of complexes
\[
B: C_\bullet(A) \rightarrow C_\bullet(A)[-1]\
\]
\end{proposition}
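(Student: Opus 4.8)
The plan is to factor $B$ through the standard auxiliary operators of cyclic theory and to reduce both identities to a handful of simplicial/cyclic relations. I introduce the cyclic operator $t\colon C_p(A)\to C_p(A)$, $t(a_0\otimes\cdots\otimes a_p)=(-1)^p a_p\otimes a_0\otimes\cdots\otimes a_{p-1}$, the norm $N=\sum_{i=0}^p t^i$, the extra degeneracy $s(a_0\otimes\cdots\otimes a_p)=1\otimes a_0\otimes\cdots\otimes a_p$, and the truncated differential $b'=\sum_{i=0}^{p-1}(-1)^i d_i$, where $d_i$ multiplies the $i$-th and $(i+1)$-st entries. A direct reindexing of the explicit formula for $B$ shows that $B=(1-t)sN$; moreover, since the tensor factors past the first live in $\overline A=A/k\cdot 1$, any term in which $s$ inserts a unit and $t$ then moves it out of the leading slot vanishes, so on our normalized complex one in fact has $B=sN$.

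First I would record the elementary relations satisfied by $t$, $N$, $s$, $b$, $b'$. On $C_p(A)$ the cyclic operator has order $p+1$, i.e. $t^{p+1}=\id$ (the sign $(-1)^{p(p+1)}$ is trivial), whence $(1-t)N=N(1-t)=0$. The simplicial identities for the face maps give the two compatibilities $b(1-t)=(1-t)b'$ and $Nb=b'N$, and $s$ is a contracting homotopy for the truncated differential, $b's+sb'=\id$. Each is a short computation with the $d_i$ and the definition of $t$.

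Granting these, $B^2=0$ is immediate: in $B=(1-t)sN$ the two copies of $B$ meet in the combination $N(1-t)=0$. (Alternatively, $B$ lands in chains with a unit in the leading slot, and applying $B$ to such a chain forces a unit into a reduced tensor factor, so $B^2=0$ on the nose.) For the anticommutation I would compute
\[
bB=b(1-t)sN=(1-t)b'sN=(1-t)(\id-sb')N=(1-t)N-(1-t)s\,b'N,
\]
using $b(1-t)=(1-t)b'$ and $b's=\id-sb'$. The first summand vanishes because $(1-t)N=0$, and in the second $b'N=Nb$, so $bB=-(1-t)sN\,b=-Bb$; that is, $bB+Bb=0$, and $B$ descends to a map of complexes $C_\bullet(A)\to C_\bullet(A)[-1]$.

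The routine but genuinely delicate part is the bookkeeping behind the structural identities and the factorization $B=(1-t)sN$: all of $t$, $b$, $b'$ carry degree-dependent signs $(\pm1)^{p}$, so verifying $b(1-t)=(1-t)b'$ and $Nb=b'N$ requires matching signs term by term across the cyclic permutation, and one must check that the identities respect the passage to the normalized complex (equivalently, that the unit inserted by $s$ never survives in a reduced slot). Once the signs are pinned down, the operator-level computation above is the entire content of the proposition.
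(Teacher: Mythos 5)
Your argument is correct, and it is the standard one (Connes/Loday): write $B=(1-t)sN$ on the unnormalized complex, reduce everything to the relations $t^{p+1}=\id$, $(1-t)N=N(1-t)=0$, $b(1-t)=(1-t)b'$, $b'N=Nb$, $b's+sb'=\id$, and then pass to the normalized complex, where $tsN$ dies and $B$ becomes the formula in the statement. The paper states the proposition without proof (it is classical), so there is nothing to compare against; the only point worth making explicit in your write-up is that $t$ and $N$ do \emph{not} individually descend to $C_p(A)=A\otimes\overline{A}^{\otimes p}$ (a tensor with $a_p\in k\cdot 1$ is degenerate but $t$ moves that unit into the leading slot), so the operator identities must be established on $A^{\otimes(p+1)}$ and then pushed to the quotient using only that the composites $b$ and $(1-t)sN$ preserve the degenerate subcomplex — which is exactly the order of operations your plan already follows.
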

\begin{definition} \label{dfn:cc}
For $p\in\bbZ$ let
\begin{eqnarray*}
CC^-_p(A) & = & \prod_{\overset{i\geq p}{i \equiv  p \mod 2}} C_{i}(A) \\
CC^{{\rm{per}}}_p(A) & = & \prod_{i \equiv p \mod 2} C_{i}(A)\
\end{eqnarray*}
$$CC_p(A)  =  \bigoplus_{\overset{i\leq p}{i \equiv p \mod 2}} C_{i}(A)$$
\end{definition}
Since $i\geq 0,$ the third formula has a finite sum in the right hand side.
The complex $(CC^-_\bullet(A),B+b)$ (respectively
$(CC^{{\rm{per}}}_\bullet(A),B+b)$, respectively $(CC_\bullet(A),B+b)$) is called
the {\em negative cyclic} (respectively
{\em periodic cyclic}, respectively {\em cyclic}) complex of $A$. The
homology of these complexes is denoted by $HC^-_\bullet(A)$ (respectively by
$HC^{{\rm{per}}}_\bullet(A)$, respectively by $HC_\bullet(A))$.

There are inclusions of complexes
\begin{equation}\label{inclusions}
CC^-_\bullet(A)[-2]\hookrightarrow CC^-_\bullet(A)\hookrightarrow
CC^{per}_\bullet(A)
\end{equation}
and the short exact sequences
\begin{equation}\label{ses:CC-C}
0 \rightarrow CC^-_\bullet(A)[-2] \rightarrow CC^-_\bullet(A) \rightarrow C_\bullet(A) \rightarrow 0\
\end{equation}
\begin{equation}\label{ses:CC-C1}
0 \rightarrow  C_\bullet(A) \rightarrow CC_\bullet(A) \stackrel{S}{\rightarrow} CC_\bullet(A) [2] \rightarrow 0\
\end{equation}
To the double complex $CC_{\bullet}(A)$ one associates the
spectral sequence
\begin{equation} \label{eq:ssccc}
E^2_{pq} = H_{p-q}(A, A)
\end{equation}
converging to $HC_{p+q}(A)$.

In what follows we will use the notation of Getzler and Jones (\cite{GJ}).
Let $u$ denote a variable of degree $-2$. 
\begin{definition}\label{dfn:((u))}
For any $k$-module $M$ we denote by $M[u]$ $M$-valued polynomials in $u$, by $M[[u]]$ $M$-valued power series, and by $M((u))$ $M$-valued Laurent series in $u$.
\end{definition}
The negative and periodic
cyclic complexes are described by the following formulas:
\begin{eqnarray} \label{ses:gejo}
CC^-_{\bullet}(A) & = & (C_{\bullet}(A)[[u]], b + uB) \\
\label{ses:gejoper}
CC^{\text{per}}_{\bullet}(A) & = & (C_{\bullet}(A)((u)), b + uB)\\
CC_{\bullet}(A) & = & (C_{\bullet}(A)((u)) / uC_{\bullet}(A)[[u]], b
+ uB)
\end{eqnarray}
In this language, the map $S$ is just multiplication by $u$.
\begin{remark} \label{rmk:units}
For an algebra $A$ without unit, let $\tilde{A} = A + k\cdot 1$ and put
$$CC_{\bullet}(A) = \operatorname{Ker}(CC_{\bullet}(\tilde{A}) \rightarrow
CC_{\bullet}(k));$$
similarly for the negative and periodic cyclic complexes. If $A$
is a unital algebra then these complexes are quasi-isomorphic to
the ones defined above.
\end{remark}

\subsection{Homology of differential graded algebras}  \label{ss:ccdga}
One can easily generalize all the above constructions to the case when $A$
is
a differential graded algebra (DGA) with the differential $\delta$ (i.e.
$A$ is a graded algebra and $\delta$ is a derivation of degree $1$ such that
$\delta^2=0$).

The action of $\delta$ extends to an action on Hochschild chains by the
Leibniz rule:
\[
\delta (a_0\otimes\ldots\otimes a_p ) =
\sum_{i=1}^p {(-1)^{\sum_{k<i}{(| a_k| + 1)+1}}
(a_0\otimes\ldots\otimes\delta a_i \otimes ldots \otimes a_p )}
\]
The maps $b$ and $B$ are modified to include signs:
\begin{equation} \label{eq:b grad}
b(a_0 \otimes ldots \otimes a_p) = \sum _{k=0}^{p-1} (-1)^{\sum_{i=0}^{k}
{(|a_i| + 1)+1}}
a_0 \otimes ldots \otimes a_k a_{k+1} \otimes ldots \otimes a_p
\end{equation}
$$+ (-1)^{|a_p| + (|a_p|+1)\sum_{i=0}^{p-1}(|a_i|+1)} a_pa_0 \otimes ldots
\otimes a_{p-1}
$$
\begin{equation} \label{eq: B graded}
B(a_0 \otimes ldots \otimes a_p) = \sum_{k=0}^p (-1)^{\sum _{i \leq
k}(|a_i| + 1) \sum _{i \geq k}(|a_i| + 1)} 1 \otimes a_{k+1} \otimes ldots
\otimes a_p \otimes
\end{equation}
$$\otimes a_0 \otimes ldots \otimes a_k $$

The complex $C_{\bullet}(A)$ now becomes the total complex of
the double complex with the differential $b + \delta$:
$$C_p(A)=\bigoplus _{j-i=p}(A\otimes {\overline{A}}^{\otimes j})^{i}$$
The negative and
the periodic cyclic complexes are defined as before in terms of the new
definition of $C_{\bullet}(A)$.
All the results of this section extend to the differential graded case.
\begin{remark}\label{rmk:sums vs products}
Note that the total complex consists of direct sums rather than direct products. This choice, as well as the choice of defining the periodic cyclic complex using Laurent series, is made so that a quasi-isomorphism of DG algebras would induce a quasi-isomorphism of corresponding complexes.
\end{remark}

%
%
%

\subsection{The Hochschild cochain complex} \label{hocochain}

Let $A$ be a graded algebra with unit over a commutative unital
ring $k$ of characteristic zero.  A Hochschild $d$-cochain is a
linear map $A^{\otimes d}\to A$.  Put, for $d\geq 0$,
\begin{equation}\label{eq:Hochcochains1}
        C^d(A) = C^d (A,A) =\operatorname{Hom}_k({\overline{A}}^{\otimes
d},A)
\end{equation}
where ${\overline{A}}=A/k\cdot 1$. Put
\begin{equation}\label{eq:Hochcochains2}
        |D|\;=({\rm {degree\; of\; the\; linear\; map\; }}D)+d
\end{equation}
  Put for cochains $D$ and $E$ from $C^{\bullet}(A,A)$
\begin{equation}\label{eq:Hochcochains3}
        (D\smile E)(a_1,\dots,a_{d+e})=(-1)^{| E|\sum_{i \leq d}(|a_i| + 1)}
        D(a_1,\dots,a_d)\times
\end{equation}
\begin{equation}\label{eq:Hochcochains4}
        \times E(a_{d+1},\dots,a_{d+e});
\end{equation}
\begin{equation}\label{eq:Hochcochains5}
        (D\circ E)(a_1,\dots,a_{d+e-1})=\sum_{j \geq 0}
        (-1)^{(|E|+1)\sum_{i=1}^{j}(|a_i|+1)}\times
        \end{equation}
$$
\times D(a_1,\dots,a_j,
        E(a_{j+1},\dots,a_{j+e}),\dots);
$$
\begin{equation}\label{eq:Hochcochains7}
        [D, \; E]= D\circ E - (-1)^{(|D|+1)(|E|+1)}E\circ D
\end{equation}
These operations define the graded associative algebra
$(C^{\bullet}(A,A)\;,\smile)$ and the graded Lie algebra
($C^{\bullet + 1}(A,A)$, $[\;,\;]$) (cf. \cite{CE}; \cite{G}).
Let
\begin{equation}\label{eq:Hochcochains8}
        m(a_1,a_2)=(-1)^{| a_1|}\;a_1 a_2;
\end{equation}
this is a 2-cochain of $A$ (not in $C^2$).  Put
\begin{equation}\label{eq:Hochcochains9}
        \delta D=[m,D];
\end{equation}
\begin{equation}\label{eq:Hochcochains10}
        (\delta D)(a_1,\dots,a_{d+1})=(-1)^{|a_1||D|+|D|+1}\times
\end{equation}
\begin{equation}\label{eq:Hochcochains11}
        \times a_1 D(a_2,\dots,a_{d+1})+
\end{equation}
$$
        +\sum
_{j=1}^{d}(-1)^{|D|+1+\sum_{i=1}^{j}(|a_i|+1)}
                D(a_1,\dots,a_ja_{j+1},\dots,a_{d+1})
$$
$$
        +(-1)^{|D|\sum_{i=1}^{d }(|a_i|+1)}D(a_1,\dots,a_d)a_{d+1}
$$

One has
\begin{equation}\label{eq:Hochcochains12}
        \delta^2=0;\quad\delta(D\smile E)=\delta D\smile E+(-1)^{|D|}
                D\smile\delta E
\end{equation}
\begin{equation}\label{eq:Hochcochains13}
        \delta[D,E]=[\delta D,E]+(-1)^{|D|+1}\;[D,\delta E]
\end{equation}
($\delta^2=0$ follows from $[m,m]=0$).

Thus $ C^{\bullet}(A,A)$ becomes a complex; we will denote it also by
$C^{\bullet}(A)$. The cohomology of this complex
is $H^{\bullet}(A,A)$ or the Hochschild cohomology. We denote it also by
$H^{\bullet}(A) $.  The $\smile$ product induces the
Yoneda product on $H^{\bullet}(A,A)={\rm{Ext}}_{A\otimes A^0}^{\bullet}(A,A)$.  The
operation
$[\;,\;]$ is the Gerstenhaber bracket \cite{G}.

If $(A, \; \partial)$ is a differential graded algebra then one can define
the differential $\partial$ acting on $\Ccc$ by:
\begin{equation}\label{eq:Hochcochains14}
\partial D \;\; = \; [\partial , D]
\end{equation}

\begin{thm} \cite{G} The cup product and the Gerstenhaber bracket induce a
Gerstenhaber algebra structure on
$\Hcc$ (cf. \ref{ss:gerstenhaber} for the definition of a
Gerstenhaber algebra).
\end{thm}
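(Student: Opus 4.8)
The plan is to upgrade the three facts already recorded on cochains—associativity of $(\Ccc,\smile)$, the graded Lie structure on $C^{\bullet+1}(A)$, and the two derivation identities \eqref{eq:Hochcochains12}, \eqref{eq:Hochcochains13} expressing that $\delta$ differentiates both $\smile$ and $[\,,\,]$—to the corresponding statements on $\Hcc$. The derivation identities immediately show that $\smile$ and $[\,,\,]$ carry cocycles to cocycles and (cocycle times coboundary) to coboundaries, so both operations descend to $\Hcc$. On cohomology we therefore get, for free, an associative product and a graded Lie bracket. What is genuinely new, and what the proof must supply, are the two remaining Gerstenhaber axioms that are invisible at the cochain level: graded commutativity of the induced product, and the Poisson (biderivation) compatibility of the bracket with the product.

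The crucial point is that each of these two axioms holds only up to homotopy on $\Ccc$, and in both cases the homotopy is furnished by Gerstenhaber's composition operation $\circ$ of \eqref{eq:Hochcochains5}. First I would establish the homotopy-commutativity identity, of the shape
\[
D\smile E-(-1)^{|D||E|}E\smile D=\pm\,\delta(D\circ E)\mp(\delta D)\circ E\mp D\circ(\delta E),
\]
with signs governed by the grading convention \eqref{eq:Hochcochains2}. Evaluating on cocycles annihilates the last two terms and exhibits the graded commutator as the coboundary $\pm\,\delta(D\circ E)$; hence $\smile$ becomes graded commutative on $\Hcc$.

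Next I would prove the Poisson rule. Here the bracket is genuinely \emph{not} a biderivation of $\smile$ on cochains: decomposing $[D,E\smile F]$ via the bracket $[\,,\,]$, the term $(E\smile F)\circ D$ distributes as a derivation over $\smile$, but the term $D\circ(E\smile F)$ does not, and its failure is a homotopy. Thus there is an identity
\[
[D,E\smile F]-[D,E]\smile F-(-1)^{(|D|+1)|E|}E\smile[D,F]=\delta(\cdots)\pm(\cdots)\circ\delta(\cdots),
\]
in which every summand on the right, other than a single $\delta$-exact term, contains a factor $\delta D$, $\delta E$, or $\delta F$. Restricting to cocycles collapses the right-hand side to a coboundary, so on $\Hcc$ the bracket acts as a graded derivation of $\smile$; graded antisymmetry then yields the same in the second slot. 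Combined with the associativity, graded commutativity, and Jacobi identity already available on $\Hcc$, this is precisely the assertion that $(\Hcc,\smile,[\,,\,])$ is a Gerstenhaber algebra.

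The main obstacle is entirely combinatorial: verifying the two displayed master identities with the correct Koszul signs coming from \eqref{eq:Hochcochains2}. The conceptual reason they hold is that $\smile$, $\circ$, and $\delta=[m,-]$ assemble into a brace (homotopy Gerstenhaber) algebra, and both identities are first consequences of the relation $[m,m]=0$ for the multiplication cochain $m$ of \eqref{eq:Hochcochains8}. To keep the sign bookkeeping manageable I would work throughout with the shifted degree $|D|$ and derive both homotopy formulas from the single pre-Lie relation satisfied by $\circ$, rather than expanding each side of the defining axioms by brute force.
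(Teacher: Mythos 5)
The paper offers no proof of this statement, deferring entirely to the citation of Gerstenhaber's original article \cite{G}, and your outline is precisely that classical argument: the derivation identities \eqref{eq:Hochcochains12}--\eqref{eq:Hochcochains13} give descent of both operations to $\Hcc$, the pre-Lie product $\circ$ of \eqref{eq:Hochcochains5} supplies the homotopy for graded commutativity of $\smile$, and the brace $D\{E,F\}$ (together with the strict right-distributivity of $(E\smile F)\circ D$) supplies the homotopy for the Poisson compatibility. The outline is correct; the only work left is the sign bookkeeping you already flag, and your suggestion to derive both master identities from $[m,m]=0$ and the pre-Lie relation is the standard way to keep that manageable.
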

For  cochains $D$ and $D_i$ define a new Hochschild cochain by the following
formula of Gerstenhaber (\cite{G}) and Getzler (\cite{Ge1}):

\begin{equation}\label{eq:Hochcochains15}
D_0\{D_1, \ldots , D_m\}(a_1, \ldots, a_n) =
\end{equation}
$$
=\sum (-1)^{ \sum_{k\leq {i_p}}(|a_k| + 1)(| D_p|+1)}  D_0(a_1, \ldots
,a_{i_1} , D_1 (a_{i_1 + 1}, \ldots ),\ldots ,
$$
$$D_m (a_{i_m + 1}, \ldots ) , \ldots)
$$
\begin{proposition}
One has
$$
(D\{E_1, \ldots , E_k \})\{F_1, \ldots, F_l \}=\sum (-1)^{\sum _{q \leq
i_p}(|E_p|+1)(|F_q|+1)} \times
$$
$$
\times D\{F_1, \ldots , E_1 \{F_{i_1 +1}, \ldots , \} , \ldots ,  E_k
\{F_{i_k +1}, \ldots , \}, \ldots, \}
$$
\end{proposition}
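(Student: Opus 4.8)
The identity is the fundamental \emph{brace relation}, and its content is purely combinatorial once one adopts the ``insertion'' reading of the operation \eqref{eq:Hochcochains15}. The plan is to expand both sides as sums of iterated substitutions of the cochains $E_p$ and $F_q$ into the argument string of $D$, exhibit a bijection between the indexing sets of the two sums, and then check that the Koszul signs agree term by term.

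First I would fix the combinatorial model. Evaluating $D\{E_1,\ldots,E_k\}$ on arguments $(a_1,\ldots,a_n)$ amounts, by \eqref{eq:Hochcochains15}, to grafting $E_1,\ldots,E_k$ onto $k$ of the input slots of $D$, in left-to-right order, leaving the remaining slots to be filled by the $a_i$. Iterating, a typical term of the left-hand side $(D\{E_1,\ldots,E_k\})\{F_1,\ldots,F_l\}$ is obtained by further grafting $F_1,\ldots,F_l$, again in increasing positional order, onto the input slots of the cochain $D\{E_1,\ldots,E_k\}$. The key observation is that every input slot of $D\{E_1,\ldots,E_k\}$ is \emph{owned} either by $D$ (a slot of $D$ not occupied by an $E_p$) or by exactly one $E_p$ (a slot of $E_p$). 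Since the $F_q$ are grafted in increasing positional order, and the slots owned by $E_p$ form a single consecutive interval lying entirely to the left of those owned by $E_{p+1}$, the $F_q$ that land inside a fixed $E_p$ form a consecutive block $F_{i_p+1},\ldots,F_{i_{p+1}}$.

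Next I would turn this observation into a bijection. A term of the left-hand side is determined by the grafting data of the $E$'s into $D$ together with the slot occupied by each $F_q$. Recording instead, for each $p$, the block $F_{i_p+1},\ldots,F_{i_{p+1}}$ of those $F_q$ grafted inside $E_p$, together with the slots of the remaining $F_q$ grafted directly into $D$, reorganizes the same data as a single application of an outer brace to $D$ whose arguments are the directly-grafted $F_q$ interleaved in order with the composite cochains $E_p\{F_{i_p+1},\ldots\}$. This is exactly the summand on the right-hand side indexed by $0\le i_1\le\cdots\le i_k\le l$. The correspondence is a bijection because the blocks together with the directly-grafted $F_q$ partition $\{F_1,\ldots,F_l\}$, and each inner brace $E_p\{\cdots\}$ re-expands via \eqref{eq:Hochcochains15} into precisely the graftings of its block into $E_p$.

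The main obstacle is the sign. The left-hand side carries the Koszul signs produced by the two successive applications of \eqref{eq:Hochcochains15}: a sign for moving each $E_p$ past the material to its left, and then a sign for moving each $F_q$ past everything to its left in the already-substituted string. On the right-hand side these must reassemble into the global factor $(-1)^{\sum_{q\le i_p}(|E_p|+1)(|F_q|+1)}$ times the internal signs of the inner braces $E_p\{\cdots\}$ and of the outer brace. The point to verify is that the contributions involving the $a_i$ are identical on both sides, since the substituted argument string and the insertion positions coincide under the bijection; what remains is the cross-terms between the $E$'s and the $F$'s. One checks that the cross-term between $E_p$ and $F_q$ occurs exactly when $F_q$ lies to the left of $E_p$ in the final string, i.e. precisely when $q\le i_p$, with weight $(|E_p|+1)(|F_q|+1)$ forced by the degree convention $|D|=(\deg D)+d$ of \eqref{eq:Hochcochains2}. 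Matching these cross-terms to the stated exponent is the only delicate step; the remaining signs are absorbed into the inner and outer braces on the right.
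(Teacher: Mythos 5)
The paper states this proposition without proof---it is the classical brace (higher pre-Lie) identity going back to Gerstenhaber and Getzler---so there is no argument in the text to compare yours against; I can only assess your proposal on its own terms. Its structure is the standard one and is sound: both sides, evaluated on $(a_1,\dots,a_n)$, expand into sums over insertions; every argument slot of $D\{E_1,\dots,E_k\}$ is owned either by $D$ or by exactly one $E_p$; order-preservation of the outer brace forces the $F_q$ landing in a fixed $E_p$ to form a consecutive block; and this gives the bijection with the summands indexed by $0\le i_1\le\cdots\le i_k\le l$ on the right. You also correctly locate the source of the global sign: the cross-term $(|E_p|+1)(|F_q|+1)$ occurs exactly when the value of $F_q$ stands to the left of the insertion point of $E_p$, i.e.\ when $q\le i_p$, and the $a$-dependent contributions cancel because the shifted degree of $F_q(a_{j+1},\dots,a_{j+f})$ is congruent to $(|F_q|+1)+\sum(|a_i|+1)$ modulo $2$ under the convention \eqref{eq:Hochcochains2}. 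The only step you assert rather than execute is this final comparison of exponents; writing it out explicitly would complete the argument, but there is no gap in the method.
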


The above proposition can be restated as follows.
For a cochain $D$ let $D^{(k)}$ be the following $k$-cochain of $\Ccc$:
$$
D^{(k)}(D_1, \ldots, D_k) = D\{D_1, \ldots, D_k\}
$$
\begin{proposition} \label{etoee}
The map
$$
D \mapsto \sum_{k \geq 0} D^{(k)}
$$
is a morphism of differential graded algebras
$$ C^{\bullet}(A) \rightarrow C^{\bullet}(  C^{\bullet}(A))$$
\end{proposition}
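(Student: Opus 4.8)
The plan is to check separately the two conditions defining a morphism of differential graded algebras: that $\Phi\colon D\mapsto\sum_{k\geq 0}D^{(k)}$ is multiplicative for the cup products, and that it commutes with the differentials. Write $B=\Ccc$ for the source regarded as a DG algebra with product $\smile$ and differential $\delta=[m,-]$, so that the target is $C^{\bullet}(B)=C^{\bullet}(\Ccc)$. Its differential splits as $\partial_B=b_B+\beta$, where $b_B$ is the Hochschild differential \eqref{eq:Hochcochains10} attached to the associative product $\smile$ of $B$ and $\beta$ is the differential \eqref{eq:Hochcochains14} induced on cochains by the internal differential $\delta$ of $B$. All signs below are the Koszul signs dictated by \eqref{eq:Hochcochains3}, \eqref{eq:Hochcochains5} and \eqref{eq:Hochcochains15}, and I suppress them as $\pm$ except where they matter.

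For multiplicativity I would evaluate the degree-$k$ component of $\Phi(D)\smile\Phi(E)$ on a tuple $(D_1,\dots,D_k)$, which reduces the identity $\Phi(D\smile E)=\Phi(D)\smile\Phi(E)$ to the Hirsch-type distributivity law
\[
(D\smile E)\{D_1,\dots,D_k\}=\sum_{s=0}^{k}\pm\,D\{D_1,\dots,D_s\}\smile E\{D_{s+1},\dots,D_k\}.
\]
This follows directly from \eqref{eq:Hochcochains3} and \eqref{eq:Hochcochains15}: in the left-hand brace each inserted cochain $D_j$ occupies a single argument slot of $D\smile E$, hence lies entirely in the $D$-block or entirely in the $E$-block, and because the $D_j$ are inserted in increasing order of position there is a single splitting index $s$. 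The combinatorics thus reproduces the right-hand side, and the only real task is to check that the sign of \eqref{eq:Hochcochains15} agrees with the cup-product sign of $C^{\bullet}(B)$.

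For the chain map property I would compare the two sides in each cochain degree. Since $b_B$ raises the cochain degree by one while $\beta$ preserves it, the degree-$k$ part of $\partial_B\Phi(D)$ is $b_B(D^{(k-1)})+\beta(D^{(k)})$, and evaluating on $(D_1,\dots,D_k)$ reduces the claim to the family of identities
\[
[m,D]\{D_1,\dots,D_k\}=b_B(D^{(k-1)})(D_1,\dots,D_k)+\beta(D^{(k)})(D_1,\dots,D_k).
\]
Using \eqref{eq:Hochcochains9} I would write the left-hand side as $(m\{D\})\{D_1,\dots,D_k\}-(-1)^{|D|+1}(D\{m\})\{D_1,\dots,D_k\}$ and expand each factor by the brace associativity relation of the preceding Proposition. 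Since $m$ has only two arguments, in $(m\{D\})\{\dots\}$ the cochain $m$ can absorb at most one loose $D_j$ besides the block $D\{\dots\}$; order forces that input to be $D_1$ or $D_k$, producing the outer cup terms $D_1\smile D\{D_2,\dots\}$ and $D\{\dots,D_{k-1}\}\smile D_k$ of $b_B$ together with $m\{D\{D_1,\dots,D_k\}\}$. In $(D\{m\})\{\dots\}$ the cochain $m$ absorbs zero, one, or two consecutive inputs $D_j$, yielding respectively the insertions of $m$ into the original slots of $D$, the terms $D\{\dots,m\{D_j\},\dots\}$, and the interior cup terms $D\{\dots,D_j\smile D_{j+1},\dots\}$ of $b_B$, where I use $m\{D_j,D_{j+1}\}=\pm D_j\smile D_{j+1}$ from \eqref{eq:Hochcochains8} and \eqref{eq:Hochcochains3}. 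Recognizing $\delta(D\{\dots\})=[m,D\{\dots\}]$ and $\delta D_j=[m,D_j]$ then reassembles these contributions into $b_B(D^{(k-1)})+\beta(D^{(k)})$.

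The main obstacle is the sign bookkeeping, and in particular one cancellation. Expanding $\beta(D^{(k)})$ through both $\delta(D\{D_1,\dots,D_k\})$ and $\sum_j D\{\dots,\delta D_j,\dots\}$ produces two families of terms in which $m$ is inserted inside a single $D_j$, namely $D\{\dots,D_j\{m\},\dots\}$; such terms never appear on the left-hand side, so they must cancel against one another. Checking that this cancellation holds, and more generally that every Koszul sign arising from \eqref{eq:Hochcochains3}, \eqref{eq:Hochcochains5} and \eqref{eq:Hochcochains15} is consistent, is the delicate part of the argument; the underlying matching of terms is entirely forced by the brace associativity relation together with the distributivity law used for multiplicativity.
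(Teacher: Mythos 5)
Your argument is correct and follows essentially the route the paper intends: the paper gives no written proof, presenting the proposition as a restatement of the preceding brace-composition identity, and your verification carries out exactly that deduction, applying the brace relation to $[m,D]=m\{D\}-\pm D\{m\}$ and matching the resulting terms against $b_B(D^{(k-1)})+\beta(D^{(k)})$, including the necessary cancellation of the $D\{\dots,D_j\{m\},\dots\}$ terms. You also rightly observe that the multiplicativity half requires the separate cup--brace distributivity law $(D\smile E)\{D_1,\dots,D_k\}=\sum_s\pm D\{D_1,\dots,D_s\}\smile E\{D_{s+1},\dots,D_k\}$, which is not literally contained in the stated brace relation but does follow directly from \eqref{eq:Hochcochains3} and \eqref{eq:Hochcochains15}, so your account supplies precisely the details the paper leaves implicit.
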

\subsection{Products on Hochschild and cyclic complexes} \label{ss:products}
Unless otherwise specified, the reference for this subsection is
\cite{L}.
\subsubsection{Product and coproduct; the K\"{u}nneth exact sequence}
\label{sss:prodcoprod}
For an algebra $A$ define the shuffle product
\begin{equation} \label{eq:sh for one}
\operatorname{sh}: C_p(A) \otimes C_q(A) \rightarrow C_{p+q}(A)
\end{equation}
as follows.
\begin{equation} \label{eq:sh for one, formula}
(a_0 \otimes \ldots \otimes a_p) \otimes (c_{0} \otimes ldots \otimes
c_{q})=
a_0c_0 \otimes \operatorname{sh}_{pq}(a_1, ldots, a_p, \,c_1, ldots, c_q)
\end{equation}
where
\begin{equation} \label{eq:Sh}
\operatorname{sh}_{pq}(x_1, ldots, x_{p+q}) = \sum_{\sigma \in
\operatorname{Sh}(p,q)}\operatorname{sgn}(\sigma)x_{\sigma ^{-1}1}\otimes
ldots \otimes x_{\sigma ^{-1}(p+q)}
\end{equation}
and
$$\operatorname{Sh}(p,q) = \{\sigma \in \Sigma _{p+q} \,| \sigma 1 < \ldots
< \sigma p;\;\sigma (p+1) < ldots  < \sigma (p+q) \} $$

In the graded case, $\operatorname{sgn}(\sigma)$ gets replaced by
the sign computed by the following rule: in all transpositions,
the parity of $a_i$ is equal to $|a_i|+1$ if $i > 0$, and
similarly for $c_i$. A transposition contributes a product of
parities.

The shuffle product is not a morphism of complexes unless $A$ is
commutative. It defines, however, an exterior product as shown in the
following theorem. For two unital algebras $A$ and $C$, let $i_A$, $i_C$ be
the embeddings $a \mapsto a\otimes 1$, resp. $c \mapsto 1\otimes c$ of $A$,
resp. $C$, to $A \otimes C$. We will use the same notation for the
embeddings that $i_A$, $i_C$ induce on all the chain complexes considered by
us.

\begin{thm} \label{thm:exterior product}
For two unital algebras $A$ and $C$ the composition
$$ C_p(A) \otimes C_q(C) \stackrel{i_A \otimes i_C}{\longrightarrow}
C_p(A\otimes C) \otimes C_q(A \otimes C)
\stackrel{\operatorname{sh}}{\longrightarrow} C_{p+q}(A \otimes C)
$$
defines a quasi-isomorphism
$$ {\bf {\operatorname{sh}}}: C_{\bullet}(A) \otimes C_{\bullet}(C) \rightarrow
C_{\bullet}(A \otimes C) $$
\end{thm}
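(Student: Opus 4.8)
The plan is to recognize both sides as chain complexes associated to simplicial $k$-modules and to deduce the statement from the Eilenberg--Zilber theorem. Let $C^{\mathrm{un}}_n(A) = A^{\otimes_k(n+1)}$ denote the unreduced cyclic bar construction: a simplicial $k$-module whose faces $d_i$ multiply adjacent factors (cyclically for $d_n$) and whose degeneracies $s_i$ insert the unit $1$ in the $i$-th slot. Its Moore complex carries the differential $b$, and its normalization --- the quotient by degenerate simplices --- is exactly the reduced complex $C_\bullet(A) = A \otimes \overline{A}^{\otimes \bullet}$ of Section~\ref{section:CC}, since modding out the unit-insertion degeneracies replaces each factor $A$ in positions $1,\dots,n$ by $\overline{A}$. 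The inclusion of the normalized complex into the Moore complex is a chain homotopy equivalence for any simplicial module, so it is harmless to argue with $C^{\mathrm{un}}_\bullet$ and transport the conclusion to the reduced complexes at the end.

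The key observation is the canonical isomorphism $(A \otimes C)^{\otimes(n+1)} \cong A^{\otimes(n+1)} \otimes C^{\otimes(n+1)}$, which identifies $C^{\mathrm{un}}_\bullet(A \otimes C)$ with the diagonal of the bisimplicial $k$-module $(p,q) \mapsto A^{\otimes(p+1)} \otimes C^{\otimes(q+1)}$ formed as the external product of $C^{\mathrm{un}}_\bullet(A)$ and $C^{\mathrm{un}}_\bullet(C)$; here the embeddings $i_A, i_C$ are precisely what realizes the factors $A^{\otimes(p+1)}$ and $C^{\otimes(q+1)}$ inside $(A\otimes C)^{\otimes(p+1)}$ and $(A \otimes C)^{\otimes(q+1)}$. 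The associated double complex has horizontal and vertical differentials $b \otimes \id$ and $\id \otimes b$ (with the usual Koszul sign), so its total complex is exactly the tensor product of complexes $C^{\mathrm{un}}_\bullet(A) \otimes C^{\mathrm{un}}_\bullet(C)$.

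Next I would invoke the (bisimplicial) Eilenberg--Zilber theorem: the shuffle map from the total complex of the double complex associated to a bisimplicial $k$-module to the chain complex of its diagonal is a chain homotopy equivalence, with the Alexander--Whitney map as an explicit homotopy inverse. Both maps and both homotopies are produced by the method of acyclic models, hence are $k$-linear and valid over an arbitrary commutative ring, with no flatness hypothesis needed. A direct inspection of the Eilenberg--Zilber formula --- a signed sum over $(p,q)$-shuffles of degeneracy operators --- shows that, under the identification above, it sends $(a_0 \otimes \cdots \otimes a_p) \otimes (c_0 \otimes \cdots \otimes c_q)$ to the expression in \eqref{eq:sh for one, formula}; that is, it is precisely the map $\mathbf{sh}$ of the theorem. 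Being a homotopy equivalence, it is in particular a quasi-isomorphism.

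It remains to descend to the reduced complexes, and this is also where the only real care is required. The Eilenberg--Zilber equivalence restricts to the normalized subcomplexes, because both the shuffle and Alexander--Whitney maps carry degenerate simplices to degenerate ones; hence $\mathbf{sh}$ induces a quasi-isomorphism $C_\bullet(A) \otimes C_\bullet(C) \to C_\bullet(A \otimes C)$, and on the normalized complex every shuffle term containing an inserted unit dies, leaving exactly the surviving formula \eqref{eq:sh for one, formula}. The main obstacle is therefore not conceptual but the sign- and normalization-bookkeeping: matching the Koszul signs of the graded shuffle rule (the convention that the parity of $a_i$ is $|a_i|+1$) against those coming from the Eilenberg--Zilber map, and verifying that the unit-insertion degeneracies are what turn the normalization of $C^{\mathrm{un}}_\bullet$ into the reduced complex $A \otimes \overline{A}^{\otimes \bullet}$. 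Once the diagonal/Eilenberg--Zilber picture is set up, everything else is formal.
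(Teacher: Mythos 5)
The paper offers no proof of its own here --- it defers to the reference [L] (Loday, \emph{Cyclic Homology}) --- and your Eilenberg--Zilber argument is exactly the standard proof given there: identify $C_\bullet(A\otimes C)$ with the normalized complex of the diagonal of the external-product bisimplicial module, and observe that the Eilenberg--Zilber shuffle map is precisely $\mathbf{sh}\circ(i_A\otimes i_C)$. Your treatment of the normalization (degeneracies inserting units, both EZ maps preserving degenerate simplices) is the right way to pass from the unreduced to the reduced complexes, so the argument is complete and matches the intended one.
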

To extend this theorem to cyclic complexes, define
\begin{equation} \label{eq:sh' for one}
\operatorname{sh}': C_p(A) \otimes C_q(A) \rightarrow C_{p+q+2}(A)
\end{equation}
as follows.
\begin{equation} \label{eq:sh` for one, formula}
(a_0 \otimes \ldots \otimes a_p) \otimes (c_{0} \otimes \ldots \otimes
c_{q})\mapsto
1 \otimes \operatorname{sh}'_{p+1,\,q+1}(a_0, \ldots, a_p, \,c_0, \ldots,
c_q)
\end{equation}
where
\begin{equation} \label{eq:Sh'}
\operatorname{sh}'_{p+1,q+1}(x_0, \ldots, x_{p+q+1}) = \sum_{\sigma \in
\operatorname{Sh}'(p+1,q+1)}\operatorname{sgn}(\sigma)x_{\sigma
^{-1}0}\otimes \ldots \otimes x_{\sigma ^{-1}(p+q+1)}
\end{equation}
and $\operatorname{Sh}'(p+1,q+1)$ is the set of all permutations $
\sigma \in \Sigma _{p+q+2}$ such that $\sigma 0 < \ldots  < \sigma
p$, $\sigma (p+1) < \ldots  < \sigma (p+q+1)$, and $ \sigma 0 <
\sigma (p+1) $.

Now define \eqref{eq:sh' for one} to be the composition
$$ C_p(A) \otimes C_q(C) \stackrel{i_A \otimes i_C}{\longrightarrow}
C_p(A\otimes C) \otimes C_q(A \otimes C)
\stackrel{\operatorname{sh}'}{\longrightarrow} C_{p+q+2}(A \otimes C)
$$

In the graded case, the sign rule is as follows: any $a_i$ has parity
$|a_i|+1$, and similarly for $c_i$.

\begin{thm} \label{thm: exterior product cyclic}
The map ${\bf{\operatorname{sh}}} + u{\bf{\operatorname{sh}'}}$ defines a
$k[[u]]$-linear, $(u)$-adically continuous quasi-isomorphism
$$(C_{\bullet}(A) \otimes C_{\bullet}(C))[[u]] \rightarrow CC_{\bullet}^-(A\otimes
C)$$
as well as
$$(C_{\bullet}(A) \otimes C_{\bullet}(C))((u))\rightarrow
CC_{\bullet}^{\operatorname{per}}(A\otimes C)$$
$$(C_{\bullet}(A) \otimes C_{\bullet}(C))((u))/u(C_{\bullet}(A)
\otimes C_{\bullet}(C))[[u]] \rightarrow CC_{\bullet}(A\otimes C)$$
(differentials on the left hand sides are equal to  $b\otimes
1 + 1 \otimes b + u(B\otimes 1 + 1 \otimes B$)).
\end{thm}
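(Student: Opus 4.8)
The plan is to first verify that $\operatorname{sh}+u\operatorname{sh}'$ is a morphism of complexes, and then to promote it to a quasi-isomorphism in each of the three cases by reducing, through a filtration in the variable $u$, to Theorem~\ref{thm:exterior product}.

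\emph{Chain map.} Writing the left-hand differential as $d_0+ud_1$ with $d_0=b\otimes 1+1\otimes b$ and $d_1=B\otimes 1+1\otimes B$, and the right-hand one as $b+uB$, I would expand $(b+uB)(\operatorname{sh}+u\operatorname{sh}')=(\operatorname{sh}+u\operatorname{sh}')(d_0+ud_1)$ and collect powers of $u$. This produces three identities: at order $u^0$, $b\circ\operatorname{sh}=\operatorname{sh}\circ d_0$; at order $u^1$, $B\circ\operatorname{sh}+b\circ\operatorname{sh}'=\operatorname{sh}\circ d_1+\operatorname{sh}'\circ d_0$; and at order $u^2$, $B\circ\operatorname{sh}'=\operatorname{sh}'\circ d_1$. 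The $u^0$ identity is exactly the assertion that $\operatorname{sh}$ is a chain map, which is contained in Theorem~\ref{thm:exterior product} (the subalgebras $i_A(A)$ and $i_C(C)$ commute in $A\otimes C$, so the shuffle product of their chains commutes with $b$). The remaining two are combinatorial identities relating the shuffle sets $\operatorname{Sh}$ and $\operatorname{Sh}'$ to the cyclic rotations defining $B$; indeed $\operatorname{sh}'$ is engineered precisely so that the $u^1$ identity holds. Carrying out these signed bijections of shuffles is the main obstacle, but it is a finite combinatorial check independent of the algebras.

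\emph{Negative cyclic case.} I would filter both sides by powers of $u$, setting $F^{p}=u^{p}(\,\cdot\,)[[u]]$. Since $b$ preserves and $B$ raises the power of $u$ by one, both differentials preserve this decreasing filtration, and so does $\operatorname{sh}+u\operatorname{sh}'$, the term $u\operatorname{sh}'$ strictly raising filtration. On $\operatorname{gr}^{p}$ the differentials reduce to $d_0$ and $b$ and the induced map is exactly $\operatorname{sh}$, a quasi-isomorphism by Theorem~\ref{thm:exterior product}. Both $(C_\bullet(A)\otimes C_\bullet(C))[[u]]$ and $CC^-_\bullet(A\otimes C)$ are $u$-adically complete and separated, and because $C_\bullet$ is concentrated in nonnegative degrees the induced filtration on homology is bounded below in each total degree; the complete-convergence comparison theorem for spectral sequences of such filtered complexes then upgrades the graded quasi-isomorphism to the asserted quasi-isomorphism of total complexes.

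\emph{Periodic and cyclic cases.} For the periodic complex I would deduce the statement from the negative one by a colimit: $C_\bullet(A\otimes C)((u))=\varinjlim_n u^{-n}C_\bullet(A\otimes C)[[u]]$ along multiplication by $u^{-1}$, and since $\operatorname{sh}+u\operatorname{sh}'$ is $k((u))$-linear it commutes with these structure maps, exhibiting it as a filtered colimit of shifts of the negative-cyclic quasi-isomorphism; as homology commutes with filtered colimits, the map on $((u))$ is a quasi-isomorphism. For the ordinary cyclic complex I would instead use that $CC_\bullet$ is degreewise finite (Definition~\ref{dfn:cc}): applying $\operatorname{sh}+u\operatorname{sh}'$ to the short exact sequence \eqref{ses:CC-C1} yields a morphism of short exact sequences whose kernel term carries the map $\operatorname{sh}$ (the $u=0$ part), a quasi-isomorphism by Theorem~\ref{thm:exterior product}. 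Induction on the number of summands, finite in each total degree, combined with the five lemma, then shows that $\operatorname{sh}+u\operatorname{sh}'$ is a quasi-isomorphism on $CC_\bullet(A\otimes C)$, completing the proof.
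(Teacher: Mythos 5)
The paper contains no proof of this theorem: all of \ref{ss:products} is imported from Loday's book \cite{L}, so there is no internal argument to compare yours against. Your outline is, structurally, the standard proof from that source, and the soft parts are handled correctly: collecting powers of $u$ to reduce the chain-map property to three identities; the $u$-adic filtration, reducing on associated graded to Theorem \ref{thm:exterior product}, for the negative cyclic case; the filtered colimit over multiplication by $u^{-1}$ for the periodic case (this is exactly why the paper defines $CC^{\operatorname{per}}$ with Laurent series, cf.\ Remark \ref{rmk:sums vs products}); and a degreewise-finite filtration for the cyclic case. Two small corrections. In the negative case the filtration $F^p=u^p(\cdot)[[u]]$ is not bounded below in any fixed total degree (each $F^p$ is an infinite product and generally nonzero), so boundedness is not what saves you; what you need, and do also invoke, is that both sides are complete and Hausdorff, after which the comparison follows from the five lemma applied to the Milnor sequences of the surjective towers $(\cdot)[[u]]/u^p$. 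In the cyclic case, the five lemma applied directly to \eqref{ses:CC-C1} is circular, since the quotient term is again the complex under study; your parenthetical remark about finiteness in each degree is the correct repair --- one inducts over the finite increasing filtration whose graded pieces are shifted copies of $C_\bullet$, rather than over the exact sequence itself.

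The one substantive gap is the chain-map verification, which you correctly identify as the crux and then set aside. The order-$u^2$ identity is actually trivial: both $B\circ\operatorname{sh}'$ and $\operatorname{sh}'\circ(B\otimes 1+1\otimes B)$ vanish separately in the normalized complex, because each output carries a unit $1$ in a tensor position of index at least one. But the order-$u^1$ identity, $b\circ\operatorname{sh}'-\operatorname{sh}'\circ(b\otimes 1+1\otimes b)=-\bigl(B\circ\operatorname{sh}-\operatorname{sh}\circ(B\otimes 1+1\otimes B)\bigr)$, is the cyclic shuffle relation, and it is essentially the entire mathematical content of the theorem; it occupies several pages of signed combinatorics in \cite{L}, Ch.~4.3. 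Describing it as ``a finite combinatorial check'' is true but does not discharge it. Since the paper itself delegates precisely this point to the literature, your proposal is at the same level of completeness as the text; a self-contained proof would have to supply that identity.
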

Note that the left hand side of the last formula maps to the
tensor product $CC_{\bullet}(A) \otimes CC_{\bullet}(C):$
$\Delta(u^{-p}c\otimes c') = (u^{-1}\otimes 1 + 1 \otimes
u^{-1})^pc\otimes c'$. One checks that this map is an embedding
whose cokernel is the kernel of the map $u \otimes 1 - 1 \otimes
u$, or $S \otimes 1 - 1 \otimes S$ where $S$ is as in
(\ref{ses:CC-C1}). From this we get
\begin{thm} \label{thm:kunneth}
There is a long exact sequence
$$ \rightarrow HC_n(A\otimes C) \stackrel{\Delta}{\longrightarrow} \bigoplus_{p+q =
n} HC_p (A) \otimes HC_q (C)
\stackrel{S\otimes 1 - 1\otimes S}{\longrightarrow}$$
$$ \bigoplus_{p+q = n-2} HC_p (A) \otimes HC_q (C)
\stackrel{\times}{\longrightarrow} HC_{n-1}(A \otimes C)
\stackrel{\Delta}{\longrightarrow} $$
\end{thm}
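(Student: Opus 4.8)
The plan is to realize the sequence as the homology long exact sequence of a short exact sequence of complexes, with the tensor-product term evaluated by the ordinary K\"unneth theorem over the ground field. First I would invoke the cyclic exterior-product quasi-isomorphism of Theorem~\ref{thm: exterior product cyclic} (its last line) to replace $CC_\bullet(A\otimes C)$ by the complex
$L=\bigl((C_\bullet(A)\otimes C_\bullet(C))((u))/u(C_\bullet(A)\otimes C_\bullet(C))[[u]],\ b\otimes1+1\otimes b+u(B\otimes1+1\otimes B)\bigr)$,
so that $HC_\bullet(A\otimes C)=H_\bullet(L)$. The comultiplication recalled just before the statement, $\Delta(u^{-p}\,c\otimes c')=(u^{-1}\otimes1+1\otimes u^{-1})^p\,c\otimes c'$, then gives a $k$-linear map $\Delta\colon L\to CC_\bullet(A)\otimes CC_\bullet(C)$, and the first task is to check that it is a morphism of complexes, intertwining $b\otimes1+1\otimes b+u(B\otimes1+1\otimes B)$ with the tensor differential $(b+uB)\otimes1+1\otimes(b+uB)$; this is precisely where the binomial shape of $\Delta$ is used.

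Next I would produce the short exact sequence of complexes
\begin{equation*}
0\longrightarrow CC_\bullet(A\otimes C)\xrightarrow{\ \Delta\ }CC_\bullet(A)\otimes CC_\bullet(C)\xrightarrow{\ S\otimes1-1\otimes S\ }\bigl(CC_\bullet(A)\otimes CC_\bullet(C)\bigr)[2]\longrightarrow0,
\end{equation*}
whose two defining properties are that $\Delta$ is injective with image equal to $\Ker(S\otimes1-1\otimes S)$ and that $S\otimes1-1\otimes S$ is surjective in each degree. This is the content of the \textquotedblleft one checks\textquotedblright\ assertion in the paragraph before the theorem, and I expect it to be the main obstacle: one must verify at the chain level that the comultiplication identifies $CC_\bullet(A\otimes C)$ with the \emph{full} kernel of the periodicity operator, and that this operator is onto. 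Concretely it is the combinatorics of $S\otimes1-1\otimes S$ (equivalently $u\otimes1-1\otimes u$) acting on the $(C_\bullet(A)\otimes C_\bullet(C))$-valued polynomials in $u^{-1}\otimes1$ and $1\otimes u^{-1}$, where one verifies that the kernel is exactly the image of $\Delta$.

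Finally I would take the associated long exact sequence in homology. Because the middle and right-hand complexes coincide up to the shift $[2]$, the K\"unneth theorem over $k$ identifies their degree-$n$ homology with $\bigoplus_{p+q=n}HC_p(A)\otimes HC_q(C)$ and $\bigoplus_{p+q=n-2}HC_p(A)\otimes HC_q(C)$ respectively\,---\,with no $\Tor$ correction, since here $k$ is a field\,---\,while the left-hand complex contributes $HC_\bullet(A\otimes C)$. The induced map between the two tensor-product groups is then $S\otimes1-1\otimes S$, and the map out of $HC_n(A\otimes C)$ is $\Delta$. The remaining point is to identify the connecting homomorphism with the product $\times$: chasing the snake lemma, a class in $\bigoplus_{p+q=n-2}HC_p(A)\otimes HC_q(C)$ lifts, by surjectivity of $S\otimes1-1\otimes S$, to a chain whose differential lies in $\Ker(S\otimes1-1\otimes S)=\Delta(L)$, and pulling it back along $\Delta$ recovers the exterior product coming from the shuffle maps of Theorem~\ref{thm: exterior product cyclic}. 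Checking that this connecting map is exactly $\times$, with the correct signs, is the second place where genuine care is needed.
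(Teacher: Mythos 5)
Your plan coincides with the paper's own (implicit) proof: the paragraph preceding the theorem asserts exactly your short exact sequence --- $\Delta$ embeds $CC_\bullet(A\otimes C)$ onto the kernel of the surjective operator $S\otimes 1-1\otimes S$ --- and the long exact sequence is its homology sequence combined with the K\"unneth isomorphism on the middle terms and the identification of the connecting map with the exterior product. The one point to watch when you carry out your ``main obstacle'' step is the normalization of $\Delta$: the image equals the full kernel of $S\otimes 1-1\otimes S$ for the divided-power coproduct $u^{-n}\mapsto\sum_{p+q=n}u^{-p}\otimes u^{-q}$ (the dual of the multiplication $k[[u_1,u_2]]\to k[[u]]$), whereas the literal binomial expansion $(u^{-1}\otimes 1+1\otimes u^{-1})^n$ acquires coefficients $\binom{n}{p}$ that take it out of that kernel, so the formula quoted from the text must be read in the former sense.
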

\subsection{Pairings between chains and cochains} \label{pairings}
Let us start with a motivation for what follows. We will see below that, when the ring of functions on a manifold is replaced by an arbitrary algebra, then Hochschild chains play the role of differential forms (with the differential $B$ replacing the de Rham differential) and Hochschild cochains play the role of multivector fields. We are looking for an analog of pairings that are defined in the classical context, namely the contraction of a form by a multivector field and the Lie derivative. In classical geometry, those pairings satisfy various algebraic relations that we try to reproduce in general. We will show that these relations are true up to homotopy; a much more complicated question whether they are true up to all higher homotopies is postponed until section \ref{ncdc}.
For a graded algebra $A$, for $D \in C^d(A,A)$, define
\begin{equation} \label{eq: i}
i_D(a_0 \otimes \ldots \otimes a_n)= (-1)^{|D|\sum_{i\leq d}(|a_i| + 1)}a_0
D(a_1, \ldots, a_d) \otimes a_{d+1} \otimes \ldots \otimes a_n
\end{equation}
\begin{proposition}  \label{prop: properties of i}
$$ [b,i_D] = i_{\delta D}$$
$$i_D i_E = (-1)^{|D||E|} i_{E \smile D} $$
\end{proposition}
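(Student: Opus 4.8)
The plan is to prove both identities by direct computation, evaluating each side on a generic chain $a_0 \otimes \cdots \otimes a_n \in C_n(A)$ and comparing. I would dispose of the multiplicativity identity $i_D i_E = (-1)^{|D||E|} i_{E \smile D}$ first, since it needs only the associativity of $A$ together with a sign comparison, and then turn to the Cartan-type formula $[b, i_D] = i_{\delta D}$, which is where the real work lies.

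For the second identity I would apply $i_E$ to $a_0 \otimes \cdots \otimes a_n$ to obtain, up to the sign $(-1)^{|E|\sum_{i \leq e}(|a_i|+1)}$, the chain $a_0 E(a_1, \ldots, a_e) \otimes a_{e+1} \otimes \cdots \otimes a_n$, and then apply $i_D$, which contracts the first $d+1$ entries into $a_0 E(a_1, \ldots, a_e) D(a_{e+1}, \ldots, a_{e+d})$. Associativity of the product in $A$ lets me regroup this as $a_0 \bigl(E(a_1,\ldots,a_e) D(a_{e+1},\ldots,a_{e+d})\bigr)$, which is exactly the entry produced by $i_{E \smile D}$ once the definition \eqref{eq:Hochcochains3} of the cup product is inserted. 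The identity then reduces to checking that the accumulated Koszul signs on the two sides differ precisely by $(-1)^{|D||E|}$; this is a bookkeeping step using the convention that each $a_i$ carries parity $|a_i|+1$ and that $|D| = \deg D + d$.

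For the first identity I would write $[b, i_D] = b\, i_D - (-1)^{|D|} i_D\, b$, the graded commutator in which $i_D$ is assigned degree $|D|$ and $b$ degree $1$. Expanding $b$ in both composites produces three kinds of terms. First, the terms in which $b$ multiplies two consecutive entries lying strictly beyond the contracted block, i.e.\ inside the tail $a_{d+1}, \ldots, a_n$: these appear identically in $b\, i_D$ and in $i_D\, b$, and the commutator sign is arranged so that they cancel in pairs. Second, the cyclic ``wrap-around'' terms $a_n a_0$: after contraction these again match between the two composites (by associativity, $a_n(a_0 D(a_1,\ldots,a_d)) = (a_n a_0)D(a_1,\ldots,a_d)$) and cancel. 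Third, the surviving terms, in which $b$ multiplies $a_0 a_1$, multiplies an adjacent pair $a_j a_{j+1}$ with $1 \leq j \leq d$, or multiplies $a_0 D(a_1,\ldots,a_d)$ by $a_{d+1}$: I would show these reassemble, via the expansion of $\delta D = [m,D]$ recorded in \eqref{eq:Hochcochains10} and \eqref{eq:Hochcochains11}, into exactly $a_0 (\delta D)(a_1, \ldots, a_{d+1}) \otimes a_{d+2} \otimes \cdots \otimes a_n$, which is $i_{\delta D}$ applied to the chain. Here the merge of $a_0 a_1$ yields the first term $a_1 D(a_2,\ldots,a_{d+1})$ of $\delta D$, the merges $a_j a_{j+1}$ yield its inner sum, and the $b\, i_D$ merge yields the final term $D(a_1,\ldots,a_d)a_{d+1}$.

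The main obstacle throughout is the sign bookkeeping. The graded Koszul signs involve the shifted parities $|a_i|+1$ summed over moving indices, and these must be tracked through the contraction, the multiplication carried out by $b$, and the reindexing that occurs when two entries merge. The delicate point is the cancellation of the tail terms: one must check that the sign attached to ``$b$ multiplies $a_k a_{k+1}$, then $i_D$ contracts'' agrees, after the twist by $-(-1)^{|D|}$, with the sign attached to ``$i_D$ contracts, then $b$ multiplies the reindexed pair,'' uniformly in $k$. Once the conventions are fixed as above, each class of terms is settled by a finite, if tedious, comparison, and no further idea is required.
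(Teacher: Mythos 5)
Your proposal is correct: the paper states Proposition \ref{prop: properties of i} without proof (it is a classical identity going back to Rinehart and Gerstenhaber), and the direct term-by-term verification you outline is the standard argument. You have the composition order and the cancellation pattern right --- the tail and wrap-around terms of $b$ cancel against the twist by $-(-1)^{|D|}$, the merges at positions $0$ through $d$ reassemble into $i_{\delta D}$ via \eqref{eq:Hochcochains10}--\eqref{eq:Hochcochains11}, and the cup-product identity reduces to associativity plus the Koszul sign comparison --- so only the routine sign bookkeeping remains to be carried out.
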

Now, put
\begin{equation} \label{eq: L}
L_D(a_0 \otimes \ldots \otimes a_n)=\sum _{k=1}^{n-d} \epsilon _k a_0
\otimes \ldots \otimes D(a_{k+1}, \ldots, a_{k+d}) \otimes \ldots \otimes
a_n +
\end{equation}
$$ \sum _{k=n+1 -d}^{n} \eta _k D (a_{k+1}, \ldots, a_n, a_0, \ldots )
\otimes \ldots \otimes a_k
$$

(The second sum in the above formula is taken over all cyclic permutations
such that $a_0$ is inside $D$). The signs are given by
$$ \epsilon _k = (|D| + 1)\sum _{i=0}^{k} (|a_i| +1)$$
and
$$
\eta _k = |D|+ 1 + \sum_{i \leq k}(|a_i|+1)\sum_{i \geq k}(|a_i|+1)
$$
\begin{proposition}
$$[L_D, L_E]=L_{[D,E]}$$
$$[b, L_D] + L_{\delta D} = 0$$
$$[L_D, B] = 0$$
\end{proposition}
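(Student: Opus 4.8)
The plan is to prove all three identities as exact equalities of operators on the chain complex $C_\bullet(A)$ by expanding each side in the basis $a_0 \otimes \cdots \otimes a_n$ and matching terms, organizing the (considerable) bookkeeping around a single structural principle. It is convenient to read the first two identities together as the assertion that
\[
L : \bigl(C^{\bullet+1}(A), \delta, [\,,\,]\bigr) \longrightarrow \bigl(\End(C_\bullet(A)), [b, -], [\,,\,]\bigr)
\]
is a morphism of differential graded Lie algebras: the first identity says $L$ preserves the bracket, and the second that it intertwines, up to the overall sign fixed by the conventions, the differential $\delta = [m,-]$ on cochains with the commutator $[b,-]$ on operators. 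The third identity is the separate statement that every $L_D$ commutes with the Connes operator $B$ in the graded sense.

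For the first identity I would expand $L_D L_E$ on $a_0 \otimes \cdots \otimes a_n$ and classify the resulting terms according to how the argument blocks of $D$ and of $E$ sit in the chain. There are three regimes. When the two blocks are disjoint, $D$ and $E$ act independently; such terms are symmetric under interchanging the roles of $D$ and $E$ and therefore cancel in the graded commutator $[L_D, L_E]$. When the block of $E$ lies inside that of $D$ (so that the output $E(\ldots)$ becomes one of the inputs of $D$) the terms reassemble, once the prefactors $\epsilon_k$ in \eqref{eq: L} are matched with the signs in the composition \eqref{eq:Hochcochains5}, into $L_{D \circ E}$, and symmetrically for $E$ inside $D$; their difference is exactly $L_{[D,E]}$ by the definition \eqref{eq:Hochcochains7} of the Gerstenhaber bracket. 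The remaining, and genuinely delicate, regime is that of the wrap-around terms, in which one or both of $D$, $E$ straddle the junction between $a_n$ and $a_0$ through the second (cyclic) sum of \eqref{eq: L}; here applying one cyclic sum after another produces a family of cyclic rotations that must be shown, together with the signs $\eta_k$, to recombine into the cyclic part of $L_{[D,E]}$. The second identity is proved by the same method, expanding $[b, L_D]$ instead: the terms in which the multiplication performed by $b$ is disjoint from the block of $D$ cancel between $b L_D$ and $\pm L_D b$, while the terms in which $b$ multiplies across the boundary of $D$'s block assemble into $L_{[m,D]} = L_{\delta D}$.

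For the third identity the decisive input is the normalization $D \in \Hom_k(\overline{A}^{\otimes d}, A)$, so that $D$ annihilates any string containing the unit. I would write $B$ as the insertion of $1$ in front followed by the full cyclic sum of \eqref{eq: B graded}, and expand both $L_D B$ and $B L_D$. In $L_D B$ every wrap-around term in which $D$ would take the freshly inserted unit as one of its arguments vanishes by this normalization, leaving only the action of $L_D$ on the rotated strings; because $B$ is already a complete cyclic symmetrization, these surviving contributions match, rotation by rotation and with opposite overall sign, the corresponding terms of $B L_D$, so that the two compositions cancel. As in the first identity, the only real content is checking that the signs line up.

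The single hard point of the whole proposition is therefore the sign-and-reindexing analysis of the wrap-around terms, which recurs in the first and third identities and is where the cyclic asymmetry between $a_0$ and $a_1, \ldots, a_n$ in the definition \eqref{eq: L} makes itself felt. I would not attempt to shortcut the third identity through a Cartan homotopy formula of the shape $L_D = [B, i_D]$: such a formula holds here only up to homotopy (the exact version carries correction terms), whereas the present identities are on the nose, so the direct computation seems unavoidable. A useful external check is that the whole package is forced by the requirement that $(C_\bullet(A), b, B)$ be a calculus module over the Gerstenhaber algebra $(C^\bullet(A), \smile, [\,,\,])$, which pins down the cyclic terms by degree and symmetry once the disjoint and nested terms have been identified.
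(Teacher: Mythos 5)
The paper states this proposition without any proof at all --- it is an expository survey, and these identities are quoted as classical facts (they go back to Rinehart, Gerstenhaber, Connes, and Daletski--Gelfand--Tsygan), so there is no argument in the text to compare yours against. On its own terms your plan is the standard direct verification and is sound: the three--regime decomposition (disjoint, nested, wrap--around) is exactly how the first identity is organized in the literature; the observation that $[L_D,B]=0$ hinges on the normalization $D\in\Hom_k(\overline{A}^{\otimes d},A)$ killing every wrap--around term in $L_D B$ that would feed the freshly inserted unit into $D$ is the decisive point and you have it; and you are right that the Cartan formula cannot be used as a shortcut, since in this setting it holds only with the correction term $S_D$ (Proposition \ref{prop: reinhart}), not on the nose. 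One small imprecision: in the computation of $[b,L_D]$, the middle terms of $\delta D$, namely $D(a_1,\dots,a_ja_{j+1},\dots,a_{d+1})$, arise from $L_D b$ with the multiplication landing \emph{strictly inside} the window eaten by $D$ (these have no counterpart in $bL_D$ to cancel against), not only from products ``across the boundary'' of the window; your phrasing should be read broadly enough to include them, or the count of surviving terms will come out short.
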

Now let us extend the above operations to the cyclic complex. Define
\begin{equation} \label{eq:S_D}
S_D(a_0 \otimes \ldots \otimes a_n)= \sum_ {j\geq 0;\; k\geq j+d}\epsilon
_{jk} 1  \otimes a_{k+1} \otimes \ldots a_0 \otimes \ldots \otimes
\end{equation}
$$D(a_{j+1}, \ldots, a_{j+d}) \otimes
\ldots \otimes a_k$$ (The sum is taken over all cyclic
permutations; $a_0$ appears to the left of $D$). The signs are as
follows:
$$ \epsilon _{jk} = |D|(|a_0| + \sum_{i=1}^{n} (|a_i|+1)) + (|D|+1)\sum
_{j+1}^{k}(|a_i|+1)+\sum_{i \leq k}(|a_i|+1)\sum_{i \geq k}(|a_i|+1)$$

\begin{proposition} \label{prop: reinhart}
(\cite{R})
$$[b+uB, i_D + uS_D] - i_{\delta D} - uS_{\delta D} = L_D $$
\end{proposition}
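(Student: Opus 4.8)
The plan is to treat this as the noncommutative analogue of the classical Cartan magic formula $L_D = [d, i_D] + i_{dD}$, and to prove it by expanding the graded commutator on the left and matching the result against the right-hand side, organized by powers of the even variable $u$. Since $u$ is central, the graded commutator of the cyclic differential $b+uB$ with the combined contraction $i_D + uS_D$ splits into three homogeneous pieces,
\begin{equation*}
[b + uB, \, i_D + uS_D] = [b, i_D] + u\bigl([b, S_D] + [B, i_D]\bigr) + u^2\,[B, S_D].
\end{equation*}
I would prove the proposition by checking the coefficients of $1$, $u$, and $u^2$ separately; note that by the degree conventions ($u$ has degree $-2$, while $L_D$ raises Hochschild degree by one more than $i_D$ lowers it) the operator $L_D$ must appear as the coefficient of $u$, so the content of the identity is distributed across these three components.

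First I would dispose of the two ``pure'' pieces. The coefficient of $u^0$ is handled immediately by Proposition \ref{prop: properties of i}, which gives $[b, i_D] = i_{\delta D}$; this is exactly the term subtracted on the left of the stated equation, so the $u^0$ part vanishes. The top ($u^2$) coefficient requires the auxiliary identity $[B, S_D] = 0$, which I would establish by the same bookkeeping already used for $B^2 = 0$ and $[L_D, B] = 0$: both $B$ and $S_D$ insert a unit $1$ in front and then run over cyclic rearrangements, so their composite is a double sum over configurations of two inserted units, and after tracking the Koszul signs these cancel in pairs. With these two components settled, the entire claim reduces to the coefficient of $u$, namely
\begin{equation*}
[b, S_D] + [B, i_D] - S_{\delta D} = L_D,
\end{equation*}
which is the genuinely new statement.

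This last identity is where the work lies, and I would prove it by expanding both commutators directly from the defining sums \eqref{eq: i}, \eqref{eq:S_D}, \eqref{eq: L}, \eqref{eq:b grad} and \eqref{eq: B graded}, then classifying the resulting terms according to how the multiplications performed by $b$, and the cyclic reshuffling and unit-insertion performed by $B$, interact with the window on which $D$ acts and with the unit inserted by $S_D$. The expected picture is that most terms cancel in pairs—an application of $b$ that multiplies two factors away from the edge of $D$'s window is matched by a term from $B$, and likewise for the cyclic $a_p a_0$ boundary term of $b$—while two families of terms survive: those in which $b$ merges an adjacent factor into the argument list of $D$, which assemble the cochain $\delta D$ and reconstitute $S_{\delta D}$ (cancelling the subtracted term), and those in which the inserted unit is absorbed so that no $1$ remains, which produce the two sums of $L_D$ according to whether $a_0$ lies outside or inside $D$. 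The main obstacle is purely the sign bookkeeping: the exponents $\epsilon_k$, $\eta_k$ and $\epsilon_{jk}$ in the definitions of $L_D$ and $S_D$ are tuned precisely so that these matchings are exact, and confirming that the signs produced by $b$ and $B$ combine to reproduce them—particularly in the degenerate boundary cases where $b$'s multiplication meets the edge of $D$'s window or where $B$'s unit meets $S_D$'s unit—is the delicate point. As a consistency check I would apply $b+uB$ once more and use $(b+uB)^2=0$ together with the already-established relations $[b, L_D] + L_{\delta D} = 0$ and $[L_D, B] = 0$ to verify that both sides transform compatibly under the differential.
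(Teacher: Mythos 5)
The paper states this proposition without proof, citing Rinehart, so there is no internal argument to compare against; judged on its own terms, your strategy is the standard one and is sound. Your key structural observation is correct and in fact corrects a notational lapse in the statement itself: since $i_{D}$ shifts Hochschild degree by $-d$ while $L_D$ shifts it by $1-d$, the operator $L_D$ can only occur as the coefficient of $u$, so the displayed identity must be read with $uL_D$ on the right-hand side (equivalently, coefficient by coefficient: $[b,i_D]=i_{\delta D}$ at order $u^0$, the Cartan homotopy identity $[b,S_D]+[B,i_D]-S_{\delta D}=L_D$ at order $u^1$, and $[B,S_D]=0$ at order $u^2$). Two refinements: first, the $u^2$ vanishing is easier than your pairwise-cancellation argument suggests — because the complex is normalized ($\overline{A}=A/k\cdot 1$), both composites $BS_D$ and $S_DB$ vanish term by term, each producing a unit in a tensor slot of $\overline{A}$, exactly as in the proof that $B^2=0$; no sign tracking is needed. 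Second, and more substantively, the order-$u$ identity is the entire content of the proposition, and you only describe the cancellation pattern rather than executing it; your classification of terms is the right one (terms of $S_Db$ where the product $a_ia_{i+1}$ falls inside the window of $D$, together with terms of $bS_D$ where a neighbor is multiplied into the value $D(\ldots)$, assemble $S_{\delta D}$; terms where $b$ absorbs the inserted unit produce the two sums of $L_D$ according to whether $a_0$ sits outside or inside $D$; everything else cancels against $[B,i_D]$), but since the exponents $\epsilon_k$, $\eta_k$, $\epsilon_{jk}$ are precisely what is being verified, the proof is not complete until that bookkeeping is done. Your closing consistency check via $(b+uB)^2=0$ and the identities $[b,L_D]+L_{\delta D}=0$, $[L_D,B]=0$ is a good safeguard but of course does not substitute for the computation.
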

\begin{proposition} \label{prop: gdt}
(\cite{DGT})
There exists a linear transformation $T(D,E)$ of the Hochschild chain
complex, bilinear in $D, \;E \in C^{\bullet}(A,A)$, such that
$$[b+uB, T(D,E)] - T(\delta D, E) - (-1)^{|D|} T(D, \delta E) =$$

$$=[L_D , i_E + u S_E] - (-1)^{|D|+ 1}(i_{[D,E]}+uS_{[D,E]})$$
\end{proposition}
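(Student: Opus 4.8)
The plan is to read the identity as the statement that the classical Cartan relation $[L_D,i_E]=\pm\, i_{[D,E]}$ persists in the cyclic setting, but only up to an explicit $(b+uB)$-homotopy that is itself compatible with the Hochschild differential $\delta$ on cochains. Throughout I write $\mathfrak{d}=b+uB$ and $I_D=i_D+uS_D$, so that Proposition \ref{prop: reinhart} reads as the Rinehart--Cartan formula $[\mathfrak{d},I_D]=L_D+I_{\delta D}$. Setting
$$\Phi(D,E)=[L_D,I_E]-(-1)^{|D|+1}I_{[D,E]},$$
the assertion becomes $\Phi(D,E)=[\mathfrak{d},T(D,E)]-T(\delta D,E)-(-1)^{|D|}T(D,\delta E)$; that is, $\Phi$ is a coboundary for the total differential $\mathcal{D}$ on bilinear chain operations which combines $[\mathfrak{d},-]$ with the $\delta$-action on the two cochain arguments (with the appropriate Koszul signs).

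First I would verify that $\Phi$ is a $\mathcal{D}$-cocycle, which is a purely formal consequence of the relations already in hand. Substituting the Rinehart--Cartan formula into the outer $\mathfrak{d}$-commutators and applying the graded Jacobi identity, one computes $[\mathfrak{d},[L_D,I_E]]$ using $[\mathfrak{d},L_D]=-L_{\delta D}$ (from $[b,L_D]+L_{\delta D}=0$ and $[L_D,B]=0$) and $[\mathfrak{d},I_E]=L_E+I_{\delta E}$. The decisive point is that the resulting $L_{[D,E]}$ term, produced through the \emph{strict} relation $[L_D,L_E]=L_{[D,E]}$ of the preceding proposition, cancels exactly against the $L_{[D,E]}$ coming from $[\mathfrak{d},I_{[D,E]}]=L_{[D,E]}+I_{\delta[D,E]}$. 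What survives involves only $\delta D$ and $\delta E$, and, after expanding $\delta[D,E]=[\delta D,E]+(-1)^{|D|+1}[D,\delta E]$, reassembles precisely into $\Phi(\delta D,E)$ and $\Phi(D,\delta E)$, giving, up to the standard Koszul signs, the cocycle identity
$$[\mathfrak{d},\Phi(D,E)]=\pm\,\Phi(\delta D,E)\pm(-1)^{|D|}\Phi(D,\delta E).$$

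Having established closedness, I would produce the primitive $T$. The natural candidate is a second-order contraction: a chain operation inserting $E$ and then $D$ into the argument chain, summed over their relative positions, together with a cyclic companion (summed over placements of the inserted unit $1$) so that $T=i_{D,E}+uS_{D,E}$, with signs dictated by the Koszul rule exactly as in \eqref{eq: i} and \eqref{eq:S_D}. More conceptually, one shows that the complex of natural bilinear operations on the normalized cyclic chain complex, equipped with $\mathcal{D}$, is acyclic in the relevant degrees --- a consequence of the contractibility of the (normalized) bar construction --- so that the cocycle $\Phi$ is automatically a coboundary and unwinding the contracting homotopy yields a formula for $T$.

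The main obstacle is precisely this last step in the cyclic directions. The operator $b$ interacts with contractions through the clean rule $[b,i_D]=i_{\delta D}$ and a Leibniz rule for the brace/composition operations, so the $u^{0}$ part of the verification is routine; but $B$ obeys no such Leibniz rule, so the cyclic corrections $S_{D,E}$ are \emph{not} determined by the $i$-part and must be found by hand, and the term $[uB,S_{D,E}]$ must be matched directly against $uS_{[D,E]}$ and the $uS_{\delta(\,\cdot\,)}$ contributions. Controlling this matching --- equivalently, proving the $\mathcal{D}$-acyclicity $k[[u]]$-linearly and $(u)$-adically continuously --- together with the attendant sign bookkeeping from the shuffle and cyclic-permutation formulas, is where the substance of the Daletskii--Gelfand--Tsygan computation resides.
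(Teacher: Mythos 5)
The paper offers no proof of this proposition at all --- it is stated with a bare citation to \cite{DGT} --- so there is nothing internal to compare against; your proposal has to stand on its own, and as it stands it has a genuine gap. The first half, the verification that
$\Phi(D,E)=[L_D,I_E]-(-1)^{|D|+1}I_{[D,E]}$ is a cocycle for the total differential $\mathcal{D}$, is correct and is indeed a purely formal consequence of Proposition \ref{prop: reinhart}, of $[b,L_D]+L_{\delta D}=0$, $[L_D,B]=0$, $[L_D,L_E]=L_{[D,E]}$, and of \eqref{eq:Hochcochains13}. But that computation is not the content of the proposition: closedness of $\Phi$ is automatic, and the assertion to be proved is precisely that $\Phi$ is \emph{exact}, i.e.\ that a bilinear $T(D,E)$ exists. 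Your proposal does not produce one. The ``second-order contraction $i_{D,E}+uS_{D,E}$'' is described only in words, with no formula and no verification, and you yourself concede that the cyclic companion $S_{D,E}$ ``must be found by hand'' and that this is ``where the substance of the Daletskii--Gelfand--Tsygan computation resides.'' That is an accurate diagnosis of what is missing, but it means the proof is deferred rather than given.

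The fallback acyclicity argument does not close the gap either. The claim that the complex of natural bilinear operations on the (normalized, cyclic) chain complex is $\mathcal{D}$-acyclic ``as a consequence of the contractibility of the normalized bar construction'' is asserted, not proved, and is false in the generality stated: the cohomology of the complex of natural operations on Hochschild and cyclic complexes is nontrivial (computing it is exactly the subject of \cite{NT2}, which the paper cites as the source of an operations-based proof of Goodwillie rigidity), so one must identify the precise bidegree in which $\Phi$ lives and prove vanishing there, or else exhibit $T$ explicitly. A further small overstatement: even the $u^{0}$ part is not ``routine,'' since $[L_D,i_E]$ does not equal $\pm i_{[D,E]}$ on the nose and the Hochschild-level homotopy (Getzler's Cartan homotopy operator, cf.\ \cite{Ge1}) is already a genuinely new operator that has to be written down and checked; no Leibniz rule produces it. In short: the reduction of the statement to ``find a primitive of a known cocycle'' is sound bookkeeping, but the primitive --- which is the theorem --- is never constructed.
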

\subsection{Hochschild and cyclic complexes of $A_{\infty}$ algebras}\label{ss:Complexes of A infty algebras} They are defined exactly as for DG algebras, the chain differential $b$ being replaced by $L_m$ and the cochain differential $\delta$ by $[m,?]$ where $m$ is the Hochschild cochain from the definition of an $A_\infty$ algebra.
\subsection{Rigidity of periodic cyclic homology} \label{ss:rigid}
The following is the Goodwillie rigidity theorem \cite{Good}. A proof using operations on Hochschild and cyclic complexes is given in \cite{NT2}.
Let $A$ be an associative algebra over a ring $k$ of
characteristic zero. Let $I$ be a nilpotent two-sided ideal of
$A$. Denote $A_0= A/I$.
\begin{thm} \label{thm:rigid} (Goodwillie)
The natural map $CC_{\bullet}^{\operatorname{per}}(A) \rightarrow
CC_{\bullet}^{\operatorname{per}}(A/I)$ 
is a quasi-isomorphism.
\end{thm}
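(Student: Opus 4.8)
The plan is to deduce invariance of periodic cyclic homology under a nilpotent extension from the Cartan homotopy formula of Proposition~\ref{prop: reinhart}, which for a Hochschild cocycle $D$ (that is, $\delta D=0$) reads $[b+uB,\,i_D+uS_D]=L_D$ and thereby exhibits the Lie derivative $L_D$ as null-homotopic on the periodic complex $(C_{\bullet}(A)((u)),b+uB)$. First I would reduce to the square-zero case. The powers of $I$ give a filtration $A\supseteq I\supseteq\cdots\supseteq I^N=0$ in which each projection $A/I^{j+1}\to A/I^{j}$ has square-zero kernel $I^{j}/I^{j+1}$; since a composite of quasi-isomorphisms is a quasi-isomorphism, it suffices to treat one such step. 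So assume $I^2=0$ and fix a $k$-linear splitting $A=A_0\oplus I$ with $A_0=A/I$.

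Next I would deform $A$ to the split extension $A_0\ltimes I$. Writing $m$ for the product, $p_0,p_I$ for the projections onto $A_0,I$, and $\gamma(a,b)=p_I\big(m(p_0a,p_0b)\big)$ for the $2$-cochain classifying the extension, conjugating $m$ by the rescaling $a_0+x\mapsto a_0+t\,x$ yields the affine family of associative products $m_t=m-(1-t)\gamma$, with $m_1=m$ and $m_0=m-\gamma$ the split product of $A_0\ltimes I$. Differentiating the associativity of $m_t$ shows $\gamma$ is a $\delta_{m_t}$-cocycle for every $t$, and since the chain differential is $b=L_m$ (see~\ref{ss:Complexes of A infty algebras}) while $B$ is independent of the product, $\frac{d}{dt}(b_t+uB)=L_\gamma$. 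Proposition~\ref{prop: reinhart}, applied to $A_t$, then gives the Lax equation $\frac{d}{dt}(b_t+uB)=[b_t+uB,\,i_\gamma+uS_\gamma]$; integrating the corresponding flow $U_t$ conjugates the differential at $t=0$ to the one at $t=1$ and so produces an isomorphism of complexes $CC_{\bullet}^{\operatorname{per}}(A)\simeq CC_{\bullet}^{\operatorname{per}}(A_0\ltimes I)$.

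Finally I would contract $A_0\ltimes I$ onto $A_0$. Here the Euler derivation $\eu$ (zero on $A_0$, the identity on $I$) is a genuine Hochschild $1$-cocycle, and $L_{\eu}$ acts on a homogeneous chain as multiplication by its weight $w$, the number of tensor factors lying in $I$. Because the extension is split and $I^2=0$, both $b$ and $B$ preserve $w$, so $w^{-1}$ is defined on the positive-weight (relative) part and commutes with $[b+uB,-]$; there $\chr k=0$ lets us invert $w$, and Proposition~\ref{prop: reinhart} gives $[b+uB,\,w^{-1}(i_{\eu}+uS_{\eu})]=w^{-1}L_{\eu}=\id$. Thus the relative complex, the kernel of $CC_{\bullet}^{\operatorname{per}}(A_0\ltimes I)\to CC_{\bullet}^{\operatorname{per}}(A_0)$, is contractible by a single bounded homotopy, so this natural projection is a quasi-isomorphism. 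Since $\gamma$ and $\eu$ take values in $I$, all the operators $i_\gamma,S_\gamma,i_{\eu},S_{\eu}$ vanish modulo $I$ and every homotopy above is trivial on $C_{\bullet}(A/I)((u))$; hence the composite equivalence $CC_{\bullet}^{\operatorname{per}}(A)\simeq CC_{\bullet}^{\operatorname{per}}(A/I)$ is compatible with the natural map, which is therefore the asserted quasi-isomorphism.

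I expect the main obstacle to be the convergence of the integrating flow $U_t$ in the deformation step; the contraction step needs only the single operator $w^{-1}(i_{\eu}+uS_{\eu})$ and raises no such issue. One must check that $U_t=\exp$ of the time-ordered integral of $i_\gamma+uS_\gamma$ is a well-defined, $(u)$-adically continuous operator on the Laurent complex $C_{\bullet}(A)((u))$. This is exactly where $I^2=0$ and the use of the periodic (Laurent) complex are essential: since $\gamma$ lands in $I$ and $I^2=0$, each contraction $i_\gamma$ strictly shortens a chain, so $i_\gamma+uS_\gamma$ is topologically nilpotent in the combined weight and $(u)$-adic topology (cf. Remark~\ref{rmk:sums vs products}); making this precise, and confirming that $U_t$ preserves the completed structure, is the technical heart of the argument.
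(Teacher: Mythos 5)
The paper does not actually prove Theorem \ref{thm:rigid}: it cites Goodwillie's original argument and the proof in \cite{NT2} ``using operations on Hochschild and cyclic complexes.'' Your outline --- filter by powers of $I$ to reduce to a square-zero extension, deform the product to the split extension $A_0\ltimes I$ by integrating a Cartan-homotopy flow, then contract the split extension onto $A_0$ via the Euler derivation --- is exactly that second proof in architecture, and the reduction step, the associativity of the family $m_t=m-(1-t)\gamma$, and the compatibility with the projection to $C_\bullet(A/I)$ are all handled correctly.

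There is, however, a genuine problem in both of your homotopy computations, traceable to Proposition \ref{prop: reinhart} as printed, which you quote literally. That statement cannot be right as it stands: its $u^0$ part would give $[b,i_D]-i_{\delta D}=L_D$, contradicting Proposition \ref{prop: properties of i}. The correct chain-level Cartan formula (compare the $u$-calculus axiom $[d,\iota_a]=(-1)^{|a|-1}uL_a$ in Definition \ref{dfn:calc h,u}) is
\[
[b+uB,\,i_D+uS_D]-i_{\delta D}-uS_{\delta D}=uL_D ,
\]
with a factor of $u$ in front of $L_D$. Your argument as written never uses the invertibility of $u$ and therefore proves too much: the asserted identity $[b+uB,\,w^{-1}(i_{\eu}+uS_{\eu})]=\id$ would make the relative Hochschild and negative cyclic complexes of a split square-zero extension contractible as well, which is false ($HH_\bullet(k[\epsilon]/(\epsilon^2))\neq HH_\bullet(k)$). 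With the corrected formula the contracting homotopy is $(uw)^{-1}(i_{\eu}+uS_{\eu})$ and the Hamiltonian of the Lax flow is $u^{-1}i_\gamma+S_\gamma$; the $u^{-1}$ in both places is precisely why the statement holds only for $CC^{\operatorname{per}}$. This also changes your convergence argument: once $u^{-1}$ appears you cannot appeal to $u$-adic smallness, and ``$i_\gamma$ shortens chains'' does not give topological nilpotency on the product $\prod_i C_i(A)$. What actually makes $\exp$ of the flow converge is that $i_\gamma^2=\pm i_{\gamma\smile\gamma}=0$ because $\gamma$ takes values in $I$ and $I^2=0$, while $S_\gamma$ is locally nilpotent on each $C_N(A)$ (each nonzero application consumes two tensor factors on which $\gamma$ does not already vanish), so the exponential is a finite sum in every component of the product. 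With these corrections your proof goes through and coincides with the cited one.
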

\subsection{Smooth functions} \label{ss:cinfty}
For a smooth manifold $M$ one can compute the Hochschild and
cyclic homology of the algebra $C^{\infty}(M)$ where the tensor
product in the definition of the Hochschild complex is one of the
following three:
\begin{equation} \label{eq:tensor1}
C^{\infty}(M)^{\otimes n} = C^{\infty}(M^n);
\end{equation}
\begin{equation} \label{eq:tensor2}
C^{\infty}(M)^{\otimes n} = \operatorname{germs}_{\Delta}C^{\infty}(M^n);
\end{equation}
\begin{equation} \label{eq:tensor3}
C^{\infty}(M)^{\otimes n} = \operatorname{jets}_{\Delta}C^{\infty}(M^n)
\end{equation}
where $\Delta$ is the diagonal.
\begin{thm} \label{thm:HKRsmooth} The map
$$\mu: f_0 \otimes f_1 \otimes \ldots \otimes f_n \mapsto \frac{1}{n!}
f_0df_1 \ldots df_n$$
defines a quasi-isomorphism of complexes
$$C_{\bullet}(C^{\infty}(M)) \rightarrow (\Omega^{\bullet}(M), 0)$$
and a ${\mathbb{C}}[[u]]$-linear, $(u)$-adically continuous
quasi-isomorphism
$$CC^-_{\bullet}(C^{\infty}(M)) \rightarrow (\Omega^{\bullet}(M)[[u]], ud)$$
Localizing with respect to $u$, we also get quasi-isomorphisms
$$CC_{\bullet}(C^{\infty}(M)) \rightarrow
(\Omega^{\bullet}(M)[u^{-1},u]]/u\Omega^{\bullet}(M)[[u]], ud)$$
$$CC^{\operatorname{per}}_{\bullet}(C^{\infty}(M)) \rightarrow
(\Omega^{\bullet} (M)[u^{-1},u]], ud)$$
\end{thm}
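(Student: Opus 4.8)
The plan is to treat the Hochschild level first, establishing that $\mu$ is a quasi-isomorphism onto $(\Omega^{\bullet}(M), 0)$, and then to deduce the cyclic statements by a filtered comparison argument built on the intertwining of Connes' operator $B$ with the de Rham differential $d$. I would begin by checking that $\mu$ is a morphism of complexes, i.e. that $\mu\circ b = 0$: writing $b$ as the alternating sum of face maps and applying the Leibniz rule $d(fg)=f\,dg + g\,df$ together with the vanishing of repeated wedges, the interior terms telescope and cancel, so $\mu$ lands in the cycles of the trivial-differential target. Next I would record the identity $\mu\circ B = d\circ\mu$, a direct computation from \eqref{diffl:B} in which the normalization $\tfrac{1}{n!}$ is chosen precisely so that the $n+1$ cyclically permuted terms of $B$ each contribute $df_0\wedge\cdots\wedge df_n$; this identity is what promotes the Hochschild statement to the cyclic ones.

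For the quasi-isomorphism claim the argument is local on $M$. Using a partition of unity (equivalently, the sheaf property of the germ and jet versions of the complex along the diagonal) together with a Mayer--Vietoris comparison, I would reduce to the case $M=\mathbb{R}^n$. There I would compute $H_{\bullet}(C^{\infty}(M))$ as $\Tor^{C^{\infty}(M\times M)}_{\bullet}(C^{\infty}(M), C^{\infty}(M))$ by means of the smooth Koszul resolution of $C^{\infty}(\mathbb{R}^n)$ over $C^{\infty}(\mathbb{R}^n\times\mathbb{R}^n)$, built from the coordinate differences $x_i\otimes 1 - 1\otimes x_i$. Granting exactness, the Koszul differentials vanish after tensoring down over $C^{\infty}(M\times M)$, so the homology is visibly the exterior algebra on $dx_1,\ldots,dx_n$, i.e. $\Omega^{\bullet}(\mathbb{R}^n)$, and tracing the identification shows it is realized by $\mu$.

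I expect the exactness of the smooth Koszul resolution to be the main obstacle, since this is exactly where one must leave pure algebra and invoke the analytic structure of $C^{\infty}$: it is not the regular-sequence argument valid for polynomials, but rests on Hadamard's lemma (in the germ case) and Borel's lemma (in the jet case), which guarantee that a smooth function vanishing on the diagonal is a smooth combination of the coordinate differences. The three tensor-product conventions \eqref{eq:tensor1}--\eqref{eq:tensor3} exist precisely to make these division arguments available; verifying that the resulting complex is acyclic in positive degrees, uniformly enough to survive the localization, is the delicate point.

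Once the Hochschild quasi-isomorphism is in hand, the cyclic statements follow formally. Filtering both $CC^-_{\bullet}(C^{\infty}(M)) = (C_{\bullet}(C^{\infty}(M))[[u]], b+uB)$ and $(\Omega^{\bullet}(M)[[u]], ud)$ by powers of $u$, the map $\mu[[u]]$ is filtration-preserving and, by $\mu b = 0$ and $\mu B = d\mu$, a map of filtered complexes whose associated graded is $\mu$ applied coefficientwise. Since this is a quasi-isomorphism by the Hochschild case and the $u$-adic filtration is complete and exhaustive, the comparison of the resulting spectral sequences shows $\mu[[u]]$ is a quasi-isomorphism; $\mathbb{C}[[u]]$-linearity and $(u)$-adic continuity are immediate from the construction. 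Inverting $u$ and passing to the quotient by $uC_{\bullet}[[u]]$ then yields the periodic and cyclic versions by the same filtered argument applied to the localized and quotient complexes.
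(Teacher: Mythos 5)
The paper does not prove this theorem: immediately after the statement it attributes the Hochschild case to Hochschild--Kostant--Rosenberg, the cyclic cases to Connes, and the $C^{\infty}(M^n)$ tensor-product version to Teleman, so there is no in-text argument to compare yours against. Your outline is, however, exactly the standard proof from those sources, and it is essentially sound: the identities $\mu\circ b=0$ and $\mu\circ B=d\circ\mu$ are correct (the $1/n!$ does make the $n+1$ cyclic terms of $B$ add up to $d$ of the image), localization to $\bbR^n$ plus the Koszul resolution on the coordinate differences computes the Hochschild homology, and you correctly identify Hadamard's and Borel's lemmas as the analytic input replacing the regular-sequence argument. Two points deserve sharper treatment. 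First, for the tensor product \eqref{eq:tensor1} the complex is not local on $M$ in any naive sense, and comparing the (topological) bar resolution with the Koszul resolution requires care with completed tensor products; this is precisely the content of Teleman's paper and should not be folded into ``a Mayer--Vietoris comparison'' without comment. Second, your filtered-complex argument as stated works for $CC^-_{\bullet}$, where the $u$-adic filtration is complete and exhaustive, but not verbatim for the periodic complex: there the cleaner route is to write $CC^{\operatorname{per}}_{\bullet}$ as the filtered colimit $\bigcup_N u^{-N}CC^-_{\bullet}$ and use that homology commutes with filtered colimits, then get the cyclic complex from the short exact sequence $0\to uC_{\bullet}[[u]]\to C_{\bullet}((u))\to CC_{\bullet}\to 0$ and the five lemma. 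With those adjustments the proposal is a faithful reconstruction of the classical proof the paper is citing.
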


This theorem, for the tensor products (\ref{eq:tensor2},
\ref{eq:tensor3}), is due essentially to Hochschild, Kostant and
Rosenberg (the Hochschild case) and to Connes (the cyclic cases).
For the tensor product (\ref{eq:tensor1}), see \cite{Te}.
\subsubsection{Holomorphic functions} \label{sss:holofunctions}
Let $M$ be a complex manifold with the structure sheaf $\cal{O}_M$ and the
sheaf of holomorphic forms $\Omega^ {\bullet}_M$.
If one uses one of the following definitions of the tensor product, then
$C_{\bullet}({\cal{O}}_M)$, etc. are complexes of sheaves:
\begin{equation} \label{eq:tensor2holo}
{\cal{O}}_M^{\otimes n} = \operatorname{germs}_{\Delta}{\cal{O}}_{M^n};
\end{equation}
\begin{equation} \label{eq:tensor3holo}
{\cal{O}}_M^{\otimes n} = \operatorname{jets}_{\Delta}{\cal{O}}_{M^n}
\end{equation}
where $\Delta$ is the diagonal.
\begin{thm} \label{thm:holo} The map
$$\mu: f_0 \otimes f_1 \otimes \ldots \otimes f_n \mapsto \frac{1}{n!}
f_0df_1 \ldots df_n$$
defines a quasi-isomorphism of complexes of sheaves
$$C_{\bullet}({\cal{O}}_M) \rightarrow (\Omega^{\bullet} _M, 0)$$
and a ${\mathbb{C}}[[u]]$-linear, $(u)$-adically quasi-isomorphism of
complexes of sheaves
$$CC^-_{\bullet}({\cal{O}}_M) \rightarrow (\Omega^{\bullet} _M[[u]], ud)$$
\end{thm}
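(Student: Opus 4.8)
The statement is local: a morphism of complexes of sheaves is a quasi-isomorphism precisely when it induces an isomorphism on every stalk (equivalently, on all cohomology sheaves), and stalks commute with cohomology since taking stalks is exact. So the plan is to reduce to a single point and then carry out a local Hochschild--Kostant--Rosenberg calculation. First I would check that $\mu$ is a morphism of complexes of sheaves at all: the verifications that $\mu \circ b = 0$ and that, after tensoring with $\bbC[[u]]$, the map $\mu$ intertwines $b + uB$ with $ud$ are the identical sign computations already used in the smooth case of Theorem \ref{thm:HKRsmooth}. These are purely local, so they transcribe verbatim to $\cO_M$; that $b+uB$ is a differential in the first place is the content of Proposition \ref{prop:b,B}, and the $\bbC[[u]]$-linearity and $(u)$-adic continuity of $\mu$ are immediate from its formula.

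Next I would reduce to stalks. Fix $x \in M$ and choose local holomorphic coordinates identifying a neighborhood with a polydisc in $\bbC^n$, so that $\cO_{M,x} = R := \bbC\{z_1, \ldots, z_n\}$, the ring of convergent power series. The entire point of the germs (resp.\ jets) definitions \eqref{eq:tensor2holo}, \eqref{eq:tensor3holo} is that the stalk of $\cO_M^{\otimes p}$ at $x$ is the germ (resp.\ jet) completion of $R^{\otimes p}$ along the diagonal, i.e.\ the analytically completed tensor power $R^{\mathbin{\hat\otimes} p}$. This is exactly the object that makes the resulting Hochschild complex compute the \emph{analytic}, rather than the merely algebraic, Hochschild homology of $R$. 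Here I would record that $R$ is a regular local $\bbC$-algebra and that these completed tensor powers are flat, so that the stalk of $C_\bullet(\cO_M)$ computes the analytic $\Tor$ of $R$ over $R \mathbin{\hat\otimes} R$.

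Finally I would perform the local computation. The core is an explicit contracting homotopy: for the regular ring $R$ one writes down a Koszul-type homotopy on the completed bar/Hochschild complex, modeled on the homotopy used in the smooth case, and checks that it exhibits $\mu$ as a quasi-isomorphism onto $(\Omega^\bullet_R, 0)$, where $\Omega^\bullet_R = \Lambda^\bullet_R\, \Omega^1_{R/\bbC}$; this recovers the analytic HKR isomorphism $HH_\bullet(R) \cong \Omega^\bullet_R$ stalkwise. For the negative cyclic statement, the same homotopy extended $\bbC[[u]]$-linearly and $(u)$-adically continuously upgrades this to a quasi-isomorphism $CC^-_\bullet(\cO_M) \to (\Omega^\bullet_M[[u]], ud)$, the term $uB$ on the left matching $ud$ on the right under $\mu$.

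The main obstacle is the analytic bookkeeping of this last step: one must ensure the contracting homotopy is well defined and convergent for germs of holomorphic functions, not merely for polynomials or formal power series, and that the germ/jet tensor product is flat and compatible with the homotopy, so that the local HKR isomorphism is genuinely analytic. This is precisely why the two special tensor products \eqref{eq:tensor2holo}, \eqref{eq:tensor3holo} are singled out; with the naive algebraic tensor product over $\bbC$ the argument breaks down, since $\cO_M \otimes_{\bbC} \cO_M$ is not $\cO_{M \times M}$ and the Hochschild complex no longer consists of coherent sheaves.
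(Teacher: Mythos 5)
The paper does not actually prove Theorem \ref{thm:holo}: it is stated as a known result, with the smooth analogue attributed to Hochschild--Kostant--Rosenberg and Connes and no argument supplied in the holomorphic case beyond the remark that the cyclic variants follow ``similarly.'' So there is no in-paper proof to compare against; what you have written is the standard classical argument, and its strategy is sound. Your reduction to stalks is exactly why the theorem is phrased with the germ/jet tensor products \eqref{eq:tensor2holo}, \eqref{eq:tensor3holo}: the stalk of $\cO_M^{\otimes p}$ at $x$ is then the ring of germs of $\cO_{M^p}$ at $(x,\dots,x)$, i.e.\ the analytically completed tensor power of $R=\bbC\{z_1,\dots,z_n\}$, and the diagonal ideal in $R\mathbin{\hat\otimes}R$ is generated by the regular sequence $z_i-w_i$, so the Koszul resolution identifies the stalkwise Hochschild homology with $\Omega^\bullet_{R}$ via $\mu$ (the $1/n!$ normalization is what makes $\mu$ compatible with the antisymmetrization). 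Two points you should make explicit rather than wave at: first, the verification that $\mu\circ B = d\circ\mu$ (not just $\mu\circ b=0$) is what lets $\mu$ intertwine $b+uB$ with $ud$; second, passing from the Hochschild quasi-isomorphism to the statement for $CC^-_\bullet=(C_\bullet[[u]],b+uB)$ is not literally ``the same homotopy extended $u$-linearly'' --- the homotopy for $b$ does not commute with $B$ --- but rather a comparison of complete, exhaustive $u$-adic filtrations whose associated graded map is the Hochschild quasi-isomorphism; this is standard but is the actual mechanism. With those caveats your outline is correct and is the argument the cited literature carries out.
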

Similarly for the complexes $CC_{\bullet}$ and $CC^{\operatorname{per}}$.
\section{Operads}\label{s:operads}
\subsection{Definition and basic properties}\label{ss:def ops}
\begin{definition}\label{dfn:operad}
An operad $\cP$ in a symmetric monoidal category with direct sums and products ${\mathcal C}$ is:

a) a collection of objects $\cP(n),$ $n\geq 1,$ with an action of the symmetric group $\Sigma_n$ on $\cP(n)$ for every $n$;

b) morphisms
$${\rm{op}}_{n_1, \ldots, n_k}: \cP(k)\otimes  \cP({n_1})\otimes\ldots \otimes \cP({n_k})\to \cP(n_1+\ldots + n_k)$$
such that:

(i) $$\bigoplus_{\sigma\in \Sigma_k} {\rm{op}}_{n_{\sigma(1)}, \ldots, n_{\sigma(k)}}: \bigoplus _{\sigma\in \Sigma_k} \cP(k)\otimes\cP(n_{\sigma(1)})\otimes \ldots\\otimes cP(n_{\sigma(k)})\to \cP(n_1+\ldots +n_k)$$
is invariant under the action of the cross product $\Sigma_k\ltimes (\Sigma_{n_1}\times\ldots\times \Sigma_{n_k});$

(ii) the diagram
$$
\begin{CD}
\cP(k)\otimes \bigotimes_i \cP(l_i) \otimes\bigotimes_{i,j} \cP(m_{i,j}) @>>> \cP(k)\otimes \bigotimes _i\cP(\sum_jm_{i,j})\\
@VVV                                                             @VVV\\
\cP(\sum_i l_i)\otimes \bigotimes_{i,j} \cP(m_{i,j})   @>>>\cP(\sum_{i,j}m_{i,j})
\end{CD}
$$
is commutative.
\end{definition}
Here is an equivalent definition: an operad is an object $\cP(I)$ for any nonempty finite set $I$, functorial with respect to bijections of finite sets,
together with a morphism
$${\rm{op}}_f: \cP(f)\to \cP(I)$$
for every surjective map $f:I\to J,$ where we put
$$\cP(f)=\cP(J)\otimes\bigotimes _{j\in J} \cP(f^{-1}(\{j\});$$
for every pair of surjections $I \stackrel{g}{\rightarrow} J\stackrel{f}{\rightarrow} K,$ and any element $k$ of $K$, set
$$g_k =g|(fg)^{-1}(\{k\}): (fg)^{-1}(\{k\})\to g^{-1}(\{k\}).$$
We require the diagram
\begin{equation}\label{eq:assoc of P(I)}
\begin{CD}
\cP(K)\otimes \bigotimes _{k\in K} \cP(g_k) @>>> \cP(fg)\\
@VVV                                                                        @VVV\\
 \cP(g)@>>> \cP(I)
\end{CD}
\end{equation}
to be commutative.

It is easy to see that the two definitions are equivalent. Indeed, starting from Definition \ref{dfn:operad}, put
$$\cP(I)=\bigoplus_{\phi:\{1,\ldots, k\}\isomoto I} \cP(k)/\sim$$
where $(\psi, p)\eq (\phi, \phi\psi^{-1}p).$ In the opposite direction, define $\cP(k)=\cP(\{1,\ldots, k\}).$

An element $e$ of $\cP(1)$ is a unit of $\cP$ if ${\rm {op}}_1(p,e)=p$ for all $p\in \cP(1),$ ${\rm {op}}_n(e,p)=p$ for all $p\in \cP(n)$ for the operation ${\rm {op}}_n:\cP(1)\otimes \cP(n)\to \cP(n)$. (This definition works for categories such as spaces, complexes, etc.; in general, instead of an object $e$, one should talk about a morphism from the object ${\bf 1}$ to $\cP(1)$). An operad is unital if it has a unit. For a unital operad $\cP,$ and for every map, surjective or not, morphisms
\begin{equation}\label{eq:tilde etc}
{\rm{op}}_f: \cP(f)\to \cP(\widetilde{I}), \;\widetilde{I}_f=I\coprod (J-f(I)),
\end{equation}
can be defined  by mapping ${\bf 1}$ to $\cP$ using the unit, and then constructing the operation ${\rm{op}}_{\tilde{f}},$ $\overline{f}(i)=f(i)$ for $i\in I,$ $\overline{f}(j)=j$ for $j \in J.$ In particular, taking $f$ to be a map whose image consists of one point, we get morphisms $\circ_i: \cP(k)\otimes \cP(n)\to \cP(n+k-1)$ for $1\leq k\leq n.$
\begin{remark}\label{remark: I tilde etc}
We can define an operad $\cP$ as a collection $\cP(n)$ with actions of $\Sigma_n$ and with products ${\rm{op}}_f$ as in \eqref{eq:tilde etc} for any map $f:I\to J,$ surjective or not, subject to the condition of invariance under $\Sigma_n$ associative in the following sense. For maps $I\stackrel{f}{\rightarrow}J{\stackrel{f}{\rightarrow}}K,$ define ${\tilde{g}}:I\to {\widetilde{J}}_f$ as the composition $I{\stackrel{g}{\rightarrow}}J\rightarrow {\widetilde{J}}_f,$ and ${\widetilde{fg}}:{\widetilde{I}}_f\to K$ as $fg$ on $I$ and $f$ on $J-f(I).$ Observe that
$$\cP(K)\otimes \bigotimes _{k\in K}\cP(f^{-1}(\{k\}))\otimes _{j\in J}\cP(g^{-1}(\{j\}))\isomoto \cP(K)\otimes \bigotimes_{k\in K}\cP(g_k);$$
$$\cP({\widetilde{J}}_f)\otimes\bigotimes_{j\in J}\cP(g^{-1}(\{j\}))\isomoto \cP({\widetilde{g}});$$
$$\cP(K)\otimes \bigotimes_{k\in K}\cP({\widetilde{f^{-1}(\{k\})}}_{g_k})\isomoto \cP({\widetilde{fg}});$$
we get the diagram
\begin{equation}\label{eq:assoc of P(I) tilde etc}
\begin{CD}
\cP(K)\otimes \bigotimes _{k\in K} \cP(g_k) @>>> \cP({\widetilde{fg}})\\
@VVV                                                                        @VVV\\
 \cP({\widetilde{g}})@>>> \cP({\widetilde{I}}_{fg})
\end{CD}
\end{equation}
that is required to be commutative. We can take this for the definition of an operad. Any unital operad is an example, but there are others which are not exactly unital.
\end{remark}
\begin{example}\label{ex:operad End}
For an object $A,$ put ${\rm{End}}_A(n)={\rm{Hom}}(A^{\otimes n}, A)$. The action of $\Sigma_n$ and the operations ${\rm{op}}$ are the obvious ones. This is the operad of endomorphisms of $A$.
\end{example}
A morphism of operads $\cP\to \cQ$ is a collection of morphisms $\cP(n)\to \cQ(n)$ that agree with the action of $\Sigma_n$ and with the operations ${\rm{op}}_{n_1, \ldots, n_k}.$ A morphism of unital operads is a morphism that sends the unit of $\cP$ to the unit of $\cQ.$
\subsubsection{Algebras over operads}\label{sss:algebras over operads}
An algebra over an operad $\cP$ is an object $A$ with a morphism $\cP\to {\rm{End}}_A.$ In other words, an algebra over $\cP$ is an object $A$ together with $\Sigma_n$-invariant morphisms
$$\cP(n)\otimes A^{\otimes n}\to A$$
such that the diagram
$$
\begin{CD}
\cP(k)\otimes \bigotimes _{i=1}^{k}\cP(n_i)\otimes A^{\otimes\sum_{i=1}^{k} n_i} @>>> \cP(\sum_{i=1}^{k} n_i)\otimes A^{\otimes\sum_{i=1}^{k} n_i}\\
@VVV                                                                                 @VVV\\
\cP(k)\otimes A^{\otimes k}@>>>A
\end{CD}
$$
is commutative.
For an algebra over a unital operad $\cP,$ one assumes in addition that the composition $A\isomoto{\bf 1}\otimes A\to \cP(1)\otimes A\to A$ is the identity.

A free algebra over $\cP$ generated by $V$ is
$${\rm{Free}}_{\cP}(V)=\bigoplus _n \cP(n)\otimes _{\Sigma_n} V^{\otimes n}$$
The action of $\cP$ combines the operadic products on $\cP$ and the free (tensor) product on $V^{\otimes \bullet}$. The free algebra satisfies the usual universal property: For any $\cP$-algebra $A,$ a morphism of objects $V\to A$ extends to a unique morphism of $\cP$-algebras ${\rm{Free}}_{\cP}(V)\to A.$
\subsubsection{Colored operads}\label{sss:Col op}
A colored operad is a set $X$ (whose elements are called colors), an object $\cP(x_1, \ldots, x_n; y)$ for every finite subset $\{x_1, \ldots, x_n\}$ and every element $y$ of $X$, an action of ${\rm{Aut}}(\{x_1, \ldots, x_n\})$ on $\cP(x_1, \ldots, x_n; y),$
and morphisms
$${\rm{op}}: \cP(y_1, \ldots, y_k; z)\otimes \bigotimes_{i=1}^k \cP(\{x_{ij}\}_{1 \leq j\leq n_i};y_i)\to \cP(\{x_{ij}\}_{1\leq i\leq k\; 1 \leq j\leq n_i}; z),$$ subject to the axioms of invariance and associativity generalizing the ones in Definition \ref{dfn:operad}. An algebra over a colored operad $\cP$ is a collection of objects $A_x$, $x\in X$, together with operations
$$\cP(x_1, \ldots, x_n; y)\otimes A_{x_1}\otimes \ldots \otimes A_{x_n}\to A_y,$$
subject to axioms of invariance and associativity.
\subsubsection{Topological operads}\label{sss:top operads} A topological operad is an operad in the category of topological spaces where $\otimes$ stands for the Cartesian product. If $\cP$ is a topological operad then $C_{-\bullet}(\cP)$ is an operad in the category of complexes. (We use the minus sign to keep all our complexes cohomological, i.e. with differential of degree $+1$). Its $nth$ term is the singular complex of the space $\cP(n).$
\subsection{DG operads}\label{ss:DG operads} A DG operad is an operad in the category of complexes. A DG operad for which $\cP(n)=0$ for $n\neq 1$ is the same as an associative DG algebra.
\subsection{Cofibrant DG operads and algebras} \label{ss:Cofibrant DG operads and algebras} A free DG operad  generated by a collection of complexes $V(n)$ with an action of $\Sigma(n)$ is defined as follows. Let ${\rm{FreeOp}}(V)(n)$ be the direct sum over isomorphism classes of rooted trees $T$ whose external vertices are labeled by indexes $1,\ldots, n$:
$${\rm{FreeOp}}(V)(n)=\bigoplus _T \bigotimes _{{\rm{Internal}}\; {\rm{vertices}}\;v \;{\rm {of}}\; T} V(\{{\rm{edges}}\;{\rm{outgoing}}\; {\rm {from}} \;v\})$$
The action of the symmetric group relabels the external vertices; the operadic products graft the root of the tree corresponding to the argument in ${\rm{FreeOp}}(V)(n_i)$ to the vertex labeled by the index $i$ of the tree corresponding to the factor in ${\rm{FreeOp}}(V)(k)$.
A free operad has the usual universal property: for a DG operad $\cP,$ a morphism of collections of $\Sigma_n$ modules $V(n)\to \cP(n)$ extends to a unique morphism of operads ${\rm{FreeOp}}(V)\to \cP.$
\subsubsection{Semifree operads and algebras}\label{semifree} An algebra over a DG operad $\cP$ is semifree if:

 (i) its underlying graded $k$-module is a free algebra generated by a graded $k$-module $V$ over the underlying graded operad of $\cP;$

(ii) there is a filtration on $V:$ $0=V_0\subset V_1\subset \ldots,$ $V=\cup _n V_n,$ such that the differential sends $V_n$ to the suboperad generated by $V_k, \; k<n.$

One defines a semifree DG operad exactly in the same way, denoting by $V$ a collection of $\Sigma_n$-modules.

A DG operad $R$ (resp. an algebra $R$ over a DG operad $\cP$) is cofibrant if it is a retract of a semifree DG operad (resp. algebra), i.e. if there is a semifree $Q$ and maps $R\stackrel{i}{\longrightarrow} Q\stackrel{j}{\longrightarrow}R$ such that $ji={\rm{id}}_R.$

We say that a morphism of DG operads (resp. of algebras over a DG operad) is a fibration if it is surjective. We say that a morphism is a weak equivalence if it is a quasi-isomorphism. It is easy to see that the above definition of a cofibrant object is equivalent to the usual one: for every morphism $p:P\to Q$ that is a fibration and a weak equivalence, and for every $f:R\to Q,$ there is a morphism ${\tilde{f}}:R\to P$ such that $p{\tilde{f}}=f.$

\subsubsection{Cofibrant resolutions} A cofibrant resolution of a DG operad $\cP$ is a cofibrant DG operad $\cR$ together with a surjective quasi-isomorphism of $DG$ operads $\cR\to \cP.$ Every DG operad has a cofibrant resolution. For two such resolutions $\cR_1$ and $\cR_2,$ there is a morphism $\cR_1\to \cR_2$ over $\cP.$ Any two such morphisms are homotopic in the following sense. Let $\Omega^\bullet([0,1])$ be the DG algebra $k[t,dt]$ with the differential sending $t$ to $dt$. Let ${\operatorname{ev}}_a: \Omega^\bullet([0,1])\to k$ be the morphism of algebras sending $t$ to $a$ and $dt$ to zero. Two morphisms $f_0, f_1:\cR_1\to \cR_2$ are homotopic if there is a morphism $f:\cR_1\to \cR_2\otimes \Omega^\bullet([0,1])$ such that ${\id}_{\cR_2}\otimes {\operatorname{ev}}_a=f_a$ for $f=0, 1.$
\subsection{Bar and cobar constructions}\label{ss: and cobar resolutions} The references for this subsection are \cite{GK} for the case of operads and \cite{GJ} for the case of DG operads.
\subsubsection{Cooperads and coalgebras} The definition a cooperad and a coalgebra over it is dual to that of an operad and an algebra over it. In particular, a cooperad is a collection of objects $\cB(n)$ with actions of $\Sigma_n$, together with morphisms
$$\cB(n_1+\ldots+n_k)\to \cB(k)\otimes\cB(n_1)\otimes \ldots \otimes \cB(n_k) ,
$$
and a coalgebra C over $\cB$ is an object $C$  together with morphisms
$$C\to \cB(n)\otimes C^{\otimes n},$$
subject to the conditions of $\Sigma_n$-invariance and coassociativity.
A cofree coalgebra over $\cB$ (co)generated by a complex $W$ is defined as
$${\rm{Cofree}}_{\cB}(W)=\prod _{n\geq 1}(\cB(n)\otimes W^{\otimes n})^{\Sigma_n};$$
a cofree cooperad (co)generated by a collection of $\Sigma_n$-modules $W=\{W(n)\}$ is by definition
$${\rm{CofreeCoop}}(W)(n)=\prod _T \bigotimes _{{\rm{Interior}}\; {\rm{vertices}}\;v \;{\rm {of}}\; T} W(\{{\rm{edges}}\;{\rm{outgoing}}\; {\rm {from}} \;v\})$$
The cooperadic coproducts are induced by cutting a tree in all possible ways into a subtree containing the root and $k$ subtrees $T_1,\;\ldots,T_k,$ such that the external vertices of $T_i$ are exactly the external vertices of $T$ labeled by $n_1+\ldots +n_{i-1}+1, \ldots, n_1+\ldots +n_{i}.$ The coaction of $\cB$ on the cofree coalgebra is a combination of the cooperadic coproducts on $\cB$ and the cofree coproduct on the tensor coalgebra $W^{\otimes\bullet}.$
\subsubsection{The bar construction}\label{sss:bar}
Let $\cP$ be a DG operad as in Remark \ref{remark: I tilde etc}. The bar construction of $\cP$ is the cofree DG cooperad ${\rm{CofreeCoop}}(\cP[-1])$ with the differential defined by
$d=d_1+d_2$ where, for a rooted tree $T,$
$$d_1 ({\otimes _{{\rm{Internal}}\; {\rm{vertices}}\;v \;{\rm {of}}\; T}}(p(v)))=\sum \pm \otimes_{v'\neq v}p(v')\otimes d_\cP p(v),$$
$p(v)\in \cP(\{ {\rm{edges}}\;{\rm{outgoing}}\;{\rm {from}} \;v\})[1],$ where $d_\cP$ is the differential on $\cP[1];$
$$d_2({\otimes}_v(p(v))) =\sum _{{\rm{Internal}}\; {\rm{edges}}\;e \;{\rm {of}}\; T}\pm{\bf c}(e)(\otimes_v(p(v))).$$
Here ${\bf c}(e)$ is the operator of contracting the edge $e$ that acts as follows. Let $v_1$ and $v_2$ be vertices adjacent to $e$, $v_1$ closer to the root than $v_2.$ Let $T_e$ be the tree obtained from $T$ by contracting the edge $e$. Consider the operation
$$\cP(\{\; {\rm{edges}}\;{\rm{of}}\;T\;{\rm{outgoing}}\;{\rm {from}} \;v_1\})\otimes \cP(\{\; {\rm{edges}}\;{\rm{of}}\;T\;{\rm{outgoing}}\;{\rm {from}} \;v_2\})$$
$$\stackrel{{\rm{op}}_{f_e}}{\rightarrow} \cP(\{ {\rm{edges}}\;{\rm{of}}\;T_e\;{\rm{outgoing}}\;{\rm {from}} \;v_1\})$$
corresponding to the map
$$f_e:\{\; {\rm{edges}}\;{\rm{of}}\;T\;{\rm{outgoing}}\;{\rm {from}} \;v_2\}\to \{ {\rm{edges}}\;{\rm{of}}\;T\;{\rm{outgoing}}\;{\rm {from}} \;v_1\}$$
sending all edges to $e$. The operator ${\bf c}(e)$ replaces $T$ by $T_e$ and the tensor factor $p(v_1)\otimes p(v_2)$ by its image under ${\rm{op}}_{f_e}.$ The signs both in $d_1$ and $d_2$ are computed according to the following rule: start from the root of $T$ and advance to the vertex, resp. to the edge. Passage through every factor $p(v)$ at a vertex $v$ introduces the factor $(-1)^{|p(v|}$ (the degree in $\cP[1]$). 

It is easy to see that this differential defines a DG cooperad structure on ${\rm{CofreeCoop}}(\cP[-1]).$ We call this DG cooperad the bar construction of $\cP$ and denote it by ${\Br}(\cP).$

The dual definition starts with a DG cooperad $\cB$ and produces the DG operad ${\Cbr}(\cB).$
\begin{lemma} \label{lemma:bar of free operad}
Let $V=\{V(n)\}$ be a collection of $\Sigma_n$-modules. The embedding of $V$ into ${\rm {Bar}}{\rm{FreeOp}}(V)$ that sends an element of $V(n)$ into itself attached to a corolla with $n\,$ external vertices is a quasi-isomorphism of complexes.
\end{lemma}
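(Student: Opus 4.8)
The plan is to unwind the bar construction into a two-level tree picture and then split the resulting complex as a product indexed by the glued-up \emph{total tree}, reducing the whole statement to the acyclicity of an augmented simplex. Concretely, an element of $\Br\FreeOp(V)=\CofreeCoop(\FreeOp(V)[-1])$ is a rooted tree $T$ (the outer, cooperadic tree) whose internal vertices are decorated by elements of $\FreeOp(V)[-1]$; but each such decoration is itself a rooted tree decorated at its internal vertices by $V$. Grafting the inner $V$-trees according to $T$ produces a single $V$-decorated rooted tree $\tau$, the \emph{total tree}, together with the extra datum of which internal edges of $\tau$ are outer (edges of $T$) and which are inner (internal to a single decoration). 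Thus, over a fixed total shape $\tau$, a basis element is exactly a choice of subset $S\subseteq E(\tau)$ of the internal edges to be declared outer, the clusters being the connected components of $\tau$ after cutting along $S$.

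Next I would observe that $d=d_1+d_2$ preserves the total shape $\tau$: the part $d_1$ applies the differential of $V$ at each vertex and so preserves all arities, hence $\tau$; the part $d_2$ contracts one outer edge, i.e. grafts two adjacent clusters into one along an internal edge of $\tau$, changing $S$ but leaving $\tau$ untouched. Consequently, as a complex,
$$\Br\FreeOp(V)\;=\;\bigoplus_{\tau}\Big(\textstyle\bigotimes_{v}V(n_v),\,d_V\Big)\otimes(C_\tau,\,d_2),$$
where $v$ runs over the internal vertices of $\tau$, $n_v$ is the arity of $v$, and $C_\tau$ is the \emph{cut complex}: the span of the subsets $S\subseteq E(\tau)$, with $d_2[S]=\sum_{e\in S}\pm[S\setminus\{e\}]$ obtained by uncutting one outer edge. (If $V(1)\neq 0$ this is really a product over the infinitely many shapes of a given arity, but homology commutes with products, so this is harmless.) Here one uses the reduced, augmentation-ideal bar construction, so that every cluster carries at least one $V$-vertex; then every subset $S$ genuinely occurs and the two tensor factors are acted on independently by $d_V$ and $d_2$.

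The point is then that $(C_\tau,d_2)$ is the augmented simplicial chain complex of the full simplex on the vertex set $E(\tau)$, the class $[S]$ being the face spanned by $S$ and $[\emptyset]$ the augmentation; hence it is acyclic whenever $E(\tau)\neq\emptyset$ and equals $k$, in the degree of one cluster, when $E(\tau)=\emptyset$. A tree has $E(\tau)=\emptyset$ exactly when it is a corolla with a single internal vertex of some arity $n$, and then the summand is $(V(n),d_V)$; for every other $\tau$ the bounded acyclic complex $C_\tau$ of free $k$-modules is contractible, so tensoring its contraction with the identity on $\bigotimes_v V(n_v)$ kills the summand. Since on a corolla $d_2=0$ and $d_1=d_V$, the embedding of the statement is a chain map landing precisely in the corolla summands, and the computation shows it induces an isomorphism $H(V)\isomoto H(\Br\FreeOp(V))$.

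The one place needing genuine care, and the step I expect to be the main obstacle, is the sign bookkeeping behind the displayed decomposition: using the sign rule recorded after the definition of $d_1,d_2$ (each passage through a factor $p(v)$ contributes $(-1)^{|p(v)|}$), one must check that under the identification $d_2$ becomes the simplicial boundary on $C_\tau$ up to a single coherent sign, and that the Koszul signs between the $d_V$-part and the $d_2$-part are exactly those of a tensor product of complexes, so that the summand is literally $(\bigotimes_v V(n_v))\otimes C_\tau$. Once the signs are normalized, for instance by fixing a linear order on $E(\tau)$ and letting $d_2$ carry the standard alternating signs, the remainder is the elementary acyclicity of the augmented simplex, whose cone contraction furnishes an explicit homotopy should one prefer an explicit argument to the homological one.
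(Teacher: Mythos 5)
The paper states this lemma without proof, so there is no argument of the author's to compare against; judged on its own terms, your proof is correct and is the standard one (essentially the Ginzburg--Kapranov/Getzler--Jones argument that the bar construction of a free operad is acyclic away from the generators). The decomposition by total tree $\tau$, the identification of the edge-(un)cutting direction with the augmented chain complex of the simplex on $E(\tau)$, and the contraction of that complex whenever $E(\tau)\neq\emptyset$ are exactly the right steps, and your closing remark correctly locates the only remaining bookkeeping in the signs. One point deserves a bit more care than your parenthetical about products: the paper defines $\CofreeCoop$ as a \emph{product} over outer trees $T$ of direct sums over inner decorations, and when $V(1)\neq 0$ the regrouping by total shape $\tau$ yields neither the direct sum nor the full product of the columns $\bigl(\bigotimes_v V(n_v)\bigr)\otimes C_\tau$ but a hybrid strictly in between, so ``homology commutes with products'' does not literally apply. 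This is repaired either by assuming $V(1)=0$, or by using the conilpotent cofree cooperad (a direct sum over trees), which is the correct meaning of the bar construction here and makes your decomposition an honest direct sum over $\tau$, or by observing that the differential preserves the total number of $V$-decorated vertices, which confines everything to finite pieces. With that convention fixed, the argument is complete.
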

Let $\cP$ be a DG operad as in Remark \ref{remark: I tilde etc}. Consider the map $\Cbr\Br (\cP)\to \cP$ defined as follows. A free generator which is an element of ${\rm{CofreeCoop}}(\cP[1])[-1]$ corresponding to a tree $T$ is sent to zero unless $T$ is a corolla, in which case it is sent to the corresponding element of $\cP(n).$
\begin{prop}\label{prop:bar cobar}
The above map $\Cbr\Br (\cP)\to \cP$ is a surjective quasi-isomorphism of DG operads.
\end{prop}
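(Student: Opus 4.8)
The statement asserts that the counit-type map $\Cbr\Br(\cP)\to\cP$ is a surjective quasi-isomorphism. The standard approach is the following: surjectivity is essentially immediate from the construction, so the whole content is the quasi-isomorphism assertion, which I would prove by a filtration/spectral-sequence argument reducing everything to Lemma \ref{lemma:bar of free operad}.

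Let me describe the steps.

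===

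\emph{Step 1: Surjectivity.} The generators of $\Cbr\Br(\cP)$ include, for each $n$, the corolla with $n$ external vertices decorated by an element of $\cP(n)$, and the map sends such a generator to precisely that element of $\cP(n)$. Since every element of $\cP(n)$ arises this way, the map is surjective in each arity. This is the easy half and requires no further argument.

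\emph{Step 2: Set up a filtration.} The main point is to organize $\Cbr\Br(\cP)$ by a filtration whose associated graded collapses the problem onto free operads, where Lemma \ref{lemma:bar of free operad} applies. As an operad, $\Cbr\Br(\cP)=\FO(\Br(\cP)[1])[-1]$ carries three differentials: the internal differential $d_\cP$ coming from $\cP$, the bar differential $d_2$ (edge contraction inside each bar-vertex), and the cobar differential coming from regrafting trees of trees. I would filter by a quantity measured against the total number of vertices in the combined two-level tree (cobar tree of bar trees), arranged so that on the associated graded only the combinatorial (edge-contraction and regrafting) differentials survive, while $d_\cP$ is pushed to a later page. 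The key structural fact is that $\Br(\cP)=\CofreeCoop(\cP[-1])$ is, as a graded cooperad before the bar differential, cofree on the collection $\cP[-1]$; dually, $\Cbr$ of a cofree cooperad is free.

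\emph{Step 3: Identify the associated graded with a free-operad bar computation.} On the associated graded, the complex $\Cbr\Br(\cP)$ becomes the cobar–bar composite applied with the internal $\cP$-differential suppressed, i.e. the combinatorics of grafting corollas and contracting edges for the \emph{free} situation. Here I would invoke Lemma \ref{lemma:bar of free operad}, which says precisely that the inclusion $V\hookrightarrow\Br\FO(V)$ sending a generator to its corolla is a quasi-isomorphism. Applied arity-by-arity with $V=\cP$ treated as bare generators, this shows that the purely combinatorial differential has homology concentrated on corollas, which are exactly the elements mapping isomorphically onto $\cP$ under the counit. Thus on the $E_1$ (or $E_0$) page the map already induces an isomorphism onto $\cP$ with its suppressed differential.

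\emph{Step 4: Convergence and conclusion.} Finally I would compare the two filtrations (on source and target, where $\cP$ carries the trivial filtration with $\cP$ sitting in a single stratum) and check the filtration is exhaustive and bounded below in each arity and each homological degree, so that the spectral sequence converges. Once $E_1$ is identified with $\cP$ and the reduced map is an isomorphism there, the remaining differential on the $E_1$-page is exactly $d_\cP$, matching the differential on the target $\cP$; hence the map of spectral sequences is an isomorphism from $E_1$ onward, and by the comparison theorem the map $\Cbr\Br(\cP)\to\cP$ is a quasi-isomorphism.

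===

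\emph{The main obstacle.} The delicate point is Step 2: choosing a filtration that is simultaneously compatible with the cobar–bar bidifferential, genuinely isolates the combinatorial differentials from $d_\cP$ on the associated graded, and remains bounded (hence convergent) in each fixed arity and degree. The two-level tree-of-trees bookkeeping, together with the suspension shifts $[\pm1]$ and the sign rule described after the definition of the bar construction, makes the degree accounting error-prone; one must verify that the corolla generators really are the only homology classes of the combinatorial complex, which is where Lemma \ref{lemma:bar of free operad} does the essential work.
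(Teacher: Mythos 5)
The paper states this proposition without proof, so there is nothing to compare line by line; but Lemma \ref{lemma:bar of free operad} is placed immediately before it as the evident key ingredient, and your outline --- surjectivity via corollas, a weight filtration on the tree-of-trees, identification of the associated graded homology with $\cP$ via the acyclicity of the bar construction of a free operad, and a convergence check --- is exactly the standard bar--cobar counit argument that is intended here. The one point to settle carefully is the filtration you leave vague in Step 2: filtering by total internal degree (to suppress $d_\cP$ first, as you propose) requires boundedness hypotheses on $\cP$, whereas filtering by the total number of $\cP$-labels in the composite tree --- preserved by $d_\cP$ and the cobar (regrafting) differential, strictly decreased by edge contraction --- is bounded in each arity for any $\cP$, puts the combinatorial cut/contract complex of each fixed labeled tree on the associated graded (where Lemma \ref{lemma:bar of free operad}, or its evident dual, shows it is acyclic off the corollas), and leaves $d_\cP$ as the induced differential on $E^1\isomo\cP$, completing the comparison.
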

The DG operad $\Cbr\Br (\cP)$ is the standard cofibrant resolution of $\cP$.

\subsection{ Koszul operads}\label{ss: Koszul operads} The reference for this subsection is \cite{GK}. We give a very brief sketch of the main definitions and results. Let $V(2)$ be a $k$-module with an action of $\Sigma_2$. A quadratic operad generated by $V(2)$ is a quotient of the free operad $\FreeOp(\{V(2)\})$ by the ideal generated by a subspace $R$ of $(\FreeOp(\{V(2)\}))(3).$ 

For a $k$-module $X$, let $X^*=\Hom_k(X, k).$ Let $V(2)$ and $S$ be free $k$-modules of finite rank. The Koszul dual operad to a quadratic operad $\cP$ generated by $V(2)$ with relations $R$ is the quadratic operad $\cP^\vee$ generated by $V(2)[1]^*$ subject to the orthogonal complement $R^\perp$ to $R$.

By definition, $({\mathcal P}^\vee)^\vee={\mathcal P}.$ There is a natural morphism of operads $\cP^\vee\to \Br(\cP)^*$. The quadratic operad $\cP$ is {\em Koszul} if this map is a quasi-isomorphism. 

A quadratic operad $\cP$ is Koszul if and only if $\cP$ is.

The above constructions may be carried out if $V(2)$ is replaced by a pair $(V(1), V(2)).$

For a Koszul operad $\cP,$ the DG operad $\Cbr(\cP^\vee)$ is a cofibrant resolution of $\cP.$ We will denote it by $\cP_\infty.$

\subsection{Operads $\rm{As}$, $\rm{Com}$, $\rm{Lie}$, $\rm{Gerst}$, $\rm{Calc}$, $\rm{BV}$, and their $\infty$ analogs}\label{ss:examples of operads}
\subsubsection{$\rm{As}$, $\rm{Com}$, and $\rm{Lie}$}\label{sss:ass, comm, lie} Algebras over them are, respectively, graded associative algebras, graded commutative algebras, and graded Lie algebras.
\subsubsection{Gerstenhaber algebras}  \label{ss:gerstenhaber}
Let $k$ be the ground ring of characteristic zero.
A {\it Gerstenhaber algebra} is a graded space ${\mathcal{A}} $
together with
\begin{itemize}
\item A graded commutative associative algebra structure on
${\mathcal{A}}$;
\item a graded Lie algebra structure on ${\mathcal{A}}^{{\bullet}+1}$
such that
$$[a,bc]=[a,b]c+(-1)^{(|a| -1) |b|)}b[a,c]$$
\end{itemize}

\begin{example} \label{thm:gerstenhaber} Let $M$ be a smooth manifold. Then
$$ {\mathcal{V}}^{\bullet}_M = \wedge ^{\bullet} T_M$$
is a sheaf of Gerstenhaber algebras.
\end{example}
The product is the exterior product, and the bracket is the Schouten
bracket.
We denote by ${\mathcal V}(M)$ the Gerstenhaber algebra of global sections of this
sheaf.
\begin{example} \label{thm:gerstenhaber-2} Let $\g$ be a Lie algebra. Then
$$ C_{\bullet}(\g) = \wedge ^{\bullet} \g$$
is a Gerstenhaber algebra.
\end{example}
The product is the exterior product, and the bracket is the unique bracket
which turns $C_{\bullet}(\g)$ into a Gerstenhaber algebra and which is the
Lie bracket on $\g = \wedge ^1 (\g)$.

\subsubsection{Calculi} \label{ss:ncdc}
\begin{definition} \label{dfn:precalc}
A {\it precalculus} is a pair of a Gerstenhaber algebra
${\mathcal{V}}^{\bullet}$ and a graded space $\Omega ^{\bullet}$  together with
\begin{itemize}
\item a structure of a graded module over the graded commutative  algebra
${\mathcal{V}}^{\bullet}$ on $\Omega ^{-{\bullet}} $ (the corresponding action is
denoted by $i_a,\; a \in {\mathcal{V}}^{\bullet}$);
\item a structure of a graded module over the graded Lie  algebra
${\mathcal{V}}^{\bullet +1}$ on ${\Omega} ^{-{\bullet}}$ (the corresponding action
is denoted by $L_a,\; a \in {\mathcal{V}}^{\bullet}$)
such that
$$[L_a,i_b]=i_{[a,b]}$$
and
$$L_{ab} = (-1)^{|b|}L_a i_b +  i_a L_b$$
\end{itemize}
\end{definition}
\begin{definition} \label{dfn:calc}
A {\it calculus} is a precalculus
together with an operator $d$ of degree 1 on $\Omega ^{{\bullet}}$ such that
$d^2 = 0$ and
$$ [d,i_a]=(-1)^{|a|-1}L_a. $$
\end{definition}
\begin{example} \label{ex:calc-M}
For any manifold one defines a calculus $\Ca (M)$ with
${\mathcal{V}}^{\bullet}$ being the algebra of multivector fields,
$\Omega ^{\bullet}$ the space of differential forms, and $d$ the
de Rham differential. The operator $i_a$ is the contraction of a
form by a multivector field.
\end{example}
\begin{example} \label{ex:calc-0}
For any associative algebra $A$ one defines a calculus $\Ca _0 (A)$ by
putting
${\mathcal{V}}^{\bullet} = H^{\bullet}(A,A)$ and $\Omega^{\bullet} = H_{\bullet}
(A,A)$. The five operations from Definition \ref{dfn:calc} are the cup
product, the Gerstenhaber bracket, the pairings $i_D$ and $L_D$, and the
differential $B$, as in \ref{pairings}. The fact that it is indeed a
calculus follows from Theorem \ref{prop: gdt}.
\end{example}
A differential graded (dg) calculus is a calculus with extra differentials
$\delta$ of degree 1 on ${\mathcal{V}}^{\bullet}$ and $b$ of degree $-1$ on
$\Omega ^{\bullet}$ which are derivations with respect to all the
structures.
\begin{definition}\label{dfn:calc h,u}
1) An $\hbar$-calculus is a precalculus over the algebra $k[\hbar],$ $|\hbar|=0,$ together with a $k[\hbar]$-linear operator of degree $+1$ on $\Omega^{-\bullet}$ satisfying
$$d^2=0;\;[d,\iota_a]=(-1)^{|a|-1}\hbar L_a$$

2) A $u$-calculus is a precalculus over the algebra $k[u],$ $|u|=2,$ together with a $k[u]$-linear operator of degree $-1$ on $\Omega^{-\bullet}$ satisfying
$$d^2=0;\;[d,\iota_a]=(-1)^{|a|-1}u L_a$$
\end{definition}
\subsubsection{BV algebras}\label{sss:BV algebras}
\begin{definition}\label{dfn:BV}
A Batalin-Vilkovisky (BV) algebra is a Gerstenhaber algebra together with an operator $\Delta: {\mathcal{A}}\to {\mathcal{A}}$ of degree $-1$ satisfying
$$\Delta^2=0$$
and
\begin{equation}\label{eq:BV}
\Delta(ab)-\Delta(a)b-(-1)^{|a|}a\Delta(b)=(-1)^{|a|-1}[a,b]
\end{equation}
\end{definition}
Note that the above axioms imply
\begin{equation}\label{eq:BV 1}
\Delta([a,b])-[\Delta(a),b]+(-1)^{|a|-1}[a,\Delta(b)]=0
\end{equation}
There are two variations of this definition.
\begin{definition}\label{dfn:BV h,u}
1) A ${\rm{BV}}_\hbar$-algebra is a Gerstenhaber algebra over the algebra $k[\hbar],$ $|\hbar|=0,$ with a $k[\hbar]$-linear operator $\Delta: {\mathcal{A}}\to {\mathcal{A}}$ of degree $-1$ satisfying
$$\Delta^2=0,$$
the identity \eqref{eq:BV 1}, and
\begin{equation}\label{eq:BV h}
 \Delta(ab)-\Delta(a)b-(-1)^{|a|}a\Delta(b)=(-1)^{|a|-1}\hbar[a,b]
\end{equation}

2) 1) A ${\rm{BV}}_u$-algebra is a Gerstenhaber algebra over the algebra $k[u],$ $|u|=2,$ with a $k[u]$-linear operator $\Delta: {\mathcal{A}}\to {\mathcal{A}}$ of degree $+1$ satisfying
$$\Delta^2=0,$$
the identity \eqref{eq:BV 1}, and
\begin{equation}\label{eq:BV u}
\Delta(ab)-\Delta(a)b-(-1)^{|a|}a\Delta(b)=(-1)^{|a|-1}u[a,b]
\end{equation}
\end{definition}
\begin{proposition}\label{prop:koszul duals}
For a DG operad $\cP,$ denote by $\cP^\vee$ its Koszul dual.
\begin{enumerate}
\item
$\As ^\vee=\As;\; \Com ^\vee =\Lie;\; \Lie^\vee=\Com;$
\item
a complex $A$ is an algebra over $\Gerst ^\vee$ if and only if $A[1]$ is an algebra over $\Gerst;$
\item
a complex $A$ is an algebra over $\BV _u^\vee$ if and only if $A[1]$ is an algebra over $\BV _\hbar;$
\item
a complex $A$ is an algebra over $\BV _\hbar^\vee$ if and only if $A[1]$ is an algebra over $\BV _u;$
\item
a pair of complexes $(A,\Omega)$ is an algebra over $\Ca _u^\vee$ if and only if $(A[1], \Omega)$ is an algebra over $\Ca _\hbar;$
\item
a pair of complexes $(A,\Omega)$ is an algebra over $\BV _u^\vee$ if and only if $(A[1], \Omega)$ is an algebra over $\Ca _\hbar.$
\item
All the operads above are Koszul.
\end{enumerate}
\end{proposition}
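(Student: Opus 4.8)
The plan is to verify each item of Proposition \ref{prop:koszul duals} by explicitly computing the quadratic data (generators and relations) for each operad, applying the definition of Koszul dual as orthogonal complement, and then invoking a general Koszul-duality criterion for item (7). The unifying principle throughout is the shift-by-one dictionary: an algebra structure over an operad $\cP$ on $A[1]$ is, by adjusting degrees and signs, the same as a structure over a related operad on $A$, and Koszul duality systematically interchanges symmetric and antisymmetric generators while dualizing the space of relations.

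First I would dispose of item (1), the classical computations of Ginzburg--Kozsul \cite{GK}: $\As$ is generated by a single binary operation with no symmetry and the relations are associativity, which is self-orthogonal inside $\FreeOp(\{V(2)\})(3)$ of rank six, giving $\As^\vee=\As$; for $\Com$ the generator is symmetric with the associativity relation, whose orthogonal complement singles out the antisymmetric generator with the Jacobi relation, i.e. $\Lie$, and symmetrically $\Lie^\vee=\Com$. For items (2)--(6) the strategy is the same in each case. I would write down the generators of $\Gerst$, namely a symmetric product in degree $0$ and an antisymmetric bracket in degree $-1$ (equivalently a bracket on $\cA^{\bullet+1}$), together with the three relations: associativity, Jacobi, and Leibniz. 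Taking the orthogonal complement $R^\perp$ and accounting for the degree shift built into the definition of $\cP^\vee$ (generated by $V(2)[1]^*$) produces precisely the Gerstenhaber relations again but with the roles of product and bracket exchanged, which after the shift $A\mapsto A[1]$ is exactly the statement that $A$ is a $\Gerst^\vee$-algebra iff $A[1]$ is a $\Gerst$-algebra. The $\BV_\hbar$ and $\BV_u$ computations in (3)--(4) proceed identically, now carrying along the operator $\Delta$ and the auxiliary variable $\hbar$ or $u$; here one must be careful that the sign and degree conventions in \eqref{eq:BV h} and \eqref{eq:BV u} are matched correctly under the duality, since $|\hbar|=0$ while $|u|=2$, and it is precisely this degree discrepancy that forces the duality to swap $\BV_\hbar$ with $\BV_u$ rather than sending each to itself.

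For the calculus items (5)--(6) I would treat $\Ca_u$ and $\Ca_\hbar$ as \emph{colored} operads (in the sense of \ref{sss:Col op}) with two colors, one for $\cV^\bullet$ and one for $\Omega^\bullet$, whose generators encode the five operations $\smile$, $[\,,\,]$, $i$, $L$, $d$ of Definitions \ref{dfn:precalc}--\ref{dfn:calc h,u}, subject to the listed compatibility relations. The Koszul dual is then computed color by color: the $\cV$-part reduces to the Gerstenhaber/BV computation already done, and the mixed relations $[L_a,i_b]=i_{[a,b]}$, $L_{ab}=(-1)^{|b|}L_a i_b+i_aL_b$, and $[d,i_a]=(-1)^{|a|-1}u L_a$ (resp. $\hbar L_a$) have orthogonal complements that reproduce the calculus relations with the variable swapped. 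This is where I expect the bookkeeping to be heaviest, and it is the main obstacle: keeping the signs and the two gradings straight through the orthogonal-complement computation in the colored setting is genuinely delicate, and one must confirm that the shift $(A,\Omega)\mapsto(A[1],\Omega)$ affects only the $\cV$-color in the expected way.

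Finally, for item (7) I would not attempt a direct bar-cobar computation for each operad. Instead I would observe that $\As$, $\Com$, $\Lie$ are the classical Koszul operads of \cite{GK}, and that $\Gerst$ is Koszul by the general principle that the operad controlling $E_2$-algebras (equivalently, the homology of the little disks operad) is Koszul — or by the self-duality established in (2) combined with the criterion that an operad is Koszul iff its dual is, which is the unnumbered assertion ``A quadratic operad $\cP$ is Koszul if and only if $\cP^\vee$ is'' stated just above in \ref{ss: Koszul operads}. The $\BV$ and $\Ca$ operads then inherit Koszulness either from their identification with $\Gerst$-type data plus a free square-zero operator $\Delta$ (which preserves Koszulness), or again by the dual-Koszulness criterion together with items (3)--(6). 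The upshot is that (7) follows formally once (1)--(6) are in hand, so the real work is concentrated in the sign-careful orthogonal-complement computations, especially the colored calculus case.
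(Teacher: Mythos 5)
The paper offers no proof of this proposition: immediately after the statement it simply records that the result ``was proved in \cite{GK} for $\As$, $\Com$ and $\Lie$; in \cite{GJ} for $\Gerst$; and in \cite{GCTV} for $\BV$.'' So your outline is not competing with an argument in the text; it is a sketch of what those references actually do, and as such it is essentially the right plan: identify the quadratic data, compute $R^\perp$ with the shift $V(2)\mapsto V(2)[1]^*$, and read off the operad on the other side. Two small factual corrections: for the symmetric operad $\As$ the generator $V(2)$ is the regular representation of $\Sigma_2$, so $\FreeOp(\{V(2)\})(3)$ is $12$-dimensional (the associativity relations and their orthogonal complement are each $6$-dimensional), not of ``rank six''; and the degree bookkeeping for $\Gerst$ should be anchored to the paper's convention that the bracket lives on $\cA^{\bullet+1}$.

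The two places where your proposal glosses over genuine difficulties are exactly the ones the paper outsources to the literature. First, the paper's definition of Koszul dual in \ref{ss: Koszul operads} is stated only for a one-colored operad generated by $V(2)$, with a one-line remark that it extends to a pair $(V(1),V(2))$; the calculus operads need both the unary-generator extension (for $d$ and $\Delta$) and a two-colored extension simultaneously, and neither is set up in the paper. Saying the dual is ``computed color by color'' understates this: the mixed relations such as $[L_a,i_b]=i_{[a,b]}$ couple the colors, so the orthogonal complement is not taken colorwise. Second, Koszulness of $\BV$ is not a formal consequence of Koszulness of $\Gerst$ plus ``a free square-zero operator $\Delta$'': the defining relation \eqref{eq:BV h} equates a weight-two expression ($\Delta$ composed with the product) with a weight-one one ($\hbar[a,b]$), so even with the formal parameter the presentation is inhomogeneous in the weight grading, and handling this is the main technical content of \cite{GCTV}. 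Your item (7) therefore does not ``follow formally once (1)--(6) are in hand''; it is the hardest part for the $\BV$ and $\Ca$ cases. If you want a self-contained argument rather than a citation, the cleanest route is a distributive-law argument (in the style of Markl's proof for $\Gerst=\Com\circ\Lie$) combined with the curved/inhomogeneous Koszul machinery of \cite{GCTV} for the $\Delta$- and $d$-relations.
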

The above result was proved in \cite{GK} for $ \As,$ $\Com$ and $\Lie; $ in \cite{GJ} for $\Gerst;$ and in \cite{GCTV} for $\BV.$

\subsection{The Boardman-Vogt construction}\label{ss:The Boardman-Vogt construction} For a topological operad $\cP,$ Boardman and Vogt constructed in \cite{BV} another topological operad $W\cP,$ together with a weak homotopy equivalence of topological operads $W\cP\isomoto \cP.$ (In fact $W\cP$ is a cofibrant replacement of $\cP$). The space $W\cP(n)$ consists of planar rooted trees $T$ with the following additional data:
\begin{enumerate}
\item
internal vertices of $T$ of valency $j+1$ are decorated by points of $\cP(j);$ 
\item external vertices of $T$ are decorated by numbers from $1$ to $n,$ so that the map sending a vertex to its label is a  bijection between the set of internal vertices and $\{1,\ldots, n\};$ 
\item internal edges of $T$ are decorated by numbers $0\leq r\leq 1.$ The label $r$ is called the length of the edge. 
\end{enumerate}
If the length of an edge of a tree is zero, this tree is equivalent to the tree obtained by contracting the edge, the label of the new vertex defined via operadic composition from the labels of the two vertices incident to $e.$
\subsection{Operads of little discs}\label{ss:Operads of little discs} Let $D$ be the standard $k$-disc $\{x\in \bbR^k|\; |x|\leq 1.\}$. For $1\leq i \leq n,$ denote by $D_i$ a copy of $D$. Let $\LD_k(n)$ be the space of embeddings 
\begin{equation}\label{eq:embedding discs}
\coprod _{i=1}^n D_i \to D
\end{equation}
 whose restriction to every component is affine Euclidean. The collection $\{\LD_k(n)\}$ is an operad in the category of topological spaces. The action of $S_n$ is induced from the action by permutations of the $n$ copies of $D.$ Operadic composition is as follows. For embeddings 
$$f\colon\coprod _{i=1}^m D_i \to D$$ 
and
$$f_i\colon\coprod_{j_i=1}^{n_i} D_{j_i} \to D_i,$$
the embedding 
\begin{equation}\label{eq:op prod discs}
{\operatorname{op}}_{n_1,\ldots, n_m}(f;f_1,\ldots, f_m)\colon\coprod _{i=1}^m \coprod _{j=1}^{n_i} D_j\to D
\end{equation}
acts on every component $D_{j_i}$ by the composition $f\circ f_i.$
\subsection{Fulton-MacPherson operads}\label{ss:Fulton-MacPherson operads} The spaces $\FM_k(n)$ were defined by Fulton and MacPherson in \cite{FM}. The operadic structure on them was defined in \cite{GJ} by Getzler and Jones.

For $k>0,$ let $\bbR^+\ltimes \bbR^k$ be the group of affine transformations of $\bbR^k$ generated by positive dilations and translations. Define the configuration spaces to be
\begin{equation}\label{eq:def conf}
\Conf _k(n)=\{(x_1, \ldots, x_n)| x_i\in \bbR ^k, \, x_i\neq x_j\}/ (\bbR^+\ltimes \bbR^k)
\end{equation}
There are compactifications $\FM_k(n)$ of $\Conf_k(n)$ that form an operad in the category of topological spaces for each $k>0.$ As an operad of sets, $\FM_k(n)$ is the free operad generated by the collection of sets $\Conf_k(n)$ with the action of $S_n$.  In fact there are continuous bijections 
\begin{equation}\label{bij for FM}
\FreeOp (\{\Conf_k(n)\})\to \FM_k(n)
\end{equation}
The spaces $\FM_k(n)$ are manifolds with corners. They can be defined explicitly as follows. Consider the functions
$\theta_{ij}: \Conf_k(n)\to S^{k-1}$ and $\rho_{ijk}:\Conf_k(n)\to \bbR$ by 
\begin{equation}\label{eq:coords on FM}
\theta_{ij}(x_1,\ldots, x_n)=\frac{x_i-x_j}{|x_i-x_j|};\; \delta_{ijk}(x_1,\ldots, x_n)=\frac{|x_i-x_j|}{|x_i-x_k|}
\end{equation}
The map 
\begin{equation}\label{eq:ro teta}
\Conf_k(n)\to (S^{k-1})^{\binom{n}{2}}\times[0,+\infty]^{\binom{n}{3}}
\end{equation}
defined by all $\theta_{ij},\,i<j,$ and $\delta_ {ijk},\,i<j<k,$ can be shown to be an embedding. The space $\FM_k(n)$ can be defined as the closure of the image of this embedding.

Kontsevich and Soibelman proved in \cite{KoSo} that the topological operads $\FM_k$ and $\LD_k$ are weakly homotopy equivalent. In fact there is a homotopy equivalence of topological operads 
\begin{equation}\label{eq:equiv of Salvatore}
W\LD_k \isomoto \FM_k.
\end{equation}
constructed by Salvatore in \cite{Salva}, Prop. 4.9.
\subsection{The operad of framed little discs}\label{ss:Operads of framed little discs} This operad constructed analogously to the operad ${rm{LD}}_2 .$ By definition, ${\rm{FLD}}_2(n)$ is the space of affine embeddings \ref{eq:embedding discs} together with points $a_i\in \partial D_i,$ $a\in \partial D.$ The operadic compositions consist of those for ${\rm{LD}}_2$ and of  rotating the discs $D_i$ so that the marked points on the boundaries come together.
\subsection{The colored operad of little discs and cylinders}\label{ss:The colored operad of little discs and cylinders} The colored operad $\LC$ has two colors that we denote by $\co$ and $\cha.$ All spaces $\LC(x_1,\ldots, x_n; y)$ are empty if more than one $x_i$ is equal to $\cha$ or if one $x_i$ is equal to $\cha$ and $y=\co.$ For $n\geq 0,$ let
$$\LC(n)\stackrel{\rm{def}}{=}\LC({\co, \ldots, \co; \co})= {\LD_2(n)}$$
and 
$$\LC(n,1)\stackrel{\rm{def}}{=}\LC(\co, \ldots, \co,\cha;\cha).$$
The spaces $\LD(n)$ form a suboperad of $\LC.$
For $r>0,$ let $C_r$ be the cylinder $S^1 \times [0,r].$ By definition, $\LC(n,1)$ is the space of data $(r,g)$ where 
$$g\colon\coprod _{i=1}^k D_i \to C_r$$
is an embedding such that $g|D_i$ is the composition 
\begin{equation}\label{eq:g i}
D_i\stackrel{\widetilde{g_i}}{\rightarrow}\R\times [0,r]\stackrel{\rm{pr}}{\rightarrow}S^1\times [0,1]
\end{equation}
of the projection with an affine Euclidean map ${\widetilde{g}}$. The action of $S_n$ on $\LC(n,1)$ is induced by permutations of the components $D_i.$ Let us define operadic compositions of two types. The first is 
$$ \LC (m,1)\times \LC(n_1)\times \ldots \times \LC(n_m)\to \LC(n_1+\ldots +n_m, 1);$$
it is defined exactly as the operadic composition in \eqref{eq:op prod discs}, with $D$ replaced by $C_r.$ The second is
\begin{equation}\label{eq:comp2 for cyls}
\LC(n,1)\times \LC(m,1)\to \LC(n+m,1)
\end{equation}
For ${\widetilde{g_i}}: D_i\to C_r, 1\leq i\leq n,$ and ${\widetilde{g'_j}}: D_j\to C_{r'}, 1\leq j\leq m,$ as in \eqref{eq:g i}, define
\begin{equation}\label{eq:eqeq}
{\widetilde {g ''}}\colon (\coprod _{i=1}^n D_i )\coprod( \coprod _{j=1}^m D_j)\to \R\times [0,r+r']
\end{equation}
that sends $z\in D_i, 1\leq i\leq n,$ to the image of ${\widetilde {g}}(z)$ under the map $\R\times [0,r]\to \R\times [0,r+r'],$ $(x,t)\mapsto (x,t),$ and  $z\in D_j, 1\leq i\leq m,$ to the image of ${\widetilde {g'}}(z)$ under the map $\R\times [0,r']\to \R\times [0,r+r'],$ $(x,t)\mapsto (x,t+r).$ Let $g={\rm{pr}}\circ {\widetilde g}$ and $g'={\rm{pr}}\circ {\widetilde g'}$ The composition \eqref{eq:eqeq} of $(g,r)$ and $g'r')$ is by definition $({\rm{pr}}\circ {\widetilde {g ''}}, r+r').$

All other nonempty spaces $\LC(x_1,\ldots, x_n;y),$ in other words spaces 
\newline
$\LC(\co, \ldots \co, \cha, \ldots, \co; \cha),$ together with the actions of symmetric groups and with operadic compositions, are uniquely determined by the above and by the axioms of colored operads.
\subsubsection{The colored operad of little discs and framed cylinders} \label{sss:The colored operad of little discs and framed cylinders}
The colored operad $\LfC$ is defined exactly as $\LC$ above, with the following modifications. First, by definition, $\LC(n,1)$ is the space of data $(r, x_0,x_1,g)$ where $r$ and $g$ are as above, $x_0\in S^1\times \{0\},$ and $x_1\in \R\times \{r\},$ factorized by the action of the circle by rotations on the factor $S^1.$ The composition 
\begin{equation}\label{eq:comp2 for fr cyls}
\LfC(n,1)\times \LfC(m,1)\to \LfC(n+m,1)
\end{equation}
is defined as follows: given $(r,g,x_0,x_1)$ and $(r',g',x'_0,x'_1),$ their composition is $(r+r', g'', x_0, x'_1+x_1-x'_0)$ where $g''={\rm{pr}}\circ{\widetilde{g''}}$ and ${\widetilde{g''}}$ is exactly as in 
\eqref{eq:eqeq}, with the only difference that it sends $z\in D_j, 1\leq i\leq m,$ to the image of ${\widetilde {g'}}(z)$ under the map $\R\times [0,r']\to \R\times [0,r+r'],$ $(x,t)\mapsto (x+x_1-x'_0,t+r).$ Note that $\LC(1)$ is contractible but $\LfC(1)$ is homotopy equivalent to $S^1.$
\subsubsection{The Fulton-MacPherson version of $\LC$ and of $\LfC$}\label{sss:FM Cyl} Note first that the colored operad $\LC$ can be alternatively defined as follows: the spaces $\LC(n)$ are as above; the spaces $\LC(n,1)$ are defined as subspaces of $\LD_2(n+1)$ consisting of those embeddings \eqref{eq:embedding discs} that map the center of $D_{n+1}$ to the center of $D$. The action of the symmetric groups and the operadic compositions are induced from those of $\LD_2.$ Similarly, define the two-colored operad $\FMC$ as follows. Put $\FMC(n)=\FM(n);$ define $\FMC(n,1)$ to be the subspace of $\FM(n+1)$ consisting of data 
$$(T, \{c_v\}|{v\in\{{\operatorname{external}}\; {\operatorname{vertices}}\;{\operatorname{of}}\;T}\})$$
such that:
\begin{enumerate}
\item
$T$ is a rooted tree;
\item
$c_v\in \Conf(\{{\rm{edges}}\;{\rm{outgoing}}\; {\rm {from}} \;v\});$
\item
Consider the path from the root of $T$ to the external vertex labeled by $n+1$ ({\em the trunk} of $T$). Let $e_0$ be the edge on this path that goes out of a vertex $v.$ Let $c_v=(x_e)$ where $e$ are all edges outgoing from $v.$ Then $x_{e_0}=0.$
\end{enumerate}
We leave to the reader to define the operadic compositions and the action of the symmetric groups, as well as the Fulton-MacPherson analog $\FMfC$ of the two-colored operad $\LfC.$ 
\begin{proposition}\label{prop:GJ for cyl}
The two-colored operads $\FMC$ and $\LC,$ resp. $\FMfC$ and $\LfC,$ are weakly equivalent.
\end{proposition}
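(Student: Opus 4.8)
The plan is to upgrade the disc-level equivalence \eqref{eq:equiv of Salvatore} to the two-colored setting by running the Boardman-Vogt construction of Section \ref{ss:The Boardman-Vogt construction} color by color. First I would form the colored Boardman-Vogt replacement $W\LC$; the $W$-construction applies verbatim to colored operads, producing a morphism of two-colored operads $W\LC \isomoto \LC$ that is a weak homotopy equivalence on every space, and likewise $W\LfC \isomoto \LfC$. Since weak equivalence of colored operads permits a zigzag, it then suffices to construct morphisms $W\LC \to \FMC$ and $W\LfC \to \FMfC$ and to verify that they are level-wise weak equivalences; combined with the first zigzag this yields $\LC \xleftarrow{\sim} W\LC \xrightarrow{\sim} \FMC$ and similarly for the framed versions.

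On the one-color part there is nothing new: since $\LC(n)=\LD_2(n)$ and $\FMC(n)=\FM(n)$, Salvatore's equivalence \eqref{eq:equiv of Salvatore} for $k=2$ already gives $W\LC(n)\isomoto \FMC(n)$, using that $\FM(n)$ is freely generated by the configuration spaces $\Conf_2(n)$ via \eqref{bij for FM} and that $\LD_2(n)\simeq \Conf_2(n)$. The work concentrates on the mixed spaces $\LC(n,1)$ and $\FMC(n,1)$. Here I would exploit the alternative descriptions of \ref{sss:FM Cyl}: the cylinder $S^1\times[0,r]$ is conformally an annulus, hence identified with $\C\setminus\{0\}$ up to the rotation-dilation symmetry, so a configuration of $n$ discs in a cylinder corresponds up to homotopy to $n+1$ labelled points in $\C$ with the last one pinned at the origin. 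This is exactly the subspace $\LC(n,1)\subset \LD_2(n+1)$ cut out by the condition that the center of $D_{n+1}$ go to the center of $D$, and $\FMC(n,1)\subset \FM(n+1)$ is its Fulton-MacPherson compactification cut out by the trunk condition $x_{e_0}=0$. The point is that the Salvatore morphism $W\LD_2\to \FM$ respects these two subspace conditions: one checks that the center-to-center constraint on the source is carried to the trunk constraint $x_{e_0}=0$ on the target, so the morphism restricts to $W\LC(n,1)\to \FMC(n,1)$, and it is a weak equivalence by the same fibration argument as in the disc case, the fibers over the (identified) configuration bases being contractible.

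The main obstacle is compatibility with the \emph{two} operadic compositions. The first composition, grafting discs into a cylinder, is formally parallel to the disc-into-disc grafting already handled by Salvatore, so its intertwining follows the disc argument. The genuinely new ingredient is the cylinder-into-cylinder composition \eqref{eq:comp2 for cyls}, which stacks two cylinders of lengths $r,r'$ and has no disc analog; I expect the real verification to be showing that the map intertwines this stacking with the corresponding concatenation along the trunk in $\FMC$, and in particular that it extends continuously over the corner strata where $r\to 0$ or $r'\to 0$ and where points collide, matching the boundary description of the manifold-with-corners $\FMC(n,1)$. For the framed versions $\LfC$ and $\FMfC$ the same argument applies, carrying along the extra marked points $x_0,x_1$ and the residual $S^1$-action; since $\LfC(1)\simeq S^1$, one builds the comparison over the framed disc equivalence and checks that the framing data on both sides are matched by the rotation bookkeeping in \eqref{eq:comp2 for fr cyls}.
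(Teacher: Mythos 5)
The paper states Proposition \ref{prop:GJ for cyl} without any proof at all -- it appears immediately after \ref{sss:FM Cyl}, where even the operadic compositions on $\FMC$ and the definition of $\FMfC$ are left to the reader -- so there is no argument of the author's to compare yours against. That said, your strategy is exactly the one the paper's definitions are engineered for: $\LC(n,1)$ is (re)defined as the subspace of $\LD_2(n+1)$ where the center of $D_{n+1}$ is pinned to the center of $D$, $\FMC(n,1)$ as the subspace of $\FM(n+1)$ cut out by the trunk condition $x_{e_0}=0$, and the intended proof is plainly to restrict Salvatore's equivalence \eqref{eq:equiv of Salvatore} to these subspaces, which is what you do. Your outline is sound.

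One remark on where you locate the difficulty. You single out the cylinder-into-cylinder composition \eqref{eq:comp2 for cyls} as the ``genuinely new ingredient'' with ``no disc analog.'' But once you commit to the subspace description of \ref{sss:FM Cyl}, both colored compositions of $\LC$ are restrictions of ordinary $\LD_2$-compositions -- the stacking of cylinders is just $\circ_{n+1}$, insertion at the pinned point -- and likewise for $\FMC$ inside $\FM(n+1)$; so compatibility with the stacking is inherited from the disc case for free. The step that actually carries content, and which you pass over with ``corresponds up to homotopy,'' is the identification of the original cylinder model of $\LC(n,1)$ (affine discs in $S^1\times[0,r]$) with the pinned-disc model inside $\LD_2(n+1)$: the conformal map from the cylinder to an annulus does not take affine discs to affine discs, so the two models are not homeomorphic and one must produce a zigzag of weak equivalences of \emph{colored operads} between them, under which stacking becomes $\circ_{n+1}$. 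That is the lemma your proof really needs; the $r\to 0$ degeneration you worry about lives there, not in the comparison with $\FMC$. With that point made explicit, the rest of your argument (restriction of the Salvatore map, level-wise equivalence via contractible fibers, and the bookkeeping of $x_0,x_1$ and the residual $S^1$-action in the framed case) goes through as you describe.
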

\section{DG categories}\label{s:DG cats} The contents of this section are taken mostly from \cite{DrDG}, \cite{KellerDG}, and \cite{Toen DG lectures}.
\subsection{Definition and basic properties}\label{ss:dfn propts DG cats} A differential graded (DG) category $A$ over $k$ is a collection $\Ob(A)$ of elements called objects and of complexes $A(x,y)$ of $k$-modules for every $x,y\in\Ob(A),$ together with morphisms of complexes
\begin{equation}\label{eq:dg cat}
A(x,y)\otimes A(y,z)\to A(x,z), \, a\otimes  b\mapsto ab,
\end{equation}
and zero-cycles ${\bold{1}} _x \in A(x,x)$, such that \eqref{eq:dg cat} is associative and ${\bold{1}} _x a=a{\bold 1}_y=a$ for any $a\in A(x,y).$ For a DG category, its homotopy category is the $k$-linear category ${\operatorname{Ho}}(A)$ such that $\Ob({\operatorname{Ho}}(A))=\Ob(A)$ and ${\operatorname{Ho}}(A)(x,y)=H^0(A(x,y),$ with the units being the classes of ${\bold{1}}_x$ and the composition induced by  \eqref{eq:dg cat}. 

A DG functor $A\to B$ is a map $\Ob(A)\to \Ob(B),$ $x\mapsto Fx,$ and a collection of morphisms of complexes $F_{x,y}\colon A(x,y)\to B(Fx,Fy), x,y\in \Ob(A),$ which commutes with the composition \eqref{eq:dg cat} and such that $F_{x,x}({\bold{1}}_{x})={\bold{1}}_{Fx}$ for all $x.$

The opposite DG category of $A$ is defined by $\Ob(A^{\op})=\Ob(A)$, $A^{\op}(x,y)=A(y,x),$ the unit elements are the same as in $A$, and the composition \eqref{eq:dg cat} is the one from $A,$ composed with the transposition of tensor factors.

For two DG categories $A$ and $B$, the tensor product $A\otimes B$ is defined as follows: $\Ob(A\otimes B)=\Ob(A)\times \Ob(B);$ we denote the object $(x,y)$ by $x\otimes y;$ 
$$(A\otimes B)(x\otimes y,x'\otimes y')=A(x,y)\otimes B(x',y');$$
$(a\otimes b)(a'\otimes b')=(-1)^{|a'||b|}aa'\otimes bb'; {\bold{1}}_{x\otimes y}={\bold{1}}_x\otimes {\bold 1}_y.$
\subsection{Cofibrant DG categories}\label{ss:cof DG cats} Cofibrant DG categories are defined exactly following the general principle of \ref{ss:Cofibrant DG operads and algebras}. 
\subsection{Quasi-equivalences}\label{ss:QE} A quasi-equivalence \cite{Tabu} between DG categories $A$ and $B$ is a DG functor $F:A\to B$ such that a) $F$ induces an equivalence of homotopy categories and b) for any $x,y\in \Ob(A),$ $F_{x,y}\colon A(x,y)\to B(Fx,Fy)$ is a quasi-isomorphism.
\subsection{Drinfeld localization}\label{Dr locn} For a full DG subcategory $C$ of a DG category $A$, the localization of $A$ with respect to $C$ is obtained from $A$ as follows. Consider DG categories $k_C$ and ${\mathcal{N}}_C;$ $\Ob(k_C)=\Ob({\mathcal{N}}_C)=\Ob(C);$ $k_C(x,y)={\mathcal{N}}_C(x,y)=0$ if $x\neq y;$ $k_C(x,x)=k\cdot {\bold{1}}_x;$ ${\mathcal{N}}_C(x,x)$ is equal to the free algebra generated by one element $\epsilon_x$ of degree $-1$  satisfying $d\epsilon _x={\bold{1}}_x$ for all $x\in Ob(C).$ The localization of $A$ is the free product $A * _{k_C} {\mathcal{N}}_C.$ In other words, it is a DG category $\cA$ such that:
\begin{enumerate}
\item
$\Ob(\cA)=\Ob(x);$
\item
there is a DG functor $i:A\to \cA$ which is the identity on objects;
\item
for every $x\in \Ob(C),$ there is an element $\epsilon_x$ of degree $-1$  in $\cA(x,x)$ satisfying $d\epsilon _x={\bold{1}}_x;$ 
\item
for any other DG category $\cA '$ together with a DG functor $i': A\to \cA'$ and elements $\epsilon '_x$ as above, there is unique DG functor $f:\cA\to \cA'$ such that $i'=f\circ i$ and $\epsilon _x\mapsto \epsilon '_x.$
\end{enumerate}
One has
$$\cA(x,y)=\bigoplus_{n\geq 0}\bigoplus_{x_1,\ldots, x_n\in \Ob(C)} A(x,x_1)\epsilon_{x_1}A(x_1,x_2)\epsilon_{x_2}\ldots \epsilon_{x_n}A(x_n,y);$$
it is easy to define the composition and the differential explicitly.
\subsection{DG modules over DG categories}\label{DGmods} A DG module over a DG category $A$ is a collection of complexes of $k$-modules $M(x),$ $x\in \Ob(A),$ together with morphisms of complexes 
\begin{equation}\label{eq:dg mod}
A(x,y)\otimes M(y)\to A(x), \, a\otimes  m\mapsto am,
\end{equation}
which is compatible with the composition \eqref{eq:dg cat} and such that ${\bold{1}}_x m=m$ for all $x$ and all $m\in M(x).$ A DG bimodule over $A$ is a collection of complexes $M(x,y)$ together with morphisms of complexes 
\begin{equation}\label{eq:dg bimod}
A(x,y)\otimes M(y,z) \otimes A(z,w)\to M(x,w), \, a\otimes m \otimes  b\mapsto amb,
\end{equation}
that agrees with the composition in $A$ and such that ${\bold{1}}_x m {\bold{1}}_y = m$ for any $x,y,m.$ We put $am=am{\bold{1}}_z$ and $mb={\bold{1}}_x mb.$ A DG bimodule over $A$ is the same as a DG module over $A\otimes A^{\op}.$
\subsection{Bar and cobar constructions for DG categories}\label{ss:Bar cobar for DG cat} The bar construction of a DG category $A$  is a DG cocategory $\Br (A)$ with the same objects where
$${\Br}(A)(x,y)=\bigoplus_{n\geq 0}\bigoplus _{x_1,\ldots, x_n}A(x,x_1)[1]\otimes A(x_1,x_2)[1]\otimes \ldots \otimes A(x_n,x)[1]$$
with the differential
$$d=d_1+d_2;$$
$$d_1(a_1|\ldots | a_{n+1})=\sum_{i=1}^{n+1}\pm (a_1|\ldots|da_i|\ldots |a_{n+1});$$
$$d_2(a_1|\ldots | a_{n+1})=\sum_{i=1}^{n}\pm (a_1|\ldots|a_ia_{i+1}|\ldots |a_{n+1})$$
The signs are $(-1)^{\sum_{j<i}(|a_i|+1)+1}$ for the first sum and $(-1)^{\sum_{j\leq i}(|a_i|+1)}$ for the second. The comultiplication is given by
$$\Delta(a_1|\ldots|a_{n+1})=\sum_{i=0}^{n+1}(a_1|\ldots|a_i)\otimes (a_{i+1}|\ldots |a_{n+1})$$
Dually, for a DG cocategory $B$ one defines the DG category $\Cbr(B)$. The DG category $\Cbr\Br(A)$ is a cofibrant resolution of $A$.
\subsubsection{Units and counits}\label{sss:units and counits} It is convenient for us to work with DG (co)categories without (co)units. For example, this is the case for $\Br(A)$ and $\Cbr(B)$ (we sum, by definition, over all tensor products with at least one factor). Let $A^+$ be the (co)category $A$ with the (co)units added, i.e. $A^+(x,y)=A(x,y)$ for $x\neq y$ and $A^+(x,x)=A(x,x)\oplus k\id _x.$ If $A$ is a DG category then $A^+$ is an augmented DG category with units, i.e. there is a DG functor $\epsilon: A^+\to k_{\Ob (A)}$. The latter is the DG category with the same objects as $A$ and with $k_I(x,y)=0$ for $x\neq y,$ $k_I(x,x)=k.$ Dually, one defines the DG cocategory $k^{\Ob(B)}$ and the DG functor $\eta: k^{\Ob(B)} \to B^+$ for a DG cocategory $B$.

\subsubsection{Tensor products} \label{sss:tensor products} For DG (co)categories with (co)units, define $A\otimes B$ as follows: ${\rm{Ob}}(A\otimes B)={\rm{Ob}}(A)\times {\rm{Ob}}(B);$ $(A\otimes B)((x_1,y_1), (x_2, y_2))=A(x_1,y_1)\otimes B(x_2,y_2);$ the product is defined as $(a_1\otimes b_1)(a_2\otimes b_2)=(-1)^{|a_2||b_1|}a_1a_2\otimes b_1b_2$, and the coproduct in the dual way. This tensor product, when applied to two (co)augmented DG (co)categories with (co)units, is again a (co)augmented DG (co)category with (co)units: the (co)augmentation is given by $\epsilon\otimes \epsilon$, resp. $\eta\otimes \eta.$

\begin{definition}\label{dfn:tensor product without (co)units}
For DG categories $A$ and $B$ without units, put
$$A\otimes B=\Ker(\epsilon\otimes \epsilon: A^+\otimes B^+\to k_{\Ob (A)} \otimes k_{\Ob (B)}).$$
Dually, for For DG cocategories $A$ and $B$ without counits, put
$$A\otimes B=\Coker(\eta\otimes \eta:k^{\Ob (A)}\otimes k^{\Ob (B)} \to A^+\otimes B^+).$$
\end{definition}

One defines a morphism of DG cocategories
\begin{equation}\label{eq:prod shuffle}
\Br(A)\otimes \Br(B)\to\Br(A\otimes B)
\end{equation}
by the standard formula for the shuffle product
\begin{equation}\label{eq:shuffle}
(a_1|\ldots|a_m)(b_1|\ldots|b_n)=\sum\pm (\ldots |a_i|\ldots|b_j|\ldots)
\end{equation}
The sum is taken over all shuffle permutations of the $m+n$ symbols $a_1, \ldots, a_m, b_1,$ $\ldots, b_n)$, i.e. over all permutations that preserve the order of the $a_i$'s and the order of the $b_j$'s. The sign is computed as follows: a transposition of $a_i$ and $b_j$ introduces a factor $(-1)^{(|a_i|+1)(b_j|+1)}.$ Let us explain the meaning of the factors $a_i$ and $b_j$ in the formula. We assume $a_i\in A(x_{i-1}, x_i)$ and $b_j \in B(y_{j-1}, y_j)$ for $x_i\in {\rm{Ob}}(A)$ and $y_j\in {\rm{Ob}}(B),$ $0\leq i\leq m,$ $0\leq j\leq m.$ Consider a summand $(\ldots |a_i|b_j|b_{j+1}|\ldots|b_k|a_{i+1}|\ldots).$ In this summand, all $b_p,$ $j\leq p\leq k,$ are interpreted as ${\id_{x_i}}\otimes b_p\in (A\otimes B)((x_i, y_{p-1}), (x_i, y_p)).$ Similarly, in the summand $(\ldots |b_i|a_j|a_{j+1}|\ldots|a_k|a_{i+1}|\ldots),$ all $a_p,$ $j\leq p\leq k,$ are interpreted as $a_p\otimes {\id_{y_i}}\in (A\otimes B)((x_{p-1},y_i), (x_p,y_i)).$
Dually, one defines the morphism of DG cocategories
\begin{equation}\label{eq:coprod coshuffle}
\Cbr(A\otimes B)\to \Cbr(A)\otimes \Cbr(B)
\end{equation}

\subsection{$A_\infty$ categories}\label{ss:A infty cats} An $A_\infty$ category is a natural generalization of both a DG category and an $A_\infty$ algebra. We refer the reader, for example, to \cite{KoSo1}.
\subsubsection{DG category ${\bf C}^\bullet(A,B)$}\label{sss:DG cat C(A,B)} For two DG categories $A$ and $B$, define the DG category ${\bf C}^\bullet(A,B)$ as follows. Its objects are $A_\infty$ functors $f:A\to B$. Define the complex of morphisms as
$${\bf C}^\bullet(A,B)(f,g)=C^\bullet (A, _f B _g)$$
where $_f B _g$ is the complex $B$ viewed as an $A_\infty$ bimodule on which $A$ acts on the left via $f$ and on the right via $g$. The composition is defined by the cup product as in the formula \eqref{eq:Hochcochains3}.
\begin{remark}\label{rmk:why not all bimodules?} Every $A_\infty$ functor $f:A\to B$ defines an $A_\infty$ $(A,B)$-bimodule $_f B$, namely the complex $B$ on which $A$ acts on the left via $f$ and $B$ on the right in the standard way. If for example $f, \, g:A\to B$ are morphisms of algebras then $C^\bullet (A,_f B _g)$ computes ${\rm{Ext}}^\bullet _{A\otimes B^{\op}}(_fB, _gB).$ What we are going to construct below does not seem to extend literally to all ($A_\infty$) bimodules. This applies also to related constructions of the category of internal homomorphisms, such as in \cite{Keller} and \cite{Toen int hom}. One can overcome this by replacing $A$ by the category of $A$-modules, since every $(A, B)$-bimodule defines a functor between the categories of modules.
\end{remark}
\subsubsection{The bialgebra structure on $\Br (C^\bullet (A,A))$}\label{sss:bialg stree on C(A,A)}
Let us first recall the product on \hfill the \hfill bar \hfill construction \hfill $\Br(C^\bullet(A,A))$ \hfill where \hfill $C^\bullet (A,A)$ \hfill is \hfill the \hfill algebra \hfill of \newline Hochschild cochains of $A$ with coefficients in $A$ (cf. \cite{GJ}, \cite{GV}). For cochains $D_i$ and $E_j,$ define
$$(D_1|\ldots|D_m)\bullet (E_1|\ldots |E_n)=\sum\pm (\ldots |D_1\{\ldots\}| \ldots |D_m\{\ldots\}|\ldots)$$
Here the space denoted by $\ldots$ inside the braces contains $E_{j+1}, \ldots, E_k;$ outside the braces, it contains $E_{j+1}|\ldots|E_k.$ The factor $D_i\{E_{j+1}, \ldots, E_k\}$ is the brace operation as in \eqref{eq:Hochcochains15}. The sum is taken over all possible combinations for which the natural order of $E_j$'s is preserved. The signs are computed as follows: a transposition of $D_i$ and $E_j$ introduces a sign $(-1)^{(|D_i|+1)(|E_j|+1)}.$ In other words, the right hand side is the sum over all tensor products of $D_i\{E_{j+1}, \ldots, E_k\},$ $k\geq j,$ and $E_p$, so that the natural orders of $D_i$'s and of $E_j$'s are preserved. For example,
$$(D)\bullet (E)=(D|E)+(-1)^{(|D|+1)(|E|+1)}(E|D)+D\{E\}$$
\begin{proposition}\label{prop:product on bar C}
The product $\bullet$ together with the comultiplication $\Delta$ makes $\Br(C^\bullet(A,A))$ an associative bialgebra.
\end{proposition}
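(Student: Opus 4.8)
The plan is to recognize $\Br(C^\bullet(A,A))$ as the bar construction of a homotopy Gerstenhaber algebra and to verify the bialgebra axioms one at a time. The comultiplication $\Delta$ is the deconcatenation coproduct, so its coassociativity is immediate, and it is a coderivation of the bar differential by the very construction of $\Br$; these cost nothing. The real content is (a) associativity of $\bullet$, (b) the bialgebra compatibility $\Delta(x\bullet y)=(\Delta x)\bullet(\Delta y)$, and (c) the fact that $\bullet$ is a morphism of complexes. The organizing principle throughout is that the brace operations $D\{D_1,\ldots,D_m\}$ of \eqref{eq:Hochcochains15} satisfy the higher pre-Lie relation recorded in the (unnumbered) Proposition just above Proposition \ref{etoee}, and that these braces interact with the cup product $\smile$ and the Hochschild differential $\delta$ in a controlled way.

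For associativity I would expand $\big((D_1|\ldots|D_m)\bullet(E_1|\ldots|E_p)\big)\bullet(F_1|\ldots|F_q)$ and $(D_1|\ldots|D_m)\bullet\big((E_1|\ldots|E_p)\bullet(F_1|\ldots|F_q)\big)$ directly from the definition of $\bullet$. Each resulting term is a tensor word whose letters are iterated braces of the form $D_i\{\ldots\}$ with arguments themselves of the form $E_j\{\ldots\}$ or $F_k$; on the other side one meets letters $D_i\{\ldots\}$ whose arguments involve $E_j\{F_{\ldots},\ldots\}$. Matching the two expansions letter by letter is exactly the brace relation $\big(D\{E_1,\ldots,E_k\}\big)\{F_1,\ldots,F_l\}=\sum\pm D\{\ldots,E_1\{\ldots\},\ldots,E_k\{\ldots\},\ldots\}$. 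The only delicate point is the sign bookkeeping: every transposition of two of the symbols $D_i,E_j,F_k$ contributes $(-1)^{(|{\cdot}|+1)(|{\cdot}|+1)}$, and I would check that these Koszul signs agree with the signs appearing in the brace relation.

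For the bialgebra compatibility I would use that $\Delta$ cuts a tensor word between two adjacent letters. The key combinatorial observation is that in any summand of $x\bullet y$ a brace $D_i\{E_{j+1},\ldots,E_k\}$ occupies a single letter of the output word: its arguments $E_{j+1},\ldots,E_k$ are absorbed and cannot be separated from $D_i$ by a cut. Hence every place at which one may cut the product word corresponds uniquely to a pair of places at which one cuts the $D$-word and the $E$-word, and re-forming $\bullet$ on each half yields precisely $(\Delta x)\bullet(\Delta y)$ with the tensor-product algebra structure $(a\otimes b)\bullet(c\otimes d)=\pm(a\bullet c)\otimes(b\bullet d)$. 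This is a direct check once the sign conventions are fixed.

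The main obstacle is (c), the compatibility of $\bullet$ with the bar differential $d=d_1+d_2$, where $d_1$ applies $\delta$ letterwise and $d_2$ contracts adjacent letters via $\smile$. For $d_1$ one needs the Leibniz-type identity expressing $\delta\big(D\{D_1,\ldots,D_m\}\big)$ in terms of $\delta D$ and the $\delta D_i$, a direct consequence of the definition \eqref{eq:Hochcochains15}. The genuinely hard term is $d_2$: reconciling the contractions $\ldots|D_i\smile D_{i+1}|\ldots$ produced by $d_2$ with the brace-product requires the distributivity identity
$$(D\smile D')\{E_1,\ldots,E_n\}=\sum_{i=0}^n\pm\, D\{E_1,\ldots,E_i\}\smile D'\{E_{i+1},\ldots,E_n\},$$
relating the braces to $\smile$. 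This identity is exactly the defining axiom of a homotopy Gerstenhaber (brace) algebra in the sense of Gerstenhaber and Voronov, and it, rather than pure brace combinatorics, is where the proof really does its work; once it is in hand, $d(x\bullet y)=(dx)\bullet y+(-1)^{|x|}x\bullet(dy)$ follows by matching terms.
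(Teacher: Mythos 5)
The paper itself offers no proof of this proposition: it is stated as a known result with pointers to \cite{GJ} and \cite{GV}, and your outline is essentially the verification carried out in those references, so at the level of strategy there is nothing to compare. Your treatment of coassociativity, of the compatibility $\Delta(x\bullet y)=(\Delta x)\bullet(\Delta y)$ (a cut of the output word determines, and is determined by, a pair of cuts of the two input words, because each brace $D_i\{E_{j+1},\ldots,E_k\}$ is a single indivisible letter and the orders of the $D_i$ and of the $E_j$ are preserved), and of associativity via the higher pre-Lie relation for braces are all correct.

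The one place where the argument as written would not close up is the claim that the $d_1$ part of the chain-map property follows from a ``Leibniz-type identity\ldots a direct consequence of the definition.'' The differential $\delta=[m,\cdot]$ is \emph{not} a derivation of the brace operations: computing from \eqref{eq:Hochcochains15} one finds
$$\delta\bigl(D\{E_1,\ldots,E_n\}\bigr)-(\delta D)\{E_1,\ldots,E_n\}-\sum_i\pm\,D\{E_1,\ldots,\delta E_i,\ldots,E_n\}$$
$$=\pm\,E_1\smile D\{E_2,\ldots,E_n\}+\sum_i\pm\,D\{E_1,\ldots,E_i\smile E_{i+1},\ldots,E_n\}\pm\,D\{E_1,\ldots,E_{n-1}\}\smile E_n,$$
and these correction terms are indispensable: they are exactly what cancels the $d_2$-contractions of the output word in which a brace letter $D_i\{\ldots\}$ is cupped with a neighbouring bare letter $E_l$, as well as the mismatch between contracting $E_l\smile E_{l+1}$ as adjacent letters of the output word versus inside a brace argument coming from $x\bullet(d_2y)$. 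So the Gerstenhaber--Voronov package needed here consists of two identities on an equal footing --- the distributivity of braces over $\smile$, which you do isolate as the crux, and the homotopy identity above --- and omitting the latter leaves the $d_1$/$d_2$ bookkeeping genuinely unbalanced. Once both are in place, the rest of your argument goes through.
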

Now let us explain how to modify the product $\bullet$ and to get a DG functor
\begin{equation}\label{eq:bul ca }\bullet: \Br({\bf C}^\bullet(A,B))\otimes \Br({\bf C}^\bullet(B,C))\to \Br({\bf C}^\bullet(A,C))
\end{equation}
\subsubsection{The \hfill brace \hfill operations \hfill on \hfill ${\bf C}^\bullet (A,B)$}\label{sss:brace for cats} \hfill For \hfill Hochschild \hfill cochains \hfill $D\in \newline C^\bullet (B, _{f_0} C_{f_1})$ and $E_i\in {\bf C}^\bullet (A, _{g_{i-1}} B _{g_i},$ $1\leq i\leq n,$ define the cochain
$$D\{E_1,\ldots, E_n\}\in C^\bullet (A,_{f_0g_0} C_{f_1g_n})$$
by
\begin{equation}\label{eq:braces for C(A,B)}
D\{E_1,\ldots, E_n\}(a_1,\ldots,a_N)=\sum\pm D(\ldots, E_1({\underline{\ldots}}), \ldots, E_n({\underline{\ldots}}), \ldots)
\end{equation}
where the space denoted by ${\underline{\ldots}}$ within $E_k({\underline{\ldots}})$ stands for $a_{i_k+1}, \ldots, a_{j_k},$ and the space denoted by $\ldots$ between $E_k({\underline { \ldots}})$ and $E_{k+1}({\underline{\ldots}})$ stands for $g_k(a_{j_k+1}, \ldots, ), g_k(\ldots), \newline \ldots, g_k(\ldots, a_{i_{k+1}}).$ The sum is taken over all possible combinations such that ${i_k}\leq {j_k}\leq i_{k+1}.$ The signs are as in \eqref{eq:Hochcochains15}.
\subsubsection{The $\bullet$ product on $\Br({\bf C}(A,B))$}\label{sss:bul for cats} For Hochschild cochains $D_i\in C^\bullet(B,_{f_{i-1}}C_{f_i})$ and $E_j\in C^\bullet(A,_{g_{j-1}}B_{g_j}),$ $1\leq i\leq m,$ $1\leq j\leq n,$ we have
$$(D_1|\ldots|D_m)\in \Br({\bf C}^\bullet (B,C))(f_0, f_m);$$
$$(D_1|\ldots|D_m)\in \Br({\bf C}^\bullet (A,B))(g_0, g_m);$$
define
$$(D_1|\ldots|D_m)\bullet(E_1|\ldots|E_n)\in \Br({\bf C}^\bullet (A,C))(f_0g_0, f_mg_n)$$
by the formula in the beginning of \ref{sss:bialg stree on C(A,A)}, with the following modification. The expression $D_i\{E_{j+1}, \ldots, E_k\}$ is now in ${\bf C}(A,C)(f_{i-1}g_{j+1}, f_ig_j),$ as explained above. The space denoted by $\ldots$ between $D_i\{E_{j+1}, \ldots, E_k\}$ and $D_{i+1}\{E_{p+1}, \ldots, E_q\}$ contains $f_i(E_{k+1}| \ldots)|f_i(\ldots)|\ldots| f_i(\ldots, E_p).$ Here, for an $A_\infty$ functor $f$ and for cochains $E_1, \ldots, E_k,$
\begin{equation}\label{eq:f of E}
f(E_1,\ldots, E_k)(a_1, \ldots, a_N)=\sum f(E_1(a_1, \ldots, a_{i_2-1}), \ldots, E_k(a_{i_{k}+1}, \ldots, a_n))
\end{equation}
The sum is taken over all possible combinations $1\leq i_1\leq i_2\leq\ldots i_k\leq n.$
\begin{lemma}\label{lemma:bullet for cats}
1) The product $\bullet$ is associative.

2) It is a morphism of DG cocategories. In other words,
one has
$$\Delta\circ \bullet=({\bullet_{13}\otimes\bullet_{24}})\circ (\Delta\otimes\Delta)$$
as morphisms
$$\Br(C^\bullet(A,B))(f_0,f_1)\otimes \Br(C^\bullet(B,C))(g_0,g_1)\to$$
$$\Br(C^\bullet(A,C))(f_0g_0,fg)\otimes \Br(C^\bullet(A,C))(fg,f_1g_1)$$
\end{lemma}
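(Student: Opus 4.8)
The plan is to derive both assertions from the single-object statement, Proposition~\ref{prop:product on bar C}, by tracking the extra data carried by the $A_\infty$ functors that label the objects. The essential point is that the combinatorial skeleton of the product $\bullet$ is unchanged: a monomial of $(D_1|\ldots|D_m)\bullet(E_1|\ldots|E_n)$ is a sequence of blocks, each of which is either a brace $D_i\{E_{j+1},\ldots,E_k\}$ as in \eqref{eq:braces for C(A,B)} or a ``gap'' entry $f_i(E_{p+1}|\ldots|E_q)$ produced by the functor action \eqref{eq:f of E}. When $A=B=C$ and all functors are identities, the gap entries collapse to the bare letters $E_{j+1}|\ldots|E_k$ and one recovers exactly the formula of \ref{sss:bialg stree on C(A,A)}.

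First I would prove the associativity in part~1). Writing out $(X\bullet Y)\bullet Z$ and $X\bullet(Y\bullet Z)$ for $X=(D_1|\ldots)$, $Y=(E_1|\ldots)$, $Z=(F_1|\ldots)$, both sides become sums over the ways of inserting the $F$'s into the braces and gaps formed by the $D$'s and $E$'s. Matching the two expansions reduces to two identities. The first is the brace relation for ${\bf C}^\bullet(A,B)$, the analogue of the Proposition preceding~\ref{etoee}, asserting that a doubly iterated brace $(D\{\ldots\})\{\ldots\}$ equals a single sum of the form $D\{\ldots,E\{\ldots\},\ldots\}$. The second is a compatibility between the functor action and the braces: applying $f_i(\ldots)$ to a string that already contains a brace block reorganizes, via the $A_\infty$ relations for $f_i$ and the functoriality of composition, into the brace blocks prescribed by the other bracketing. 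I would isolate this last point as a short auxiliary lemma and check its signs against \eqref{eq:Hochcochains15} and \eqref{eq:f of E}.

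Next, for part~2), I would first note that $\bullet$ commutes with the bar differential (so that, together with the coproduct identity, it is a morphism of DG cocategories), and then expand both sides of $\Delta\circ\bullet=(\bullet_{13}\otimes\bullet_{24})\circ(\Delta\otimes\Delta)$ on a generator $X\otimes Y$. Since $\Delta$ is deconcatenation, it cuts a monomial of $X\bullet Y$ at every position between two of its blocks. The key observation is that each such cut respects the block structure and hence corresponds to a \emph{unique} pair: a cut of $X$ after some $D_i$ and a cut of $Y$ after some $E_r$. If $f$ denotes the target functor of $D_i$ and $g$ the target functor of $E_r$, the two halves of the product word lie in $\Br({\bf C}^\bullet(A,C))(f_0g_0,fg)$ and $\Br({\bf C}^\bullet(A,C))(fg,f_1g_1)$, matching the middle object $fg$ of the target. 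Conversely every compatible pair of cuts of $X$ and $Y$ arises from a unique cut of the product. This bijection, term by term, is precisely the asserted identity, the signs again being inherited from Proposition~\ref{prop:product on bar C}.

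The main obstacle will be the sign reconciliation in the twisted setting. The two combinatorial bijections above are faithful generalizations of the $A=B=C$ case, so the structural content of both parts is essentially contained in Proposition~\ref{prop:product on bar C}; what requires genuine care is the interaction of the degree shifts $[1]$ in the bar construction, the Koszul signs in the brace formula \eqref{eq:Hochcochains15}, and the signs generated by the functor action \eqref{eq:f of E}, all computed along the path from the root as prescribed in \ref{sss:bar}. Once the auxiliary functor--brace compatibility lemma is established with the correct signs, I expect parts~1) and~2) to follow formally from the bijections described above.
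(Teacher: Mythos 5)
The paper states this lemma without any proof (the surrounding constructions are attributed to \cite{GJ}, \cite{GV}, and to Keller, Lyubashenko, Manzyuk, Kontsevich and Soibelman), so there is no argument of the author's to compare yours against; I can only assess your proposal on its own terms, and it is the right strategy. Your reduction of part 1) to (a) the iterated brace identity of the Proposition preceding \ref{etoee}, now in its twisted form where the spaces between the $E_k(\underline{\ldots})$ are filled by $g_k(\ldots)$ as in \eqref{eq:braces for C(A,B)}, and (b) an auxiliary compatibility $(f_i(E_a,\ldots,E_b))\{F_\ast\}=\sum\pm f_i(\ldots,E_j\{F_\ast\},\ldots)$ between \eqref{eq:f of E} and the braces, is exactly how the term-matching of $(X\bullet Y)\bullet Z$ against $X\bullet(Y\bullet Z)$ decomposes; note that (b) genuinely uses the $A_\infty$-functor equations for $f_i$, so it deserves the separate lemma you propose. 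For part 2), your cut/deconcatenation bijection is correct, with one refinement worth making explicit: a gap letter $f_i(E_{k+1},\ldots,E_l)$ bundles several $E$'s into a single letter of the product word, so for a fixed pair of cut positions $(i,r)$ only those monomials in which no brace block and no gap group straddles $E_r$ contribute, and it is the sum over all groupings that matches the tensor product of the two sub-products; the intermediate object is then $f_ig_r$, which is the $fg$ of the statement. Finally, you are right that "morphism of DG cocategories" also requires $\bullet$ to commute with the bar differential $d_1+d_2$; the lemma's displayed identity only records the coproduct half, and the differential half (the brace--cup--$\delta$ relations in the twisted setting) is in fact the more laborious verification, so it should not be dismissed as a footnote. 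With the signs tracked as you indicate, the argument goes through.
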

\subsubsection{Internal ${\underline{\rm{Hom}}}$ of DG cocategories}\label{sss:int hom}Following the exposition of \cite{Keller}, we explain the construction of Keller, Lyubashenko, Manzyuk, Kontsevich and Soibelman. 
For two $k$-modules $ V$ and $W$, let ${ {\Hom}}(V,W)$ be the set of homomorphisms from $V$ to $W$, and let $\iHom(V,W)$ be the same set viewed as a $k$-module. The two satisfy the property
\begin{equation}\label{eq:univ property of int hom}
\Hom(U\otimes V,W)\isomoto \Hom (U, \iHom(V,W)).
\end{equation}
In other words, $\iHom(V,W)$ is the internal object of morphisms in the symmetric monoidal category $k-\rm{mod}.$ The above equation automatically implies the existence of an associative morphism 
\begin{equation}\label{eq:composition on iHom}
\iHom(U,V)\otimes \iHom(V,W)\to \iHom(U,W)
\end{equation}
If we replace the category of modules by the category of algebras, there is not much chance of constructing anything like the internal object of morphisms. However, if we replace $k-\rm{mod}$ by the category of coalgebras, the prospects are much better. For our applications, it is better to consider counital coaugmented coalgebras. In this category, objects $\iHom$ do not exist because the equation \eqref{eq:univ property of int hom} does not agree with coaugmentations. However, as explained in \cite{Keller}, the following is true.
\begin{proposition}\label{prop:int Hom by Keller}
The category of coaugmented counital conilpotent cocategories admits internal $\iHom$s. For two DG categories $A$ and $B,$ one has
\begin{equation}\label{eq:iHom of cocats}
\iHom(\Br(A), \Br(B))=\Br({\bf C}(A,B))
\end{equation}
\end{proposition}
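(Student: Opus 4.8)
The plan is to establish the formula \eqref{eq:iHom of cocats} by showing that $\Br({\bf C}(A,B))$ satisfies the defining adjunction of the internal $\iHom$: for every coaugmented counital conilpotent DG cocategory $X$ there should be a natural isomorphism
$$\Hom\bigl(X,\Br({\bf C}(A,B))\bigr)\;\cong\;\Hom\bigl(X\otimes\Br(A),\,\Br(B)\bigr),$$
where $\otimes$ is the tensor product of Definition~\ref{dfn:tensor product without (co)units}. Proving this natural isomorphism simultaneously shows that the internal $\iHom$ exists and equals $\Br({\bf C}(A,B))$. Under this identification, the associative composition \eqref{eq:composition on iHom} required of an internal-Hom object is then supplied by the $\bullet$-product morphism of Lemma~\ref{lemma:bullet for cats}, so no separate construction of the composition is needed.

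Both sides of the desired isomorphism are to be computed by the same device, the cofreeness of the bar construction. Since $\Br(C)$ is the cofree conilpotent DG cocategory cogenerated by the collection $C[1]$ (\ref{sss:bar}, \ref{ss:Bar cobar for DG cat}), a morphism of DG cocategories $Y\to\Br(C)$ amounts to a map on objects $\Ob(Y)\to\Ob(C)$ together with a corestriction to cogenerators $\tau\colon Y\to C[1]$, the compatibility of $\tau$ with the two differentials being exactly a Maurer--Cartan (twisting-cochain) equation in which the product is the composition of $C$ and the quadratic term is formed through the coproduct of $Y$. Applying this with $Y=X$ and $C={\bf C}(A,B)$ rewrites the left-hand $\Hom$ as the set of twisting cochains $X\to{\bf C}(A,B)$; applying it with $Y=X\otimes\Br(A)$ and $C=B$ rewrites the right-hand $\Hom$ as the set of twisting cochains $X\otimes\Br(A)\to B$. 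The proposition is thereby reduced to a natural bijection between these two sets.

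To produce that bijection I would transpose across the ordinary tensor--hom adjunction \eqref{eq:univ property of int hom}. Writing $\Br(A)=\bigoplus_n A[1]^{\otimes n}$, a twisting cochain $X\otimes\Br(A)\to B$ is a family of maps $X(x,x')\otimes A[1]^{\otimes n}\to B$, and these transpose to maps $X(x,x')\to\iHom(A[1]^{\otimes n},B)$; assembled over $n$ they are precisely maps into the Hochschild cochain complex underlying ${\bf C}(A,B)$ (\ref{sss:DG cat C(A,B)}). On objects, the assignment $\Ob(X)\times\Ob(A)\to\Ob(B)$ together with the part of the Maurer--Cartan equation that uses only the deconcatenation coproduct of $\Br(A)$ forces each $x\in\Ob(X)$ to be sent to an $A_\infty$ functor $F_x\colon A\to B$, i.e.\ to an object of ${\bf C}(A,B)$, the coefficient bimodule on the target then being ${}_{F_x}B_{F_{x'}}$ exactly as in \ref{sss:DG cat C(A,B)}.

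The substance of the statement is that under this transposition the Maurer--Cartan equation for $\tau\colon X\otimes\Br(A)\to B$ becomes the Maurer--Cartan equation for the transposed datum $\sigma\colon X\to{\bf C}(A,B)$. Decomposing the former by the number of $A$-inputs and splitting the coproduct into its $\Br(A)$-part and its $X$-part, the terms that gather $A$-inputs into one slot should reproduce the differential of ${\bf C}(A,B)$ (the Hochschild differential together with the structure maps of $A$ and of the $F_x$), while the terms that split along the coproduct of $X$ should reproduce the cup product \eqref{eq:Hochcochains3} of ${\bf C}(A,B)$, fed through the brace operations \eqref{eq:braces for C(A,B)}. I expect the main obstacle to be exactly this term-by-term matching \emph{with signs}: the transposition across the shifts $[1]$, the brace signs, and the Koszul signs in the coproduct of the tensor product (Definition~\ref{dfn:tensor product without (co)units}) all interact, and it is their precise cancellation that makes the two equations coincide. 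Naturality in $X$ is then automatic, since every map involved is given by an explicit functorial formula, and the associativity of the induced composition is the one already recorded in Lemma~\ref{lemma:bullet for cats}.
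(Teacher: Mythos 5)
The paper offers no proof of this proposition, deferring entirely to Keller's exposition of the Keller--Lyubashenko--Manzyuk--Kontsevich--Soibelman construction; your argument --- cofreeness of $\Br(-)$ reducing both $\Hom$ sets to twisting cochains, transposition across \eqref{eq:univ property of int hom}, and the identification of the two Maurer--Cartan equations via the braces and the cup product --- is exactly that standard proof. The only points to make explicit when writing it out are that conilpotency of $X$ is what makes the corestriction-to-cogenerators description of morphisms into $\Br(C)$ valid, and that the $A_\infty$ functors $F_x$ arise from restricting the twisting cochain $X\otimes\Br(A)\to B$ along the coaugmentation $k\cdot\mathrm{id}_x\hookrightarrow X(x,x)$ rather than only from the deconcatenation part of the coproduct of $\Br(A)$.
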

\subsection{Hochschild and cyclic complexes of DG categories and $A_\infty$ categories}\label{ss:Complexes of A infty cats}These are direct generalizations of the corresponding constructions for DG algebras. The Hochschild chain complex of a DG category (or, more generally, of an $A_\infty$ category) $A$ is defined as
$$C_p(A)=\bigoplus _{k-j=p} \bigoplus_{i_0,\ldots, i_p\in {\rm{Ob}}(A)} (A(i_0,i_1)\otimes {\overline A}(i_1,i_2)\otimes \ldots \otimes {\overline A}(i_p,i_0))^j;$$
the Hochschild cochain complex, as
$$C^p(A)=\prod _{k+j=p} \prod_{i_0,\ldots, i_p\in {\rm{Ob}}(A)} {\rm{Hom}}({\overline A}(i_0,i_1)\otimes \ldots \otimes {\overline A}(i_{p-1},i_p), A(i_0,i_p))^j;$$
the formulas for the differentials $b,$ $B,$ and $\delta$ are identical to those defined above for DG and $A_\infty$ algebras.
\section{Infinity algebras and categories}\label{s:Infinity-algebras and categories}
We develop a version of the definitions of an infinity algebra over an operad, an infinity category, and an infinity $n$-category. These
definitions
are closer to the work of Lurie, and of Batanin, than the ones developed in \ref{s:operads}. We compare the two. We show that Hochschild cochains of a DG algebra (or DG category) form an infinity two-category. We extend some of this discussion to the case of  Hochschild chains.
\subsection{Infinity algebras over an operad}\label{ss:Infty-algs} Let $\cP$ be an operad in sets. Define the category $\cP^{\#}$ as the PROP associated to $\cP.$ In other words, let $\cP^{\#}$ be the category whose objects are $[n], \, n=1,2,3,\ldots,$ and whose morphisms are defined by
\begin{equation}\label{eq:P sharp}
\cP^{\#}([n],[m])= \{{\rm{Natural}}\;{\rm{maps}}\;X^n\to X^m\}
\end{equation}
where $X$ is any set which is an algebra over $\cP.$ By this we mean that morphisms from $[n]$ to $[m]$ are all maps that you can construct universally, using the algebra structure, from $X^n$ to $X^m$ where $X$ is any set that is a $\cP$-algebra, so that every component $x_j$ in the argument $(x_1,\ldots,x_n)$ is used exactly once. 

\begin{remark}\label{rmk:As sharp}
When $\cP={\operatorname{As}}$, a $\cP$-algebra is an associative monoid. We will, however, modify the definition slightly and require it to be a unital monoid. The set of objects will be $\{[0], \,[1], \, [2], \ldots\}.$ Morphisms in ${\operatorname{As}}^{\#}([n],[m])$ can be identified with data
$$(f:\{1,\ldots, n\}\to \{1,\ldots, m\}; <_1, \ldots, <_m)$$
where $<_i$ is a linear order on $f^{-1}(\{i\}).$ A natural morphism associated to such data is defined by
\begin{equation}\label{eq:nat mm in As sharp}
(x_1,\ldots,x_n)\mapsto (\prod_{f(j)=1}x_j, \ldots, \prod_{f(j)=m}x_j)
\end{equation}
where the products are taken according to the orders $<_i$ and the product over the empty set is $1.$ This category was introduced in \cite{Fied}.
\end{remark}
The category $\cPsh$ has a symmetric monoidal structure as follows. On objects, $[n]\otimes [m]=[n+m];$ on morphisms, $f\otimes f':[n+n']\to[m+m']$ is the natural morphism obtained by concatenation of $f$ and $f'.$

The following definition is due to Leinster \cite{Leinster}.
\begin{definition}\label{dfn:inf alg}
Let ${\mathfrak C}$ be a symmetric monoidal category with weak equivalences.
An infinity-algebra over $\cP$ in ${\mathfrak C}$ is a functor
$$A:\cPsh\to {\mathfrak C}; \;[n]\mapsto A(n)$$
together with a natural transformation
\begin{equation}\label{eq:coprod Leinster}
\Delta(n,m):\,A(n+m)\to A(n)\otimes A(m)
\end{equation}
which is a weak equivalence for every pair $(n, \,m)$ and is coassociative, i.e.
$$ \id_{A(n)}\otimes \Delta(m,k)=\Delta(n,m)\otimes \id_{A(k)}:A(n+m+k)\to A(n)\otimes A(m) \otimes A(k)$$
\end{definition}
\begin{lemma} \label{lemma:infinity to infty}
For an infinity algebra $A$ in the category of complexes, there exists a $k[\cP]_\infty$-algebra structure on $A(1)$ such that the composition
$$\cP(n)\otimes A(n)\stackrel{\id _{\cP}\otimes\Delta}{\longrightarrow}\cP(n) \otimes A(1)^{\otimes n}\to A(1)$$
is homotopic to 
$$\cP(n)\otimes A(n)\to {\cP^{\#}}(n,1)\otimes A(n)\to A(1).$$
This structure can be chosen canonically up to homotopy.
\end{lemma}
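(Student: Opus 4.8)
The plan is to read the data of an infinity-algebra $A$ as an oplax comonoidal functor from the PROP $\cPsh$ to complexes whose structure maps $\Delta$ are weak equivalences: if the $\Delta(n,m)$ were isomorphisms, $A$ would be strong monoidal and hence the same thing as a strict $k[\cP]$-algebra on $V:=A(1)$; since they are only quasi-isomorphisms we obtain a $\cP$-algebra up to coherent homotopy, which by the general theory of cofibrant resolutions (\ref{ss:Cofibrant DG operads and algebras}) is precisely a $k[\cP]_\infty$-algebra. Concretely the goal is to produce a morphism of DG operads $\Phi\colon k[\cP]_\infty\to\End_V$ and to exhibit the asserted homotopy.

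First I would extract the data that occurs naturally. The action of the unary-output operations $\cPsh(n,1)=\cP(n)$ on the functor $A$ gives, after linearization, a map of collections $\alpha_n\colon k[\cP](n)\to\Hom(A(n),V)$; functoriality of $A$ together with coassociativity of $\Delta$ makes $\alpha$ compatible with operadic composition, so that it is a morphism into the ``homotopy operad'' $\cE$ with $\cE(n)=\Hom(A(n),V)$, the only failure of $\cE$ to be a genuine operad being that its composition would require a section of $\Delta$. Iterating $\Delta$ produces quasi-isomorphisms $A(n)\isomoto V^{\otimes n}$, hence quasi-isomorphisms $\Delta^{*}\colon\End_V(n)=\Hom(V^{\otimes n},V)\isomoto\Hom(A(n),V)=\cE(n)$; and since we work over a field each $\Delta^{(n)}\colon A(n)\to V^{\otimes n}$ admits a chain homotopy inverse $\sigma_n$.

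I would then construct $\Phi$ by induction on the tree-weight filtration of the standard semifree resolution $k[\cP]_\infty=\Cbr\Br(k[\cP])=\FreeOp(W)$, $W=\Br(k[\cP])[-1]$. On the weight-one generators, which form a copy of $\cP(n)$ up to shift, set $\Phi(p)=\alpha(p)\circ\sigma_n\in\Hom(V^{\otimes n},V)$; on a weight-$w$ generator $\xi$ one must solve $d\,\Phi(\xi)=\Phi(d\xi)$, where $d\xi$ has lower weight so the right-hand side is an already-defined cycle. The content of the induction is that this cycle is a boundary and that the solution is unique up to homotopy: this is exactly the problem of lifting the homotopy-operad map $\alpha\circ q$ along the levelwise quasi-isomorphism $\Delta^{*}$, with obstructions living in the cohomology of the convolution complex $\Hom(W,\operatorname{Cone}(\Delta^{*}))$, which is acyclic precisely because every $\Delta^{*}$ is a quasi-isomorphism. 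Acyclicity simultaneously kills all obstructions (so $\Phi$ exists) and makes all choices canonically homotopic (so $\Phi$ is determined up to homotopy), which is the asserted canonicity.

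The homotopy statement is then immediate: by construction the leading (weight-one) component of the $k[\cP]_\infty$-structure is $p\otimes v_1\otimes\cdots\otimes v_n\mapsto\alpha(p)\,\sigma_n(v_1\otimes\cdots\otimes v_n)$, so precomposing with $\id_{\cP}\otimes\Delta$ yields $\alpha(p)\circ\sigma_n\circ\Delta^{(n)}$, which is homotopic to $\alpha(p)$ via $\alpha$ applied to the homotopy $\sigma_n\circ\Delta^{(n)}\simeq\id_{A(n)}$; and $\alpha(p)$ acting on $A(n)$ is exactly the composite $\cP(n)\otimes A(n)\to\cPsh(n,1)\otimes A(n)\to V$. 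The main obstacle is this third step: giving $\cE$ a rigorous homotopy-operadic meaning and proving the acyclicity of the obstruction complex, i.e. that the failure of $\Delta$ to be strictly invertible is uniformly killed. This is the abstract homotopical input---presumably the lifting argument attributed in the introduction to Ilan Barnea---asserting that a cofibrant operad lifts, uniquely up to homotopy, along a levelwise quasi-isomorphism such as $\Delta^{*}$.
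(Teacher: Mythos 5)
Your proposal is correct in outline but takes a genuinely different technical route from the paper. You build the morphism $k[\cP]_\infty=\Cbr\Br(k[\cP])\to\End_{A(1)}$ directly, by induction on the weight filtration of the semifree resolution, with obstructions killed by the levelwise acyclicity of the cone of $\Delta^{*}\colon\End_{A(1)}(n)\to\Hom(A(n),A(1))$. The paper instead encodes the infinity-algebra structure as a square-zero coderivation on the DG coalgebra $\prod_n(\Br(\cP)(n)\otimes A(n))^{\Sigma_n}$ over the cooperad $\Br(\cP)$, and transfers that coderivation along the quasi-isomorphism to the cofree coalgebra $\prod_n(\Br(\cP)(n)\otimes A(1)^{\otimes n})^{\Sigma_n}$, whose coderivations are exactly $\cP_\infty$-structures on $A(1)$; the two arguments are bar-dual descriptions of the same homotopy transfer, and both yield existence, the stated compatibility up to homotopy with the action of $\cP^{\#}(n,1)$, and canonicity up to homotopy. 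The obstacle you flag yourself --- that your collection $\cE(n)=\Hom(A(n),A(1))$ is not literally an operad, so ``lift along a levelwise quasi-isomorphism'' does not immediately typecheck --- is precisely what the paper's formulation is designed to sidestep: a coderivation on a coalgebra over $\Br(\cP)$ is a well-defined object even when that coalgebra is not cofree, so no homotopy-operad structure on $\cE$ ever needs to be articulated. If you want to complete your version as written, the cleanest repair is to replace $\cE$ by that coalgebra, or equivalently to verify by hand the lifting property of the cofibrant operad $\Cbr\Br(k[\cP])$ against $\End_{A(1)}\to\cE$, which your weight induction already essentially does. (One further small caveat for both arguments: choosing chain homotopy inverses to the maps $A(n)\to A(1)^{\otimes n}$ uses more than ``quasi-isomorphism over a commutative ring,'' so some cofibrancy of the complexes, or a field of scalars, should be assumed.)
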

\begin{proof} One can define the DG coalgebra 
$$\prod_n ({\rm{Bar}}({\mathcal P}) (n)\otimes A(n))^{\Sigma_n}$$
over ${\rm{Bar}}({\mathcal P})$ together with a coderivation $d$ of degree one and square zero, using the infinity algebra structure on $A.$ Then one transfers the DG coalgebra structure to the quasi-isomorphic complex 
$$\prod_n ({\rm{Bar}}({\mathcal P}) (n)\otimes A^{\otimes n})^{\Sigma_n}$$
which is the cofree coalgebra over ${\rm{Bar( {\mathcal P})}}$ generated by $A$. The resulting coderivation gives a ${\mathcal P}_\infty$-algebra structure on $A.$
\end{proof}
\begin{remark}\label{rmk:Costello vs Leinster}
In \cite{Costello TQFT}, Costello uses a different definition of an infinity algebra over a PROP in simplicial sets. For such a PROP ${\bf P},$ an infinity ${\bf P}$-algebra $A$ is defined as a functor ${\bf P}\to {\mathfrak C}$ together with an associative natural transformation $A(n)\otimes A(m)\to A(n+m)$ which is a weak equivalence for every $m$ and $n$. But, when ${\bf P}=\cP^{\#}$ for an operad $\cP,$ what we get is a strict algebra over $\cP.$
\end{remark}
\begin{remark}\label{rmk:Segal vs Leinster}\cite{Leinster}
When ${\mathfrak C}={\rm{Top}},$ then the definition of an infinity associative algebra leads to the definition of a Segal space $X$ with $X_0={\rm{pt}}.$ Indeed, put $X_n=A(n).$ Define $d_i:A(n)\to A(n-1)$ as follows. For $1\leq i\leq n-1,$ $d_i$ is induced by the map $(x_1,\ldots, x_n)\mapsto (x_1, \ldots, x_ix_{i+1}, \ldots, x_n)$ in ${\rm{As}}^{\#}([n],[n-1]).$ For $i=0,$ resp. $i=n,$ define $d_i$ to be the composition $A(n)\to A(1)\times A(n-1)\to A(n-1),$ resp. $A(n)\to A(n-1)\times A(1)\to A(n-1).$ Degeneracy operators $s_i$ are induced by maps $(x_1, \ldots, x_n)\mapsto (x_1, \ldots, x_i, 1,x_{i+1}, \ldots, x_n).$
\end{remark}
\subsubsection{Multiple infinity algebras}\label{sss:multiple algebras}
A morphism $A_1\to A_2$ of infinity $\cP$-algebras is a morphism of functors which is compatible with the underlying structure. By definition, a morphism is a weak equivalence if every map $A_1(n)\to A_2(n)$ is a weak equivalence.

Infinity $\cP$-algebras form a symmetric monoidal category: for two such algebras $A_1$ and $A_2, $ put $(A_1\otimes A_2)(n)=A_1(n)\otimes A_2(n);$ the action of morphisms from $\cP^{\#}$ and the comultiplication $\Delta$ are defined as tensor products of those for $A_1$ and $A_2.$

\begin{definition}\label{dfn:multiple algebras} An infinity $(\cP, \cQ)$-algebra is an infinity $\cP$-algebra in the symmetric monoidal category of infinity $\cQ$-algebras.
\end{definition}
In other words, an infinity $(\cP, \cQ)$-algebra is a collection of objects $A(m,n)$, morphisms $\cP^{\#}(m_1,m_2)\otimes A(m_1,n)\to A(m_2,n),$ and weak equivalences $\cQ^{\#}(n_1,n_2)\otimes A(m,n_1)\to A(m,n_2),$ $A(m_1+m_2,n)\to A(m_1,n)\otimes A(m_2,n),$ and $A(m,n_1+n_2)\to A(m,n_1)\otimes A(m,n_2)$ subject to various compatibilities.

\begin{example}\label{ex:mult algebra}
Let $\cP\otimes \cQ$ be the tensor product as in \cite{Dunn}; it is defined as the free product of $\cP$ and $\cQ$ factorized by relations
$$\alpha(\beta, \ldots, \beta)=\beta(\alpha, \ldots, \alpha)\in (\cP\otimes \cQ)(mn)$$
for all $\alpha\in \cP(m)$ and $\beta\in \cQ(n);$ here $\alpha(\beta, \ldots, \beta)$ denotes ${\rm{op}}_{n, \ldots, n}(\alpha\otimes (\beta\otimes \ldots \otimes \beta))$ and $\beta(\alpha, \ldots, \alpha)$ denotes ${\rm{op}}_{m, \ldots, m}(\beta\otimes(\alpha\otimes\ldots\otimes\alpha)).$ For a $\cP\otimes \cQ$-algebra $A$ one can define an infinity $(\cP, \cQ)$-algebra with $A(m,n)=A^{\otimes mn}.$
\end{example}

\subsection{Infinity categories}\label{ss:infty-cats}
\begin{definition}\label{dfn:As I} For a set $I$, let $\As^\#_I$ be the following category. Its objects are directed graphs with the set of vertices $I$ and with a finite number of edges. For two such graphs $\Gamma$ and $\Gamma ',$ $\As^\#_I (\Gamma, \Gamma ')$ is the set of all natural maps
$$\prod _{{\rm{edges} (\Gamma)}} X({\rm{source}}(e), {\rm{target}}(e))\to \prod _{{\rm{edges} (\Gamma ')}} X({\rm{source}}(e), {\rm{target}}(e))$$
for \hfill any \hfill category \hfill $X$ \hfill with \hfill ${\rm{Ob}}(X)=I;$ \hfill we \hfill require \hfill any \hfill argument $x_e\in X({\rm{source}}(e), \newline {\rm{target}}(e))$ to enter exactly once.
\end{definition}

Note that $\As^\#_I$ is a symmetric monoidal category if we put $\Gamma\otimes \Gamma '=\Gamma\coprod \Gamma '$ (disjoint union of edges with the same set of vertices). If $I$ is a one-element set then $\As^\#_I$ is the category ${\rm{As}}^\#$ as in \ref{s:Infinity-algebras and categories}.

A map of sets $F:I_1\to I_2$ induces a monoidal functor $F_*:\As^\#_{I_1}\to \As^\#_{I_2}.$

\begin{definition}\label{dfn:hom cat}
An infinity category $A$ in a symmetric monoidal category ${\mathfrak C}$ with weak equivalences is a set $I$ and a functor $A:\As^\#_I\to {\mathfrak C}$ together with a coassociative natural transformation
$$\Delta(\Gamma, \Gamma '): A(\Gamma\coprod \Gamma ')\to A(\Gamma)\otimes A( \Gamma ')$$
which is a weak equivalence for all $\Gamma, \Gamma '$ in ${\rm{Ob}}(\As^\#_I).$
\end{definition}
\subsection{Infinity $2$-categories}\label{ss:infty-2-cats} Let ${\mathfrak C}$ be the category of complexes, of simplicial sets, or of topological spaces. For an infinity category $A$ in ${\mathfrak C}$, define the homotopy category ${\rm{Ho}}(A)$ by
$${\rm{Ob}}{\rm{Ho}}(A)=I;\; {\rm{Ho}}(A)(i,j)=H^0(A(i\to j))$$
in the case of complexes, or $\pi_0$ in the other cases. (By $i\to j$ we denote the graph with two vertices marked by $i$ and $j$ and one arrow from $i$ to $j$).
\begin{definition}\label{dfn:mm and qeq of inf cats}
A morphism of infinity categories $(I_1,A_1)\to (I_2,A_2)$ is:

a) a map of sets $F:I_1\to I_2;$

b) a morphism of functors $A_1\to A_2\circ F_*$ which is compatible with $\Delta.$

A morphism is by definition a weak equivalence if it induces an equivalence of homotopy categories and every morphism $A_1(\Gamma)\to A_2(F_*(\Gamma))$ is a weak equivalence.
\end{definition}
The category of infinity categories is symmetric monoidal if one puts $(I_1, A_1) \otimes (I_2, A_2)=(I_1\times I_2, A)$ where
$$A(\Gamma)=A_1(\Gamma_1)\otimes A_2(\Gamma_2);$$
here $\Gamma_1$ has one edge $i_1\to j_1$ for every edge $(i_1,i_2)\to (j_1,j_2)$ and $\Gamma_2$ has one edge $i_2\to j_2$ for every edge $(i_1,i_2)\to (j_1,j_2).$
\begin{definition}\label{dfn:infty 2-cat} An infinity two-category is an infinity category in the symmetric monoidal category of infinity categories (the monoidal structure and weak equivalences on the latter are defined above).
\end{definition}
\subsection{Hochschild cochains as an infinity two-category} \label{ss:cochs as 2cat}
It is well known that categories \hfill form \hfill a \hfill two-category \hfill where \hfill one-morphisms \hfill are \hfill functors \hfill and \hfill two-\newline morphisms are natural transformations. Associative algebras also form a two-category: one-morphisms between $A$ and $B$ are $(A,B)$-bimodules; two-morphisms between $(A,B)$-bimodules $M$ and $N$ are morphisms of bimodules. In other words, to any algebras $A$ and $B$ we can associate a category $\cC(A,B)=(A,B)-{\operatorname{bimod}};$ for any three algebras there is a functor 
\begin{equation}\label{eq:product in 2-cat}
\cC(A,B)\times \cC(B,C)\to \cC(A,C)
 \end{equation}
 that satisfies the associativity property; it sends $(M,N)$ to $M\otimes _B N.$ For any $A$, there is the unit object $\id_A $ of $\cC(A)$ with respect to the above product. In fact $\id_A=A$ viewed as a bimodule. Note that
$$\End_{\cC(A,A)}(\id(A))={\operatorname{Center}}(A).$$
Note also that the two-category of algebras maps to the two-category of categories: an algebra $A$ maps to the category $A-{\rm{mod}},$ and a bimodule $M$ to the functor $M\otimes -.$

Our aim is to construct an infinity version of the above, namely an infinity 2-category whose objects are DG categories.

\subsubsection{The construction of the infinity 2-category of Hochschild cochains}\label{constr of 2-cat of Hochcoch}
Let $I$ be any set of DG categories. We first define the infinity-category ${\mathcal C}$ in the category of DG categories with the set of objects $I$. To do that, for any directed graph $\Gamma$ with set of vertices $I$ and with finitely many edges, put
\begin{equation}\label{eq:dfn of cat in algs}
{\mathcal C}(\Gamma)=\Cbr(\bigotimes _{{\rm{edges} (\Gamma)}} \Br({\bf C}({\rm{source}}(e), {\rm{target}}(e))))
\end{equation}
(recall that DG categories ${\bf{C}}(A,B)$ were defined in \ref{sss:bul for cats}).
For any $f:\Gamma\to \Gamma'$ in $\As^\#(\Gamma, \Gamma'),$ the corresponding map is induced by the $\bullet$ product and by insertion of $1$. The coproduct $\Delta:\cC(\Gamma\coprod \Gamma')\to \cC(\Gamma)\otimes \cC(\Gamma')$ is a partial case of the coproduct \eqref{eq:coprod coshuffle}.
\subsubsection{The module structure}\label{sss:The module structure} Similarly to the above, one can define the notion of an infinity algebra and an infinity module in a monoidal category $\cC$ with weak equivalences. Such an object is an infinity algebra $\{A(n)\}$ and a collection of objects $\{M(n-1,1)\}$ subject to various axioms that we leave to the reader. (Alternatively, one can replace the operad $\cP$ in Definition \ref{dfn:inf alg} by the colored operad $\As ^+$). Similarly one can define an infinity functor from an infinity category to $\cC.$ The latter is a collection of objects $M(\Gamma, v)$ where $\Gamma$ is a graph as above and $v$ is a vertex of $\Gamma.$ 

Recall that we have constructed in \ref{constr of 2-cat of Hochcoch} an infinity category ${\mathcal C}$ in the category ${\rm DGCat}$ of DG categories such that the its value at the graph $A\to B$ with two vertices and one edge is equal to
$${\mathcal C}(A\to B)=\Cbr\Br ({\bf C}(A,B)).$$ 
One can extend this definition by constructing an infinity functor $M$ from $\cC$ to ${\rm DGCat}$ such that $M(A)=\Cbr \Br (A).$ To do this, just observe that there is a morphism of DG cocategories
\begin{equation}\label{eq: bullet module}
\Br ({\bf C}(A,B))\times \Br(A)\to \Br(B)
\end{equation}
that agrees with the product from Lemma \ref{lemma:bullet for cats}. 
\subsubsection{The $A_\infty$ structure on chains of cochains}\label{sss:chains of cochains}
As a consequence of the above, we get
\begin{proposition}\label{prop: chains of cochains} 
1) The complex $C_{-\bullet}(C^\bullet (A,A), (C^\bullet (A,A))$ carries a natural $A_\infty$ algebra structure such that

\begin{itemize}
\item All $m_n$ are $k[[u]]$-linear, $(u)$-adically continuous
\item $m_1 = b+\delta +uB$
For $x,\;y \in C_{\bullet}(A)$: \item $(-1)^{|x|}m_2 (x,y) =
(\operatorname{sh} + u \operatorname{sh}')(x,y)$

For $D,\;E \in C^{\bullet}(A,A)$:
\item $(-1)^{|D|}m_2(D,E) = D \smile E$
\item $m_2(1\otimes D, \; 1 \otimes E) + (-1)^{|D||E|}m_2(1\otimes E, \; 1
\otimes D) = (-1)^{|D|}1 \otimes [D,\; E]$
\item $m_2( D, \; 1 \otimes E) + (-1)^{(|D|+1)|E|}m_2(1\otimes E, \;D) =
(-1)^{|D|+1}[D,\; E]$
\end{itemize}
(we use the shuffle products as defined in \ref{sss:prodcoprod}).

2) The complex $C_{-\bullet}(A,A)$ carries a natural structure of an $A_\infty$ module over the $A_\infty$ algebra from 1), such that

\begin{itemize}
\item All $\mu_n$ are $k[[u]]$-linear, $(u)$-adically continuous
\item $\mu_1 = b +uB$ on $C_{\bullet}(A)[[u]]$

For $a \in C_{\bullet}(A)[[u]]$:
\item $\mu_2 (a, D) = (-1)^{|a||D| + |a|}(i_D + u S_D)a$
\item $\mu_2 (a, 1 \otimes D) = (-1)^{|a||D|}L_D a $

For $a, \; x \in C_{\bullet}(A)[[u]]$: $(-1)^{|a|}\mu_2(a,x) =
(\operatorname{sh} + u \operatorname{sh}')(a,x)$
\end{itemize}

3) The above structures extend to negative cyclic complexes ${\rm CC}^-_\bullet.$
\end{proposition}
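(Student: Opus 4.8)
The plan is to obtain all three structures by the infinity-algebra formalism of Section \ref{s:Infinity-algebras and categories}: I will realize the chains of cochains, and the chains, as the values at $[1]$ of an infinity associative algebra and an infinity module in the sense of Definition \ref{dfn:inf alg}, both over the ground ring $k[[u]]$, and then transfer these to genuine $A_\infty$ structures by Lemma \ref{lemma:infinity to infty} and its colored-operad ($\As^+$) analogue alluded to in \ref{sss:The module structure}. The prescribed low-order terms are then read off by expanding the leading components of the transferred structures against the explicit combinatorial formulas already recorded in the paper.

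\textbf{Part 1.} I would build the underlying infinity associative algebra from the $\bullet$-bialgebra on the bar construction. By Proposition \ref{prop:product on bar C} the pair $(\bullet,\Delta)$ makes $\Br(C^\bullet(A,A))$ a DG bialgebra, and this is exactly the datum needed to regard the Hochschild chain complex $C_{-\bullet}(C^\bullet(A,A))$ as the value at $[1]$ of an infinity algebra: one sets $A(n)$ to be the cyclic-bar object built from $(\Br(C^\bullet(A,A)),\bullet)$, with the $\As^\#$-action induced by $\bullet$ and the comultiplications $\Delta(n,m)$ of \eqref{eq:coprod Leinster} induced by the coproduct $\Delta$. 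These are quasi-isomorphisms because the relevant bar constructions are quasi-free, so Lemma \ref{lemma:infinity to infty} applies and yields the canonical (up to homotopy) $A_\infty$ structure. The identity $m_1=b+\delta+uB$ is immediate from the differential on the chain complex. The identifications of $m_2$ with $\smile$ on cochains, with $\operatorname{sh}+u\operatorname{sh}'$ on chains, and the two bracket formulas, are the degree-one part of $\bullet$, computed from the sample expansion $(D)\bullet(E)=(D|E)\pm(E|D)+D\{E\}$ of \ref{sss:bialg stree on C(A,A)} together with the shuffle formulas of \ref{sss:prodcoprod}.

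\textbf{Part 2.} Here I would use the module morphism \eqref{eq: bullet module}, which for $B=A$ reads $\Br({\bf C}(A,A))\times \Br(A)\to \Br(A)$ and which by Lemma \ref{lemma:bullet for cats} is compatible with $\bullet$ and with the coproducts. Restricting to the identity-functor component exhibits $\Br(A)$, which computes $C_{-\bullet}(A,A)$, as an infinity module over the infinity algebra of Part 1; the module analogue of Lemma \ref{lemma:infinity to infty} transfers it to an $A_\infty$ module. The leading operation $\mu_2$ is then the degree-one part of \eqref{eq: bullet module}, and matching it against the definitions of $i_D$ and $S_D$ in \eqref{eq: i}, \eqref{eq:S_D} and of $L_D$ in \eqref{eq: L} produces the stated formulas $\mu_2(a,D)=\pm(i_D+uS_D)a$ and $\mu_2(a,1\otimes D)=\pm L_D a$. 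Propositions \ref{prop: reinhart} and \ref{prop: gdt} are precisely the first homotopy-coherence relations these operations satisfy, now promoted to all orders by the transfer.

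\textbf{Part 3 and the main obstacle.} Part 3 requires only that the whole construction be $k[[u]]$-linear and $(u)$-adically continuous and compatible with the inclusion into $CC^-_\bullet$; this holds because $\bullet$, $\Delta$, and \eqref{eq: bullet module} are manifestly $u$-linear (the variable $u$ enters only through $B$, $\operatorname{sh}'$ and $S_D$) and Lemma \ref{lemma:infinity to infty} may be run over $k[[u]]$. The genuine difficulty is not existence but the \emph{rigidity of the normalization}: one must arrange the infinity-algebra and infinity-module data so that the transferred $m_2$ and $\mu_2$ equal the prescribed closed expressions on the nose, rather than merely up to homotopy, while keeping the cyclic ($uB$) direction strictly compatible with $\bullet$. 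This is where the delicate sign bookkeeping of \eqref{eq:Hochcochains15}, \eqref{eq:Sh'} and \eqref{eq:S_D} must be reconciled with the signs produced by the cyclic-bar differential, and the relations packaged in Propositions \ref{prop: reinhart} and \ref{prop: gdt} are the essential input guaranteeing that these low-order terms close up coherently.
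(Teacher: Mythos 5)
Your proposal follows essentially the same route as the paper: the paper's (one-sentence) proof likewise derives the statement by applying the multiplicative (Künneth/shuffle) chain functor $C_{-\bullet}$ to the infinity-algebra and infinity-module structures built from the $\bullet$-product on $\Br(C^\bullet(A,A))$ and the module morphism \eqref{eq: bullet module} in \ref{constr of 2-cat of Hochcoch}--\ref{sss:The module structure}, with rectification as in Lemma \ref{lemma:infinity to infty}. Your additional discussion of how the low-order terms $m_2$, $\mu_2$ are read off and normalized is a reasonable elaboration of details the paper leaves implicit.
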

\begin{proof} In fact, the above is true if we replace $C_{-\bullet}$ or ${\rm CC}^-_\bullet$ by any functor which is multiplicative, i.e admits an associative K\"{u}nneth map.
\end{proof}
\begin{remark}\label{rmk:A infty on ch of coch}
An $\Ai$ structure as above was constructed in \cite{Ts GM}. It was used in \cite{DTT GM} to construct a Gauss-Manin connection on the periodic cyclic complex.
\end{remark}

\subsection{Hochschild chains}\label{ss:chains}
\subsubsection{A 2-category with a trace functor}\label{sss:A 2-category with a trace functor} The two-category of algebras and bimodules has an additional structure: a functor $\Tr _A: \cC(A,A)\to k-{\operatorname{mod}}$ such that the two functors
\begin{equation}\label{eq:trace functor 1}
\cC(A,B)\times \cC(B,A)\to \cC(A,A)\stackrel{\Tr_A}{\longrightarrow}k-{\operatorname{mod}}
\end{equation}
and
\begin{equation}\label{eq:trace functor 2}
\cC(A,B)\times \cC(B,A)\to \cC(B,A)\times \cC(A,B)\to \cC(B,B)\stackrel{\Tr_B}{\longrightarrow}k-{\operatorname{mod}}
\end{equation}
coincide. Here the first functor from the left in \eqref{eq:trace functor 1} and the second from the left in \eqref{eq:trace functor 2} are the products as in \eqref{eq:product in 2-cat}; the first functor from the left in \eqref{eq:trace functor 2} is the permutation of factors. We call a two-category with a functor as above {\em a two-category with a trace functor}.

For the two-category of algebras, the trace functor is defined as
\begin{equation}\label{eq:trace functor for bimods}
\Tr_A (M)=M/[A,M]=M\otimes _{A\otimes A^{\rm{op}}} A=H_0(A,M)
\end{equation}
\subsubsection{A dimodule over a 2-category}\label{A dimodule over a 2-category}
When we consider only those bimodules that come from morphisms of algebras, we get another algebraic structure on the two-category of algebras.

For an $(A,B)$-bimodule $M$, put
\begin{equation}\label{eq:dual bimodule}
M^{\vee}=\Hom(M,B)
\end{equation}
which is a $(B,A)$-bimodule. We have a morphism of $(B,B)$-bimodules
\begin{equation}\label{eq:pairing 1}
M^\vee \otimes _A M\to B
\end{equation}
For bimodules of the type that we will consider below, there is also a morphism of $(A,A)$-bimodules
\begin{equation}\label{eq:pairing 2}
A\to M \otimes _B M^\vee
\end{equation}
such that the compositions
\begin{equation}\label{eq:pairing 3}
M=A\otimes _A M \to (M\otimes _B M^\vee)\otimes _A M\isomoto M\otimes _B (M^\vee\otimes _A M)\to M
\end{equation}
\begin{equation}\label{eq:pairing 4}
M^\vee= M^\vee \otimes _A {\bf 1}\to M^\vee\otimes _A (M\otimes _B M^\vee)\isomoto (M^\vee\otimes _A M)\otimes _B M^\vee\to M^\vee
\end{equation}
are the identity morphisms. There is the second way to define a dual bimodule; namely, for an $(A,B)$-bimodule $M$, define a $(B,A)$-bimodule
\begin{equation}\label{eq: dual dagger}
M^\dagger =\Hom_A(M,A).
\end{equation}
There are bimodule morphisms $M\to M^{\vee\dagger}$ and $M\to M^{\dagger\vee}.$ The first one is an isomorphism for $M=_f B$ as above, the second for $M=B_f\isomoto (_f B)^\vee.$
Put
\begin{equation}\label{eq:pairing for bimodules}
\langle M,N\rangle = M \otimes _{B\otimes A^{\op}} N^\vee =(M \otimes _{B} N^\vee) \otimes _{A\otimes A^{\op}} A
\end{equation}
Let us describe the pairing $\langle M, N\rangle ,$ and the algebraic structure it is an example of, in the special case when our bimodules are of the form $_fB$ where $f$ is a homomorphism of algebras. Denote, as above, by $_f B _g$ the algebra $B$ viewed as an $A$-bimodule on which $A$ acts on the left via $f$ and on the right via $g$. Here $f$ and $g$ are two homomorphisms $A\to B.$ We have
\begin{equation}\label{eq:dfn of T(A,B) for algs1}
\langle{}_gB,{}_f B\rangle= \Tr_B (_f B _g)=B/\langle f(a)b-bg(a)|a\in A\rangle.
\end{equation}
Denote
\begin{equation}\label{eq:dfn of T(A,B) for algs2}
T(A,B)(f,g)=\langle{}_gB,{}_f B\rangle
\end{equation}
Note also that
\begin{equation}\label{eq:dfn of C(A,B) for algs, homoms}
\cC(A,B)(f,g)= \Hom_{A-B}(_f B, _gB)=\{b\in B|\forall a\in A: g(a)b=bf(a)\}.
\end{equation}
The collection $T(A, B)$ of $k$-modules $T(A,B)(f,g)$ carries the following structure.

1. For every $A$ and $B$, the collection ${T(A,B)}$ is a bimodule over the category $\cC(A,B).$

2. For every three algebras $A, \, B, \, C,$ there are pairings
\begin{equation}\label{eq:dimodule 1}
T(A,B)(g_0,g_1)\times \cC(B,C)(f_0,f_1)\to T(A,C)(f_0g_0, f_1g_1)
\end{equation}
and
\begin{equation}\label{eq:dimodule 2}
 T(A,C)(f_0g_0, f_1g_1)\times \cC(A,B)(g_1,g_0)\to T(B,C)(f_0,f_1)
 \end{equation}
 such that the following three compatibility conditions hold:

\begin{enumerate}
 \item
  the functors
 $$
 T(A,B)(h_0,h_1)\times \cC(B,C)(g_0,g_1)\times \cC(C,D)(f_0,f_1)\to
 $$
 $$
T(A,B)(h_0,h_1)\times \cC(B,D)(g_0h_0,g_1h_1)\to T(A,D)(f_0g_0h_0, f_1g_1h_1)
 $$
 and
 $$
 T(A,B)(h_0,h_1)\times \cC(B,C)(g_0,g_1)\times \cC(C,D)(f_0,f_1)\to
 $$
 $$
 T(A,C)(g_0h_0,g_1h_1)\times \cC(C,D)(h_0,h_1)\to T(A,D)(f_0g_0h_0, f_1g_1h_1)
 $$
 are equal;

 \item
  the functors
 $$T(A,D)(f_0g_0h_0, f_1g_1h_1)\times \cC(A,B)(h_1,h_0)\times \cC(B,C)(g_1,g_0)\to $$
 $$T(A,D)(f_0g_0h_0, f_1g_1h_1)\times \cC(A,C)(g_1h_0,g_1h_1)\to T(C,D)(f_0,f_1)$$
 and
 $$T(A,D)(f_0g_0h_0, f_1g_1h_1)\times \cC(A,B)(h_1,h_0)\times \cC(B,C)(g_1,g_0)\to $$
 $$T(B,D)(f_0g_0, f_1g_1)\times \cC(B,C)(g_1,g_0)\to T(C,D)(f_0,f_1)$$
 are equal;

 \item
 $$T(A,C)(g_0h_0, g_1h_1)\times \cC(A,B)(h_1,h_0)\times \cC(C,D)(f_0, f_1)\to$$
 $$T(B,C)(g_0,g_1)\times \cC(C,D)(f_0, f_1)\to T(B,D)(f_0g_0, f_1g_1)$$
 and
 $$T(A,C)(g_0h_0, g_1h_1)\times \cC(A,B)(h_1,h_0)\times \cC(C,D)(f_0, f_1)\to$$
 $$T(A,D)(f_0g_0h_0, f_1g_1h_1)\times \cC(A,B)(h_1,h_0)\to  T(B,D)(f_0g_0, f_1g_1)$$
 are equal.

\end{enumerate}
3. The pairings \eqref{eq:dimodule 1}, \eqref{eq:dimodule 2} are compatible with the $\cC(A,B)$-bimodule structures on $T(A,B).$

We call a 2-category and a collection of $T(A,B)(f,g)$ subject to the conditions above {\em a 2-category with a dimodule} (for want of a better term).

When $\cC$ is the 2-category of algebras and bimodules, and $T(A,B)(f,g)$ are as in \eqref{eq:dfn of T(A,B) for algs2}, then the action \eqref{eq:dimodule 1} is defined as
\begin{equation}\label{eq:action of 0-coch on 0-ch 1}
b\otimes c\mapsto f_1(b)c=cf_0(b)
\end{equation}
for $b\in {_{g_1}}B _{g_0}$ and $c\in \cC(B,C)(f_0,f_1);$
the action \eqref{eq:dimodule 2} is defined as
\begin{equation}\label{eq:action of 0-coch on 0-ch 2}
c\otimes b\mapsto f_1(b)c\eq cf_0(b)\in T(B,C)(g_1,g_0)
\end{equation}
for $b\in \cC(A,B)(g_1,g_0)=\{b\in B|\forall A:g_1(a)b=bg_0(a)$ and $c\in _{f_1g_1}C_{f_0g_0}.$

The definition of a dimodule is rather peculiar. If we replace categories $\cC(A,B)$ by sets, and therefore consider a category $\cC$ instead of a two-category, we get the definition of a $(\cC^{\op}, \cC)$-bimodule. In the case of 2-categories that we are working with, the notion of a dimodule is more subtle. If we put $$T^{\rm{dual}}(A,B)(f,g)=\Hom_k(T(A,B)(g,f),k)$$
then a dimodule defines two compatible actions
$$T(A,B)\times \cC(B,C)\to T(A,C)$$
$$\cC(A,B)\times T^{\rm{dual}}(B,C)\to T^{\rm{dual}}(A,C)$$

For any dimodule $T$ over a 2-category $\cC$, the action \eqref{eq:dimodule 2} of the morphism $\id_g,$ $f\in \Ob C(A,B),$ defines the morphism
the action \eqref{eq:dimodule 1} of the morphism $\id_f,$ $f\in \Ob C(B,C),$ defines the morphism
\begin{equation}\label{eq: dir im in dicat}
f_*: T(A,B)(g_0, g_1) \to T(A,C)(fg_0, fg_1);
\end{equation}
the action \eqref{eq:dimodule 2} of the morphism $\id_g,$ $g\in \Ob C(A,B),$ defines the morphism
\begin{equation}\label{eq: inv im in dicat}
g^*: T(A,C)(f_0g, f_1g)\to T(B,C)(f_0,f_1) .
\end{equation}
Our dimodule $T$ has the following extra property (which does not seem to follow from the axioms).
\begin{lemma}\label{lemma:cyclicity of the dimodule}
Let $f_0, f_1: B\to C$ and $g_0, g_1:A\to B$ be one-morphisms in $\cC$ such that $f_0g_0=f_1g_1.$ Then 
the diagram 
$$
\begin{CD}
T(A,B)(g_0,g_1) @>{f_1}_*>> T(A,C)(f_1g_0, f_1g_1)\\
@V{f_0}_*VV                   @V=VV       \\                                                    
T(A,C)(f_0g_0, f_0g_1) @>>>   T(A,C)(f_1g_0, f_0g_0)\\
@V=VV                      @V{g_0}^*VV\\
T(A,C)(f_1g_1, f_0g_1)@>{g_1}^*>>T(B,C)(f_1,f_0)
\end{CD}
$$
is commutative. 
\end{lemma}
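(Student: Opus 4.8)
The plan is to prove commutativity by an explicit element chase, using the descriptions of the direct-image maps $f_*$ from \eqref{eq: dir im in dicat} and the inverse-image maps $g^*$ from \eqref{eq: inv im in dicat}, together with the hypothesis $f_0g_0=f_1g_1$. The role of that hypothesis is precisely to justify the three equality arrows: because $f_1g_1=f_0g_0$, the objects $T(A,C)(f_1g_0,f_1g_1)$ and $T(A,C)(f_1g_0,f_0g_0)$ have identical defining relations, and likewise $T(A,C)(f_0g_0,f_0g_1)=T(A,C)(f_1g_1,f_0g_1)$; the ``$=$'' arrows are the corresponding identity maps on the underlying complexes. The genuine content of the diagram is therefore the equality of the two boundary composites running from the corner $T(A,B)(g_0,g_1)$ to the corner $T(B,C)(f_1,f_0)$, the middle row merely recording the intermediate identifications needed to connect the vertical arrows.

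First I would make each arrow explicit on a representative $c$. By \eqref{eq:dfn of T(A,B) for algs1}–\eqref{eq:dfn of T(A,B) for algs2}, every vertex other than the source is a quotient of $C$, namely
\[
T(A,C)(\phi_0,\phi_1)=C/\langle \phi_0(a)x-x\phi_1(a)\rangle,\qquad T(B,C)(f_1,f_0)=C/\langle f_1(b)x-xf_0(b)\rangle .
\]
Specializing \eqref{eq:action of 0-coch on 0-ch 1} to $\id_{f}$ shows that the direct image $f_*$ is induced by $c\mapsto f(c)$, while specializing \eqref{eq:action of 0-coch on 0-ch 2} to $\id_{g}$ shows that $g^*$ is induced by the identity of $C$, i.e. it is the projection onto a further quotient. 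Hence the clockwise composite $(g_0)^*\circ({=})\circ(f_1)_*$ sends the class of $c\in T(A,B)(g_0,g_1)$ to the class of $f_1(c)$ in $T(B,C)(f_1,f_0)$, while the counter-clockwise composite $(g_1)^*\circ({=})\circ(f_0)_*$ sends it to the class of $f_0(c)$.

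It then remains to identify these two classes. In $T(B,C)(f_1,f_0)=C/\langle f_1(b)x-xf_0(b)\rangle$ the difference $f_1(c)-f_0(c)=f_1(c)\cdot 1-1\cdot f_0(c)$ is a defining relation (take $b=c$ and $x=1$), so $f_1(c)$ and $f_0(c)$ represent the same class and the boundary rectangle commutes. This is exactly the step where the cyclic invariance built into the trace functor $\Tr_C$ — the coincidence of \eqref{eq:trace functor 1} and \eqref{eq:trace functor 2} — is used, and it is also the reason the property is not formal: none of the dimodule axioms 1--3 force the relation $f_1(b)x\sim xf_0(b)$, which is a feature of the specific trace \eqref{eq:trace functor for bimods}. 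The same bookkeeping, applied to the two constituent squares with the identity-induced middle arrow, reduces each square to the identical computation.

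Finally, I expect the main obstacle to be carrying this argument beyond strict algebras and $0$-(co)chains to the actual dimodule, where each $T(A,B)(f,g)$ is the full Hochschild chain complex with coefficients in the bimodule ${}_fB_g$ and the maps $f_*,\,g^*$ are the corresponding chain-level operations. There the equality arrows remain honest identifications of complexes (valid because the relevant composites of one-morphisms literally agree), but the concluding coincidence $f_1(c)\sim f_0(c)$ must be promoted from an equality of classes to an explicit chain homotopy. I anticipate that homotopy being furnished by the cyclic structure — concretely by the operator $B$ and the $S_D$-type operations of \S\ref{pairings} (Propositions \ref{prop: reinhart}–\ref{prop: gdt}) — and writing it down while verifying the required compatibilities with the $\bullet$-product and the coproduct is where the real work lies.
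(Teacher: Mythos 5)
Your proof is correct and follows essentially the same route as the paper's: both composites are computed on a representative $c$, giving $f_1(c)$ and $f_0(c)$ respectively, and these agree in $T(B,C)(f_1,f_0)=\Tr_C({}_{f_1}C_{f_0})$ because their difference $f_1(c)\cdot 1-1\cdot f_0(c)$ is a defining relation of the quotient — which is exactly the paper's observation that the difference is the Hochschild boundary of $1\otimes c$, the origin of the cyclic differential $B$. Your closing remarks about promoting this to a chain homotopy at the level of the full Hochschild complexes go beyond the lemma as stated (the paper only formulates that extension as an expectation in the following subsection), but they correctly locate where the genuine work would lie.
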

\begin{proof} In fact, for $b\in _{g_1}B_{g_0},$ ${g_1}^*{f_0}_*(b)=f_0(b)\in{ _{f_0}}C_{f_1};$ ${g_0}^*{f_1}_*(b)=f_1(b)\in {_{f_0}}C_{f_1};$ the two are equal in $H_0(B, {_{f_0}C_{f_1}})$ (their difference is equal to the Hochschild chain differential of $1\otimes b;$ here is the origin of the cyclic differential $B,$ see below).
\end{proof}
\subsubsection{The higher structure on Hochschild chains: the first step}\label{sss:chains: the first step}
We expect that, when we replace $\cC(A,B)(f_0, f_1) $ by $C^\bullet(A,{_{f_1}B_{f_0}})$ and $T(A,B)(f_0, f_1) $ by $C_\bullet(A,{_{f_1}}B_{f_0}),$ the result will carry a structure of an infinity dimodule with property \eqref{lemma:cyclicity of the dimodule}. Observe first that the morphisms \eqref{eq: dir im in dicat}, \eqref{eq: inv im in dicat} can be written down easily:
\begin{equation}\label{eq: higher dir im in dicat}
f_*(b_0\otimes a_1 \ldots \otimes \otimes a_n)=f(b_0)\otimes a_1 \ldots \otimes a_n;
\end{equation}
\begin{equation}\label{eq: higher inv im in dicat}
g^*(c_0\otimes a_1 \otimes \ldots \otimes a_n)=c_0\otimes g(a_1) \otimes \ldots \otimes g(a_n).
\end{equation}
\subsubsection{The origin of the differential $B$}\label{sss:the origin of B}
Consider the statement of Lemma \ref{lemma:cyclicity of the dimodule} in the partial case $A=B=C,$ $f_1=g_0=f,$ $g_1=f_0=\id.$ We see that the two maps 
$$\id, \;f: C_\bullet (A, {_f}A)\to C_\bullet (A, {_f}A)$$
should be homotopic. Here
$$f(a_0\otimes a_1\otimes \ldots \otimes a_n)=f(a_0)\otimes f(a_1)\otimes \ldots \otimes f(a_n).$$
In particular, $C_\bullet (A,A)$ should carry an endomorphism of degree plus one. Such a homotopy can be easily written down as 
\begin{equation}\label{eq:B f}
B(f)(a_0\otimes a_1 \ldots \otimes a_n)=\sum_{i=0}^n (-1)^{ni}1\otimes f(a_i)\otimes \ldots \otimes f(a_n)\otimes a_0\ldots \otimes a_{i-1}
\end{equation}
\section{Deligne conjecture}\label{s:Deligne}
\subsection{Deligne conjecture for Hochschild cochains}\label{ss:Deligne cochains}
In the early 90s, Deligne conjectured that Hochschild cochains form a homotopy algebra over the operad of chain complexes of the little discs operad. This conjecture was proved by McClure and Smith in \cite{MS}. Subsequent proofs are contained in \cite{Batanin}, \cite{Berger}, \cite{HKV}, \cite{Kauf}, \cite{KoSo}, \cite{Lurie DAG6}, \cite{T}. 
\begin{thm}\label{thm:Deligne cochains} For any $A_\infty$ category $A$ there is an action of a cofibrant resolution of the DG operad $C_{-\bullet}(\LD_2)$ on the Hochschild complex $C^\bullet(A,A)$ such that at the level of cohomology:
\begin{enumerate}
\item
the generator of $H_0(\LD_2(2))$ acts by the cup product on $H^\bullet(A,A);$
\item
the generator of $H_1(\LD_2(2))$ acts by the Gerstenhaber bracket on $H^\bullet(A,A).$
\item
This structure is natural with respect to isomorphisms.
\end{enumerate}
\end{thm}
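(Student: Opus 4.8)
\subsection*{Proof proposal}

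The plan is to follow Tamarkin's operadic method (\cite{T}), producing the action in three stages: first assemble the brace operations into a rigid, choice-free algebraic structure on $C^\bullet(A,A)$; then promote this to an action of a cofibrant resolution of the Gerstenhaber operad; and finally transfer along the formality quasi-isomorphism to land on a cofibrant resolution of $C_{-\bullet}(\LD_2)$. Throughout, the cohomology-level statements (1) and (2) will be visible already at the first stage, so that verifying them at the end is immediate.

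First I would package the brace operations \eqref{eq:Hochcochains15} together with the cup product into the bialgebra structure on the bar construction supplied by Proposition \ref{prop:product on bar C}: the deconcatenation coproduct $\Delta$ together with the $\bullet$ product make $\Br(C^\bullet(A,A))$ a DG bialgebra. By the Getzler--Jones correspondence (\cite{GJ}, \cite{GV}), a DG bialgebra structure on a tensor coalgebra is exactly a $B_\infty$-algebra structure on the underlying complex, equivalently an action of the brace operad. The essential feature is that this structure is completely explicit and involves no choices; it already contains the cup product as the lowest $\bullet$-operation and the Gerstenhaber bracket as the antisymmetrization of the single brace $D\{E\}$. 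In particular, on $H^\bullet(A,A)$ it recovers the Gerstenhaber-algebra structure of \cite{G}.

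Next I would use that, by Proposition \ref{prop:koszul duals}, the operad $\Gerst$ is Koszul, so $\Gerst_\infty=\Cbr(\Gerst^\vee)$ is a cofibrant resolution of $\Gerst$; an action of $\Gerst_\infty$ is precisely a $G_\infty$-algebra structure. Using the brace/bialgebra data together with a fixed Drinfeld associator, one constructs a map of DG operads $\Gerst_\infty\to(\text{brace operad})$, hence a $G_\infty$-action on $C^\bullet(A,A)$ lifting the Gerstenhaber algebra on cohomology. I would then invoke the formality of the little discs operad, namely that $C_{-\bullet}(\LD_2)$ is quasi-isomorphic as a DG operad to its homology, the Gerstenhaber operad $H_{-\bullet}(\LD_2)=\Gerst$. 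Combined with the fact that $\Gerst_\infty\to\Gerst$ is a cofibrant resolution and that cofibrant resolutions are unique up to homotopy (subsection \ref{ss:Cofibrant DG operads and algebras}), this produces a cofibrant resolution $\cR$ of $C_{-\bullet}(\LD_2)$ together with a map $\cR\to(\text{brace operad})$, and thus the desired action on $C^\bullet(A,A)$. Assertions (1) and (2) follow since, under $H_{-\bullet}(\LD_2)=\Gerst$, the generator of $H_0(\LD_2(2))$ is the product class, acting by the lowest $\bullet$-operation $=$ cup product, while the generator of $H_1(\LD_2(2))$ is the bracket class, acting by the one-brace $=$ Gerstenhaber bracket; naturality (3) holds because the entire construction is functorial in the $A_\infty$ category $A$ once the single global associator is fixed.

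The hard part will be the formality of $C_{-\bullet}(\LD_2)$ and, equivalently, the construction of the $\Gerst_\infty$-action in the middle stage: both rest on the choice of a Drinfeld associator and form the genuine technical core of Tamarkin's argument. One can instead bypass the associator along the McClure--Smith / Kontsevich--Soibelman route (\cite{MS}, \cite{KoSo}): identify the brace operad directly with the cellular chains of the Fulton--MacPherson compactification $\FM_2$, and then use the homotopy equivalence $W\LD_2\isomoto\FM_2$ of \eqref{eq:equiv of Salvatore} together with $\FM_2\isomoto\LD_2$. In that approach the obstacle is displaced to the combinatorial--geometric verification that the brace structure matches the cell decomposition of $\FM_2$, but the final transfer to a cofibrant resolution of $C_{-\bullet}(\LD_2)$ proceeds as above.
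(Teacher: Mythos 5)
The paper does not actually prove Theorem \ref{thm:Deligne cochains}: it is stated as a known result with references to \cite{MS}, \cite{KoSo}, \cite{Batanin}, \cite{Berger}, \cite{HKV}, \cite{Kauf}, \cite{T}, so your proposal can only be measured against the cited literature and the machinery the paper develops elsewhere (Proposition \ref{prop:product on bar C}, Sections \ref{s:Formality of the little discs operad} and \ref{ncdc}). Your first stage is exactly right and is the paper's own starting point: the $\bullet$-product and deconcatenation coproduct on $\Br(C^\bullet(A,A))$ encode the brace/$B_\infty$ structure, with the cup product and the antisymmetrized one-brace giving the Gerstenhaber algebra on cohomology.

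The problem is with the logical order of your primary route. The associator-based map $\Gerst_\infty\to(\text{brace operad})$ that you invoke in the middle stage is, in Tamarkin's treatment, \emph{obtained} by composing the formality of $C_{-\bullet}(\LD_2)$ (Theorem \ref{thm:formality of LD2}, which needs the associator) with an identification of the brace operad with a chain model of $\LD_2$ --- and that identification \emph{is} the Deligne conjecture. So deducing Theorem \ref{thm:Deligne cochains} from that map is circular unless you construct $\Gerst_\infty\to(\text{brace operad})$ by purely algebraic means (Etingof--Kazhdan quantization, as in Tamarkin's first proof), which you do not indicate. The clean, non-circular argument is the one you relegate to a ``bypass'': exhibit a direct weak equivalence between the brace (or surjection/minimal) operad and cellular chains of a cofibrant model of $\LD_2$, e.g. via $\FM_2$ and $W\LD_2\isomoto\FM_2$ as in \cite{MS}, \cite{KoSo}; conditions (1)--(3) then follow by inspecting the arity-$2$ cells, where the $H_0$ class maps to the cup product and the $H_1$ class (the full circle, i.e. the antisymmetrization $D\{E\}\mp E\{D\}$, not the single brace) maps to the Gerstenhaber bracket. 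No associator is needed for the theorem as stated; the associator enters only afterwards, in Theorems \ref{thm:formality of LD2} and \ref{thm:gerstenhaber-1}, to rectify the action to a genuine $\Gerst_\infty$ structure. I would restructure the write-up so that the combinatorial--geometric identification is the core of the proof rather than an aside, and state explicitly that the comparison zig-zag induces the standard isomorphism on homology, which is what makes (1) and (2) checkable.
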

\subsection{Deligne conjecture for Hochschild chains}\label{ss:Deligne chains}
An extension of the Deligne conjecture to chains maintains that the pair of complexes of Hochschild cochains and chains is a homotopy algebra over the two-colored operad of little discs and cylinders. 
\begin{thm}\label{thm:Deligne chains} For any $A_\infty$ category $A$ there is an action of a cofibrant resolution of the DG operad $C_{-\bullet}(\LD_2)$ on the pair of Hochschild complexes $(C^\bullet(A,A), C_{-\bullet}$ $(A,A)$ such that at the level of cohomology:
\begin{enumerate}
\item
the generator of $H_0(\LC(1,1))$ acts by the pairing $H^\bullet(A,A)\otimes H_{-\bullet}(A,A)\to H_{-\bullet}(A,A);$
\item
the generator of $H_1(\LC(1,1))$ acts by the pairing $H^\bullet(A,A)\otimes H_{-\bullet}(A,A)\to H_{-\bullet+1}(A,A).$
\item
This structure is natural with respect to isomorphisms.
\end{enumerate}
\end{thm}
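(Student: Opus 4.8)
The plan is to construct the homotopy action explicitly on the pair $(C^\bullet(A,A), C_{-\bullet}(A,A))$ and then to recognize it as an action of a cofibrant resolution of the two-colored DG operad $C_{-\bullet}(\LC)$ of little discs and cylinders. Since the disc-colored part satisfies $\LC(n)=\LD_2(n)$, the action of its cofibrant resolution on the cochains $C^\bullet(A,A)$ is already furnished by Theorem~\ref{thm:Deligne cochains}; the genuinely new content is the coherent action of the cylinder spaces $\LC(n,1)$, which must turn $C_{-\bullet}(A,A)$ into a homotopy module over the homotopy $\LD_2$-algebra $C^\bullet(A,A)$. Because $\LC$ is weakly equivalent to its Fulton--MacPherson model $\FMC$ (Proposition~\ref{prop:GJ for cyl}), I would work with whichever model makes the tree combinatorics most transparent.

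First I would write down the generating operations. The degree-$0$ cell of $\LC(1,1)$ should act by the contraction $i_D$ and the degree-$1$ cell by the Lie derivative $L_D$ of \ref{pairings}, producing the pairings to $H_{-\bullet}(A,A)$ and to $H_{-\bullet+1}(A,A)$ required in items~(1) and~(2); the operator $B$ itself appears as the image of the fundamental class of the circle rotating the cylinder, exactly as anticipated in \ref{sss:the origin of B}. The relation $[b,i_D]=i_{\delta D}$ of Proposition~\ref{prop: properties of i}, together with the Lie-derivative relations of \ref{pairings}, reproduces on homology the calculus identity $[d,i_a]=\pm L_a$ of Example~\ref{ex:calc-0}, which is the cohomological shadow of the cylinder relations. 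The most efficient way to organize these operations with all of their higher coherences -- and to refine the construction to the negative cyclic complex, where $S_D$ and the variable $u$ enter through Reinhart's formula (Proposition~\ref{prop: reinhart}) -- is to invoke Proposition~\ref{prop: chains of cochains}, whose $A_\infty$-module structure on chains of cochains already packages $i_D$, $L_D$, $S_D$ and the shuffle products with the requisite homotopies and with the $(u)$-adic continuity that we need.

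Next I would assemble these data into an operadic action over the standard cofibrant resolution $\Cbr\Br\bigl(C_{-\bullet}(\LC)\bigr)$ of Proposition~\ref{prop:bar cobar}. Each free generator of this resolution is a decorated tree dual to a cell of $\LC$, and I would send it to the corresponding iterated composite of brace operations and of $i_D$, $L_D$, $S_D$. The two species of operadic composition in $\LC$ guide the bookkeeping: grafting discs into the cylinder corresponds to feeding cochain (brace) operations into the chain operations, while stacking two cylinders end to end corresponds to composing contractions, governed by the identity $i_D i_E=(-1)^{|D||E|}i_{E\smile D}$ of Proposition~\ref{prop: properties of i}. The cobar differential on trees must then be matched, term by term, with the graded commutators of these composites with $b+\delta+uB$; the first layer of this matching is supplied by Propositions~\ref{prop: reinhart} and~\ref{prop: gdt}, and the remaining layers by the $A_\infty$-module relations of Proposition~\ref{prop: chains of cochains}. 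The naturality clause~(3) is then automatic, since every operation is given by a universal formula.

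The main obstacle will be establishing coherence of the cylinder operations across all arities and cylinder heights simultaneously, that is, producing the \emph{full} homotopy action rather than only its first two layers. Concretely, one must verify that the operations assigned to the higher cells of $\LC(n,1)$ satisfy the operadic relations up to prescribed chain homotopies, and in particular that the cyclic operator $B$ is generated in exactly the manner forced by the cylinder geometry, as anticipated in \ref{sss:the origin of B} where $B$ arises as the homotopy witnessing cyclic symmetry. I expect this to require either an explicit McClure--Smith / Kontsevich--Soibelman model of $C_{-\bullet}(\FMC)$ with a direct check of the relations, or an obstruction-theoretic argument exploiting the formality of the cylinder operad to show that the relevant deformation complex is acyclic, so that the first-layer operations extend; in either approach the homotopy-coherent package of Proposition~\ref{prop: chains of cochains} is what makes the extension feasible.
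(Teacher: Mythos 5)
The paper states this theorem without proof --- it is an expository survey, and the result is quoted from the literature (the chain analogue of the McClure--Smith et al.\ proofs, cf.\ \cite{TT}, \cite{DTT Fty}, \cite{KoSo}) --- so there is no in-paper argument to compare yours against. Judged on its own terms, your proposal correctly identifies the target (an action of a cofibrant resolution of the two-colored operad $C_{-\bullet}(\LC)$, the occurrence of $\LD_2$ in the statement being a slip), correctly reduces the disc-colored part to Theorem \ref{thm:Deligne cochains}, and correctly matches the generators of $H_0(\LC(1,1))$ and $H_1(\LC(1,1))$ with $i_D$ and $L_D$, including the degree shift. But there is a genuine gap: the entire content of the theorem is the existence of the \emph{full} homotopy-coherent action --- the compatible assignment of operations to all cells of all the spaces $\LC(n,1)$ --- and this is exactly the step you defer to ``either an explicit McClure--Smith / Kontsevich--Soibelman model \dots or an obstruction-theoretic argument,'' neither of which you carry out or even set up precisely. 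Propositions \ref{prop: properties of i}, \ref{prop: reinhart} and \ref{prop: gdt} give the operations and first-order homotopies, i.e.\ the induced precalculus on homology, which is the easy part; and Proposition \ref{prop: chains of cochains} packages only an $A_\infty$ (one-dimensional, $E_1$-type) algebra and module structure, which is strictly weaker than the two-colored little-discs ($E_2$-type) module structure being asserted. The passage from the brace/$A_\infty$ package to the full $\LC$-action is precisely where the combinatorial work of the cited proofs lives, so invoking Proposition \ref{prop: chains of cochains} as the organizing device does not close the gap.

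Two further cautions. First, the suggestion to use formality of the cylinder operad (Theorem \ref{thm:formality of LC}) to run an obstruction argument inverts the logical order of the paper: formality is applied \emph{after} the chain-level Deligne conjecture to transfer the action to the homology operad $\Cai$ (Theorem \ref{thm:calc}); you would need an independent argument that the relevant deformation complex is acyclic, which is not automatic. Second, your appeal to the rotation of the cylinder producing $B$ pertains to the framed operad $\LfC$ and the cyclic/negative-cyclic refinement; for the unframed statement at hand ($\LC(1)$ contractible, $\LC(1,1)\simeq S^1$) the operator $B$ and the terms $S_D$, $u$ are not part of what must be constructed, so that portion of your plan, while not wrong, addresses a stronger statement than the one posed.
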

\section{Formality of the operad of little two-discs}\label{s:Formality of the little discs operad}
\subsection{Associators}\label{ss: Associators} We follow the exposition in \cite{BN}, \cite{T1}, and \cite{SW}.
\subsubsection{The operad in categories $\PaB$}\label{ss:PaB}  Define the category $\PaB(n)$ as follows. Its object is a parenthesized permutation, i.e. a pair $(\sigma, \pi)$ of a permutation $\sigma\in S_n$ and a parenthesization $\pi$ of length $n.$ A parenthesization is by definition an element of the free non-associative monoid with one generator $\bullet$. Example ($n=6$):
\begin{equation}\label{eq:parenthes}
\pi=((\bullet\bullet)((\bullet\bullet)(\bullet\bullet)))
\end{equation}
A morphism from $(\sigma_1, \pi_1)$ to $(\sigma_2, \pi_2)$ is an element of the braid group $B_n$ whose projection to $S_n$ is equal to $\sigma_2 ^{-1}\sigma_1.$ The composition of morphisms is given by the multiplication of braids. 

To describe the operadic structure, it is more convenient to use a slightly different definition of $\PaB(n).$ 
A parenthesization of a finite ordered set $A$ is a parenthesization of length $n=|A|$ where the $jth$ symbol $\bullet$ is replaced by $a_j$ for all $j.$

For two total orders $<_1$ and $<_2$ on a finite set $A$, a pure braid between $(A, <_1)$ and $(A, <_2)$ is a braid whose lower ends are decorated by elements of $A$ in the order $<_1,$ whose  upper ends are decorated by elements of $A$ in the order $<_2,$ and whose strands all go from $a$ to the same element $a$. For a finite set $A$, the category $\PaB(A)$ is defined as follows:
\begin{enumerate} 
\item
Objects of $\PaB(A)$ are pairs $(<, \pi)$ where $<$ is a total order on $A$ and a parenthesization of $A$;
\item
a morphism from $(<_1, \pi_1)$ to $(<_2, \pi_2)$ is a pure braid from $(A, <_1)$ to $(A, <_2);$
\item
the composition is the multiplication of braids.
\end{enumerate}

Now let us define the operadic composition. Let $A$ and $B$ be totally ordered finite sets. Consider the surjection $A\coprod B\to A\coprod \{c\}$ that is the identity on $A$ and that sends all elements of $B$ to $c.$ The operadic composition
\begin{equation}\label{eq:circ j for PaB}
\PaB(B)\times\PaB(A\coprod \{c\})\to \PaB(A\coprod B)
\end{equation}
 corresponding to this surjection acts as follows: Let $<_1$ be a total order on $B$, $\pi_1$ a parenthesization of $B,$  $<_2$ a total order on $A\coprod \{c\},$ and $\pi_2$ a parenthesization of $A\coprod \{c\}$. Then the value of the functor \eqref{eq:circ j for PaB} on $((<_1,\pi_1), (<_2,\pi_2)$ is $(<, \pi)$ where
 \begin{enumerate}
 \item $<$ is the total order for which $a<a'$ iff $a<_2 a';$ $b<b'$ iff $b<_1 b';$ $a<b$ iff $a<_2 c;$
 \item $\pi$ is obtained from the parenthesization $\Pi_2$ by replacing the symbol $c$ with the set $B$, parenthesized by $\pi_1.$
 \end{enumerate}
 Note that the operad of sets $\Ob\PaB$ is the free operad generated by one binary operation. At the level of morphisms, let $\gamma$ be a pure braid between $(B, <_1)$ and $(B, <_1')$; let $\gamma '$ be a pure braid between $(A\coprod \{c\}, <_2)$ and $(A\coprod \{c\}, <_2').$ The functor \eqref{eq:circ j for PaB} sends $(\gamma, \gamma')$ to $\gamma''$ defined as $\gamma'$ in which the strand from $c$ to $c$ is replaced by the pure braid $\gamma.$
\subsubsection{The operad in Lie algebras $\frt$}\label{ss:tt} For a finite set $A,$ let $\frt(A)$ be the Lie algebra with generators $t_{ij},\,i,\,j\in A,$ subject to relations 
\begin{equation}\label{eq:relations in t(n) 1} 
[t_{ij}, t_{kl}]=0
\end{equation}
if $i,j,k,l$ are all different;
\begin{equation}\label{eq:relations in t(n) 2} 
[t_{ij}, t_{ik}+t_{jk}]=0
\end{equation}
if $i,j,k$ are all different. We put $\frt(n)=\frt(\{1,\ldots, n\}).$ These Lie algebras form an operad in the category of Lie algebras where the monoidal structure is the direct sum. The operadic compositions are uniquely defined by the compositions
$\circ_j : \frt(m)\oplus \frt(n)\to \frt(n+m-1)$
acting as follows. Let $A$ and $B$ be finite sets. Consider the surjection $A\coprod B\to A\coprod \{c\}$ that is the identity on $A$ and that sends all elements of $B$ to $c.$ The operadic composition
$\frt(B)\oplus\frt(A\coprod \{c\})\to \frt(A\coprod B)$
 corresponding to this surjection acts as follows:
\begin{equation}\label{eq:operadic comp in tt}
(t_{bb'}, t_{aa'})\mapsto t_{bb'}+ t_{aa'}; (t_{bb'}, t_{ac})\mapsto t_{bb'}+\sum_{b''\in B}t_{ab''}
\end{equation}
for $a,\,a'\in A, b,\,b'\in B.$ The action of the symmetric group on $U(\frt(n))$ is by permutation of pairs of indices $(ij).$

The operad $\frt$ gives rise to the operads $U(\frt)$ and ${\widehat{U(\frt)}}$ in the category of algebras and to the operad ${\widehat{U(\frt)}}^{\operatorname{group}}$ in the category of groups. Here $U(\frt)$ is the universal enveloping algebra of $\frt,$ ${\widehat{U(\frt)}}$ its completion with respect to the augmentation ideal, and ${\widehat{U(\frt)}}^{\operatorname{group}}$ the set of grouplike elements of this completion (with respect to the coproduct for which all $t_{ij}$ are primitive). Since every group is a category with one object, we can consider ${\widehat{U(\frt)}}^{\operatorname{group}}$ as an operad in categories.
\subsubsection{Definition of an associator}\label{sss:dfn ass} Let $\sigma$ be the morphism in $\PaB(2)$ between $(12)$ and $(21)$ corresponding to the generator of the pure braidgroup ${\operatorname{PB}}_2\isomoto \bbZ.$ Let $a$ be the morphism in $\PaB(3)$ between $(12)3)$ and $(1(23))$ corresponding to the trivial pure braid $e.$ 
\begin{definition}\label{dfn:associator}
An associator is a group element $\Phi\in {\widehat{U(\frt(3))}}^{\operatorname{group}}$ such that there is a morphism of operads in categories
$$\PaB\to {\widehat{U(\frt)}}^{\operatorname{group}}$$
that sends $\sigma$ to $\exp(\frac{t_{12}}{2})$ and $a$ to $\Phi.$
\end{definition}
The following theorem is essentially proven in \cite{Dr}. It is formulated in the language of operads in \cite{T1} which is based on \cite{BN}.
\begin{thm}\label{thm:associator exists} 
There exists an associator $\Phi.$
\end{thm}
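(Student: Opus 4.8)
The plan is to follow Drinfeld's original construction of an associator via the Knizhnik--Zamolodchikov (KZ) connection, organized so that the operadic relations fall out of flatness. First I would unwind what the required operad morphism amounts to. Since $\Ob\PaB$ is the free operad on a single binary operation, the morphism on objects is forced: it sends each parenthesized permutation to the unique object of $\widehat{U(\frt)}^{\operatorname{group}}(n)$. At the level of morphisms, every pure braid in $\PaB$ is built by operadic composition out of $\sigma\in\PaB(2)$ and $a\in\PaB(3)$, and the relations among these generators are precisely MacLane's pentagon (living in $\PaB(4)$) together with the two hexagon relations (in $\PaB(3)$) expressing compatibility of braiding with associativity. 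Imposing $\sigma\mapsto\exp(t_{12}/2)$, functoriality of the operad morphism is therefore equivalent to producing a grouplike $\Phi\in\widehat{U(\frt(3))}^{\operatorname{group}}$ satisfying the pentagon equation in $\widehat{U(\frt(4))}$ and the two hexagon equations in $\widehat{U(\frt(3))}$. So the theorem reduces to solving this finite system of equations in the prounipotent group.

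To construct a solution I would introduce the KZ connection on the configuration spaces $\Conf_k(\bbC)$ with values in the operad $\frt$; on the normalized space of three points this reduces to the differential equation $\frac{dG}{dz}=\left(\frac{t_{12}}{2\pi i\,z}+\frac{t_{23}}{2\pi i\,(z-1)}\right)G$ on $\bbC\setminus\{0,1\}$. The key point is that this connection is flat, and its flatness (the integrability of the full KZ connection on $\Conf_k$) is literally the infinitesimal braid relations \eqref{eq:relations in t(n) 1}, \eqref{eq:relations in t(n) 2} defining $\frt$. The equation has canonical horizontal sections $G_0$ and $G_1$ singled out by prescribed asymptotics $G_0(z)\sim z^{t_{12}/2\pi i}$ near $z=0$ and $G_1(z)\sim (1-z)^{t_{23}/2\pi i}$ near $z=1$; I then set $\Phi_{KZ}=G_1^{-1}G_0$, the regularized transition between the two collision points.

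It remains to verify the three properties. Grouplikeness of $\Phi_{KZ}$ follows because the connection takes values in the Lie algebra $\frt$, so parallel transport is an automorphism of the coproduct and preserves grouplike elements. The image of the braiding generator is the half-monodromy of the KZ connection around a simple collision, which with this normalization is exactly $\exp(t_{12}/2)$. Finally, the pentagon and the two hexagons are the images under the holonomy functor of the corresponding relations in $\PaB$: conceptually, the flat KZ connection on the configuration operad, equipped with tangential base points at the boundary strata of the Deligne--Mumford compactifications $\overline{M}_{0,4}$ and $\overline{M}_{0,5}$, makes the holonomy a morphism of operads from the fundamental groupoid --- which can be identified with $\PaB$ --- to $\widehat{U(\frt)}^{\operatorname{group}}$; the pentagon then reflects the simple connectivity of $\overline{M}_{0,5}$ and the hexagons that of $\overline{M}_{0,4}$.

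The main obstacle is analytic and lives in this last step. The KZ holonomy between collision points genuinely diverges, so one must regularize using the asymptotic expansions $z^{t_{12}/2\pi i}$ and $(1-z)^{t_{23}/2\pi i}$, and then prove rigorously that the regularized holonomy still transports flatly across the boundary strata, so that the operadic relations are faithfully inherited; controlling these boundary asymptotics uniformly in the operadic gluing is the technical heart of the argument. An alternative I would mention, which sidesteps the analysis and produces associators defined over $\Q$, is Drinfeld's purely algebraic existence proof: one solves the pentagon and hexagons order by order in the grading of $\widehat{U(\frt(3))}$, the obstruction at each step lying in a graded cohomology group that is shown to vanish, an argument that simultaneously exhibits the torsor structure of the set of associators under the Grothendieck--Teichm\"uller group.
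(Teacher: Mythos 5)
The paper gives no proof of this theorem at all --- it simply defers to Drinfeld \cite{Dr} (as reformulated operadically in \cite{BN}, \cite{T1}) --- and your sketch is a correct and faithful outline of exactly that cited argument: reduction to pentagon plus hexagons via the freeness of $\Ob\PaB$ and the presentation of $\PaB$, construction of $\Phi_{KZ}$ as the regularized holonomy of the flat KZ connection, and the alternative order-by-order algebraic existence proof over $\Q$. You also correctly identify the regularization at the boundary strata as the technical heart, which is the point the paper's Remark \ref{rmk:Lie theory and assoc} flags as the reason the result is nontrivial.
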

\begin{remark}\label{rmk:Lie theory and assoc}
The above theorem is plausible because the relations \eqref{eq:relations in t(n) 1}, \eqref{eq:relations in t(n) 2} are infinitesimal analogs of the defining relations in pure braid groups. Na\"{i}vely, $\frt(n)$ is the Lie algebra of ${\operatorname{PB}}_n.$ If the latter were nilpotent, the theorem would follow from rational homotopy theory. However, pure braid groups are far from being nilpotent, so the existence of an associator is not easy to prove.
\end{remark}
\subsubsection{Parenthesized braids and little discs}\label{sss:PaB vs FM} Consider the embedding 
\begin{equation}\label{eq:FM1 FM2}
\FM_1\to \FM_2
\end{equation}
induced by the embedding $\bbR\to \C\isomoto \bbR^2.$ Note that the zero strata of $\FM_1$ form an operad in sets that is isomorphic to the operad $\Ob\PaB.$ We denote this suboperad of $\FM_1$ (and $\FM_2$) by $\PaP.$ Denote by $\pi_1(\FM_2(n), \PaP(n))$ the full subcategory of the fundamental groupoid of $\FM_2$ with the set of objects $\PaP(n).$ The collection of categories $\pi_1(\FM_2(n), \PaP(n))$ is an operad that we denote by $\pi_1(\FM_2, \PaP)$.
\begin{lemma}\label{lemma:PaP}
There is an isomorphism of operads in categories
$$\pi_1(\FM_2, \PaP)\isomoto \PaB$$
\end{lemma}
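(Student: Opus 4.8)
The plan is to build the isomorphism one arity at a time, defining functors $F_n\colon \pi_1(\FM_2(n),\PaP(n))\to\PaB(n)$ and then checking that they assemble into a morphism of operads in categories. On objects there is nothing to prove: by the remark preceding the lemma the zero strata $\PaP(n)$ of $\FM_1(n)$ form the free operad on one binary operation, which is exactly $\Ob\PaB(n)$, and I take this identification as the action of $F_n$ on objects. Concretely a point of $\PaP(n)$ records a total order of $\{1,\dots,n\}$ along the real line together with the nested clustering by which the fully degenerate configuration is approached; these two data are precisely the permutation $\sigma$ and the parenthesization $\pi$ of an object of $\PaB(n)$.

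For morphisms I would exploit the homotopy equivalence $\FM_2(n)\simeq\Conf_2(n)$ furnished by the inclusion of the open stratum, recalling that $\FM_2(n)$ is a manifold with corners whose interior is $\Conf_2(n)$. The basepoints lie on corner strata, so one cannot push paths into the interior rel endpoints directly; instead I would fix, for each object $p\in\PaP(n)$, a nearby interior configuration $\hat p$ (the points placed on the real line in the order prescribed by $p$, then perturbed) joined to $p$ by the canonical short path in the local collar, which is well defined up to canonical homotopy since the collar is contractible. To a homotopy class of paths $\gamma$ in $\FM_2(n)$ from $p_0$ to $p_1$ I then associate the class in the braid group $B_n$ of the concatenation $\hat p_0\to p_0\to\cdots\to p_1\to\hat p_1$, viewed in the ordered configuration space $\Conf(n,\C)$, where a path is by definition a braid. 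Its image in $S_n$ is the change of order $\sigma_1^{-1}\sigma_0$, matching the definition of the hom-sets of $\PaB(n)$; homotopy invariance and functoriality are immediate, since homotopic paths give isotopic braids and concatenation of paths corresponds to multiplication of braids.

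Bijectivity on hom-sets follows from a structural comparison: both $\pi_1(\FM_2(n),\PaP(n))$ and $\PaB(n)$ are connected groupoids—connected because $\FM_2(n)$ is path connected and because in $\PaB(n)$ any two parenthesized permutations are joined by some braid—with vertex group the pure braid group $\operatorname{PB}_n=\pi_1(\Conf_2(n))=\pi_1(\FM_2(n))$. Under $F_n$ the vertex groups are identified by the isomorphism $\pi_1(\FM_2(n))\cong\operatorname{PB}_n$, and the induced map on morphisms out of a fixed object is a bijection onto the braids with the prescribed underlying permutation. Since $F_n$ is a bijection on objects, an isomorphism on one (hence every) vertex group, and respects the connected-groupoid structure, it is an isomorphism of categories. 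For operad compatibility I would check that $F=\{F_n\}$ intertwines the $\Sigma_n$-actions and the operadic compositions: the symmetric group actions agree because relabelling the $n$ points of $\Conf(n,\C)$ is exactly the relabelling of strands on $\PaB(n)$, while operadic insertion in $\FM_2$ glues a configuration into an infinitesimal disc about one point of the outer configuration, which on fundamental groupoids replaces a single strand of the outer braid by the whole inner braid cabled along it. This is verbatim the composition rule for $\PaB$ recalled before the lemma, and on objects it is the chosen identification of free operads, so the relevant squares commute.

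The hard part will be the morphism identification at the corner basepoints together with the operadic-composition check. One has to verify that the ``read off the braid'' construction is genuinely well defined despite the basepoints sitting on boundary strata of $\FM_2(n)$, which is what the collar and contractibility argument above is for, and that infinitesimal insertion of configurations corresponds \emph{exactly} to cabling of braids; this last point is where the specific geometry of the Fulton--MacPherson compactification and the identification of $\PaP$ with a free operad do the real work. As a convenient check that also pins down the isomorphism uniquely, I would match the generators $\sigma$ and $a$ of $\PaB$ with, respectively, the elementary half-twist interchanging two points and the constant path witnessing re-association of clusters.
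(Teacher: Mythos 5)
The paper states this lemma without giving any proof (it is quoted from the Tamarkin/Bar-Natan circle of ideas, cf.\ \cite{T1}, \cite{BN}), so there is nothing in the text to compare your argument against; judged on its own, your outline is correct and is the standard way this identification is established. You correctly isolate the three ingredients: the identification of the zero strata of $\FM_1$ with parenthesized permutations (which the paper already asserts), the transport of base points from the corners into the interior via the contractibility of a corner neighbourhood so that $\pi_1(\FM_2(n))\cong \pi_1(\Conf_2(n))\cong \operatorname{PB}_n$ can be used, and the connected-groupoid argument (bijective on objects, isomorphism on one vertex group, hence an isomorphism of groupoids) for full faithfulness. Your final point, that operadic insertion in the Fulton--MacPherson compactification corresponds to cabling a strand by the inner braid, is indeed the only place where real geometric content enters, and it matches verbatim the composition rule for $\PaB$ recalled in \ref{sss:PaB vs FM}; flagging it as the step requiring the explicit local structure of $\FM_2$ near its boundary strata is the right assessment of where the work lies.
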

\subsection{Formality of the operad of chains of little two-discs}\label{ss:Formality of little discs operad} 
\begin{thm}\label{thm:formality of LD2} \cite{T1}
There is a chain of weak equivalences between DG operads $C_{-\bullet}(\LD_2)$ and $H_{-\bullet}(\LD_2).$
\end{thm}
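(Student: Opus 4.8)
The plan is to factor the desired chain through the combinatorics of braids and an associator, exploiting the asphericity of planar configuration spaces. First I would pin down both endpoints. On the homology side, the classical computation of the little discs operad identifies $H_{-\bullet}(\LD_2)$ with the Gerstenhaber operad $\Gerst$ (which is moreover Koszul by Proposition \ref{prop:koszul duals}), so the real content is the \emph{formality} of the DG operad of chains. On the chains side, the weak equivalence \eqref{eq:equiv of Salvatore}, $W\LD_2\isomoto \FM_2$, induces a weak equivalence of DG operads $C_{-\bullet}(\LD_2)\simeq C_{-\bullet}(\FM_2)$, so it suffices to connect $C_{-\bullet}(\FM_2)$ to $\Gerst$.

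The key geometric input is that each $\FM_2(n)$ is homotopy equivalent to the open configuration space $\Conf_2(n)$, which is a $K(\mathrm{PB}_n,1)$; it is therefore aspherical, and its singular chains are quasi-isomorphic to the chains on the nerve of its fundamental groupoid. Taking the zero-strata $\PaP(n)$ as basepoints and invoking Lemma \ref{lemma:PaP}, $\pi_1(\FM_2,\PaP)\isomoto \PaB$, I would produce a weak equivalence of DG operads $C_{-\bullet}(\FM_2)\simeq C_{-\bullet}(\Nerve(\PaB))$; here one uses that the nerve is (lax) monoidal and that the chosen basepoints form a suboperad, so the equivalence is operadic. This replaces the topological question by the algebraic one of understanding the operad in groupoids $\PaB$ through the chains on its nerve.

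Now the associator enters. An associator $\Phi$ (Theorem \ref{thm:associator exists}) furnishes, by Definition \ref{dfn:associator}, a morphism of operads in categories $\PaB\to \widehat{U(\frt)}^{\operatorname{group}}$ sending $\sigma\mapsto \exp(t_{12}/2)$ and $a\mapsto \Phi$; working over a field of characteristic zero and passing to prounipotent completions, Drinfeld's theory upgrades this to a weak equivalence between $\PaB$ and the operad $\PaCD$ of parenthesized chord diagrams, whose objects are the parenthesizations of $\PaB$ and whose morphisms are the group-like elements of $\widehat{U(\frt)}$. The decisive feature is that $\frt$ is a \emph{graded} Lie algebra operad, each generator $t_{ij}$ carrying weight one, so $\PaCD$ is graded and the chains on its nerve split according to weight. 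Hence $C_{-\bullet}(\Nerve(\PaCD))$ is formal, with homology the Chevalley--Eilenberg homology of $\frt$, which by the Arnold--Kohno description of $H_{-\bullet}(\Conf_2)$ is exactly $\Gerst=H_{-\bullet}(\LD_2)$. Assembling
\[
C_{-\bullet}(\LD_2)\simeq C_{-\bullet}(\FM_2)\simeq C_{-\bullet}(\Nerve(\PaB))\simeq C_{-\bullet}(\Nerve(\PaCD))\simeq H_{-\bullet}(\LD_2)
\]
completes the chain.

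The main obstacle is the passage through the associator. Producing $\Phi$ is exactly Theorem \ref{thm:associator exists}, and since the pure braid groups $\mathrm{PB}_n$ are far from nilpotent (Remark \ref{rmk:Lie theory and assoc}) one cannot invoke rational homotopy theory directly but must work with prounipotent completions, controlling them compatibly across all arities and all operadic compositions. Turning the associator \emph{morphism} into an operadic \emph{weak equivalence} $\PaB\simeq\PaCD$, and verifying that the weight grading on $\frt$ genuinely induces the splitting of $C_{-\bullet}(\Nerve(\PaCD))$ onto its homology in an operad-functorial way, is where the real work lies; by contrast, the asphericity reduction and the nerve/chains bookkeeping are routine.
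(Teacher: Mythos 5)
Your proposal follows essentially the same route as the paper: reduce $C_{-\bullet}(\LD_2)$ to $C_{-\bullet}(\Nerve\,\PaB)$ via the weak equivalence with $\FM_2$, the $K(\pi,1)$ property of configuration spaces and Lemma \ref{lemma:PaP}, then use an associator to pass to $\widehat{U(\frt)}^{\operatorname{group}}$, and finally identify the chains on its nerve with the Chevalley--Eilenberg complex of $\frt$ and hence with $\Gerst\isomoto H_{-\bullet}(\LD_2)$. The only cosmetic difference is that you conclude formality of the chord-diagram side from the weight grading, where the paper writes out the explicit intermediate chain through the completed and uncompleted cobar constructions of $U(\frt)^+$.
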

\begin{proof} There is a chain of equivalences of topological operads:
$${\operatorname{Nerve}}\;\pi_1(\FM_2, \PaP)\lisomoto{\operatorname{Nerve}}\;\pi_1(\FM_2)\lisomoto \FM_2$$
The morphism of nerves on the left is induced by an equivalence of categories and therefore an equivalence. The map on the right is the classifying map which is an equivalence because all $\FM_2(n)$ are $K(\pi, 1).$ By Lemma \ref{lemma:PaP}, there is a chain of equivalences between $\Nerve \;\PaB$ and $FM_2.$ Applying the functor $C_{-\bullet} $ to this chain of equivalences, we see that  it is enough to construct a chain of weak equivalences between $C_{-\bullet} (\Nerve \;\PaB)$ and $H_{-\bullet}(\LD_2),$ which is the same as $H_{-\bullet}(\FM_2).$ An associator $\Phi$ provides an equivalence
\begin{equation}\label{eq:Assoc as equiv of ops}
C_{-\bullet} (\Nerve \;\PaB)\isomoto C_{-\bullet} (\Nerve \;{\widehat{U(\frt)}}^{\operatorname{group}})
\end{equation}
The right hand side of the above (if we replace singular chains of the geometric realization by simplicial chains) is the completed version of the chain complex of the group ${\widehat{U(\frt)}}^{\operatorname{group}}.$ It is not difficult to define the chain of equivalence below, where $\Cbr$ stands for the cobar construction of the augmentation ideal or, what is the same, the standard complex for computing $\Tor_{-\bullet}^{U(\frt)}(k,k).$
$$C_{-\bullet} (\Nerve \;{\widehat{U(\frt)}}^{\operatorname{group}})\isomoto {\widehat{\Cbr}}_{-\bullet}(U({\frt})^+)\lisomoto {\Cbr}_{-\bullet}(U({\frt})^+)\lisomoto C^{\Lie}_{-\bullet}(\frt)$$
Finally, the right hand side is quasi-isomorphic to $\Gerst\isomoto H_{-\bullet}(\LD_2).$
\end{proof}
\subsection{Formality of the colored operad of little discs and cylinders}\label{ss:Formality of the colored operad of little discs and cylinders}
\begin{thm}\label{thm:formality of LC}
There are chains of weak equivalences between two-colored DG operads $C_{-\bullet}(\LC)$ and $H_{-\bullet}(\LC),$ and between $C_{-\bullet}(\LfC)$ and $H_{-\bullet}(\LfC).$
\end{thm}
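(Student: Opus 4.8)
\emph{Strategy and reduction.} The plan is to imitate the proof of Theorem \ref{thm:formality of LD2} step by step, running the two colors in parallel and letting the same Drinfeld associator of Theorem \ref{thm:associator exists} govern both the little-disc operations and the cylinder operations. First, by Proposition \ref{prop:GJ for cyl} the two-colored topological operads $\FMC$ and $\LC$ (resp. $\FMfC$ and $\LfC$) are weakly equivalent, and $C_{-\bullet}$ takes weak equivalences of topological operads to weak equivalences of DG operads; hence it suffices to prove formality of $C_{-\bullet}(\FMC)$ and $C_{-\bullet}(\FMfC)$. The disc color is already handled, since $\FMC(n)=\FM_2(n)$ and the chain
$$\Nerve\,\pi_1(\FM_2, \PaP)\lisomoto \Nerve\,\pi_1(\FM_2)\lisomoto \FM_2$$
from Theorem \ref{thm:formality of LD2} applies verbatim. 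The genuinely new input is the cylinder color, i.e. the spaces $\FMC(n,1)$ and $\FMfC(n,1)$, and I would first check that these are aspherical: their interiors are built from configuration spaces of points in the open cylinder $S^1\times\bbR\cong\C^{\ast}$, and configuration spaces of a punctured plane are aspherical (they are iterated fibre bundles with $K(\pi,1)$ fibres). Consequently the classifying map $\Nerve\,\pi_1(\FMC)\lisomoto \FMC$ is again an equivalence and $C_{-\bullet}(\FMC)$ may be computed from fundamental groupoids.

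\emph{A parenthesized cylinder operad.} Next I would build a two-colored analog $\PaB^{\mathrm{cyl}}$ of the operad in categories $\PaB$, whose disc color is $\PaB$ itself and whose cylinder color has, as objects, the parenthesized configurations on the cylinder (the zero strata of the cylinder color of $\FMC$, which form a cylindrical analog $\PaP^{\mathrm{cyl}}$ of the set operad $\PaP\isomoto \Ob\PaB$) and, as morphisms, the cylindrical (annular) braids between them; the operadic compositions encode both grafting discs into a cylinder and stacking two cylinders. Generalizing Lemma \ref{lemma:PaP}, I would prove an isomorphism of two-colored operads in categories
$$\pi_1(\FMC, \PaP^{\mathrm{cyl}})\isomoto \PaB^{\mathrm{cyl}},$$
after which, taking nerves and chains, the task becomes the comparison of $C_{-\bullet}(\Nerve\,\PaB^{\mathrm{cyl}})$ with $H_{-\bullet}(\LC)$.

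\emph{The associator and the infinitesimal cylinder algebra.} I would then introduce the cylindrical infinitesimal braid Lie algebra $\frt^{\mathrm{cyl}}$: a two-colored operad in Lie algebras extending $\frt$ by generators $t_{i0}$ recording the winding of the $i$th disc around the core of the cylinder, subject to the type-$B$/annular infinitesimal braid relations (the holonomy Lie algebra of $\Conf(\C^{\ast})$). The associator $\Phi$ of Theorem \ref{thm:associator exists}, which already furnishes the equivalence \eqref{eq:Assoc as equiv of ops} on the disc color, simultaneously determines a morphism on the cylinder color once one prescribes the grouplike element to which the ``carry a disc once around the cylinder'' braid is sent, namely the exponential of the winding generator $t_{i0}$. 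This yields, exactly as in the disc case, a zigzag
$$C_{-\bullet}(\Nerve\,\PaB^{\mathrm{cyl}})\isomoto C_{-\bullet}\bigl(\Nerve\,\widehat{U(\frt^{\mathrm{cyl}})}^{\operatorname{group}}\bigr)\lisomoto \Cbr_{-\bullet}\bigl(U(\frt^{\mathrm{cyl}})^{+}\bigr)\lisomoto C^{\Lie}_{-\bullet}(\frt^{\mathrm{cyl}}),$$
and a computation of the Lie homology of $\frt^{\mathrm{cyl}}$ identifies the right-hand side with the precalculus operad of Definition \ref{dfn:precalc}: the disc color recovers $\Gerst$, while the cylinder color recovers the contraction $i_a$ and the Lie derivative $L_a$ (matching the $H_0$ and $H_1$ generators of $\LC(1,1)$ in Theorem \ref{thm:Deligne chains}). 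This is precisely $H_{-\bullet}(\LC)$, which gives the first asserted chain of weak equivalences.

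\emph{The main obstacle and the framed case.} The hard part will be the coherence of a single associator across both colors: one must show that the cylindrical braid groups are, relation by relation, the group-level shadow of $\frt^{\mathrm{cyl}}$, and that $\Phi$ together with the chosen winding element genuinely assembles into a morphism of two-colored operads in categories, satisfying the mixed hexagon/pentagon constraints that now involve the generators $t_{i0}$. For $\LfC$ there is the further circle factor---the space of a single framed cylinder with no discs is homotopy equivalent to $S^1$, whereas its unframed analog is contractible---which is exactly what produces the degree-one operator $d$ of a calculus (the operator underlying the differential $B$ of \ref{sss:the origin of B}). Incorporating the framing requires a framed refinement of the cylinder associator data; once this is in place, the same zigzag identifies $C_{-\bullet}(\FMfC)$ with the full calculus operad of Definition \ref{dfn:calc}, i.e. with $H_{-\bullet}(\LfC)$, establishing the second chain of weak equivalences.
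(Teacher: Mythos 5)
Your proposal is correct and follows exactly the route the paper intends: the paper's own proof consists of the single remark that the argument is ``virtually identical'' to that of Theorem \ref{thm:formality of LD2}, with a modification for the $S^1$-action in the framed case, and omits all details. What you have written is precisely the execution of that plan --- reduction to $\FMC$/$\FMfC$ via Proposition \ref{prop:GJ for cyl}, asphericity of the cylinder-color spaces, a parenthesized-braid model for the second color, the infinitesimal analog of the annular braid groups, and the zigzag through the associator --- so you have in effect supplied the details the paper leaves out, including correctly flagging the $S^1$/framing issue as the one genuinely new difficulty.
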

\begin{proof} The proof for the case of $\LC$ is virtually identical to the proof of Theorem \ref{thm:formality of LD2}. The proof for $\LfC$ requires a modification regarding the action of $S^1.$ We omit it here.
\end{proof}
\subsubsection{Gamma function of an associator}\label{sss:Gamma fn of associator} Note that 
$$\frt(3)\isomoto {\operatorname{Free}}{\operatorname{Lie}}(t_{12}, t_{23})\oplus k\cdot (t_{12}+t_{13}+t_{23})$$
It is easy to see \cite{Dr}, \cite{BN} that one can choose $\Phi=\Phi(t_{12}, t_{23}).$ Since $\Phi$ is grouplike, $\log \Phi$ is a Lie series in two variables. Put
\begin{equation}\label{eq:zetas}
\log\Phi(x,y)=-\sum_{k=1}^\infty \zeta_\Phi (k+1)\ad^k_x(y)+O(y^2)
\end{equation}
and 
\begin{equation}\label{eq:zetas 1}
\Gamma_\Phi (u)=\exp(\sum_{n=2}^\infty (-1)^n \zeta_\Phi(n) u^n/n)
\end{equation}
It is known that
\begin{equation}\label{eq:zetas 2}
\exp(\sum_{n=1}^\infty  \zeta_\Phi(2n) u^{2n})=-\frac{1}{2}(\frac{u}{e^u-1}-1+\frac{u}{2})
\end{equation}

\section{Noncommutative differential calculus} \label{ncdc}

We deduce from \ref{s:Deligne} and \ref{s:Formality of the little discs operad} that the Hochschild cochain complex is an infinity Gerstenhaber algebra and, more generally, the pair of the cochain and the chain complexes is an infinity calculus. This admits the interpretation below, due to the fact that infinity algebras can be rectified (cf. \ref{s:operads}).

\subsection{The $\Gerst_\infty$ structure on Hochschild cochains} \label{ss:gerstenhaber-1}
Below is the theorem from \cite{T}.
\begin{thm}   \label{thm:gerstenhaber-1}
For every associative algebra $A$ and every associator $\Phi,$ there exists a $\Gerst_\infty$ algebra structure on $C^\bullet(A,A)$, natural with respect to isomorphisms of algebras, such that
\begin{enumerate}
\item The induced Gerstenhaber
algebra structure on 
$H^{\bullet}(A,A)$  
is the standard one, defined by the cup product and the Gerstenhaber bracket as in \ref{hocochain}.
\item
The underlying $\Li$ structure on $C^{\bullet+1}(A,A)$ is given by the Gerstenhaber bracket.
\end{enumerate}
\end{thm}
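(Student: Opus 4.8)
The plan is to combine the two main inputs already available, the Deligne conjecture (Theorem~\ref{thm:Deligne cochains}) and the formality theorem (Theorem~\ref{thm:formality of LD2}), and then to transport the resulting action along the formality chain by means of cofibrant resolutions. Deligne's conjecture provides a cofibrant DG operad $\cR$ with a surjective quasi-isomorphism $\cR \isomoto C_{-\bullet}(\LD_2)$ together with an action $\rho\colon \cR \to \End_{C^\bullet(A,A)}$. Formality provides, for the chosen associator $\Phi$, a chain of weak equivalences $\Gerst \lisomoto \cdots \isomoto C_{-\bullet}(\LD_2)$, where we use the identification $H_{-\bullet}(\LD_2)\cong\Gerst$. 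Since $\Gerst$ is Koszul (Proposition~\ref{prop:koszul duals}), its cofibrant resolution is $\Gerst_\infty=\Cbr(\Gerst^\vee)$, and producing a $\Gerst_\infty$-algebra structure on $C^\bullet(A,A)$ is exactly producing a morphism of DG operads $\Gerst_\infty \to \End_{C^\bullet(A,A)}$.

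First I would work inside the model structure on DG operads in characteristic zero, in which weak equivalences are quasi-isomorphisms, fibrations are surjections, and every object is fibrant; the cofibrant objects are the retracts of the semifree operads of~\ref{ss:Cofibrant DG operads and algebras}. Both $\cR$ and $\Gerst_\infty$ are cofibrant, and each resolves one end of the formality chain. In the homotopy category $\mathrm{Ho}$ of DG operads the formality chain is an isomorphism $C_{-\bullet}(\LD_2)\cong\Gerst$, and a cofibrant resolution represents its target; hence $\cR$ and $\Gerst_\infty$ become isomorphic in $\mathrm{Ho}$. Because both are cofibrant and fibrant, an isomorphism in $\mathrm{Ho}$ is realized by an honest weak equivalence of DG operads $\phi\colon \Gerst_\infty \isomoto \cR$ (the Whitehead theorem, equivalently the straightening of a zig-zag of weak equivalences between cofibrant-fibrant objects into a single one; this refines the uniqueness-up-to-homotopy of cofibrant resolutions recorded in~\ref{ss:Cofibrant DG operads and algebras}). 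The composite
\[
\rho\circ\phi\colon \Gerst_\infty \to \cR \to \End_{C^\bullet(A,A)}
\]
is then the desired $\Gerst_\infty$-algebra structure; its dependence on $\Phi$ is inherited from the formality chain, and naturality with respect to isomorphisms of algebras follows since $\phi$ is fixed once and for all, independently of $A$, while $\rho$ is natural by Theorem~\ref{thm:Deligne cochains}(3).

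Next I would verify the two normalizations by passing to homology. Applying $H$ to $\phi$ yields an isomorphism $\Gerst\cong\Gerst$ of the homologies of the two resolutions, under which the degree-zero arity-two generator and the degree-one arity-two generator of $\Gerst$ correspond to the classes of $H_0(\LD_2(2))$ and $H_1(\LD_2(2))$, the formality equivalence being normalized (via $\sigma\mapsto\exp(t_{12}/2)$ and $a\mapsto\Phi$) to match these generators. By Theorem~\ref{thm:Deligne cochains} they act on $H^\bullet(A,A)$ through $\rho$ by the cup product and the Gerstenhaber bracket, which gives statement~(1). For statement~(2) I would use that the Gerstenhaber bracket is already a strict DG Lie bracket on $C^{\bullet+1}(A,A)$, and arrange the formality chain so that its restriction to the Lie (Koszul-dual) suboperad is strict; the underlying $\Li$-structure induced by $\rho\circ\phi$ then reduces to the strict DG Lie algebra given by the Gerstenhaber bracket, as claimed.

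The main obstacle is the homotopical bookkeeping: one must know that DG operads in characteristic zero form a (cofibrantly generated) model category with all objects fibrant, so that a zig-zag of weak equivalences between the two cofibrant resolutions rigidifies to a single weak equivalence $\phi$, and that this can be arranged compatibly in $\Phi$ and naturally in $A$. Among the normalizations the delicate point is~(2): whereas~(1) is a purely homological identification, obtaining the \emph{strict} Gerstenhaber bracket as the binary part of the $\Li$-structure requires that the formality equivalence respect the Lie suboperad on the nose rather than merely up to homotopy, a refinement of the construction underlying Theorem~\ref{thm:formality of LD2}.
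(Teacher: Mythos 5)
Your proposal follows exactly the route the paper takes: Theorem \ref{thm:gerstenhaber-1} is deduced, as stated at the opening of Section \ref{ncdc}, by combining the Deligne conjecture (Theorem \ref{thm:Deligne cochains}) with the $\Phi$-dependent formality of $C_{-\bullet}(\LD_2)$ (Theorem \ref{thm:formality of LD2}) and transporting the action across the zig-zag via cofibrant resolutions (rectification as in \ref{s:operads}). The paper delegates the homotopical bookkeeping and the normalization of the underlying $\Li$-structure to \cite{T}, and your treatment of those points --- including correctly flagging the strictness of the Lie part as the delicate step --- is consistent with that source.
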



\subsection{The $\Cai $ structure on Hochschild chains}  \label{ss:calculus-2}

\begin{thm} \label{thm:calc} \cite{TT}, \cite{DTT Fty}
For every associative algebra $A$ and every associator $\Phi,$ there exists a $\Cai$ algebra structure on $(C^\bullet(A,A),C_\bullet (A,A))$, such that
\begin{enumerate}
\item
The induced calculus structure on $(H^\bullet(A,A),H_\bullet (A,A))$ is defined by the Gerstenhaber bracket, the cup product, the actions $\iota_D$ and $L_D$ from \ref{pairings}, and the cyclic differential $B$, as in Example \ref{ex:calc-0}.
\item
The induced structure of an $\Li$ module over $C^{\bullet+1}(A,A)$ on $C_{{\bullet}}(A)[[u]]$ is defined by the differential
$b+uB$ and the DG Lie algebra action $L_D$ from \ref{pairings}.
\end{enumerate}
\end{thm}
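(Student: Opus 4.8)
The plan is to run, for the pair of complexes, exactly the argument that proves Theorem \ref{thm:gerstenhaber-1} for cochains alone, with the one-colored operad $\LD_2$ replaced by the two-colored operad $\LC$ of little discs and cylinders, so that chains and cochains are treated in a single operadic package. First I would invoke the chain-level Deligne conjecture, Theorem \ref{thm:Deligne chains}: it supplies a cofibrant two-colored DG operad $\cR$ together with a surjective quasi-isomorphism $\cR\to C_{-\bullet}(\LC)$ and an action of $\cR$ on the pair $(C^\bullet(A,A),C_{-\bullet}(A,A))$, that is, a morphism of two-colored DG operads $\cR\to \End_{(C^\bullet, C_{-\bullet})}$ sending the disc color to the cochain complex and the cylinder color to the chain complex. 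Parts (1)--(2) of that theorem record how the generators of $H_0$ and $H_1$ of the relevant spaces act: by the cup product, the Gerstenhaber bracket, and the contractions $\iota_D$, $L_D$ together with $B$.

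Next I would feed in the formality of $\LC$, Theorem \ref{thm:formality of LC}, which provides a chain of weak equivalences of two-colored DG operads connecting $C_{-\bullet}(\LC)$ with $H_{-\bullet}(\LC)$; this zig-zag is the one built from the chosen associator $\Phi$, which is precisely where $\Phi$ enters the statement. The structural input here is the identification $H_{-\bullet}(\LC)\cong \Ca$ of the homology of little discs and cylinders with the two-colored operad governing calculi, in its $u$-linear incarnation, so that the chain color naturally carries $C_\bullet(A)[[u]]$ with differential $b+uB$ as in part (2). This is the chain analog of the identification $H_{-\bullet}(\LD_2)\cong \Gerst$ used in Theorem \ref{thm:formality of LD2}, and it is the content of the statement that $\Cai$ is a cofibrant resolution of $\Ca$.

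Then I would assemble these inputs using the lifting properties of cofibrant DG operads from \ref{ss:Cofibrant DG operads and algebras}. Since all objects are fibrant while $\cR$ and $\Cai$ are cofibrant, the formality zig-zag between $C_{-\bullet}(\LC)$ and $\Ca\cong H_{-\bullet}(\LC)$ exhibits $\cR$ as a cofibrant resolution of $\Ca$ as well; hence there is a quasi-isomorphism $\Cai\to \cR$ lifting the identity of $\Ca$, unique up to homotopy. Composing with the Deligne action $\cR\to \End_{(C^\bullet, C_{-\bullet})}$ produces the desired morphism $\Cai\to \End_{(C^\bullet, C_{-\bullet})}$, i.e. a $\Cai$-structure on the pair. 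Because every arrow in the chain is a quasi-isomorphism, the induced structure on homology is read off from the images of the generators, and by parts (1)--(2) of Theorem \ref{thm:Deligne chains} these are exactly the cup product, the Gerstenhaber bracket, and the operations $\iota_D$, $L_D$, $B$, giving the calculus of Example \ref{ex:calc-0}; restricting to the Lie part of the cochain color recovers the $\Li$-module description of part (2). Naturality in isomorphisms of $A$ is inherited from the corresponding clauses in Theorems \ref{thm:Deligne chains} and \ref{thm:formality of LC}.

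The hard part, I expect, is not the formal model-category manipulation but pinning down that the transferred structure realizes the \emph{specific} calculus of Example \ref{ex:calc-0}, with the correct signs and the correct placement of the degree $-2$ variable $u$, rather than merely something quasi-isomorphic to it; this requires the precise identification of $H_{-\bullet}(\LC)$ with the $u$-linear calculus operad and a matching of generators compatible with the Getzler--Jones model $\FMC$. The second delicate point is propagating the associator dependence correctly through the two-colored formality zig-zag, including the modification for the $S^1$-action mentioned after Theorem \ref{thm:formality of LC} in the framed case; once those identifications are in place, the remaining steps are the standard lifting arguments already set up in \ref{ss:Cofibrant DG operads and algebras}.
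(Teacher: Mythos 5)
Your proposal follows exactly the route the paper itself indicates at the start of Section \ref{ncdc}: combine the chain-level Deligne conjecture (Theorem \ref{thm:Deligne chains}) with the formality of the colored operad of little discs and cylinders (Theorem \ref{thm:formality of LC}), identify $H_{-\bullet}$ of that operad with the calculus operad, and rectify via the lifting properties of cofibrant operads, with the associator entering through the formality zig-zag. The delicate points you flag (the precise identification with the $u$-linear calculus operad and the framed/$S^1$ modification needed to produce the differential $B$) are indeed the substance deferred to \cite{TT} and \cite{DTT Fty}, so your account is consistent with the paper's own argument.
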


\subsection{Enveloping algebra of a Gerstenhaber algebra}
\label{ss:enveloping}
The following construction is motivated by Example
\ref{ex:calc-M}. For a Gerstenhaber algebra ${\mathcal{V}}^{\bullet}$,
let $\Y ({\mathcal{V}}^{\bullet})$ be the associative algebra
generated by two sets of generators $i_a$, $L_a$, $a \in
{\mathcal{V}}^{\bullet}$, both $i$ and $L$ linear in $a$,
$$|i_a| = |a|; \; |L_a| = |a| - 1$$
subject to relations
$$i_ai_b = i_{ab};\;\;[L_a,L_b] = L_{[a,b]};$$
$$[L_a, i_b] = i_{[a,b]};\;L_{ab} = (-1)^{|b|}L_ai_b + i_aL_b$$

The algebra $\Y ({\mathcal{V}}^{\bullet})$ is equipped with the differential $d$
of degree one which is defined as a derivation sending $i_a$ to $(-1)^{|a|-1}L_a$ and
$L_a$ to zero.

For a smooth manifold $M$ one has a homomorphism
$$\Y (\VM) \rightarrow \D (\Omega ^{\bullet}(M))$$
The right hand side is the algebra of differential operators on
differential forms on $M$, and the above homomorphism sends the
generators $i_a$, $L_a$ to corresponding differential operators on
forms (cf. Example  \ref{ex:calc-M}). It is easy to see that the
above map is in fact an isomorphism.
\subsubsection{Differential operators on forms in noncommutative calculus}\label{sss:ops on forms in nc calc}
Using a standard rectification argument 
one can restate Theorem \ref{thm:calc} as follows:
\begin{thm} \label{thm:calc rectified} 
For every associative algebra $A$ and every associator $\Phi,$ there exists a DG calculus $(\cV^\bullet (A), \Omega^\bullet (A)),$ natural with respect to isomorphisms of algebras, such that:

1) there is a quasi-isomorphism of DGLA 
 $$\cV^{\bullet+1} (A)\to C^{\bullet+1}(A,A)$$
 and a compatible quasi-isomorphism of DG modules
 $$(\Omega^\bullet (A)[[u]], \delta+ud)\to (C_\bullet(A,A)[[u]], b+uB)$$
 where the right hand sides are equipped with the standard structures given by the Gerstenhaber bracket and the operation $L_D;$ both maps are natural with respect to isomorphisms of algebras;
 
 2) The statement 1) of Theorem \ref{thm:calc} holds.
 \end{thm}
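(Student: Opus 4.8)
The plan is to obtain the DG calculus $(\cV^\bullet(A),\Omega^\bullet(A))$ by rectifying the homotopy calculus furnished by Theorem \ref{thm:calc}, and then to extract the two comparison maps from the underlying, already strict, Lie and module structures. First I would record the formal input. By Proposition \ref{prop:koszul duals} the $u$-calculus operad $\Ca_u$ is Koszul, so $\Cai=\Cbr(\Ca_u^\vee)$ is the standard cofibrant resolution of $\Ca_u$ and a $\Cai$-algebra is precisely a homotopy calculus. Theorem \ref{thm:calc} equips the pair $(C^\bullet(A,A),C_\bullet(A)[[u]])$ with such a structure; crucially, its underlying $\Li$-algebra on $C^{\bullet+1}(A,A)$ and its $\Li$-module on $C_\bullet(A)[[u]]$ are the \emph{strict} DG ones, given respectively by the Gerstenhaber bracket and by the differential $b+uB$ together with the action $L_D$.

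Next I would apply the Koszul bar–cobar rectification. Working in the category of $\Cai$-algebras over $k[[u]]$, set $(\cV^\bullet(A),\Omega^\bullet(A))=\Cbr_{\iota}\Br_{\iota}(C^\bullet(A,A),C_\bullet(A)[[u]])$, i.e. the cobar construction over $\Ca_u$ applied to the Koszul-dual-coalgebra bar construction of the homotopy calculus. This is a genuine $\Ca_u$-algebra, hence an honest DG calculus, and it carries a functorial $\infty$-quasi-isomorphism $(\cV^\bullet(A),\Omega^\bullet(A))\rightsquigarrow(C^\bullet(A,A),C_\bullet(A)[[u]])$, the resolution counit. Since the whole construction is assembled from the $k[[u]]$-linear, $(u)$-adically continuous operations of Theorem \ref{thm:calc}, it is again $k[[u]]$-linear and $(u)$-adically continuous; and because $\Cbr_{\iota}\Br_{\iota}$ is a functor, it is automatically natural with respect to isomorphisms of $A$. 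This disposes of the two structural provisos in the statement.

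It then remains to upgrade the $\infty$-quasi-isomorphism to the \emph{strict} DGLA and DG-module maps demanded by part 1). Restricting the $\infty$-calculus morphism along the sub-operad governing a DG Lie algebra acting on a module, I obtain an $\infty$-morphism between the genuine such algebras $(\cV^{\bullet+1}(A),\Omega^\bullet(A)[[u]])$ and $(C^{\bullet+1}(A,A),C_\bullet(A)[[u]])$, the target being strict by the previous paragraph. Since this sub-operad is Koszul and we work in characteristic zero, I would replace the source by a cofibrant model; the $\infty$-quasi-isomorphism then represents an isomorphism in the homotopy category with cofibrant source and fibrant target, hence is realized by a genuine (strict) morphism of the Lie-plus-module structure. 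This is exactly the quasi-isomorphism of DGLA $\cV^{\bullet+1}(A)\to C^{\bullet+1}(A,A)$ together with the compatible quasi-isomorphism of DG modules $(\Omega^\bullet(A)[[u]],\delta+ud)\to(C_\bullet(A,A)[[u]],b+uB)$. Part 2) is inherited verbatim from Theorem \ref{thm:calc}, as rectification does not alter the induced structure on homology.

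The main obstacle, I expect, is the interaction of the rectification with the $u$-adic structure together with the need to keep the comparison maps strict for the Lie–module substructure while only homotopical for the full calculus. One must verify that the bar–cobar resolution and the subsequent cofibrant replacement can be carried out $k[[u]]$-linearly and $(u)$-adically continuously, and — more delicately — that the chosen strictification of the underlying Lie–module $\infty$-morphism remains compatible with the calculus structure on $\cV^\bullet(A)$, so that the operator $L_a$ on $\Omega^\bullet(A)$ is genuinely the action intertwined by the module map. Checking this compatibility, rather than the formal rectification itself, is where the real work lies.
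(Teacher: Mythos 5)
Your proposal is correct and follows essentially the same route as the paper, which offers no details beyond the phrase ``using a standard rectification argument'' applied to Theorem \ref{thm:calc}; your bar--cobar rectification over the Koszul operad $\Ca_u$, together with the extraction of the strict Lie and module comparison maps, is precisely the argument being invoked (carried out in \cite{TT4} and \cite{DTT Fty}).
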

 \begin{proposition}\label{prop:nc ops on forms}
 There is an $\Ai$ quasi-isomorphism of $\Ai$ algebras, natural with respect to isomorphisms of algebras:
 $$Y(\cV^\bullet(A))\to C_{-\bullet}(C^\bullet (A,A), C^\bullet (A,A))$$
 that extends to an $\Ai$ quasi-isomorphism
 $$(Y(\cV^\bullet(A))[[u]], \delta+ud)\to {\rm {CC}}^-_{-\bullet}(C^\bullet (A,A), C^\bullet (A,A))$$
 (the $\Ai$ structures on the right hand side were defined in \ref{prop: chains of cochains} ).
 \end{proposition}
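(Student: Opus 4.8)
The plan is to realize $\Phi$ as the map of \emph{operator algebras} induced by the rectification of Theorem~\ref{thm:calc rectified}: the strict DG calculus $(\cV^\bullet(A),\Omega^\bullet(A))$ has $\Y(\cV^\bullet(A))$ acting on its forms, the homotopy calculus $(C^\bullet(A,A),C_\bullet(A,A))$ of Theorem~\ref{thm:calc} has $C_{-\bullet}(C^\bullet(A,A),C^\bullet(A,A))$ acting on its chains (this is the content of part~2 of Proposition~\ref{prop: chains of cochains}), and the morphism of calculi should induce a morphism of the two operator algebras. I would build this as an $\Ai$ morphism whose quasi-isomorphism property is then checked by a filtration argument.

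First I would fix the linear term. Theorem~\ref{thm:calc rectified} provides the underlying map of complexes $a\mapsto D_a$ from $\cV^\bullet(A)$ to $C^\bullet(A,A)$ (the DGLA quasi-isomorphism), together with a compatible module quasi-isomorphism $(\Omega^\bullet(A)[[u]],\delta+ud)\to(C_\bullet(A,A)[[u]],b+uB)$. Set
\[
\Phi_1:\ i_a\longmapsto D_a,\qquad L_a\longmapsto \pm\,1\otimes D_a,
\]
where $D_a$ is regarded as a $0$-chain and $1\otimes D_a$ as a $1$-chain of $C_{-\bullet}(C^\bullet(A,A),C^\bullet(A,A))$. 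A degree count gives $|D_a|=|a|=|i_a|$ and $|1\otimes D_a|=|a|-1=|L_a|$; since $B(D_a)=1\otimes D_a$, the summand $uB$ of $m_1$ reproduces, after scaling by $u$, the enveloping-algebra differential $d(i_a)=(-1)^{|a|-1}L_a$, while $\delta$ accounts for $\delta(i_a)=i_{\delta a}$. Thus $\Phi_1$ is a morphism of complexes over $k[[u]]$. This is the noncommutative avatar of the isomorphism $\Y(\cV^\bullet(M))\cong\D(\Omega^\bullet(M))$ of \ref{ss:enveloping}: under $\Phi_1$ a $0$-chain acts on chains by contraction and a $1$-chain $1\otimes D$ by the Lie derivative, which are exactly the operations $i_D$ and $L_D$ appearing in part~2 of Proposition~\ref{prop: chains of cochains}.

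The defining relations of $\Y(\cV^\bullet(A))$ --- $i_ai_b=i_{ab}$, $[L_a,i_b]=i_{[a,b]}$, $[L_a,L_b]=L_{[a,b]}$ and $L_{ab}=(-1)^{|b|}L_ai_b+i_aL_b$ --- hold only up to homotopy for $\Phi_1$, because $a\mapsto D_a$ respects the bracket but not the commutative product, and because the corresponding chain-level identities are the ones established only up to the explicit homotopies of Propositions~\ref{prop: reinhart} and~\ref{prop: gdt}. I would therefore produce the higher components $\Phi_n$ by obstruction theory: solving the $\Ai$-morphism equations inductively in the filtration by the number of tensor factors. At each stage the obstruction is a cycle, and it is a boundary because on cohomology the target carries precisely the Gerstenhaber/calculus structure (Theorem~\ref{thm:gerstenhaber-1}, Example~\ref{ex:calc-0}) for which the above relations hold strictly; hence $\Phi_n$ can be chosen, and by part~3 of Proposition~\ref{prop: chains of cochains} the construction extends $k[[u]]$-linearly and $(u)$-adically continuously to the negative cyclic complex, giving the second map of the statement. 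Naturality in $A$ follows from the naturality of all the ingredients.

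Finally, to see that $\Phi$ is a quasi-isomorphism I would filter $\Y(\cV^\bullet(A))$ by the order of an operator (the Poincar\'e--Birkhoff--Witt filtration by the number of $L$'s) and $C_{-\bullet}(C^\bullet(A,A),C^\bullet(A,A))$ by the Hochschild bar degree; $\Phi_1$ is filtered and the associated graded differentials reduce to the internal $\delta$. On the associated graded the source becomes a free graded-commutative algebra built functorially from $\cV^\bullet(A)$ and its shift, and the target, because the induced product is the shuffle product (which in characteristic zero makes Hochschild chains a free graded-commutative algebra), becomes the analogous object built from $C^\bullet(A,A)$; the induced map is then a sum of symmetric powers of the quasi-isomorphism $a\mapsto D_a$, hence a quasi-isomorphism, and the (complete, exhaustive) filtration spectral sequence upgrades this to the total complexes. \emph{The main obstacle} is twofold: organizing the homotopies of Propositions~\ref{prop: reinhart} and~\ref{prop: gdt} and their iterates so that all $\Ai$-morphism equations hold simultaneously and compatibly with $u$; and matching the two associated graded complexes, which amounts to the identification, already visible in the smooth case, of the symbols of differential operators on forms with the Hochschild chains of multivector fields --- the tangent--cotangent identification furnished by the odd symplectic, i.e.\ $\BV$, structure on $\cV^\bullet(A)$.
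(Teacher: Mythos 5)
The paper itself contains no proof of Proposition \ref{prop:nc ops on forms}: it defers entirely to \cite{TT4}, where the morphism is obtained from the operadic machinery (the two--coloured Deligne conjecture, Theorem \ref{thm:Deligne chains}, together with the formality of the coloured operad of little discs and cylinders, Theorem \ref{thm:formality of LC}) rather than built by hand. That said, your linear term is exactly right: sending $i_a$ to $D_a$ as a $0$-chain and $L_a$ to $1\otimes D_a$ as a $1$-chain is what makes the module structure of Proposition \ref{prop: chains of cochains}(2) (where a $0$-chain $D$ acts by $i_D+uS_D$ and a $1$-chain $1\otimes D$ by $L_D$) correspond to the defining action of $\Y(\cV^\bullet)$ on forms, and your bookkeeping $B(D_a)=1\otimes D_a$ versus $d(i_a)=(-1)^{|a|-1}L_a$ is correct. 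The closing heuristic --- PBW symbols of $\D(\Omega^\bullet)$ against the HKR picture of Hochschild chains of the cochain algebra, matched by the odd tangent--cotangent flip --- is indeed the idea underlying \cite{TT4}.

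The two points you yourself flag as ``the main obstacle'' are, however, genuine gaps rather than routine technicalities. First, the inductive construction of the higher $\Phi_n$: the obstruction to extending an $\Ai$ morphism by one more component lives in a Hochschild-type cohomology of the source with coefficients in the target, and it is not killed merely because the induced map on cohomology respects the products and brackets --- that hypothesis only produces $\Phi_2$ (essentially Propositions \ref{prop: reinhart} and \ref{prop: gdt}). Your vanishing principle, applied verbatim elsewhere, would ``prove'' formality of every algebra whose cohomology carries the expected structure. To close the induction you would need either vanishing of the relevant obstruction groups or cofibrancy of $\Y(\cV^\bullet(A))$, which is not evident since it is a quotient of a free algebra by relations; and even then the choices must be made functorially to secure the asserted naturality. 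Second, the spectral-sequence comparison: filtering the target by bar degree leaves on the first page the Hochschild chains of the cohomology, not a free commutative object; identifying it with symbols requires an HKR-type statement for the homotopy-commutative algebra $C^\bullet(A,A)$, i.e.\ essentially its formality as a $\Gerst_\infty$ (or at least $\Ci$) algebra --- Theorem \ref{thm:gerstenhaber-1} --- an input your argument has not invoked at that stage, so the reduction is close to circular there. Both difficulties are precisely what the operadic proof in \cite{TT4} is designed to bypass: a single coherent action of a formal coloured operad yields all higher homotopies and their compatibilities, and naturality, at once.
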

 The proof is given in \cite{TT4}.
\section{Formality theorems}\label{ss:formality}
For an associative algebra $A$ and an associator $\Phi,$ let 
$$(C^\bullet(A,A),C_\bullet(A,A))_\Phi$$ 
denote the $\Cai$ algebra given by Theorem \ref{thm:calc}. Let $X$ be a smooth manifold (real, complex analytic, or algebraic over a field of characteristic zero).
\begin{thm}\label{thm:formality}
There is a $\Cai$ quasi-isomorphism between the sheaves of $\Cai$ algebras $(C^\bullet(\cO_X, \cO_X),C_\bullet(\cO_X, \cO_X))_\Phi$ and $\Ca _X$ such that:
\begin{enumerate}
\item
the induced isomorphism
$${\bf H}^\bullet(\cO_X, \cO_X)\to H^\bullet(X, \wedge ^\bullet T_X)$$
is given by 
$$c\mapsto \iota({\sqrt{\widehat A}}_\Phi(T_X))  I_{\rm{HKR}}(c);$$
\item
the induced isomorphism
$${\bf H}_\bullet(\cO_X, \cO_X)\to H^{-\bullet}(X, \Omega)$$
is given by 
$$c\mapsto{\sqrt{\widehat A}}_\Phi(T_X)\wedge I_{\rm{HKR}}(c)$$
\end{enumerate}
where the left hand side stands for the hypercohomology of $X$ in the sheaf of Hochschild complexes, and ${\sqrt{\widehat A}}_\Phi(T_X)$ is the characteristic class of the tangent bundle $T_X$ corresponding to the symmetric power series $\Gamma_\Phi (x_1)\ldots \Gamma_\Phi(x_n)$. Here $\Gamma_\Phi$ denotes the gamma function of the associator $\Phi.$
\end{thm}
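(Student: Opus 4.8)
The plan is to reduce the global statement to the local, algebraic $\Cai$ formality already established in Theorem \ref{thm:calc}, in its rectified form Theorem \ref{thm:calc rectified}, and then to globalize it over $X$ by means of formal geometry and a Fedosov-type resolution, in the spirit of \cite{Fe}, \cite{TT}, \cite{DTT Fty} and \cite{Wil}. Concretely, I would first apply Theorem \ref{thm:calc rectified} to the algebra $\widehat{\cO}_x=k[[x_1,\ldots,x_n]]$ of the formal neighborhood of a point, using one of the completed tensor products \eqref{eq:tensor2holo}, \eqref{eq:tensor3holo} (resp. \eqref{eq:tensor2}, \eqref{eq:tensor3}) so that the Hochschild complexes are complexes of sheaves, as in Theorem \ref{thm:holo} (resp. \ref{thm:HKRsmooth}). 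The crucial point is that both the $\Cai$ structure and the comparison morphism of Theorem \ref{thm:calc rectified} are natural with respect to isomorphisms of algebras; hence they are equivariant for the Harish--Chandra pair $(W_n,GL_n)$ of formal vector fields and linear automorphisms acting on $\widehat{\cO}_x$, which is exactly the input required for Gelfand--Kazhdan globalization.

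Second, I would globalize. Fix a torsion-free affine (resp. holomorphic) connection $\nabla$ on $X$ and form the bundle of formal coordinate systems, whose fibers are torsors under the formal diffeomorphisms fixing the origin. The associated Fedosov differential $D=\nabla+[\gamma,\,\cdot\,]$, with $\gamma$ the Fedosov one-form, is flat and resolves the sheaves of Hochschild chains and cochains of $\cO_X$ and, on the geometric side, the calculus $\Ca_X$. Because the local $\Cai$ morphism of Theorem \ref{thm:calc rectified} is $(W_n,GL_n)$-equivariant, it descends fiberwise and commutes with $D$, so it yields a morphism of the resolved sheaves of $\Cai$ algebras on $X$. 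This is the desired sheaf-level $\Cai$ quasi-isomorphism, and the two resolutions compute $\mathbf{H}^\bullet(\cO_X,\cO_X)$, $\mathbf{H}_\bullet(\cO_X,\cO_X)$ and $H^\bullet(X,\wedge^\bullet T_X)$, $H^{-\bullet}(X,\Omega)$ respectively. The compatibility of the chain-level map with the cochain-level map, so that the pair forms a genuine $\Cai$ morphism rather than two unrelated formality maps, is carried by the module compatibility already present in Theorem \ref{thm:calc rectified} and must be tracked through the resolution.

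Third, and most delicately, I would compute the induced maps on (hyper)cohomology. Over the formal disc the comparison map induces on cohomology the naive HKR isomorphism $I_{\mathrm{HKR}}$ of Theorem \ref{thm:holo} (resp. \ref{thm:HKRsmooth}). Passing to the Fedosov globalization, the curvature of $\nabla$ enters only through the one-loop, or \emph{wheel}, parts of the formality morphism, these being the only graph contributions that survive against the curvature on cohomology. Summing the wheel contributions over the number of legs produces a multiplicative characteristic class of $T_X$, whose individual coefficients are governed by the $\zeta_\Phi$-values attached to the associator. By the definition \eqref{eq:zetas 1} of the gamma function together with the identity \eqref{eq:zetas 2} of Section \ref{sss:Gamma fn of associator}, this class is precisely $\sqrt{\widehat A}_\Phi(T_X)$, the symmetric power series $\Gamma_\Phi(x_1)\ldots\Gamma_\Phi(x_n)$ evaluated on the Chern roots. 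This yields the corrections $c\mapsto \iota(\sqrt{\widehat A}_\Phi(T_X))\,I_{\mathrm{HKR}}(c)$ on cochains and $c\mapsto \sqrt{\widehat A}_\Phi(T_X)\wedge I_{\mathrm{HKR}}(c)$ on chains.

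The main obstacle is exactly this last identification. One must show that the one-loop corrections of the $\Cai$ formality morphism, reorganized through the Fedosov connection, assemble into a closed multiplicative class, that the higher-loop graphs contribute nothing on cohomology, and that the resulting generating series matches $\Gamma_\Phi$ coefficient by coefficient via \eqref{eq:zetas 2}. This combinatorial matching of wheel coefficients with the $\zeta_\Phi(2n)$ is the heart of the argument and is due to Willwacher \cite{Wil}, \cite{Wil1}; the remaining ingredients are the equivariant local statement of Theorem \ref{thm:calc rectified} and the now-standard Fedosov globalization following \cite{Konts}, \cite{Fe}.
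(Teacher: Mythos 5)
Your outline is consistent with the paper, which offers no argument of its own for this theorem beyond the single sentence ``The proof can be obtained from \cite{Wil}, \cite{Wil1}'': the route you describe (the local, naturality-equivariant $\Cai$ formality of Theorem \ref{thm:calc rectified}, Gelfand--Kazhdan/Fedosov globalization, and the identification of the wheel contributions with the $\Gamma_\Phi$-class via \eqref{eq:zetas 1}--\eqref{eq:zetas 2}) is exactly the standard strategy behind those references, and you correctly locate the genuinely hard step --- the matching of the one-loop coefficients with $\zeta_\Phi$ and the vanishing of higher loops on cohomology --- in Willwacher's work, just as the paper does.
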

The proof can be obtained from \cite{Wil}, \cite{Wil1}.

\section{Deformation quantization} \label{ss:defqua}
Let $M$ be a smooth manifold. By a deformation quantization of $M$ we mean a
formal product
\begin{equation} \label{eq:starproduct}
f * g = fg + \sum_{k=1}^{\infty} (i\hbar)^k P_k(f,g)
\end{equation}
where $P_k$ are bidifferential expressions, $*$ is associative,
and $1*f = f*1 = f$ for all $f$. Given such a product (which is
called a star product), we define
\begin{equation} \label{eq:At(M)}
{\mathbb{A}}^\hbar(M) = (C^{\infty}(M)[[\hbar]], *)
\end{equation}
This is an associative algebra over ${\mathbb{C}}[[\hbar]]$. By
${\mathbb{A}}_c^\hbar(M)$ we denote the ideal $C^{\infty}_c(M)[[\hbar]]$ of this
algebra. An isomorphism of two deformations is by definition a power series
$T(f) = f + (i\hbar)^k\sum_{k=1}^{\infty} T_k(f)$ where all $T_k$ are differential
operators and which is an isomorphism of algebras.

Given a star product on $M$, for $f,\;g \in C^{\infty}(M)$ let
\begin{equation} \label{equ:poissonbracket}
\{f,g\}= P_1(f,g) - P_1(g,f) = \frac{1}{t}[f,g] |\hbar=0.
\end{equation}
This is a Poisson bracket corresponding to some Poisson structure
on $M$. If this Poisson structure is defined by a symplectic form
$\omega$, we say that ${\mathbb{A}}^\hbar(M)$ is a deformation of the
symplectic manifold $(M,\;\omega)$.

Recall the following classification result from \cite{Del},
\cite{DWL}, \cite{Fe}, \cite{NT4}.
\begin{thm} \label{thm:classification of symplectic deformations}
Isomorphism classes of deformation quantizations of a symplectic
manifold $(M, \omega)$ are in a one-to-one correspondence with the
set
$$ \frac{1}{i\hbar}[\omega] + H^2 (M, {\mathbb{C}}[[\hbar]])$$
where $[\omega]$ is the cohomology class of the symplectic
structure $\omega$.
\end{thm}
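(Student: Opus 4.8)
The plan is to follow Fedosov's geometric construction, which treats the symplectic case directly and simultaneously produces the invariant realizing the stated bijection. First I would form the bundle $\mathcal{W}$ of formal Weyl algebras over $M$: its fiber at $x$ is the completed symmetric algebra on $T_x^*M$ with coefficients in $\mathbb{C}[[\hbar]]$, equipped with the fiberwise Moyal product built from the inverse $\omega^{ij}$ of the symplectic form. Assigning degree $1$ to the fiber coordinates $y^i$ and degree $2$ to $\hbar$ makes $\mathcal{W}$ a filtered bundle of algebras. On $\mathcal{W}$-valued forms I would introduce the Fedosov differential $\delta = dx^i\,\partial/\partial y^i$, a homotopy inverse $\delta^{-1}$ that strictly raises the grading, and a symplectic connection $\nabla$, that is, a torsion-free connection preserving $\omega$; such connections always exist on a symplectic manifold.

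Next, for each closed formal two-form $\Omega = \omega + \hbar\,\omega_1 + \hbar^2\,\omega_2 + \cdots$ I would construct a flat ``abelian'' connection $D = -\delta + \nabla + \tfrac{i}{\hbar}[\gamma,\cdot]$, with $\gamma \in \Omega^1(M,\mathcal{W})$ normalized by $\delta^{-1}\gamma = 0$. Flatness $D^2 = 0$ is equivalent to requiring the Weyl curvature $R + \nabla\gamma - \delta\gamma + \tfrac{i}{\hbar}\gamma^2$ to equal the central scalar form $\Omega$, where $R$ is the curvature of $\nabla$; solving for $\delta\gamma$ and applying $\delta^{-1}$ yields a fixed-point equation that I would solve by the standard contraction argument, convergence and uniqueness following from the grading-raising property of $\delta^{-1}$. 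The projection $a \mapsto a|_{y=0}$ then identifies the flat sections $\ker D$ with $C^{\infty}(M)[[\hbar]]$ as $\mathbb{C}[[\hbar]]$-modules, and transporting the fiberwise Moyal product through this identification gives an associative star product $*_\Omega$ deforming $\omega$.

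I would then attach to the construction its \emph{characteristic class} $\theta(*_\Omega) = \tfrac{1}{i\hbar}[\Omega]$, whose leading term is forced to be $\tfrac{1}{i\hbar}[\omega]$ since $\Omega_0 = \omega$, so that $\theta(*_\Omega) \in \tfrac{1}{i\hbar}[\omega] + H^2(M,\mathbb{C}[[\hbar]])$. Because every element of this target affine space is the class of some closed formal two-form $\Omega$ with $\Omega_0 = \omega$, the Fedosov construction realizes \emph{every} class, giving surjectivity. It then remains to show that $\theta$ descends to a well-defined bijection on isomorphism classes, and this is where the main difficulty lies.

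The hard part will be injectivity together with well-definedness: that $*_\Omega$ and $*_{\Omega'}$ are isomorphic exactly when $[\Omega]=[\Omega']$, independently of the auxiliary $\nabla$ and $\delta^{-1}$. When $\Omega'-\Omega = d\beta$ is exact I would build the isomorphism by a Moser-type homotopy, interpolating $\Omega_t = \Omega + t\,d\beta$, producing the corresponding families $D_t$ and $\gamma_t$, and integrating the $t$-flow generated by an inner derivation $\tfrac{i}{\hbar}[h_t,\cdot]$ chosen so that $\dot\gamma_t + D_t h_t = 0$; this yields a gauge equivalence of flat-section algebras and hence an equivalence $T(f)=f+\sum_{k\ge1}(i\hbar)^k T_k(f)$ of the star products, the order-by-order obstructions being $\delta$-exact by construction and so vanishing. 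Conversely, to see that isomorphic products share the same class I would define $\theta$ intrinsically, e.g. via Deligne's cohomological invariant or via the uniqueness up to scale of the trace on $\mathbb{A}_c^\hbar(M)$, and verify its manifest invariance under any such $T$; identifying this intrinsic $\theta$ with $\tfrac{1}{i\hbar}[\Omega]$ on Fedosov products is the technical crux. (Alternatively, one could deduce the same classification from the Kontsevich formality theorem by reducing formal Poisson bivectors near $\omega^{-1}$ to formal closed two-forms and applying the formal Moser argument above.) The remaining points -- associativity of $*_\Omega$, convergence of the recursions, and functoriality of $\theta$ -- are routine.
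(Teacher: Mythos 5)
The paper does not prove this theorem: it is recalled as a known classification result and attributed to \cite{Del}, \cite{DWL}, \cite{Fe}, \cite{NT4}, so there is no internal proof to compare against. Your plan is the standard Fedosov route from those references, and the construction half of it (Weyl bundle, recursive solution of the abelian-connection equation via the grading-raising homotopy $\delta^{-1}$, identification of flat sections with $C^{\infty}(M)[[\hbar]]$, and realization of every class in $\tfrac{1}{i\hbar}[\omega]+H^2(M,\mathbb{C}[[\hbar]])$) is correctly laid out.

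There is, however, one genuine gap in the classification half. Your Moser-type argument and your intrinsic definition of $\theta$ together show that two \emph{Fedosov} star products are isomorphic if and only if their Weyl curvatures are cohomologous, and that $\theta$ is an isomorphism invariant of arbitrary star products. But the theorem asserts a bijection on isomorphism classes of \emph{all} deformation quantizations, and nothing in your sketch shows that an arbitrary star product is isomorphic to one produced by the Fedosov construction (equivalently, that the intrinsic invariant is \emph{complete} on arbitrary star products, not just on Fedosov ones). Without this, $\theta$ could a priori fail to be injective on the full set of isomorphism classes. The standard repair is either Fedosov's own argument that any star product arises from some abelian connection on the Weyl bundle, or the local-to-global argument of Deligne and De Wilde--Lecomte: by a formal Darboux theorem all star products are locally equivalent to the Moyal product, the sheaf of self-equivalences is controlled by $\ker(d)\subset\Omega^1[[\hbar]]$ modulo inner ones, and a \v{C}ech $H^2$ computation identifies the set of gluings with the stated affine space. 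Either of these must be inserted where you write ``it remains to show that $\theta$ descends to a well-defined bijection''; as it stands that sentence conceals the surjectivity-onto-isomorphism-classes step rather than the injectivity step you go on to address.
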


In defining the Hochschild and cyclic complexes, we use
$k={\mathbb{C}}[[\hbar]]$ as the ring of scalars, and put
\begin{equation} \label{eq:tensdefholo}
{\mathbb{A}}^\hbar(M) ^{\otimes n} =
\operatorname{jets}_{\Delta}C^{\infty}(M^n)[[\hbar]]
\end{equation}
Sometimes we are interested in the homology defined using $\C$ as the ring of scalars. Then we use the standard definitions where the tensor products over $\C$ are defined by 
\begin{equation} \label{eq:tensdefholo C}
{\mathbb{A}}^\hbar (M) ^{\otimes_{\C} n} =
\operatorname{jets}_{\Delta}C^{\infty}(M^n)[[\hbar_1, \ldots, \hbar_n]]
\end{equation}
Let ${\mathbb{A}}^\hbar(M)$ be a deformation of a symplectic manifold
$(M, \omega)$.
\begin{thm} \label{thm:deformations}
There exists a quasi-isomorphism
$$C_{\bullet}({\mathbb{A}}^\hbar(M), {\mathbb{A}}^\hbar(M))[\hbar^{-1}] \rightarrow
(\Omega^{2n-\bullet}(M)((\hbar)), i\hbar d)$$
which extends to a ${\mathbb{C}}[[\hbar,u]]$-linear, $(\hbar,u)$-adically continuous
quasi-isomorphism
$${\operatorname{CC}}^-_{\bullet}({\mathbb{A}}^\hbar(M))[\hbar^{-1}] \rightarrow (\Omega^{2n-\bullet}(M)[[u]]((\hbar)), i\hbar d)$$
\end{thm}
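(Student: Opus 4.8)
\emph{The plan is to} combine the formality of the calculus (Theorem \ref{thm:calc rectified}, in its globalized form Theorem \ref{thm:formality}) with the nondegeneracy of a symplectic structure, and then to show that inverting $\hbar$ collapses the resulting complex of forms onto the de Rham complex. First I would record that the star product $*$ is a Maurer--Cartan element $\mu=\sum_k (i\hbar)^k P_k$ of the Hochschild cochain DGLA, so under Theorem \ref{thm:calc rectified} it corresponds to a formal Poisson bivector $\pi\in\hbar\,\cV^\bullet(C^\infty(M))[[\hbar]]$ whose leading term is $i\hbar\,\omega^{-1}$, the higher terms being governed by Theorem \ref{thm:classification of symplectic deformations}. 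Twisting the DG-module quasi-isomorphism of Theorem \ref{thm:calc rectified} by $\pi$ (via the Lie-derivative action $L$), and using that the operator $B$ does not involve the product and so is undeformed and still maps to the de Rham $d$ by Theorem \ref{thm:HKRsmooth}, I would obtain $C_\bullet(\mathbb{A}^\hbar(M))\simeq(\Omega^\bullet(M)[[\hbar]],L_\pi)$ and ${\rm CC}^-_\bullet(\mathbb{A}^\hbar(M))\simeq(\Omega^\bullet(M)[[\hbar,u]],L_\pi+uB)$. By the calculus relation $[d,\iota_a]=(-1)^{|a|-1}L_a$ of Definition \ref{dfn:calc}, the leading term of $L_\pi$ on forms is $i\hbar\,\delta$, where $\delta=[\iota_{\omega^{-1}},d]$ is the symplectic codifferential.

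Next I would introduce the symplectic star operator $*_\omega\colon\Omega^k(M)\to\Omega^{2n-k}(M)$ (with $2n=\dim M$), an isomorphism intertwining $\delta$ and $d$ up to sign. Transporting the Hochschild complex $(\Omega^\bullet(M),i\hbar\,\delta+O(\hbar^2))$ along $*_\omega$ produces $(\Omega^{2n-\bullet}(M),i\hbar\,d+O(\hbar^2))$, which at once explains the shift $2n-\bullet$ and the differential $i\hbar\,d$. After inverting $\hbar$ the leading term $i\hbar\,d$ carries an invertible coefficient, so the higher-order-in-$\hbar$ corrections can be removed by a pro-nilpotent gauge transformation solved recursively in powers of $\hbar$, yielding the first quasi-isomorphism $C_\bullet(\mathbb{A}^\hbar(M))[\hbar^{-1}]\to(\Omega^{2n-\bullet}(M)((\hbar)),i\hbar\,d)$.

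For the cyclic statement the differential becomes, after $*_\omega$, of the form $i\hbar\,d+u\delta+\cdots$ on $\Omega^{2n-\bullet}(M)[[u]]((\hbar))$, and here inverting $\hbar$ is indispensable. Using the dual Lefschetz operator $\Lambda=\iota_{\omega^{-1}}$, which is nilpotent ($\Lambda^{n+1}=0$) and satisfies $\delta=[\Lambda,d]$, the finite exponential $g=\exp\big(\tfrac{u}{i\hbar}\Lambda\big)$ is a $\C((\hbar))[[u]]$-linear automorphism with $g\,(i\hbar\,d)\,g^{-1}=i\hbar\,d+u\delta+\cdots$. Matching $g$ against the twisted differential then identifies ${\rm CC}^-_\bullet(\mathbb{A}^\hbar(M))[\hbar^{-1}]$ with $(\Omega^{2n-\bullet}(M)[[u]]((\hbar)),i\hbar\,d)$, the required $(\hbar,u)$-adic continuity being automatic from the construction.

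The hard part will be to prove that the \emph{full} twisted differential --- including all higher corrections from the higher $\Cai$ operations and from the higher-order terms of $\pi$ --- is genuinely conjugate to $i\hbar\,d$ after inverting $\hbar$, for \emph{every} symplectic $M$. This cannot be read off the bare codifferential, since $(\Omega^\bullet(M),\delta)$ computes de Rham cohomology only under hard Lefschetz, whereas the theorem needs no such hypothesis; the genuine deformation supplies exactly the corrections that trivialize this discrepancy. I would control them by passing to the Fedosov description, namely the bundle of formal Weyl algebras with its flat connection, where the fibrewise Hochschild homology is concentrated in top degree $2n$ (the true origin of the shift) and the reduced connection is manifestly $i\hbar\,d$; alternatively one invokes the globalized formality of Theorem \ref{thm:formality}. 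In either case the crux is this local-to-global identification of the differential, and the factor $\tfrac{1}{i\hbar}$ in the gauge $g$ is precisely why the statement holds only after localizing at $\hbar$.
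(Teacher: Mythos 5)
The paper itself contains no proof of Theorem \ref{thm:deformations}: it is an expository statement imported from the literature (Fedosov, Nest--Tsygan, Bressler--Nest--Tsygan), where the argument is precisely the one you relegate to your final paragraph as an ``alternative'' --- realize ${\mathbb{A}}^\hbar(M)$ as flat sections of the bundle of formal Weyl algebras with the Fedosov connection, compute that the fibrewise Hochschild homology of the Weyl algebra over ${\mathbb{C}}((\hbar))$ is one-dimensional and concentrated in degree $2n$ (this is the true source of the shift $2n-\bullet$ and of the trace density), and globalize. So your fallback is the standard proof, while your main line --- chain formality plus symplectic duality --- is a genuinely different and heavier route, essentially the one made available a posteriori by Theorem \ref{thm:hoho of def alg}. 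That route buys uniformity with the Poisson case at the cost of invoking Shoikhet--Dolgushev formality for chains; the Fedosov route is self-contained and local.

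Two points in your main line need repair. First, the claim that $(\Omega^\bullet(M),\delta)$ computes de Rham cohomology ``only under hard Lefschetz'' is not correct: Brylinski's star operator is an isomorphism of complexes $(\Omega^k,\delta)\cong(\Omega^{2n-k},d)$ for \emph{every} symplectic manifold; the hard Lefschetz condition is only relevant to the existence of symplectically harmonic representatives, which you do not need. Second, and more seriously, the ``pro-nilpotent gauge transformation solved recursively in powers of $\hbar$'' is asserted without justification --- solvability of $[d,g_1]=-D_1$ at each order is exactly the content of the theorem, not a formal consequence --- and it is also unnecessary. Since $[\pi,\pi]_{\rm{Sch}}=0$ and the leading coefficient $\omega^{-1}$ is invertible, the bivector $\pi/(i\hbar)$ is the inverse of a formal symplectic form $\omega_\hbar=\omega+O(\hbar)$ over ${\mathbb{C}}[[\hbar]]$, and Brylinski's duality applied to $\omega_\hbar$ (rather than to $\omega$) conjugates $L_\pi=\pm i\hbar[d,\iota_{\pi/(i\hbar)}]$ into $\pm i\hbar\, d$ on $\Omega^{2n-\bullet}(M)[[\hbar]]$ on the nose, absorbing all higher corrections in one step. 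With that fix, inverting $\hbar$ is used only where you put it: in the operator $\exp(u\Lambda_\hbar/(i\hbar))$ that removes the $u$-term in the cyclic case, and in discarding the $\hbar$-torsion of $(\Omega^{2n-\bullet}(M)[[\hbar]],i\hbar d)$ so as to land in Laurent series.
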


An analogous theorem holds for ${\mathbb{A}}_c^\hbar(M)$ if we replace $\Omega
^{\bullet}$ by $\Omega^{\bullet}_c$.
\subsubsection{The canonical trace} \label{sss:trace}
Combining the first map from Theorem \ref{thm:deformations} in the compactly
supported case with integrating over $M$ and dividing by $\frac{1}{n!}$, one
gets the canonical trace of Fedosov
$$ Tr : {\mathbb{A}}_c^\hbar(M) \rightarrow {\mathbb{C}}((\hbar)) $$
It follows from Theorem \ref{thm:deformations} that, for $M$
connected, this trace is unique up to multiplication by an element
of ${\mathbb{C}}((i\hbar))$.

\section{Applications of formality theorems to deformation quantization}\label{s:Applications to deformation quantization}
\subsection{Kontsevich formality theorem and classification of deformation quantizations}\label{ss:classification of deformation quantizations} From Theorem \ref{thm:formality} we recover the formality theorem of Kontsevich \cite{Konts1}, \cite{Konts}:
\begin{thm}\label{thm:foKo}
For a $C^\infty$ manifold $X$ there exists an $L_\infty$ quasi-isomorphism of DGLA
$$\Gamma(X, \wedge^{\bullet+1}(T_X))\to C^{\bullet+1}(C^\infty(X), C^\infty(X))
$$
For a complex manifold $X$, or for a smooth algebraic variety $X$ over a field of characteristic zero, there exists an $L_\infty$ quasi-isomorphism of sheaves of DGLA
$$\wedge^{\bullet+1}(T_X)\to C^{\bullet+1}(\cO_X, \cO_X)$$
\end{thm}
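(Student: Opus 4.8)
The plan is to deduce this statement from the $\Cai$ formality of Theorem \ref{thm:formality} by forgetting all of the calculus structure except the Lie bracket on the multivector component. Recall that a calculus is a two-colored gadget whose first color carries a Gerstenhaber algebra $\cV^\bullet$ and whose second carries a module $\Omega^\bullet$; restricting the colored operad $\Ca$ to the first color yields $\Gerst$, so there is a morphism of operads $\Gerst\to \Ca$ and, passing to cofibrant resolutions, a morphism $\Gerst_\infty\to \Cai$. Restriction of algebras along this morphism is functorial, so the $\Cai$ quasi-isomorphism of Theorem \ref{thm:formality} restricts to a $\Gerst_\infty$ quasi-isomorphism between the sheaves $C^\bullet(\cO_X,\cO_X)$ and $\wedge^\bullet T_X$, the latter carrying its Schouten-bracket Gerstenhaber structure (Example \ref{thm:gerstenhaber}).

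Next I would make precise the underlying $\Li$ structure of a $\Gerst_\infty$ algebra. The operadic inclusion of the suspended Lie operad into $\Gerst$ — encoding the fact that the Lie bracket of a Gerstenhaber algebra $\mathcal{A}^\bullet$ lives on $\mathcal{A}^{\bullet+1}$ — induces, after resolution, a map of DG operads whose restriction carries a $\Gerst_\infty$ algebra to an $\Li$ algebra on the shift by one. Applying this to the previous step yields an $\Li$ quasi-isomorphism between $C^{\bullet+1}(\cO_X,\cO_X)$ and $\wedge^{\bullet+1}T_X$ as sheaves. The induced brackets are exactly the asserted ones: on the cochain side the Lie part of the $\Gerst_\infty$ structure is the Gerstenhaber bracket by Theorem \ref{thm:gerstenhaber-1}(2), while on the multivector side it is the Schouten bracket.

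The morphism furnished by Theorem \ref{thm:formality} naturally runs from the Hochschild calculus to $\Ca_X$, whereas Kontsevich's statement is an $\Li$ map $\wedge^{\bullet+1}T_X\to C^{\bullet+1}$ in the opposite direction; so I would invert it, using the standard fact that an $\infty$-quasi-isomorphism of algebras over a cofibrant DG operad admits a homotopy-inverse $\infty$-quasi-isomorphism, whose first Taylor coefficient is the classical Hochschild--Kostant--Rosenberg antisymmetrization map $\wedge^\bullet T_X\to C^\bullet(\cO_X,\cO_X)$. For the $C^\infty$ case one then passes to global sections: with the tensor product $C^\infty(M^n)$ the Hochschild complex and $\Gamma(X,\wedge^\bullet T_X)$ are already global objects and the relevant sheaves are soft, so no hypercohomology correction intervenes and the sheaf-level map descends to the asserted quasi-isomorphism of global DGLA. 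In the complex-analytic and algebraic cases one keeps the statement at the level of sheaves.

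The hard part will be the globalization together with invertibility at the sheaf level: inverting an $\infty$-quasi-isomorphism of sheaves of $\Li$-algebras must be arranged to respect restriction and gluing, and one must check that the two forgetful passages $\Cai\rightsquigarrow\Gerst_\infty\rightsquigarrow\Li$ commute with sheaf restriction and with the formation of the homotopy inverse. In the $C^\infty$ setting softness removes any obstruction and the argument is clean; in the holomorphic and algebraic settings the genuine sheaf-level formulation is exactly what is needed, and the global corrections (the $\Gamma_\Phi$ and $\sqrt{\widehat A}$ classes of Theorem \ref{thm:formality}) are precisely what one must not conflate with the purely Lie-theoretic statement recorded here, into which they do not enter.
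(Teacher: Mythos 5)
Your proposal is correct and follows essentially the same route as the paper, which simply derives Theorem \ref{thm:foKo} from the calculus formality Theorem \ref{thm:formality} by forgetting down to the underlying $\Li$ structure; your elaboration of the operadic restrictions $\Gerst_\infty\to\Cai$ and $\Li\to\Gerst_\infty$, the inversion of the $\infty$-quasi-isomorphism, and the passage to global sections in the $C^\infty$ case fills in exactly the steps the paper leaves implicit. The only caveat is that with a general associator the first Taylor coefficient of the resulting map is the HKR map twisted by $\sqrt{\widehat A}_\Phi(T_X)$ rather than HKR itself, but since the theorem only asserts existence of an $\Li$ quasi-isomorphism this does not affect the argument.
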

\begin{definition}\label{dfn:formal Poiss} A formal Poisson structure on a $C^\infty$ manifold $X$ is a power series $\pi=\sum _{n=0}^{\infty} (i\hbar)^{n+1}\pi_n$ where $\pi_n$ are bivector fields and $[\pi,\pi]_{\rm{Sch}}=0$ (here $[\;]_{\rm{Sch}}$ denotes the Schouten bracket, extended  bilinearly to power series in $\hbar$ with values in multivector fields). An equivalence between two formal Poisson structures $\pi$ and $\pi '$ is a series $X=\sum_{n=1}^{\infty} (i\hbar)^{n+1}X_n$ such that $\pi '=\exp(L_X)\pi.$
\end{definition}
From Theorem \ref{thm:foKo} one deduces \cite{Konts1}, \cite{Konts}
\begin{thm}\label{thm:cla def qua}
There is a bijection between isomorphism classes of deformation quantizations of a $C^\infty$ manifold $X$ and equivalence classes of formal Poisson structures on $X.$
\end{thm}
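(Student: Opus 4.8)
The plan is to recognize this as the standard passage from an $L_\infty$ quasi-isomorphism of differential graded Lie algebras to a bijection of the associated deformation functors, applied to the quasi-isomorphism furnished by Theorem \ref{thm:foKo}. The key observation is that both sides of the desired bijection are precisely the Maurer--Cartan moduli sets of the two DGLAs related by formality, so that once each side is reinterpreted in these terms the theorem becomes a formal consequence of the invariance of Maurer--Cartan moduli under $L_\infty$ quasi-isomorphism.

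First I would reformulate the left side as a Maurer--Cartan problem. Consider the DGLA $\mathfrak{g}=C^{\bullet+1}(C^\infty(X),C^\infty(X))$ with its Hochschild differential $\delta$ and Gerstenhaber bracket, extended over $\mathbb{C}[[\hbar]]$ with the augmentation ideal $(\hbar)$ as pro-nilpotent parameter. A star product $f*g=fg+\sum_{k\ge 1}(i\hbar)^k P_k(f,g)$ is associative if and only if the degree-one element $\gamma=\sum_{k\ge 1}(i\hbar)^k P_k$ satisfies $\delta\gamma+\frac12[\gamma,\gamma]=0$: indeed associativity of the deformed multiplication is $[m+\gamma,m+\gamma]=0$, which, using $[m,m]=0$ and $\delta=[m,-]$ as in \eqref{eq:Hochcochains9}, is exactly the Maurer--Cartan equation. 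An isomorphism of deformations $T(f)=f+\sum_{k\ge 1}(i\hbar)^k T_k(f)$ is the gauge action of $\exp$ of the degree-zero element $\sum_{k\ge 1}(i\hbar)^k T_k\in\mathfrak{g}^0\otimes(\hbar)$. Hence isomorphism classes of deformation quantizations of $X$ are in natural bijection with the Maurer--Cartan moduli set of $\mathfrak{g}$.

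Next I would treat the right side analogously. Consider $\mathfrak{h}=\Gamma(X,\wedge^{\bullet+1}T_X)$ with zero differential and the Schouten bracket, again extended over $\mathbb{C}[[\hbar]]$. A Maurer--Cartan element is a degree-one element $\pi=\sum_{n\ge 0}(i\hbar)^{n+1}\pi_n$ with $\pi_n$ bivector fields and $\frac12[\pi,\pi]_{\rm Sch}=0$, which is exactly a formal Poisson structure in the sense of Definition \ref{dfn:formal Poiss}; gauge equivalence by $\exp(L_X)$ with $X=\sum_{n\ge 1}(i\hbar)^{n+1}X_n$ is precisely the equivalence relation of that definition. Thus equivalence classes of formal Poisson structures on $X$ form the Maurer--Cartan moduli set of $\mathfrak{h}$.

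It remains to invoke the general fact that an $L_\infty$ quasi-isomorphism $U=\{U_n\}$ between two DGLAs, pro-nilpotent by a formal parameter, induces a bijection of Maurer--Cartan moduli sets via $\pi\mapsto\sum_{n\ge 1}\frac1{n!}U_n(\pi,\ldots,\pi)$. Applying this to the quasi-isomorphism of Theorem \ref{thm:foKo} yields the asserted bijection, with the star product attached to a Poisson bivector $\pi$ given by $m+\sum_{n\ge1}\frac1{n!}U_n(\pi,\ldots,\pi)$. The main obstacle is exactly this invariance statement: one must verify, using the defining relations among the $U_n$, that the map sends Maurer--Cartan elements to Maurer--Cartan elements, that the infinite sums converge $\hbar$-adically on the pro-nilpotent completion, and that the construction intertwines the two gauge actions so as to descend to a well-defined bijection on moduli. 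This is the Goldman--Millson/Kontsevich deformation-functor argument, and it carries the real content beyond the formality theorem itself.
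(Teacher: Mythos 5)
Your proposal is correct and is precisely the deduction the paper intends: the paper offers no written proof, saying only that the theorem is deduced from Theorem \ref{thm:foKo} following Kontsevich, and the standard route is exactly your identification of both sides with Maurer--Cartan moduli sets (of the polydifferential Hochschild cochain DGLA and of polyvector fields, each tensored with the pro-nilpotent ideal $(\hbar)$) together with the invariance of such moduli under $L_\infty$ quasi-isomorphisms. No discrepancy with the paper's approach.
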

This theorem admits an analog for complex analytic manifolds and for smooth algebraic varieties in characteristic zero. The correct generalization of a deformation quantization is a formal deformation of the structure sheaf $\cO_X$ as an {\em algebroid stack} (cf. \cite{Kashi}, \cite{K2} for definitions).
\begin{thm}\label{thm:cla def qua holo} \cite{BGNT}, \cite{BGNT1}
For any associator $\Phi,$ there is a bijection between isomorphism classes of deformation quantizations of a complex manifold $X$ and equivalence classes of Maurer-Cartan elements of the DGLA 
$$(\hbar\Omega^{0,\bullet}(X, \wedge^{\bullet +1}T_X)[[\hbar]], {\overline{\partial}})$$
\end{thm}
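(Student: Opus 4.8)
The plan is to reduce the statement to a combination of the sheaf-level formality theorem (Theorem \ref{thm:foKo}) and the standard principle that an $L_\infty$ quasi-isomorphism of complete filtered DGLAs induces a bijection on gauge-equivalence classes of Maurer--Cartan elements. The conceptual heart of the argument, however, is the identification of deformation quantizations of $X$ \emph{as an algebroid stack} with Maurer--Cartan elements of an explicit global DGLA built from Hochschild cochains. Following \cite{BGNT}, \cite{BGNT1}, the appropriate global deformation complex is obtained by Dolbeault-resolving the sheaf of Hochschild cochains,
\[
\g_{\mathrm{Hoch}} = \big(\hbar\,\Omega^{0,\bullet}(X, C^{\bullet+1}(\cO_X,\cO_X))[[\hbar]],\ \bar\partial + \delta\big),
\]
equipped with the Gerstenhaber bracket; Maurer--Cartan elements of $\g_{\mathrm{Hoch}}$ modulo gauge equivalence are in bijection with isomorphism classes of algebroid-stack deformation quantizations of $\cO_X$. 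The factor $\hbar$ records that a deformation is trivial modulo $\hbar$, and the operator $\bar\partial$ encodes the descent/gluing data of the stack that is invisible at the sheaf level.

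Granting this, I would proceed as follows. First, apply the Dolbeault functor $\Omega^{0,\bullet}(X,-)$ to the sheaf $L_\infty$ quasi-isomorphism of Theorem \ref{thm:foKo}, namely $\wedge^{\bullet+1} T_X \to C^{\bullet+1}(\cO_X,\cO_X)$ (which itself depends on the chosen associator $\Phi$ via Theorem \ref{thm:formality}). Since $\Omega^{0,\bullet}(X)$ is a fine resolution, tensoring preserves quasi-isomorphism at the level of the first Taylor coefficient: the induced map computes hypercohomology on both sides, hence remains a quasi-isomorphism. One thereby obtains an $L_\infty$ quasi-isomorphism of global DGLAs
\[
\big(\hbar\,\Omega^{0,\bullet}(X, \wedge^{\bullet+1} T_X)[[\hbar]],\ \bar\partial\big)\ \longrightarrow\ \g_{\mathrm{Hoch}},
\]
whose source is exactly the DGLA in the statement. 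Second, invoke the formal deformation theory of filtered DGLAs: the $\hbar$-adic filtration makes both sides complete, with $[F^1,F^n]\subseteq F^{n+1}$, so the Goldman--Millson type invariance applies and the $L_\infty$ quasi-isomorphism induces a bijection on $\mathrm{MC}(\cdot)/{\sim}$. Composing with the identification of the previous paragraph yields the bijection, and one checks directly that Maurer--Cartan elements of the source are precisely formal holomorphic Poisson structures $\pi=\sum_{n\ge 1}\hbar^n\pi_n$ satisfying $\bar\partial\pi+\frac{1}{2}[\pi,\pi]_{\mathrm{Sch}}=0$, with gauge equivalence matching the equivalence of Definition \ref{dfn:formal Poiss}.

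The hard part is the first step: proving that algebroid-stack deformations really are governed by $\g_{\mathrm{Hoch}}$. Unlike the $C^\infty$ case (Theorem \ref{thm:cla def qua}), where every deformation is globally a star product, on a complex manifold a local star product need not extend to a global sheaf of algebras, so one is forced into the language of twisted forms and gerbes. Setting up the deformation functor of algebroid stacks, showing it is controlled by a DGLA, and matching that DGLA with the Dolbeault-resolved Hochschild complex (including the correct treatment of the nonabelian cocycle data and of the central parameter $\hbar$) is the technical core, carried out in \cite{BGNT}, \cite{BGNT1}. A secondary point requiring care is the dependence on $\Phi$: distinct associators produce distinct formality maps and hence, a priori, distinct bijections, which is precisely the source of the associator-dependence emphasized in the introduction.
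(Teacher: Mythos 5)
The paper does not actually prove Theorem \ref{thm:cla def qua holo}; it is stated as a quotation from \cite{BGNT}, \cite{BGNT1}, so there is no in-paper argument to compare against. Your outline is consistent with the strategy of those references: (a) show that deformations of $\cO_X$ as an algebroid stack are governed by a DGLA built from Hochschild cochains, (b) transport Maurer--Cartan moduli along the ($\Phi$-dependent) formality quasi-isomorphism of Theorem \ref{thm:foKo}/\ref{thm:formality}, using the $\hbar$-adic filtration and Goldman--Millson invariance. You correctly identify (a) as the technical core and correctly defer it to \cite{BGNT}, \cite{BGNT1}, which is exactly what the present paper does.

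Two caveats are worth flagging. First, step (a) is not merely ``Dolbeault-resolve the sheaf of Hochschild cochains'': in \cite{BGNT}, \cite{BGNT1} the controlling DGLA is assembled from the descent data of the stack (a cosimplicial or \v{C}ech-type DGLA of local multidifferential cochains, compared to a Fedosov/jet-type resolution), and the equivalence of its Maurer--Cartan groupoid with that of the Dolbeault model is itself a theorem, not a formal consequence of fineness of $\Omega^{0,\bullet}$. Second, your closing identification of Maurer--Cartan elements of $(\hbar\Omega^{0,\bullet}(X,\wedge^{\bullet+1}T_X)[[\hbar]],\bar\partial)$ with ``formal holomorphic Poisson structures'' is an oversimplification: an MC element has total degree $1$ in the shifted complex, i.e.\ it is a sum of components in $\hbar\Omega^{0,q}(X,\wedge^pT_X)[[\hbar]]$ with $p+q=2$, so besides a bivector component it contains a $(0,1)$-form valued in $T_X$ (deforming the complex structure) and a $(0,2)$-form valued in $\cO_X$ (deforming the gerbe/twist). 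This is precisely why the holomorphic classification is strictly richer than the $C^\infty$ one of Theorem \ref{thm:cla def qua}, and dropping those components would misstate what the bijection classifies. Neither caveat invalidates your plan, but both belong in any written-out proof.
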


\subsubsection{Hochschild cohomology of deformed algebras}\label{sss:Hochschild cohomology of deformed algebras} Let $\pi$ be a formal Poisson structure on a smooth manifold $X.$ Denote by $\bbA^\pi$ the deformation quantization algebra given by Theorem \ref{thm:cla def qua}. The Hochschild cochain complex $C^{\bullet}(\bbA^\pi,\bbA^\pi)$ is by definition the complex of multidifferential, $\C[[\hbar]]$-linear cochains. One deduces from Theorem \ref{thm:foKo}
\begin{thm}\label{thm:hoco of def alg} \cite{Konts1}, \cite{Konts}
There is an $L_\infty$ quasi-isomorphism of DGLA
$$(\Gamma(X, \wedge^{\bullet+1}(T_X))[[\hbar]], [\pi,-]_{\rm{Sch}})\isomoto C^{\bullet}(\bbA^\pi,\bbA^\pi) $$
\end{thm}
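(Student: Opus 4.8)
The plan is to deduce this from the Kontsevich formality quasi-isomorphism of Theorem \ref{thm:foKo} by the standard device of twisting an $\Li$ morphism by a Maurer--Cartan element of its source. Write $\g=\Gamma(X,\wedge^{\bullet+1}T_X)[[\hbar]]$, viewed as a DGLA with zero differential and the ($\hbar$-linearly extended) Schouten bracket, and $\h=C^{\bullet+1}(C^\infty(X),C^\infty(X))[[\hbar]]$, the shifted Hochschild complex with differential $\delta$ and Gerstenhaber bracket, so that $C^{\bullet+1}(\bbA^\pi,\bbA^\pi)$ is $\h$ with a modified differential. By Theorem \ref{thm:foKo}, extended $\C[[\hbar]]$-linearly, there is an $\Li$ quasi-isomorphism $\cU\colon\g\to\h$ with Taylor components $\cU_n$. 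A formal Poisson structure $\pi=\sum_{n\geq 0}(i\hbar)^{n+1}\pi_n$ is by definition a degree-one element of $\g$ lying in the ideal $\hbar\g$ and satisfying the Maurer--Cartan equation $\frac{1}{2}[\pi,\pi]_{\rm{Sch}}=0$; because $\pi\in\hbar\g$, every series written below converges $\hbar$-adically.

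First I would form the pushed-forward Maurer--Cartan element
$$\gamma=\cU_*(\pi)=\sum_{n\geq 1}\frac{1}{n!}\cU_n(\pi,\ldots,\pi)\in\h,$$
a degree-one Hochschild cochain satisfying the Maurer--Cartan equation in $\h$. The key identification is that, by the very construction of $\bbA^\pi$ through Theorem \ref{thm:cla def qua}, the cochain $m+\gamma$ (with $m$ the undeformed commutative multiplication) is the star product defining $\bbA^\pi$; hence the Hochschild differential of $\bbA^\pi$ equals $[m+\gamma,-]=\delta+[\gamma,-]$. Thus the DGLA $C^{\bullet+1}(\bbA^\pi,\bbA^\pi)$ is exactly the twist $\h^{\gamma}$ of $\h$, that is $\h$ with unchanged bracket and differential $\delta+[\gamma,-]$. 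Dually, twisting $\g$ by $\pi$ replaces its zero differential with $[\pi,-]_{\rm{Sch}}$, producing precisely the left-hand side of the theorem.

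Next I would invoke the general fact that an $\Li$ morphism can be twisted by a Maurer--Cartan element of its source to yield an $\Li$ morphism
$$\cU^{\pi}\colon\g^{\pi}\to\h^{\cU_*(\pi)},\qquad \cU^{\pi}_k(x_1,\ldots,x_k)=\sum_{n\geq 0}\frac{1}{n!}\cU_{n+k}(\pi,\ldots,\pi,x_1,\ldots,x_k),$$
whose $\Li$ relations follow formally from those of $\cU$ together with the Maurer--Cartan equation for $\pi$. By the previous paragraph the source and target of $\cU^{\pi}$ are exactly the two DGLA in the statement, so it remains only to verify that $\cU^{\pi}$ is a quasi-isomorphism; its naturality with respect to isomorphisms of algebras is inherited from the naturality of $\cU$ in Theorem \ref{thm:foKo} and the functoriality of twisting.

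This last verification is the one genuinely substantive point, and the step I expect to be the main obstacle, since the linear term $\cU^{\pi}_1=\sum_{n\geq 0}\frac{1}{n!}\cU_{n+1}(\pi,\ldots,\pi,-)$ is no longer Kontsevich's HKR map. The way around it uses $\pi\in\hbar\g$: reduction modulo $\hbar$ annihilates every summand with $n\geq 1$, so $\cU^{\pi}_1\equiv\cU_1\pmod{\hbar}$, while the twisted differentials $[\pi,-]_{\rm{Sch}}$ and $\delta+[\gamma,-]$ reduce to $0$ and $\delta$ respectively. I would therefore filter both complexes by the $\hbar$-adic filtration: the map induced by $\cU^{\pi}_1$ on the associated graded is the associated graded of $\cU_1$, which is a quasi-isomorphism because the linear term of the $\Li$ quasi-isomorphism $\cU$ is. Since both $\g^{\pi}$ and $\h^{\gamma}$ are $\hbar$-adically complete and the filtration is exhaustive and complete, the standard comparison lemma for complete filtered complexes (equivalently, convergence of the associated spectral sequence) lifts the quasi-isomorphism from the associated graded to the complexes themselves, giving the desired $\Li$ quasi-isomorphism.
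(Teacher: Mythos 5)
Your proposal is correct and is precisely the deduction the paper intends: the text offers no argument beyond ``One deduces from Theorem \ref{thm:foKo}'', and the standard way to do so is exactly your twisting of the Kontsevich $\Li$ morphism by the Maurer--Cartan element $\pi$, identification of $\cU_*(\pi)$ with the star product via Theorem \ref{thm:cla def qua}, and the $\hbar$-adic filtration argument to see that the twisted morphism remains a quasi-isomorphism. Nothing is missing.
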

\subsection{Formality theorem for chains and the Hochschild and cyclic homology of deformed algebras}\label{ss:Hochschild and cyclic homology of deformed algebras}
Note that, by Theorems  \ref{thm:foKo} and \ref{thm:hoco of def alg}, the Hochschild and negative cyclic complexes of $C^\infty(X),$ resp. of $\bbA^\pi,$ are $L_\infty$-modules over $\Gamma^{\bullet+1}(T_X),$ resp. over  $(\Gamma(X, \wedge^{\bullet+1}(T_X))[[\hbar]], [\pi,-]_{\rm{Sch}}).$

\begin{thm}\label{thm:hoho of def alg} \cite{D2}, \cite{D}, \cite{S}.
\begin{enumerate}
\item
There is a $\C[[u]]$-linear,  $(u)$-adically continuous $L_\infty$ quasi-isomorphism of DG modules over the DGLA $\Gamma(X, \wedge^{\bullet+1}(T_X))$
$${\rm{CC}}^-_{-\bullet}(C^\infty (X))\isomoto  (\Omega^{-\bullet}[[u]], ud_{\rm{DR}})$$
whose reduction modulo $u$ is an $L_\infty$ quasi-isomorphism
$${C}_{-\bullet}(C^\infty (X), C^\infty (X))\isomoto  \Omega^{-\bullet}(X)$$
\item
There is a $\C[[u]]$-linear,  $(u)$-adically continuous $L_\infty$ quasi-isomorphism of DG modules over the DGLA $(\Gamma(X, \wedge^{\bullet+1}(T_X))[[\hbar]], [\pi,-]_{\rm{Sch}})$
$${{\rm{CC}}}^-_{-\bullet}(\bbA_\pi)\isomoto  (\Omega^{-\bullet}(X)[[\hbar, u]], L_\pi+ud_{\rm{DR}})$$
whose reduction modulo $u$ is an $L_\infty$ quasi-isomorphism
$${C}_{-\bullet}(\bbA^\pi,\bbA^\pi )\isomoto  (\Omega^{-\bullet}[[\hbar]], L_\pi)$$
\end{enumerate}
\end{thm}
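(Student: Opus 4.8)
The plan is to derive both parts from the calculus formality theorem (Theorem~\ref{thm:formality}) by stripping it down to its underlying $\Li$-module of ``forms'' over the DGLA of ``multivector fields,'' and then to pass from Part 1 to Part 2 by twisting with the Maurer--Cartan element $\pi.$ For Part 1, I would begin with the $\Cai$ quasi-isomorphism of Theorem~\ref{thm:formality} for $A=C^\infty(X)$ (with the jets tensor product of Section~\ref{ss:cinfty}), comparing $(C^\bullet(\cO_X,\cO_X),C_\bullet(\cO_X,\cO_X))_\Phi$ with the geometric calculus $\Ca_X.$ A $\Cai$ algebra contains, as part of its data, a DGLA (the shifted multivector part) together with a module over it given by the operation $L,$ and a $\Cai$ quasi-isomorphism restricts to an $\Li$-quasi-isomorphism of these modules. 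On the geometric side this module is $\Omega^{-\bullet}(X)$ with the Lie-derivative action of $\Gamma(X,\wedge^{\bullet+1}T_X),$ and, working in the $u$-calculus of Definition~\ref{dfn:calc h,u}, the identity $[d,\iota_a]=(-1)^{|a|-1}uL_a$ makes the cyclic differential equal to $ud_{\rm DR}$ under the Hochschild--Kostant--Rosenberg map of Theorem~\ref{thm:HKRsmooth} (matching $b+uB$ on the algebraic side). On the algebraic side the module is $C_{-\bullet}(C^\infty(X),C^\infty(X)),$ resp. $CC^-_{-\bullet}(C^\infty(X))$ after adjoining $u.$ This produces the two $\Li$-module quasi-isomorphisms of Part 1, compatible with the DGLA map of Theorem~\ref{thm:foKo}; $\C[[u]]$-linearity and $(u)$-adic continuity are inherited from the $\Cai$ morphism.

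The structural point that makes the deformation step clean is that, on both sides, the module structure used for twisting is a \emph{strict} DG module over the DGLA, even though the comparison map is a genuine $\Li$-morphism. On the algebraic side this is exactly the content of the Cartan-type relations $[L_D,L_E]=L_{[D,E]},$ $[b,L_D]+L_{\delta D}=0,$ $[L_D,B]=0$ of Section~\ref{pairings}, which say that $D\mapsto L_D$ is a strict map of DGLAs into the endomorphisms of $(C_{-\bullet}(A)[[u]],b+uB);$ the analogous Cartan relations hold for the Lie derivative acting on $(\Omega^{-\bullet}(X)[[u]],ud_{\rm DR}).$

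For Part 2, I would then twist by $\pi.$ The general principle is that an $\Li$-quasi-isomorphism $\phi\colon\g\to\h$ of DGLAs together with a compatible $\Li$-morphism $M\to N$ of DG modules can be twisted by any Maurer--Cartan element $\pi\in\g,$ yielding a quasi-isomorphism of DG modules $M^\pi\to N^{\phi_*\pi}$ whose differentials are those of $M$ and $N$ corrected by the action of $\pi,$ resp. $\phi_*\pi.$ Applying this with $\phi$ the Kontsevich morphism of Theorem~\ref{thm:foKo} and $\pi$ the formal Poisson structure: on the cochain side $\phi_*\pi$ is the Hochschild cochain defining the star product (Theorems~\ref{thm:cla def qua} and~\ref{thm:hoco of def alg}), and since the chain action is the strict Lie action $L,$ the twisted differential on chains is $b+L_{\phi_*\pi}+uB,$ which is exactly the Hochschild--cyclic differential of $\bbA^\pi$ (the operator $B$ being independent of the product); hence $N^{\phi_*\pi}=CC^-_{-\bullet}(\bbA^\pi).$ On the form side $\pi$ acts by $L_\pi,$ so the twisted module is $(\Omega^{-\bullet}(X)[[\hbar,u]],L_\pi+ud_{\rm DR}).$ This gives the cyclic statement of Part 2, and reduction modulo $u$ (allowed by continuity) recovers the Hochschild statements of both parts.

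The hard part is not the twisting but the undeformed chain-level input: constructing the compatible $\Li$-module quasi-isomorphism on Hochschild chains, with the cyclic differential correctly identified with $ud_{\rm DR},$ is precisely the chain (``cyclic'') formality theorem of \cite{S}, \cite{D}, \cite{D2}. In the present framework it is packaged inside Theorem~\ref{thm:formality}, but it is the genuine analytic and combinatorial content, and it depends on the choice of associator $\Phi.$ Beyond that, the points requiring care are the verification that the geometric module is strict with the Lie-derivative action and the stated differential (so that the twist produces the clean answer $L_\pi+ud_{\rm DR}$ with no higher corrections), and the simultaneous bookkeeping of the two formal parameters $\hbar$ and $u$ together with their adic continuity, so that the power-series conventions defining $CC^-$ are respected throughout the twisting.
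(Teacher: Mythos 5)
The paper does not prove this theorem: it is stated with citations to \cite{D2}, \cite{D}, \cite{S} and no argument is given, so there is no in-paper proof to compare against. Your two-step scheme is nonetheless a faithful reconstruction of how the result is organized in the modern literature and is consistent with how the survey frames it: Part 1 is the underlying $\Li$-module content of the calculus formality theorem (Theorem \ref{thm:formality}, whose algebraic input is Theorem \ref{thm:calc}, part (2) of which guarantees that the $\Li$-module structure on chains is the \emph{strict} one given by $b+uB$ and $L_D$, exactly the point you isolate), and Part 2 follows by twisting the $\Li$-morphism of modules by the Maurer--Cartan element $\pi$, with $\phi_*\pi$ identified with the star-product cochain as in Theorems \ref{thm:cla def qua} and \ref{thm:hoco of def alg}. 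Two remarks. First, the original proofs in \cite{S}, \cite{D2}, \cite{D} do not pass through the operadic calculus formality at all: they construct the chain-level $\Li$-module morphism directly by Kontsevich--Shoikhet configuration-space integrals on the upper half-plane and disc and then globalize via Fedosov-type resolutions; your route through Theorem \ref{thm:formality} is the later repackaging of \cite{TT}, \cite{DTT Fty}, which buys uniformity (one associator-dependent statement implying everything) at the cost of hiding the analytic content you correctly flag as the hard part. Second, in the twisting step you should say explicitly why the twisted map remains a quasi-isomorphism: since $\pi=O(\hbar)$, one filters by powers of $\hbar$ and observes that the associated graded of the twisted morphism is the untwisted one, so the standard spectral-sequence (or filtered-complex) argument applies; this is routine but it is the one place where the ``general principle'' of twisting needs an actual verification rather than a formula.
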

\subsubsection{The complex analytic case}\label{fty for chains: complex case}  Let $\pi$ be a Maurer-Cartan element of the DGLA $(\hbar\Omega^{0,\bullet}(X, \wedge^{\bullet +1}T_X)[[\hbar]], {\overline{\partial}})$. Let $\bbA ^\pi _\Phi$ be the algebroid stack deformation corresponding to $
\pi$ by Theorem \ref{thm:cla def qua holo}. A Hochschild cochain complex $C^\bullet(\cA)$ of any algebroid stack $\cA$ was defined in \cite{BGNT}; the complexes $C_-{\bullet}(\cA), $ ${\operatorname{CC}}^-_{-\bullet}(\cA), $ and ${\operatorname{CC}}^{\rm{per}}_{-\bullet}(\cA)$ were defined in  \cite{chern}. As in the usual case, $C^{\bullet+1}(\cA)$ is a DGLA and the chain complexes are $DG$ modules over it.
\begin{thm}\label{thm:hocoho def qua analytic}
\begin{enumerate}
\item
There is $L_\infty$ quasi-isomorphism
$$\Omega^{0,\bullet}(X, \wedge ^{\bullet +1}(T_X))[[\hbar]], [\pi, -]_{\rm{Sch}}) \isomoto C^{\bullet+1}(\bbA_\Phi^\pi )$$
\item There is a $\C[[u]]$-linear $(u)$-adically continuous quasi-isomorphism of $L_\infty$ modules over the left hand side of the above formula
$${\operatorname{CC}}^-_{-\bullet}(\bbA_\Phi^\pi)\isomoto (\Omega^{0,\bullet}(X,\Omega_X^\bullet)[[\hbar, u]], {\overline{\partial}}+L_\pi+u\partial)$$
\end{enumerate}
\end{thm}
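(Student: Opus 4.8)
The plan is to deduce the statement from the sheaf-level $\Cai$ formality of Theorem \ref{thm:formality} by the standard deformation-theoretic mechanism of twisting an $\Li$-algebra together with its $\Li$-module by a Maurer--Cartan element. Indeed, the assertion is precisely the complex-analytic, algebroid-stack counterpart of the $C^\infty$ statement in Theorem \ref{thm:hoho of def alg}, and the only genuinely new ingredient is that the descent of the local formality equivalences to the stack is governed by the associator $\Phi.$

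First I would globalize the formality. Theorem \ref{thm:formality} supplies a $\Cai$ quasi-isomorphism of sheaves of $\Cai$ algebras on $X$; extracting its underlying structure yields, on the cochain side, the holomorphic Kontsevich $\Li$ quasi-isomorphism of sheaves of DG Lie algebras $\wedge^{\bullet+1}T_X\isomoto C^{\bullet+1}(\cO_X,\cO_X)$ of Theorem \ref{thm:foKo}, together with a compatible $\Li$-module quasi-isomorphism on the chain side built on $\Omega^\bullet_X.$ Resolving both sides by the fine Dolbeault complex $\Omega^{0,\bullet}(X,-),$ which preserves quasi-isomorphisms and computes hypercohomology, converts these sheaf maps into global ones. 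One thus obtains an $\Li$ quasi-isomorphism of DG Lie algebras
$$(\Omega^{0,\bullet}(X,\wedge^{\bullet+1}T_X)[[\hbar]],\overline{\partial})\isomoto \Omega^{0,\bullet}(X,C^{\bullet+1}(\cO_X,\cO_X))$$
and a $\C[[u]]$-linear, $(u)$-adically continuous $\Li$-module quasi-isomorphism between $\Omega^{0,\bullet}(X,\Omega^\bullet_X)[[u]]$ with differential $\overline{\partial}+u\partial$ and the Dolbeault resolution of the negative cyclic complex of $\cO_X.$

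Next I would identify the complexes of the stack with twists and apply functoriality of twisting. By Theorem \ref{thm:cla def qua holo} the Maurer--Cartan element $\pi$ of $(\hbar\Omega^{0,\bullet}(X,\wedge^{\bullet+1}T_X)[[\hbar]],\overline{\partial})$ determines the deformation $\bbA_\Phi^\pi,$ and by the constructions of \cite{BGNT}, \cite{chern} its Hochschild and negative cyclic complexes $C^{\bullet+1}(\bbA_\Phi^\pi)$ and $\operatorname{CC}^-_{-\bullet}(\bbA_\Phi^\pi)$ are the twists of the right-hand sides above by the image $\phi_*(\pi)$ of $\pi$ under the globalized formality map. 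Since an $\Li$ quasi-isomorphism sends Maurer--Cartan elements to Maurer--Cartan elements and induces a quasi-isomorphism of the associated twisted DG Lie algebras, twisting the first map by $\pi$ gives part (1), with twisted differential $\overline{\partial}+[\pi,-]_{\rm{Sch}}.$ Likewise an $\Li$-morphism of modules over an $\Li$-morphism of algebras is compatible with twisting, so twisting the module quasi-isomorphism gives part (2); on the geometric side the differential $\overline{\partial}+u\partial$ is replaced by $\overline{\partial}+L_\pi+u\partial,$ the term $L_\pi$ coming from the Lie-derivative module action of $\pi.$

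I expect the main obstacle to be exactly the identification invoked in the previous step: descending the local formality equivalences for the sheaves $\cO_U$ to a single coherent equivalence of the Hochschild and cyclic complexes of the algebroid stack. Each chart carries the map above, but gluing over the transition data of $\bbA_\Phi^\pi$ forces these maps to be compatible with the cocycle defining the stack, and the obstruction to this compatibility is measured by the associator $\Phi$; promoting the pointwise formality to a globally coherent, gerbe-twisted one is the technical heart of the argument, carried out in \cite{BGNT}, \cite{BGNT1} for cochains and in \cite{chern} for chains.
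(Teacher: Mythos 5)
The paper states this theorem without proof, as a survey item resting on \cite{BGNT}, \cite{BGNT1} and \cite{chern}, and your route --- globalizing the sheaf-level formality of Theorem \ref{thm:formality} by a Dolbeault resolution and then twisting the resulting $\Li$ quasi-isomorphism of DGLAs and its $\Li$-module counterpart by the Maurer--Cartan element $\pi$ --- is exactly the argument those references carry out. You correctly isolate the genuine content, namely the identification of $C^{\bullet}(\bbA_\Phi^\pi)$ and ${\operatorname{CC}}^-_{-\bullet}(\bbA_\Phi^\pi)$ with the $\phi_*(\pi)$-twists of the Dolbeault-resolved complexes of $\cO_X$; the only point left tacit is that twisting an $\Li$ quasi-isomorphism again yields a quasi-isomorphism of the twisted objects, which holds here because $\pi$ is divisible by $\hbar$ and all complexes are $\hbar$-adically complete, so the standard filtration argument applies.
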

\subsection{Algebraic index theorem for deformations of symplectic structures}
Let $M$ be a smooth symplectic manifold. Let ${\mathbb{A}}^\hbar_M$
be a deformation quantization of a smooth symplectic manifold $M.$ Recall that
there exists canonical up to homotopy equivalence
quasi-isomorphism  
\begin{equation} \label{eq:tracedensity2}
\mu ^\hbar: \;CC_{\bullet}^-({\mathbb{A}}^\hbar(M))[\hbar^{-1}] \rightarrow
(\Omega^{2n-\bullet}(M)[[u]]((\hbar))] , i\hbar d)
\end{equation}
Localizing in $u$, we obtain a quasi-isomorphism
\begin{equation} \label{eq:tracedensity3}
\mu ^\hbar:
\;CC_{\bullet}^{\operatorname{per}}({\mathbb{A}}^\hbar(M))[\hbar^{-1}]
\rightarrow (\Omega^{2n-\bullet}(M)((u))((\hbar)), i\hbar d)
\end{equation}
(recall the notation from Definition \ref{dfn:((u))}).
\begin{definition} \label{dfn:tracedensity} The above morphisms are called
{\it the trace density
morphisms}.
\end{definition}
The index theorem compares the trace density morphism to the
principal symbol morphism. To define the latter, consider the
cyclic complex of the deformed algebra where the scalar ring is
$\Bbb C$ instead of ${\Bbb {C}}[[\hbar]]$. Consider the composition
$$CC_{\bullet}^-({\mathbb{A}}^\hbar M)) \to CC_{\bullet}^-(C^{\infty}(M)) \to $$
$$ \to (\Omega ^{\bullet}(M)[[u]], ud) \to (\Omega ^{\bullet}(M)[[u]][[\hbar]],
ud)$$
where the first morphism is reduction modulo $\hbar$, the second one
is $\mu$ from Theorem \ref{thm:HKRsmooth}, and the third one is
induced by the embedding ${\Bbb C} \to {\Bbb {C}}[[\hbar]]$. We will
denote this composition, followed by localization in $\hbar$, by
\begin{equation} \label{eq:principal symbol}
\mu: CC_{\bullet}^{\operatorname{per}}({\mathbb{A}}^\hbar(M)) \to
(\Omega ^{\bullet}(M)((u))((\hbar)), ud)
\end{equation}
To compare $\mu$ and $\mu ^\hbar$, let us identify the right hand
sides by the isomorphism
$$(\Omega ^{2n-\bullet}(M)((u))((\hbar)), i\hbar d) \to (\Omega
^{\bullet}(M)((u))((\hbar)), ud)$$
which is equal to $(\frac{t}{u})^{n-k}$ on $\Omega ^{k}(M)((u))((\hbar))$. After this identification, we obtain two
morphisms
$$\mu,\; \mu^t : CC_{\bullet}^{\operatorname{per}}({\mathbb{A}}^t(M)) \to
(\Omega ^{\bullet}(M)((u))((\hbar)), ud)$$
where the left hand side is defined as the periodic cyclic complex
with respect to the ground ring $\Bbb C$.
\begin{thm} \label{thm:index symplectic} At the level of cohomology,
$$\mu ^\hbar = \sum _{p=0}^{\infty} u^p (\widehat{A}(M)e^{\theta})_{2p} \cdot
\mu$$
where $\widehat{A}(M)$ is the $\widehat{A}$ class of the tangent
bundle of $M$ viewed as a complex bundle (with an almost complex
structure compatible with the symplectic form), and $\theta \in
{\frac{1}{i\hbar}}[\omega] + H^2 (M, {\Bbb C}[[\hbar]])$ is the
characteristic class of the deformation (cf. Theorem
\ref{thm:classification of symplectic deformations}).
\end{thm}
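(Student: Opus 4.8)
The plan is to compare the two morphisms $\mu$ and $\mu^\hbar$ after the identification of their targets, by showing first that each induces an isomorphism on cohomology, then that the resulting automorphism of de Rham cohomology is multiplication by an even class, and finally by identifying that class with $\sum_p u^p(\widehat{A}(M)e^\theta)_{2p}$ through a universal local computation.

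First I would check that both maps are quasi-isomorphisms. For $\mu^\hbar$ this is exactly Theorem \ref{thm:deformations}. For $\mu$, recall it is built from reduction modulo $\hbar$, followed by the Hochschild--Kostant--Rosenberg map of Theorem \ref{thm:HKRsmooth} and the inclusion $\mathbb{C}\to\mathbb{C}[[\hbar]]$; reduction modulo $\hbar$ is a quasi-isomorphism on periodic cyclic complexes by the Goodwillie rigidity theorem \ref{thm:rigid} (in its $\hbar$-adically complete form), and HKR is a quasi-isomorphism, so after localizing in $\hbar$ the map $\mu$ is a quasi-isomorphism as well. Consequently both morphisms induce isomorphisms onto the cohomology of $(\Omega^\bullet(M)((u))((\hbar)),ud)$, and the composite $\Psi=\mu^\hbar\circ\mu^{-1}$ is a well-defined automorphism of $H^\bullet_{\mathrm{dR}}(M)((u))((\hbar))$.

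Next I would show that $\Psi$ is multiplication by a fixed even class $c\in H^\bullet_{\mathrm{dR}}(M)((u))((\hbar))$. The point is that de Rham cohomology acts on the periodic cyclic homology of both $\mathbb{A}^\hbar(M)$ and $C^\infty(M)$ --- through the cup product of Hochschild cochains together with the calculus actions $i_D$, $L_D$ of \ref{pairings} --- and that both trace density morphisms intertwine this action with the cup product on $H^\bullet(M)$. This compatibility, which is built into the multiplicativity of the calculus structure (Theorems \ref{thm:calc} and \ref{thm:formality}), forces $\Psi$ to be $H^\bullet(M)$-linear, hence equal to multiplication by $c=\Psi(1)$.

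It then remains to identify $c$, and this is the main obstacle. Since $c$ is natural under symplectomorphisms and local, the splitting principle reduces its computation to a product of two-dimensional symplectic models, where the deformation is the Moyal--Weyl product. Here I would run the explicit trace-density computation. The dependence on the characteristic class $\theta$ of the deformation (classified by Theorem \ref{thm:classification of symplectic deformations}) enters the trace density through the exponential factor $e^\theta$, reflecting how $\theta$ twists the symplectic volume used to normalize the canonical trace; this is the comparatively easy part. The curvature contribution of the Fedosov connection, fed through the formality quasi-isomorphism for chains of Theorem \ref{thm:hoho of def alg} whose characteristic class is governed by the gamma function $\Gamma_\Phi$ of the associator, produces the $\widehat{A}$-genus of $T_M$ viewed as a complex bundle. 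The genuinely hard step is this last universal identification: matching the symmetric power series $\Gamma_\Phi(x_1)\cdots\Gamma_\Phi(x_n)$ against the $\widehat{A}$-series, which rests on the evaluation \eqref{eq:zetas 2} of the even zeta-values $\zeta_\Phi(2n)$ of the associator. Once the model yields $\widehat{A}(M)e^\theta$, naturality and locality propagate the answer to arbitrary $M$, completing the proof.
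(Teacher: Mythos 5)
The paper itself gives no proof of Theorem \ref{thm:index symplectic}: it is stated as a known result, with the surrounding text pointing to Fedosov \cite{Fe} and to \cite{NT4}, \cite{BNT}, so your attempt can only be measured against the strategy of those works. Your first two steps are sound and do match that strategy: $\mu^\hbar$ is a quasi-isomorphism by Theorem \ref{thm:deformations}, $\mu$ is one by Goodwillie rigidity (Theorem \ref{thm:rigid}) plus HKR (Theorem \ref{thm:HKRsmooth}), and since $HP_\bullet$ of a symplectic deformation is a free rank-one module over $H^\bullet(M)((u))((\hbar))$ under the cap-product action, two module quasi-isomorphisms onto $(\Omega^\bullet(M)((u))((\hbar)),ud)$ differ by multiplication by the invertible class $c=\Psi(1)$, provided you check that both trace density maps intertwine the $i_D+uS_D$ action with wedge product --- which is exactly the content of the calculus compatibility you cite.

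The genuine gap is in your final step. You claim that ``since $c$ is natural under symplectomorphisms and local, the splitting principle reduces its computation to a product of two-dimensional symplectic models, where the deformation is the Moyal--Weyl product.'' Naive locality cannot work here: every symplectic manifold is locally Darboux-standard and every deformation is locally isomorphic to the Moyal product, so restriction to local models destroys precisely the information carried by $\widehat{A}(M)$ and $\theta$, which are global characteristic classes of the gluing data. A computation on a contractible chart can only ever return $c=1$. The actual mechanism in \cite{NT4}, \cite{BNT} is formal geometry in the sense of Gelfand--Fuks: one replaces $\mathbb{A}^\hbar(M)$ by the Fedosov flat bundle of formal Weyl algebras, transports the whole problem to a \emph{universal} computation in the relative Lie algebra cohomology of the Lie algebra of derivations of the Weyl algebra modulo $\mathfrak{sp}(2n)\oplus\mathfrak{gl}_N$, and only there applies the splitting principle; the classes $\widehat{A}$ and $e^\theta$ then appear by Chern--Weil theory applied to the curvature of the Fedosov connection. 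Without this step your argument identifies nothing. A secondary point: your appeal to $\Gamma_\Phi$ and to \eqref{eq:zetas 2} imports the associator into a theorem that does not depend on one; this is harmless in principle (the odd zeta values cancel in $\Gamma_\Phi(x)\Gamma_\Phi(-x)$ because $T_M\otimes\mathbb{C}$ splits into conjugate halves, leaving only \eqref{eq:zetas 2} and hence $\widehat{A}$), but if you take the formality route you must actually verify this cancellation rather than assert that the ``curvature contribution'' produces $\widehat{A}$.
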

Note that the canonical trace $\operatorname {Tr} _{\operatorname
{can} }$ is the composition of $\mu ^\hbar$ with the integration
$\Omega ^{2n}((\hbar)) \to {\Bbb{C}}((\hbar))$. Let $P$ and
$Q$ be $N \times N$ matrices over ${\mathbb{A}}^t(M)$ such that
$P^2 = P$, $Q^2 = Q$, and $P-Q$ is compactly supported. Let $P_0$,
$Q_0$ be reductions of $P$, $Q$ modulo $\hbar$. They are idempotent
matrix-valued functions; their images $ P_0 {\Bbb C}^N$, $ Q_0
{\Bbb C}^N$ are vector bundles on $M$. Applying $\mu ^\hbar$ to the
the difference of Chern characters of $P$ and $Q$, we obtain the
following index theorem of Fedosov \cite{Fe} (cf. also
\cite{NT4}).
\begin{thm} \label{thm:fedosov index}
$${\operatorname {Tr}} _{\operatorname {can} }(P-Q)=\int _M
({{\operatorname{ch}}  (P_0 {\Bbb C}^N) -{\operatorname{ch}}  (Q_0 {\Bbb
C}^N)) \widehat{A}(M)e^{\theta}}
$$
\end{thm}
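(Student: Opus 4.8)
The plan is to compute the canonical trace of $P-Q$ by applying the trace density morphism $\mu^\hbar$ to a periodic cyclic Chern character and then to use Theorem~\ref{thm:index symplectic} to trade $\mu^\hbar$ for the principal symbol morphism $\mu$, paying the price of the characteristic class $\widehat{A}(M)e^\theta$. Since $P-Q$ is compactly supported, so are all the chains built from it, so the whole computation takes place in the compactly supported negative (or periodic) cyclic homology of $\mathbb{A}^\hbar(M)$, where, after the identification of targets by $(t/u)^{n-k}$ described just before Theorem~\ref{thm:index symplectic}, the morphisms $\mu$ and $\mu^\hbar$ become comparable.

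First I would recall the cyclic Chern character: for an idempotent matrix $e$ over an algebra there is a canonical cycle $\operatorname{ch}(e)$ in the periodic cyclic complex (the standard explicit formula of Connes), with degree-zero component the matrix trace $\operatorname{tr}(e)$. Applied to $P$ and $Q$ this gives classes $\operatorname{ch}(P),\operatorname{ch}(Q)\in HC^{\operatorname{per}}_0(\mathbb{A}^\hbar(M))$, and since $P-Q$ is compactly supported their difference refines to the compactly supported theory. By the definition of the canonical trace in \ref{sss:trace} as $\mu^\hbar$ followed by integration over $M$, together with the fact that the trace density cocycle pairs with $K$-theory classes through their full Chern character, one obtains the identity
\[
\Tr_{\operatorname{can}}(P-Q)=\int_M \mu^\hbar\bigl(\operatorname{ch}(P)-\operatorname{ch}(Q)\bigr),
\]
where on the right one integrates the top-degree component of the image form. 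This reduces the theorem to evaluating $\mu^\hbar$ on a periodic cyclic homology class, and it is the full Chern character (not merely the $0$-chain $\operatorname{tr}(P-Q)$) that is needed, because $\widehat A(M)e^\theta$ has components in all even degrees that must pair with the complementary components of $\operatorname{ch}(P)-\operatorname{ch}(Q)$.

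Next I would invoke Theorem~\ref{thm:index symplectic}, which at the level of cohomology rewrites $\mu^\hbar$ as $\sum_{p\ge 0}u^p(\widehat A(M)e^\theta)_{2p}\cdot\mu$. Substituting this into the displayed formula pulls the class $\widehat A(M)e^\theta$ out of the integral, leaving
\[
\Tr_{\operatorname{can}}(P-Q)=\int_M \widehat A(M)e^\theta\cdot \mu\bigl(\operatorname{ch}(P)-\operatorname{ch}(Q)\bigr),
\]
the integration over $M$ selecting the top-degree part of the product and absorbing the powers of $u$. It remains only to identify $\mu$ of the cyclic Chern character with the topological Chern character of the range bundle.

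The main obstacle is precisely this last identification, the semiclassical limit of the Chern character. By construction $\mu$ factors through reduction modulo $\hbar$, landing in $CC^{\operatorname{per}}_\bullet(C^\infty(M))$, followed by the Hochschild--Kostant--Rosenberg map of Theorem~\ref{thm:HKRsmooth}. Reduction of $\operatorname{ch}(P)$ modulo $\hbar$ yields $\operatorname{ch}(P_0)$ for the commutative idempotent $P_0\in\operatorname{Mat}_N(C^\infty(M))$, and the content to be verified is that the map $f_0\otimes\cdots\otimes f_n\mapsto \tfrac{1}{n!}f_0\,df_1\cdots df_n$ carries this explicit cycle to a Chern--Weil representative of $\operatorname{ch}(P_0\mathbb{C}^N)$. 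Concretely, the $2k$-th component of $\operatorname{ch}(P_0)$ maps under $\mu$ to a constant multiple of $\operatorname{tr}\bigl(P_0(dP_0)^{2k}\bigr)$, which is the Chern--Weil form for the tautological connection $P_0\,d\,P_0$ on $\operatorname{im}(P_0)$, whose curvature is $P_0\,dP_0\wedge dP_0$; matching the combinatorial constants in the idempotent Chern cycle against those in $\operatorname{tr}\,e^{-\mathrm{curv}}$ then reproduces $\operatorname{ch}(P_0\mathbb{C}^N)$ exactly. This is the classical compatibility of the cyclic and the topological Chern characters, and granting it the previous display becomes the asserted formula, which completes the proof.
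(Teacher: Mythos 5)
Your proposal is correct and follows exactly the route the paper takes: the paper deduces Theorem \ref{thm:fedosov index} by applying $\mu^\hbar$ to the difference of cyclic Chern characters of $P$ and $Q$, using the identification of $\operatorname{Tr}_{\operatorname{can}}$ as $\mu^\hbar$ followed by integration, and invoking Theorem \ref{thm:index symplectic} to replace $\mu^\hbar$ by $\widehat{A}(M)e^{\theta}\cdot\mu$. You have simply spelled out the steps (including the HKR identification of $\mu(\operatorname{ch}(P))$ with the Chern--Weil representative of $\operatorname{ch}(P_0\mathbb{C}^N)$) that the paper compresses into a single sentence.
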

\subsection{Algebraic index theorem}\label{ss:Algebraic index theorem} The algebraic index theorem compares two morphisms from the periodic cyclic homology of a deformed algebra to the de Rham cohomology of the underlying manifold.
\subsubsection{The trace density map}\label{sss:TR density}
\begin{definition}\label{dfn:TR}
For a $C^\infty$ manifold $X,$ a formal Poisson structure $\pi$ on $X,$ and for the deformation quantization algebra $\bbA_\pi,$ define the trace density map
$${\operatorname{TR}}\colon {{\rm{CC}}}^{\rm{per}}_{-\bullet}(\bbA_\pi)\isomoto  (\Omega^{-\bullet}(X)[[\hbar]] ((u)), ud_{\rm{DR}})$$
to be the composition
$${{\rm{CC}}}^{\rm{per}}_{-\bullet}(\bbA^\pi)\isomoto  (\Omega^{-\bullet}(X)[[\hbar]] ((u)), L_\pi+ud_{\rm{DR}})\isomoto  (\Omega^{-\bullet}(X)[[\hbar]] ((u)), ud_{\rm{DR}})$$
where the map on the right is (the first component of) the first quasi-isomorphism (2), Theorem \ref{thm:hoho of def alg}, localized with respect to $u,$ and the map on the right is the isomorphism  $\exp(\frac{\iota_\pi}{u}).$
\end{definition}
\subsubsection{The principal symbol map}\label{sss:principal symbol} Denote by ${\operatorname{CC}}^{\operatorname{per}}_{-\bullet}(\bbA^\pi)_\C$ the periodic cyclic chain complex of $\bbA^\pi$ where the ring of scalars is defined as $\C,$ not $\C[[\hbar]].$ 
\begin{definition}\label{dfn:principal symbol}
Define the principal symbol map 
$$\sigma\colon {\operatorname{CC}}^{\operatorname{per}}_{-\bullet}(\bbA^\pi)_\C\isomoto (\Omega^{-\bullet}(X)((u)), ud_{\rm{DR}})$$
to be the composition  
$$ {\operatorname{CC}}^{\operatorname{per}}_{-\bullet}(\bbA^\pi)_\C\isomoto  {\operatorname{CC}}^{\operatorname{per}}_{-\bullet}(C^\infty (X))\isomoto (\Omega^{-\bullet}(X)((u)), ud_{\rm{DR}})$$
where the map on the left is induced by the corresponding morphism of algebras (reduction modulo $\hbar,$ a quasi-isomorphism by the Goodwillie rigidity theorem), and the map on the right is the HKR quasi-isomorphism.
\end{definition}
\begin{thm}\label{thm:alg index theorem}
For $a\in {\operatorname{HC}}^{\operatorname{per}}_{-\bullet}(\bbA^\pi)_\C,$
$${\operatorname{TR}}(a)=\iota(\sigma(a))\wedge {\sqrt{\widehat A}}(T_X)_u$$
where $\iota: \Omega^{-\bullet}(X)((u))\to \Omega^{-\bullet}(X)[[\hbar]]((u))$ is the inclusion and 
$${{\sqrt{\widehat A}}}(T_X)_u=({\sqrt{\widehat A}}(T_X))_{2p} u^{\pm p}$$
\end{thm}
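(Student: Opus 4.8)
The plan is to prove the identity at the level of periodic cyclic cohomology, where both $\operatorname{TR}(a)$ and $\iota(\sigma(a))\wedge \sqrt{\widehat A}(T_X)_u$ are classes in $H^{-\bullet}_{\rm{DR}}(X)\otimes \C[[\hbar]]((u))$, and to reduce everything to the formality theorem for Hochschild chains together with a characteristic-class computation. First I would unwind the two maps. The principal symbol $\sigma$ factors, by the Goodwillie rigidity theorem (Theorem \ref{thm:rigid}), through the reduction $\bbA^\pi\to C^\infty(X)$ modulo $\hbar$ --- a quasi-isomorphism on periodic cyclic complexes --- followed by the classical HKR quasi-isomorphism of Theorem \ref{thm:HKRsmooth}; thus $\sigma$ retains only the classical shadow of the deformation. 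The trace density $\operatorname{TR}$, by contrast, is the deformed chain-formality quasi-isomorphism of Theorem \ref{thm:hoho of def alg}(2), landing in $(\Omega^{-\bullet}(X)[[\hbar,u]], L_\pi+ud_{\rm{DR}})$, post-composed with the twist $\exp(\frac{\iota_\pi}{u})$. A first, formal task is to check that this twist intertwines the differentials, i.e. that $\exp(-\frac{\iota_\pi}{u})(L_\pi+ud_{\rm{DR}})\exp(\frac{\iota_\pi}{u})=ud_{\rm{DR}}$; this follows from the calculus identity $[d_{\rm{DR}},\iota_\pi]=L_\pi$ together with $[\pi,\pi]_{\rm{Sch}}=0$, which forces the relevant series to terminate.

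The heart of the argument is to compare the deformed formality map underlying $\operatorname{TR}$ with the undeformed HKR map underlying $\sigma$. Both are instances of the $\Cai$-quasi-isomorphism of Theorem \ref{thm:formality}, the former obtained by deforming along the Maurer-Cartan element $\pi$. By part (2) of that theorem, the map induced on Hochschild homology differs from the naive HKR map precisely by wedging with the class $\sqrt{\widehat A}_\Phi(T_X)$ attached to the symmetric series $\Gamma_\Phi(x_1)\cdots\Gamma_\Phi(x_n)$. Tracking this correction through the $(u)$-localization and the exponential twist --- the twist being responsible for the $u$-rescaling that sends the degree-$2p$ component of $\sqrt{\widehat A}_\Phi(T_X)$ to its product with $u^{\pm p}$ --- I would obtain $\operatorname{TR}(a)=\iota(\sigma(a))\wedge \sqrt{\widehat A}_\Phi(T_X)_u$.

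It then remains to remove the dependence on the associator, i.e. to identify $\sqrt{\widehat A}_\Phi$ with the genuine class $\sqrt{\widehat A}$. Here I would invoke the identity \eqref{eq:zetas 2}, which pins down the even zeta values $\zeta_\Phi(2n)$ of every associator to their classical values, so that the even part of $\Gamma_\Phi$ reproduces the generating function of the $\widehat A$-genus. The odd values $\zeta_\Phi(2n+1)$, though genuinely $\Phi$-dependent, enter only odd-degree pieces of $\Gamma_\Phi$ and hence do not contribute to the characteristic class of the \emph{real} tangent bundle $T_X$, which is built from Pontryagin data. Consequently the $\Phi$-dependence drops out and the right-hand side becomes $\iota(\sigma(a))\wedge\sqrt{\widehat A}(T_X)_u$, as claimed.

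I expect the main obstacle to be the middle step: rigorously matching the deformed trace density against the undeformed principal symbol. The difficulty is that the two maps are built from genuinely different trivializations of the de Rham complex --- one through the deformed algebra over $\C[[\hbar]]$ and the twist $\exp(\frac{\iota_\pi}{u})$, the other through Goodwillie reduction over $\C$ --- and showing that the transition between them is exactly multiplication by the characteristic class requires controlling the formality quasi-isomorphism compatibly with both the deformation and the reduction modulo $\hbar$. This compatibility, which is essentially the content of Willwacher's theorem \cite{Wil}, \cite{Wil1}, is where the real work lies; the intertwining by the twist and the zeta-value identification are comparatively formal once it is in place.
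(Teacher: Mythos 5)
The paper states this theorem without proof (it is a survey; the result is referred back to the formality theorems and to \cite{Wil}, \cite{Wil1}), and your outline is precisely the intended derivation: unwind ${\operatorname{TR}}$ and $\sigma$ via Definitions \ref{dfn:TR} and \ref{dfn:principal symbol}, reduce the comparison to the $\Cai$-formality of Theorem \ref{thm:formality} twisted by the Maurer--Cartan element $\pi$ (with the intertwining $\exp(\iota_\pi/u)$ handled by $[d,\iota_\pi]=\pm L_\pi$ and $[\pi,\pi]_{\rm{Sch}}=0$), and remove the associator dependence using \eqref{eq:zetas 2} together with the vanishing of the odd-degree contributions for the real bundle $T_X$. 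The one cosmetic quibble is that the powers $u^{\pm p}$ in ${\sqrt{\widehat A}}(T_X)_u$ arise from the $u$-linear extension of the cap product on the cyclic complex (the $\iota_D+uS_D$ structure of \ref{pairings}) rather than from the twist $\exp(\iota_\pi/u)$ itself, but this does not affect the argument.
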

\subsubsection{The complex analytic case} One defines, exactly as in \ref{sss:TR density} and in \ref{sss:principal symbol}, the quasi-isomorphisms
$${\operatorname{TR}}_\Phi\colon {\operatorname{CC}}^-_{-\bullet}(\bbA_\Phi^\pi)\isomoto (\Omega^{0,\bullet}(X,\Omega_X^\bullet)[[\hbar, u]], {\overline{\partial}}+u\partial)$$
and
$$\sigma_\Phi\colon {\operatorname{CC}}^-_{-\bullet}(\bbA_\Phi^\pi)_\C\isomoto (\Omega^{0,\bullet}(X,\Omega_X^\bullet)(( u)), {\overline{\partial}}+u\partial)$$
\begin{thm}\label{alg index thm anal case} For $a\in {\operatorname{HC}}^-_{-\bullet}(\bbA_\Phi^\pi)_\C,$
$${\operatorname{TR}}_\Phi(a)=i(\sigma_\Phi(a))\wedge ({\sqrt{\widehat A}}_\Phi(T_X))_u$$
\end{thm}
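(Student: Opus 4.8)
The plan is to imitate the proof of the $C^\infty$ algebraic index theorem, Theorem~\ref{thm:alg index theorem}, in the algebroid-stack setting, extracting the characteristic class $\sqrt{\widehat A}_\Phi(T_X)$ directly from the complex-analytic formality theorem~\ref{thm:formality}. The point is that $\operatorname{TR}_\Phi$ and $\sigma_\Phi$ are compositions that differ only in which chain-level quasi-isomorphism to forms they use: $\sigma_\Phi$ uses the naive Hochschild--Kostant--Rosenberg map $I_{\rm{HKR}}$ after reducing modulo $\hbar$, whereas $\operatorname{TR}_\Phi$ uses the $\pi$-deformed formality quasi-isomorphism of Theorem~\ref{thm:hocoho def qua analytic}(2) followed by the twist $\exp(\iota_\pi/u)$. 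The assertion of the theorem is exactly that, on cohomology, these two recipes differ by wedging with $(\sqrt{\widehat A}_\Phi(T_X))_u$.

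First I would unwind both definitions at the level of sheaves of complexes on $X$, using throughout the \v{C}ech--Dolbeault resolutions appropriate to the stack $\bbA_\Phi^\pi$ so that every $L_\infty$- and $\Cai$-module map cited above is realized globally. On the principal-symbol side, reduction modulo $\hbar$ is a quasi-isomorphism by (a variant of) the Goodwillie rigidity theorem~\ref{thm:rigid}, and composition with $I_{\rm{HKR}}$ realizes $\sigma_\Phi$ as a map into $(\Omega^{0,\bullet}(X,\Omega_X^\bullet)((u)), \overline{\partial}+u\partial)$. On the trace-density side, the first component of Theorem~\ref{thm:hocoho def qua analytic}(2) is an $L_\infty$-module quasi-isomorphism over the DGLA controlling $\pi$, whose specialization at $\hbar=0$ is the chain-level map of the formality theorem~\ref{thm:formality}; by part~(2) of the latter this map induces on homology $c\mapsto \sqrt{\widehat A}_\Phi(T_X)\wedge I_{\rm{HKR}}(c)$, which is the source of the characteristic class.

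The comparison then proceeds by filtering both complexes by powers of $\hbar$ and passing to the associated graded, where $\pi$ becomes trivial, the differential $\overline{\partial}+L_\pi+u\partial$ degenerates to $\overline{\partial}+u\partial$, and both maps are assembled from the undeformed formality data of Theorem~\ref{thm:formality}; there the two maps differ precisely by the fixed cohomology class $\sqrt{\widehat A}_\Phi(T_X)\in H^{\mathrm{even}}(X)$. To recover the $u$-graded form $(\sqrt{\widehat A}_\Phi(T_X))_u=(\sqrt{\widehat A}_\Phi(T_X))_{2p}\,u^{\pm p}$ in the statement, I would track how the localization in $u$ and the twist $\exp(\iota_\pi/u)$ redistribute the homogeneous components of the class across powers of $u$, exactly as in the symplectic computation behind Theorem~\ref{thm:index symplectic}; the inclusion $i$ on the right-hand side then accounts for the difference between the scalar rings $\C$ and $\C[[\hbar]]$ underlying $\sigma_\Phi$ and $\operatorname{TR}_\Phi$.

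The main obstacle is the compatibility invoked in the second paragraph: that the abstractly constructed $L_\infty$-module quasi-isomorphism of Theorem~\ref{thm:hocoho def qua analytic}(2), obtained by rectifying the $\Cai$-structure attached to the associator $\Phi$, specializes at $\hbar=0$ to the formality identification of Theorem~\ref{thm:formality} carrying the \emph{same} associator-dependent class $\sqrt{\widehat A}_\Phi$. Matching these two a~priori distinct $\Phi$-dependent constructions is the technical heart of the argument and is where the computations of Willwacher enter; once it is in place, the $(\hbar,u)$-adic continuity statements and the passage from local formality data to the algebroid stack are routine.
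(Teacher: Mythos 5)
The paper does not actually prove this theorem: it is stated without argument, and the surrounding text (the introduction and the remark after Theorem \ref{thm:formality}) attributes the result to Willwacher's papers \cite{Wil}, \cite{Wil1}. So the only comparison available is with the derivation the paper implies, namely the one parallel to the symplectic case (Theorem \ref{thm:index symplectic}, proved in \cite{BNT}, \cite{NT4}) and to the $C^\infty$ case (Theorem \ref{thm:alg index theorem}). Your proposal correctly locates the source of the class $\sqrt{\widehat A}_\Phi(T_X)$ --- it enters through part (2) of the chain formality theorem \ref{thm:formality}, which is exactly the discrepancy between the HKR map used in $\sigma_\Phi$ and the $\Phi$-dependent formality quasi-isomorphism used in $\operatorname{TR}_\Phi$ --- and you correctly flag that the compatibility of the $\hbar$-deformed module quasi-isomorphism of Theorem \ref{thm:hocoho def qua analytic}(2) with the undeformed one is where Willwacher's computations are indispensable.

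There is, however, a genuine gap in the middle step. Filtering by powers of $\hbar$ and passing to the associated graded only recovers the leading ($\hbar=0$) term of $\operatorname{TR}_\Phi$; but the assertion of the theorem is that $\operatorname{TR}_\Phi(a)$, for $a$ defined over $\C$, is on cohomology \emph{equal} to its leading term wedged with $(\sqrt{\widehat A}_\Phi(T_X))_u$, i.e.\ that all higher-order corrections in $\hbar$ to the trace density morphism contribute nothing. That vanishing is the actual content of an algebraic index theorem and cannot be extracted from the associated graded alone: two filtered maps with the same symbol need not agree. In the known proofs this is handled not by a filtration but by a rigidity argument --- a classification of all possible morphisms of the relevant cyclic complexes via Gelfand--Fuks cohomology of formal vector fields (as in \cite{BNT}, \cite{NT4}) or via Willwacher's graph-complex computations --- which shows that the space of such characteristic morphisms is small enough that the answer is pinned down by its restriction to the formal disc, after which the constants are fixed by the formality theorem. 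Your sketch names the normalization problem in the last paragraph but folds the all-orders-in-$\hbar$ vanishing into the filtration step, where it does not follow. A secondary point worth checking: the theorem here concerns negative cyclic homology ${\operatorname{HC}}^-$ over $\C$, whereas Goodwillie rigidity (Theorem \ref{thm:rigid}) as stated applies to the periodic complex of a nilpotent extension; invoking it for the reduction-mod-$\hbar$ step on ${\operatorname{CC}}^-$ requires the completed variant and some care, which your outline passes over.
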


\subsubsection{Algebraic index theorem for traces} \label{sss:AIT traces}
\begin{thm}\label{thm:AIT traces} Let ${\mathbb A}^\pi$ be the deformation quantization of a $C^\infty$ manifold $M$ corresponding to a formal Poisson structure $\pi$. Let ${\rm{Tr}}\colon {\mathbb A}^\pi _c\to {\mathbb C}[[\hbar]]$ be a trace on the subalgebra of compactly supported functions. There exists a Poisson trace $\tau: C^\infty (M)[[\hbar]]\to {\mathbb C}[[\hbar]]$ with respect to $\pi$ such that, for any two idempotents $P$ and $Q$ in ${\rm{Matr}}_N ({\mathbb A}^\pi)$ such that $P-Q$ is compactly supported,
$${\rm{Tr}}(P-Q)=\langle \tau, {\rm{exp (\iota_{\pi})}}({\rm{ch}}(P_0-Q_0){\widehat{A}}^{\frac{1}{2}}(M))\rangle$$
where $P_0=P({\rm{mod}}\,\hbar), \; Q_0=Q({\rm{mod}}\,\hbar),$ and ${\rm{Tr}}$ is extended to the trace on the matrix algebra by ${\rm{Tr}}(a)=\sum {\rm{Tr}}(a_{ii}).$
\end{thm}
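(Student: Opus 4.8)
The plan is to deduce the statement from the algebraic index theorem (Theorem \ref{thm:alg index theorem}) together with the formality for chains (Theorem \ref{thm:hoho of def alg}), by reducing the assertion about an arbitrary trace and a pair of idempotents to the universal comparison of the trace density and principal symbol maps. First I would reinterpret the left-hand side homologically. Since $\Tr$ vanishes on commutators, it defines a cyclic $0$-cocycle, hence a class in periodic cyclic cohomology, and the standard pairing of a trace cocycle with the Chern character of an idempotent recovers the naive trace. As $P-Q$ is compactly supported, $\operatorname{ch}(P)-\operatorname{ch}(Q)$ is a well-defined class in $HC^{per}_0(\bbA^\pi_c)$, and $\Tr(P-Q)=\langle \Tr,\operatorname{ch}(P)-\operatorname{ch}(Q)\rangle$ is exactly the value of the trace functional on it. Thus the whole problem becomes the computation of a single cyclic pairing.

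Next I would produce the Poisson trace $\tau$. The compactly supported version of the formality quasi-isomorphism of Theorem \ref{thm:hoho of def alg}(2) identifies $HH_\bullet(\bbA^\pi_c)$ with the homology of $(\Omega^{-\bullet}_c(M)[[\hbar]],L_\pi)$. Dualizing the component supported in form-degree zero, a trace on $\bbA^\pi_c$ is carried to a linear functional $\tau$ on $C^\infty(M)[[\hbar]]$ that annihilates all Poisson brackets $\{f,g\}_\pi$, i.e. a Poisson trace; its existence is forced by that of $\Tr$. Equivalently, $\Tr$ coincides, on cyclic classes, with the composite of the formality morphism of Theorem \ref{thm:hoho of def alg}(2) followed by pairing with $\tau$. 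This is the step that yields the required $\tau$.

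I would then invoke the index theorem itself. For the class $a=\operatorname{ch}(P)-\operatorname{ch}(Q)$, Theorem \ref{thm:alg index theorem} gives $\operatorname{TR}(a)=\iota(\sigma(a))\wedge {\sqrt{\widehat A}}(T_M)_u$. The principal symbol map $\sigma$ is, by Definition \ref{dfn:principal symbol}, induced by reduction modulo $\hbar$ followed by the HKR map; by naturality of the Chern character this gives $\sigma(a)=\operatorname{ch}(P_0)-\operatorname{ch}(Q_0)=\operatorname{ch}(P_0-Q_0)$, the classical Chern character of the reductions. Feeding this into the previous paragraph and comparing the two formality pictures — the $L_\pi$-twisted one through which $\tau$ was defined and the one entering $\operatorname{TR}$, which by Definition \ref{dfn:TR} differ by the operator $\exp(\iota_\pi/u)$ — produces the factor $\exp(\iota_\pi)$ acting on $\operatorname{ch}(P_0-Q_0)\,{\widehat A}^{1/2}(M)$, after the bookkeeping of $u$-powers that matches degrees and extracts the degree-zero form to which $\tau$ applies. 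This yields the asserted identity $\Tr(P-Q)=\langle \tau,\exp(\iota_\pi)(\operatorname{ch}(P_0-Q_0)\,{\widehat A}^{1/2}(M))\rangle$.

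The main obstacle is the second step together with its compatibility in the third: one must show that \emph{every} trace on $\bbA^\pi_c$ arises from a genuine Poisson trace $\tau$ in a manner compatible with the cyclic pairing, and that the $L_\pi$-twisted formality used to define $\tau$ is correctly intertwined with the $\exp(\iota_\pi/u)$-normalized trace density map of Definition \ref{dfn:TR}. Tracking the $u$-powers so that the contraction $\exp(\iota_\pi)$ delivers exactly the degree-zero component paired against $\tau$ is the delicate part; by contrast, once these identifications are in place the index theorem supplies the characteristic class ${\sqrt{\widehat A}}$ and the reduction $\sigma(a)=\operatorname{ch}(P_0-Q_0)$ essentially formally.
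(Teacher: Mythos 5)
The paper states this theorem without proof (it is a survey; the result goes back to the cyclic formality papers), so there is no in-paper argument to compare against line by line; your derivation from Theorems \ref{thm:hoho of def alg} and \ref{thm:alg index theorem} is the natural route and its skeleton is right. However, the step you yourself flag as the ``main obstacle'' --- that $\Tr$ ``coincides, on cyclic classes, with the composite of the formality morphism followed by pairing with $\tau$'' --- is not mere bookkeeping; it is the one genuinely nontrivial point, and as written it does not follow from defining $\tau$ on $HH_0$. The reason it is not automatic: $HC^0_{\operatorname{per}}(\bbA^\pi_c)$ is strictly larger than the space of Poisson traces (a degree-zero cocycle on the de Rham side of the formality amounts to a whole collection of functionals, one on each $H^{2k}_{DR,c}(M)[[\hbar]]$, of which $\tau$ records only the $k=0$ piece), so two periodic cyclic $0$-cocycles that induce the same functional on $HH_0$ need not have the same pairing with $\operatorname{ch}(P)-\operatorname{ch}(Q)$.

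The gap does close, but by a specific mechanism you should make explicit: $\operatorname{ch}(P)-\operatorname{ch}(Q)$ lifts to the negative cyclic complex (only nonnegative powers of $u$), the formality morphism of Theorem \ref{thm:hoho of def alg}(2) is $\C[[u]]$-linear and reduces modulo $u$ to the Hochschild-level morphism, and $\Tr$ extended by zero sees only the $C_0$-component. Hence the $u^0\Omega^0$-component of the image of $\operatorname{ch}(P)-\operatorname{ch}(Q)$ under the un-normalized formality map is exactly the image of $P-Q$ under the degree-zero Hochschild-level morphism, whose $\tau$-value is $\Tr(P-Q)$ by the very construction of $\tau$; this is what licenses replacing $\Tr$ by $\tau$ on the specific classes at hand without proving the two cocycles cohomologous. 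Only after this is in place does undoing the normalization $\exp(\iota_\pi/u)$ of Definition \ref{dfn:TR} and invoking Theorem \ref{thm:alg index theorem} with $\sigma(a)=\operatorname{ch}(P_0-Q_0)$ produce the factor $\exp(\pm\iota_\pi)$ acting on $\operatorname{ch}(P_0-Q_0)\,\widehat{A}^{1/2}(M)$ (the sign depends on conventions the paper itself leaves ambiguous, cf.\ the $u^{\pm p}$ in Theorem \ref{thm:alg index theorem}). The remaining small points --- that $\operatorname{ch}(P)-\operatorname{ch}(Q)$ defines a compactly supported class when only $P-Q$ has compact support, and the compactly supported version of the formality quasi-isomorphism --- are routine but should be stated.
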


\end{document}